\providecommand{\keywords}[1]
{
  \small	
  \textbf{\textit{Keywords---}} #1
}
\renewcommand{\Re}{\mathop{\rm Re}\nolimits}
\renewcommand{\Im}{\mathop{\rm Im}\nolimits}
\def\uno{{\kern+.3em {\rm 1} \kern -.22em {\rm l}}}
\theoremstyle{plain} \newtheorem{theorem}{Theorem}[section]
\newtheorem{lemma}[theorem]{Lemma}
\newtheorem{proposition}[theorem]{Proposition}
\newtheorem{corollary}[theorem]{Corollary} \theoremstyle{definition}
\newtheorem{definition}[theorem]{Definition}
\newtheorem{assumption}[theorem]{Assumption} \theoremstyle{remark}
\newtheorem{remark}[theorem]{Remark}
\newcommand{\R}{{\mathbb R}}
\newcommand{\U}{{\mathcal U}}
\newcommand{\Z}{{\mathbb Z}}
\newcommand{\N}{{\mathbb N}}
\def\im{{\rm i}}
\newcommand{\C}{\mathbb{C}}
\def\({\left(}
\def\){\right)}
\def\<{\left\langle}
\def\>{\right\rangle}
\newcommand{\Hrad}{H_{\mathrm{rad}}}
\newcommand{\Lrad}{L_{\mathrm{rad}}^2}
\newcommand{\stz}{\mathrm{Stz}}
\newcommand{\stzo}{\mathrm{Stz}^1}
\newcommand{\stzso}{\mathrm{Stz}^{*,1}}
\numberwithin{equation}{section}
\begin{document}

\title{Long time oscillation of solutions of nonlinear Schr\"odinger equations near minimal mass ground state}

\author {Scipio Cuccagna, Masaya Maeda}

\maketitle

\begin{abstract}
In this paper, we consider the long time dynamics of radially symmetric solutions of nonlinear Schr\"odinger equations (NLS) having a minimal mass ground state.
In particular, we show that there exist solutions with initial data near the minimal mass ground state that oscillate for long time.
More precisely, we introduce a coordinate defined near the minimal mass ground state which consists of finite and infinite dimensional part associated to the discrete and continuous part of the linearized operator.
Then, we show that the finite dimensional part,   two dimensional, approximately obeys Newton's equation of motion for a particle in an anharmonic potential well.
Showing that the infinite dimensional part is well separated from the finite dimensional part, we will have long time oscillation.

\keywords{Nonlinear Schr\"odinger equation; ground states; oscillation}
\end{abstract}

\tableofcontents

\section{Introduction}
\label{sec:introduction}

In this paper we consider a   nonlinear Schr\"odinger equation (NLS)
\begin{equation}\label{1}
\im \partial_t u  = -\Delta u  + g(|u|^2) u ,\quad (t,x)\in \R\times \R^3,
\end{equation}
with $u(0)=u_0 \in H^1_{\mathrm{rad}}(\R^3, \C ):=\{u\in H^1(\R^3, \C )\ |\ u\text{ is radially symmetric}\}$ and $g \in C^\infty([0,\infty) ,\R)$ with  $g(0)=0$ and   $|g ^{(n)}(s) |\leq C_n s^{p-n}$  in $s\geq 1$ for some $p<2$ and $C_n>0$ for all $n\in\N\cup\{0\}$.

We assume the existence of a one parameter family of ground states, i.e.\ there exists an open interval $\mathcal O\subset (0,\infty)$ and a $C^2$  map
\begin{align}\label{2}
\mathcal O \ni \omega  \mapsto \phi_\omega \in \Hrad^1(\R^3,\R),
\end{align}
s.t.\ $\phi_\omega$ are positive and solve
\begin{align}\label{3}
0=-\Delta \phi_\omega + \omega \phi_\omega + g(\phi_\omega^2) \phi_\omega.
\end{align}
Notice that $e^{\im \omega t}\phi_\omega$ are solutions of \eqref{1}, which are also called ground states.
We further assume that the map $\omega\mapsto \|\phi_\omega\|_{L^2}$ has a nondegenerate local minimum, i.e.\
\begin{align}\label{4}
\text{$\exists\omega _*\in \mathcal O$ s.t.\   $q'(\omega_*)=0$ and $q''(\omega_*)>0$, where $q(\omega):=\frac 1 2 \|\phi_\omega\|_{L^2(\R^3, \C )}^2$.}
\end{align}

\begin{remark}
The above hypotheses have been numerically verified for various equations involving
 \textit{saturated} nonlinearities such as $g(s)=\frac{s}{1+s}$, relevant in optics, see \cite{TCL96PRL},
or $g(s)=\frac{s^3}{1+s^2}$, discussed by Marzuola \textit{et al.} \cite{MRS10JNS}.  Other examples are in  Buslaev--Grikurov \cite{BG01MCS}.
For an analytical result for double power nonlinearity in one dimension, see \cite{Maeda08KMJ}.
\end{remark}

\noindent
The above set up provides some interesting patterns.  First of all, by \eqref{4}, we have $q'(\omega)<0$ for $\omega\in (\omega_*-\delta,\omega_*)$ and $q'(\omega)>0$ for $\omega\in (\omega_*, \omega_*+\delta)$ for some $\delta>0$.
It is well known that under standard nondegeneracy assumption (1.\ of Assumption \ref{a:1} below), the ground state $e^{\im \omega t}\phi_\omega$ is orbitally stable (resp.\ unstable) if $q'(\omega)>0$ ($q'(\omega)<0$), see e.g.\ \cite{GSS}.
%Here, the definition of orbital stability/instability is the following.

%\begin{definition} We say $e^{\im \omega t}\phi_{\omega}$ is orbitally stable if $\forall\varepsilon>0$, $\exists\delta>0$ s.t.\  if $\|u_0-\phi_\omega\|_{H^1}<\delta$, the solution $u(t)$ of \eqref{1}  with $u(0)=u_0$ exists globally in time and $\inf_{\vartheta}\|u(t)-e^{\im \vartheta}\phi_\omega\|_{H^1}<\varepsilon$ for all $t\in \R$. If $e^{\im \omega t}\phi_{\omega}$ is not orbitally stable, we say $e^{\im \omega t}\phi_{\omega}$ is orbitally unstable.\end{definition}

\noindent
Therefore, $\omega_*$ is the critical frequency dividing the stable and unstable ground states.
Comech--Pelinovsky \cite{CP03CPAM} proved that this critical  ground state $e^{\im \omega_* t}\phi_{\omega_*}$
  is orbitally instable (see also \cite{Ohta11JFA, Maeda12JFA}).  The proof uses an appropriate    system of   coordinates, involving a mixture of three scalar  coordinates (related to the discrete spectrum of an appropriate linearization of the NLS) and one infinite dimensional coordinate (related to the continuous spectrum of the linearization).
By this coordinates, \eqref{1} is viewed
as a perturbation of   a finite dimensional Hamiltonian system involving the scalar coordinates, with
$e^{\im \omega_* t}\phi_{\omega_*}$ shown to be unstable  for the finite dimensional system. Then, by controlling the continuous spectrum coordinate by means of classical methods involving the energy, like in \cite{Weinstein85,Weinstein86,GSS},  Comech--Pelinovsky \cite{CP03CPAM} produces some solutions of
\eqref{1} whose discrete coordinates   \textit{shadow} the unstable solutions of the
finite dimensional system.
More precisely, it is shown that for  a sufficiently large   period of time the discrete coordinates have motion similar  to that of  the finite dimensional system while the continuous coordinate of the  NLS   remains small.
This yields
the desired  instability   for the full NLS.

Marzuola \textit{et al.} \cite{MRS10JNS} considered the framework of  Comech--Pelinovsky \cite{CP03CPAM}
and developed a systematic numerical exploration of solutions of both the finite dimensional approximation and of
the full NLS.  While observing the patterns of   Comech--Pelinovsky \cite{CP03CPAM},  Marzuola \textit{et al.}     \cite{MRS10JNS}    identified  another class of  initial data
  near the minimal mass ground state $e^{\im \omega_* t}\phi_{\omega_*}$.
The corresponding solutions look like $e^{\im \vartheta (t)}\phi_{\omega (t)}$ locally in space with
$\omega (t)$  displaying an oscillating motion, which appears periodic. A similar pattern had previously been observed also by
   Buslaev--Grikurov \cite[fig. 2 case $\alpha =0.5$]{BG01MCS}.

The aim of this paper is to rigorously justify the observation of Marzuola \textit{et al.} \cite{MRS10JNS} and provide a theoretical explanation of the oscillation phenomena.
Very roughly, our main result is the following.

\begin{theorem}[informal]\label{main:informal}
For any $M\in\N$, $M\geq 2$, there exists an  open set $\mathcal U_M\subset\Hrad^1$ near $\phi_{\omega_*}$ contained in $\{u\in \Hrad^1\ |\ \|u\|_{L^2}^2/2>q(\omega_*)\}$ s.t.\  the solutions  of \eqref{1} with initial datum $u_0\in \mathcal U_M$  can be expressed as $u(t)=e^{\im \vartheta(t)}\phi_{\omega(t)}+\mathrm{error}$  and $\omega(t)$ is oscillating  for long time.
In particular, if $u\in \mathcal U_M$, we have  $\inf _{\vartheta \in \R}\| u_0 - e^{\im \vartheta}\phi_{\omega_*}\| _{\Hrad^1}\sim \epsilon  $,
the oscillations occur over a time period of length $  T\sim \epsilon ^{-M} $ and
each oscillation occurs in a time approximately equal to $  \epsilon ^{-\frac{1}{2}}$, where  $\epsilon^2:=\frac12\|u\|_{L^2}^2-q(\omega_*)$.
\end{theorem}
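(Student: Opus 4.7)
The plan is to extend the modulation framework of Comech--Pelinovsky \cite{CP03CPAM} to detect \emph{oscillatory} rather than unstable motion near $\phi_{\omega_*}$, exploiting the fact that $q'(\omega_*)=0$ produces a Jordan block in the linearized operator along the gauge direction, while $q''(\omega_*)>0$ dictates its normal form. Concretely, I would write
\[
u(t,x) \;=\; e^{\im \vartheta(t)}\bigl(\phi_{\omega_*} + z_1(t)\,\xi_1 + z_2(t)\,\xi_2 + \eta(t,x)\bigr),
\]
where $\xi_1,\xi_2$ span the two-dimensional extension of the generalized kernel of the linearization at $\omega_*$ beyond the gauge direction, $\vartheta(t)$ is a modulation parameter fixed by an orthogonality condition that forces $\eta$ into the continuous spectral subspace of $\mathcal L_{\omega_*}$, and $z=(z_1,z_2)\in \R^2$ are the two scalar coordinates singled out in the abstract.

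Projecting \eqref{1} onto this decomposition and exploiting the Hamiltonian structure, I would derive a coupled system of the schematic form
\[
\dot z_1 = z_2 + R_1,\qquad \dot z_2 = -V'(z_1) + R_2,\qquad \im \partial_t \eta = \mathcal L_{\omega_*}\eta + F,
\]
where $V(z_1)$ is an effective anharmonic potential with local minimum at $0$ (whose shape is controlled by the mass excess $\epsilon^2$ together with $q''(\omega_*)>0$), and $R_1,R_2,F$ collect higher order terms in $(z,\eta)$. The mass constraint $\tfrac12\|u\|_{L^2}^2 = q(\omega_*)+\epsilon^2$ translates into conservation of an effective energy $H_{\mathrm{eff}}(z)\sim \epsilon^2$, which, combined with the well structure of $V$, yields periodic oscillation of $z_1$ of amplitude $\sim \epsilon^{1/2}$ and period $\sim \epsilon^{-1/2}$: exactly the motion of a particle in an anharmonic well anticipated in the abstract.

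To upgrade this approximate picture into a rigorous statement valid up to $T \sim \epsilon^{-M}$, I would perform a Birkhoff-type normal form iteration of order $M$, removing non-resonant couplings between $z$ and $\eta$. The non-resonance is geometric: the intrinsic discrete frequency $\sim \epsilon^{1/2}$ lies far below the bottom of the continuous spectrum of $\mathcal L_{\omega_*}$, which is bounded away from $0$. After normalization, the forcing term on $\eta$ is of arbitrarily high order in $\epsilon$, and a bootstrap based on Strichartz and local decay estimates for $e^{-\im t \mathcal L_{\omega_*}}$ on the continuous spectral subspace keeps $\eta$ small in a suitable norm throughout $[0,T]$, which closes back the approximation of $(z_1,z_2)$ by the Newtonian dynamics above.

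The main obstacle, as hinted at in the abstract, is the clean separation of the finite and infinite dimensional parts over the very long time scale $\epsilon^{-M}$. Even with a normal form of order $M$, one must show that the cumulative feedback of $\eta$ on the finite dimensional dynamics is at most $o(\epsilon)$ integrated over $T=\epsilon^{-M}$, so that the oscillation of $z_1$ in the well is not destroyed; balancing the dispersive decay of $\eta$, the slow oscillation of $z$, and the iteratively generated error terms is the delicate technical heart of the argument.
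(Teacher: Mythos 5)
Your overall route is the same as the paper's: modulation coordinates built on the generalized kernel at $\omega_*$, an effective planar Hamiltonian whose potential has a well created by the mass excess, a Birkhoff normal form of arbitrarily high order exploiting that the discrete frequency $\sim\epsilon^{1/2}$ sits far below the continuous spectrum, and Strichartz/local-decay control of the radiation to close a bootstrap. Two points in your plan, however, are genuine gaps as written. First, your ansatz is under-parametrized: at $\omega_*$ the generalized kernel of the linearization is \emph{four} dimensional (gauge direction, $\partial_\omega\widetilde\phi_{\omega_*}$, and two further generalized eigenvectors $\Psi_3,\Psi_4$), so with only the three scalars $(\vartheta,z_1,z_2)$ you cannot impose all the symplectic orthogonality conditions needed to place $\eta$ in the continuous spectral subspace. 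The paper keeps four discrete parameters $(\vartheta,\omega,\lambda,\mu)$ and then eliminates $\mu$ through the conserved mass $Q$ via the implicit function theorem (Lemma \ref{lem:defmu}); this step is not cosmetic, since it is precisely the conserved $Q$ entering the reduced Hamiltonian that produces the potential $V_Q(\omega)=d(\omega)-\omega Q$, whose minimum at $\omega_+>\omega_*$ exists exactly when $Q>q(\omega_*)$. In your scheme the mass constraint appears only a posteriori as an ``effective energy,'' and without the $\mu\leftrightarrow Q$ trade the finite-dimensional reduction does not close. (Relatedly, your scalings are off: with $\epsilon^2=Q-q(\omega_*)$ the $\omega$-oscillation has amplitude $\sim\epsilon$, $\lambda\sim\epsilon^{3/2}$, and the well depth is $\sim\epsilon^{3}$, not $\epsilon^{2}$; the period $\sim\epsilon^{-1/2}$ you state is correct.)

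Second, your final step claims that the bootstrap ``closes back the approximation of $(z_1,z_2)$ by the Newtonian dynamics'' over the whole interval $T\sim\epsilon^{-M}$. That is stronger than what a Gronwall-type comparison can give: the linearization of the planar system along a periodic orbit has secularly growing (period-shift) modes, so trajectory-wise shadowing of the finite-dimensional flow is only available for a bounded number of periods (this is the content of Theorem \ref{thm:shadow} and Remark \ref{rem:group}). The mechanism that actually survives to times $\epsilon^{-M}$ is different: one proves \emph{almost conservation} of the reduced energy $E_Q(\omega,\lambda)$ up to errors $O(\epsilon^4)$ on $[0,T]$ (Theorem \ref{thm:dynamics}), which confines $(\omega,\lambda)$ to a thin neighborhood of a level curve inside the well and hence guarantees persistent oscillation, while the comparison with a specific solution of the reduced system is re-initialized every few periods. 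Your proposal needs this two-tier structure (long-time energy almost-conservation plus short-time shadowing); showing the cumulative feedback of $\eta$ is small ``integrated over $T$'' is not by itself enough to preserve the trajectory, and fortunately it is also not what the theorem requires.
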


\begin{remark}
The open sets $\mathcal U_M\subset \Hrad^1(\R^3,\C)$  are not neighborhoods of $\phi_{\omega_*}$. As stated in Theorem \ref{main:informal}, they belong  to the region of $ \frac12 \|u\|_{L^2}^2>q(\omega_*)=\frac12 \|\phi_{\omega_*}\|_{L^2}^2$.
When $\frac12\|u\|_{L^2}^2<q(\omega_*)$, then $\omega(t)$ does not oscillate and the solution  escapes from the tubular neighborhood of $\mathcal T_{\omega_*}:=\{e^{\im \vartheta}\phi_{\omega}\ |\ \vartheta\in\R\}$.
This is the instability proved by Comech-Pelinovsky  \cite{CP03CPAM}.
Notice that the escape from $\phi_{\omega_*}$ of the solutions
$\frac12\|u\|_{L^2}^2<q(\omega_*)$  is   natural from the point of view of the Soliton Resolution conjecture because if the solution stayed  near $\mathcal T_{\omega_*}$ for all time, then we would expect  it to converge, modulo phase and a linear wave, to $\phi_\omega$ for $\omega$ near $\omega_*$.
 And since the linear wave would have nonnegative $L^2$ norm     scattered to spatial infinity, we would get a  contradiction with the $L^2$--norm conservation of equation \eqref{1}, because $\|\phi_{\omega}\|_{L^2}\geq \|\phi_{\omega_*}\|_{L^2}>\|u_0\|_{L^2}$. See the related discussion in the proof of Theorem 1.4 of \cite{CM16JNS}.
See also, \cite{KOPV17ARMA}.
\end{remark}

\noindent
The precise statement of Theorem \ref{main:informal} will be given in Theorems \ref{thm:normal}, \ref{thm:dynamics} and \ref{thm:shadow}.
In Theorem \ref{thm:normal}, we will make the meaning ``the solution $u(t)$ can expressed as $u(t)=e^{\im \vartheta(t)}\phi_{\omega(t)}+\mathrm{error}$" clear by introducing an appropriate coordinate system.
In Theorem \ref{thm:dynamics} and \ref{thm:shadow}, we will make the meaning "long time" clear and deduce the effective equation which governs the motion of $\omega(t)$.
This will provide the meaning of ``$\omega(t)$ is oscillating for long time".

Similar results on persistence of oscillating patterns for long periods of time have been
 proved, for different but analogous problems,  by Marzuola and Weinstein \cite{MW10DCDS}, see also \cite{GMW15DCDS,PP12JDE}.
 However here we are able to prove the persistence of the oscillations for much longer times.
 In fact, in \cite{MW10DCDS} the number of oscillations  is of the order of $\epsilon ^{-a _0} $ for a fixed $a _0$, while in this paper  the number of oscillations can be arranged to be
 $\epsilon ^{-N} $  for any $N$.  A somewhat similar result to {Theorem} \ref{main:informal}
 is in \cite{BM16CMP},  which deals with the effects that an arbitrarily small  but classically trapping
   potential exerts on solitons of a translation invariant NLS.  In  \cite{BM16CMP}
   the time frame of the phenomena is as our's, not coincidentally since  in fact we use similar methods.
   However here we examine our oscillating patterns in more detail than what is done for possibly analogous patterns in \cite{BM16CMP}. For the trapping of a soliton in a well
   we refer also to \cite{Bonanno15JDE,soffer, FGJS04CMP,HZ08IMRN,JLFGS06AHP}   and to the references therein.

We now explain the strategy of the proof.
As we mentioned above, for problems related to the dynamics near ground states, following the classical works such as \cite{Weinstein85,SW90CMP,SW92JDE}, it  is quite natural to introduce
coordinates which are in part discrete and in part  continuous coordinates and which are related to the spectral decomposition of the linearization of \eqref{1}.
For example, when we study the asymptotic stability of ground states (i.e.\ showing solutions near ground states decompose to ground state and linear wave) for the case $q'(\omega)>0$, we can decompose the solution $u(t)$ as
\begin{align*}
u(t)=e^{\im \vartheta(t)}(\phi_{\omega(t)}+r(t)).
\end{align*}
Here, $r(t)$ is symplectic orthogonal to the kernel of the linearized operator.
If we assume that there are no internal modes (non-zero discrete spectrum) for the linearized operator, then $(\vartheta, \omega)$ can be viewed as the discrete coordinates and $r $ as the continuous coordinate, with $(\vartheta, \omega ,r)$ a complete system of independent coordinates.
Therefore, the dynamics of $u(t)$ reduces to the dynamics of $\vartheta(t)$, $\omega(t)$ and $r(t)$.
As in     \cite{Cuccagnatrans,Bambusi13CMPas}  it is natural to  replace the coordinate system $(\vartheta, \omega, r)$ with $(\vartheta,Q,r)$, where $Q=\frac12 \|u\|_{L^2}^2$ is a first integral of   motion, and
by an elementary application of  Noether's principle,  to decrease the number of coordinates, reducing to a NLS--like equation on $r$. Then   the proof of asymptotic stability results from the proof of the scattering of $r$.

The above argument,  is based on the fact that the generalized kernel of the linearized operator
    is $2$ dimensional,   which is a consequence of  $q'(\omega)\neq 0$.
In the case $q'(\omega)=0$ the generalized kernel becomes $4$ dimensional, and instead of the previous ansatz with coordinates
$(\vartheta, \omega, r)$, we have
\begin{align*}
u(t)=e^{\im \omega(t)}\(\phi_{\omega(t)}+\lambda(t)\psi_3+\mu(t)\psi_4+r(t)\),
\end{align*}
where $\psi_3$ and $\psi_4$ are additional generalized eigenfunctions (see Proposition \ref{prop:1}).
Thus, under the assumption that there exist no internal modes, our problem reduces to the study of the dynamics of $(\vartheta,\omega,\lambda,\mu,r)$.
Unlike in the case $q'(\omega)\neq 0$, we cannot replace $\omega$ by $Q$.
But  we can replace $\mu$ by $Q$.
Then, by an elementary application of Noether's principle, we are left with $(\omega,\lambda,r)$. The equations for modulation parameters $(\omega,\lambda )$
have already been studied in the literature, see   (4.11) in \cite{CP03CPAM} or (3.5) in \cite{MRS10JNS}, but are not well understood. Here we add to their understanding
by
following an approach initiated in \cite{Cuccagna11CMP}. That is,   using the Hamilton structure of NLS \eqref{1},   we  move to a   Darboux system of coordinates, see  Proposition \ref{prop:3} in Subsection \ref{sec:Darboux} (and we further simplify them by canonical transformations, see Sect.\ \ref{sec:normalf}).
In Darboux   coordinates we have
\begin{align*}
\dot \omega= A^{-1}\partial_\lambda E,\quad \dot \lambda=-A^{-1}\partial_\omega E
\end{align*}
where $E$ is the energy (Hamiltonian) of the  NLS and $A=A(\omega)>0$ is a function that here we can  assume constant.
If we expand $E=E_f(Q,\omega,\lambda ,0)+``\text{terms with $r$}"$,
 where for $E_f $ see \eqref{eq:fenergy},
  and think the second term as an error, we   obtain
\begin{align*}
\dot \omega = A^{-1}\partial_\lambda E_f(Q,\omega,\lambda , 0)+\mathrm{error},\quad \dot \lambda =-A^{-1}\partial_{\omega}E_f(Q,\omega,\lambda ,0)+\mathrm{error}.
\end{align*}
Ignoring errors, this is a  2 dimensional Hamiltonian system with energy $\sim 2^{-1}A \lambda^2+ d(\omega)-\omega Q$, where $d(\omega)=E(\phi_\omega)+\omega q(\omega)$ satisfies $d'(\omega)=q(\omega)$.
So, thinking $2^{-1}A \lambda^2$ as the kinetic energy and $V_Q(\omega)=d(\omega)-\omega Q$ as potential energy, we see that $\omega$ approximately   Newton's equation in a potential well $V_Q$, with $\lambda$   the momentum.
If $Q>q(\omega_*)$, the well has a local minimum at a point $\omega=\omega_+>\omega_*$.
Thus, in this case, even starting from $\omega(0)=\omega_*$, $\omega $ will oscillate around $\omega_+$, consistently  with the numerical observations   by Marzuola \textit{et al.} \cite[Fig.\ 4--6]{MRS10JNS}.
Similarly, we see that if $Q<q(\omega_*)$, then $V(Q,\omega)$ is monotically increasing and $\omega$ will fall to the unstable side ($\omega<\omega_*$) without oscillation.

As we mentioned,  Marzuola and Weinstein \cite{MW10DCDS}, see also \cite{GMW15DCDS,PP12JDE}, considered similar problems,  proving long time   shadowing of solutions of an appropriate
finite dimensional systems by solutions of NLS's.
However,  as we mentioned above, we prove the persistence of the
 oscillating patterns   for much longer   than in \cite{MRS10JNS}, because we
are more careful in the choice of the coordinates, as we now explain.

Because of the fact that the coordinates are derived from the linearization of \eqref{1}, at a linear level
the various coordinates evolve in time  independently from each other,   they interact
at a nonlinear level only.  The reason why the oscillations last for a  long time,  but presumably not for ever,
is that the continuous coordinate, if initially very small,   remains small for a long time. However, because of nonlinear interactions, the  continuous coordinate   grows and
after a sufficiently long time,   disrupts the motion of the discrete coordinates.
The latter coordinates, for a long time  and as long as the continuous coordinate
   is very small, evolve like solutions of the finite dimensional system, but,
   from a certain moment on, evolve   quite differently. What we do in this paper,
   which is not done in related papers such as \cite{MW10DCDS}, is to choose the coordinates (i.e.\ a normal forms argument)
   in such a way that it is  quite clear
   that the interaction between the continuous coordinate with the various discrete coordinates occurs at arbitrarily  high order: the higher this  order, the longer   the time it takes
   for the continuous coordinate to disrupt the discrete coordinates.
   This is analogous  to the approach
   by Bambusi and Maspero \cite{BM16CMP}.

While we prove oscillations for very long times, the question on what happens
asymptotically to these oscillating patterns remains open.
In   \cite[Conjecture 1.1]{MRS10JNS} it is
   suggested  that asymptotically these oscillating solutions should scatter to a stable ground state. Reference is made to possible  {radiation damping}  phenomena and to the  damped oscillations observed numerically in some cases for the mass critical saturated NLS
by LeMesurier \textit{et al.} \cite[figures 7 and 16]{LemPapSulSul88}. For further comments and references see also the discussion in \cite[Sect.\ 9.3.2--9.3.3] {SulemSulem}.

\noindent
Damped oscillations similar to the ones in LeMesurier \textit{et al.} \cite{LemPapSulSul88}  are observed
   in Buslaev--Grikurov \cite[fig. 2]{BG01MCS}.  However
only the case $\alpha =0.5$ in  \cite[fig. 3]{BG01MCS}, which has an unknown asymptotic behavior,  is in the same regime  of Marzuola \textit{et al. }   The other cases in \cite[fig. 3]{BG01MCS}, all displaying damping and convergence, have $\alpha \ll 1$, which means that  the initial value  $u_0$ has large distance  in $\Hrad^1 (\R ^3)$  from the orbit of $\phi_{\omega_*}$, with distance$ \stackrel{\alpha\to 0^+ }{\rightarrow}\infty$. Ultimately the regime $\alpha \ll 1$ in \cite[fig. 3]{BG01MCS}
seems to be very different from the  regime considered in   \cite{MRS10JNS}.

   We think that
   it is   plausible that a radiation damping phenomenon like in \cite{SW99IM,BP95,Cuccagna11CMP,CM15APDE} will prove  Conjecture 1.1 in  \cite {MRS10JNS}.  However, it is almost certain   that the type of  coordinates used
   in the present paper, which originate from the analysis in Comech and Pelinovsky \cite{CP03CPAM}, are
   inadequate to prove the conjecture. There  should exist    more "nonlinear"
     coordinates, possibly related to the ones used in the study of
   the blow up by Perelman  \cite{perelman}  and Merle and Raphael  \cite{merle}.  For  few more comments on the conjecture we refer
to Remark \ref{rem:FGR}.

This paper is organized as follows.
In section \ref{sec:setup}, we set up the framework and give the precise statement of the main results (Theorems \ref{thm:normal}, \ref{thm:dynamics} and \ref{thm:shadow}).
Section \ref{sec:normal} will be devoted to the proof of Theorem \ref{thm:normal}, which consists of Darboux theorem (Proposition \ref{prop:3}) and Birkhoff normal forms  in subsection \ref{sec:normalf}.
In section \ref{sec:contspec}, we prove Theorems \ref{thm:dynamics} and \ref{thm:shadow}.
In the appendix of this paper, we provide the proof of Proposition \ref{prop:1}, Lemmas \ref{lem:Rexpand} and Lemma \ref{lem:errorstzbound}.

\section{Set up and Main result}\label{sec:setup}

\subsection{Notation}
We collect notations which we use throughout this paper.
\begin{itemize}
\item
By $f \lesssim g$, we mean there exists a constant $C>0$ such that $f\leq C g$.
Here, $C$ only depends on parameters which are irrelevant to us.
Further, by $f\lesssim_a g$, we mean that there exists a constant $C=C_a$ (depending on parameter $a$) such that $f\leq C_a g$.
If $f\lesssim g$ and $g\lesssim f$, then we write $f\sim g$.
\item
We use Pauli matrices
\begin{align}\label{17.3} \sigma_0=\begin{pmatrix} 1 & 0 \\ 0 & 1 \end{pmatrix}, \sigma_1=\begin{pmatrix} 0 & 1 \\ 1 & 0 \end{pmatrix},\ \sigma_2=\begin{pmatrix} 0 & -\im \\ \im & 0 \end{pmatrix},\  \sigma_3=\begin{pmatrix} 1 & 0 \\ 0 & -1 \end{pmatrix}.\end{align}
\item
For Banach spaces $X,Y$ we denote by $\mathcal L(X,Y)$   the Banach space of all bounded operators from $X$ to $Y$. If $X=Y$, we write $\mathcal L(X,X)=\mathcal L(X)$.
Further, we define $\mathcal L^n(X,Y)$   by $\mathcal L^0(X,Y)=Y$, $\mathcal L^1(X,Y)=\mathcal L(X,Y)$,  $\mathcal L^n(X,Y)=\mathcal L^{n-1}(X,\mathcal L(X,Y))$ and we set $\mathcal L^n(X):=\mathcal L^n(X,X)$.

\item Given a Banach space $X$, $x_0\in X$ and $r>0$, we set $D_X(x_0,r):=\{ x\in X:\|x-x_0\| _X <r \}$.
Further, for $A\subset X$, we set
\begin{align}\label{def:tubnbd}
D_X(A,r):=\{ x\in X\ |\ \mathrm{dist}_X(x,A)<r\},\quad \mathrm{dist}_X(x,A):=\inf_{y\in A}\|x-y\|_X.
\end{align}

\item
We embed $\C \hookrightarrow \C^2$
  using the natural identification
\begin{equation}\label{eq:identif}
    u\in \C \to \widetilde{u}:= {^t(u \ \bar u )}\in \widetilde{\C}:=\{ {^t (z \ \bar z )}\in \C^2 : z\in \C  \} \subset \C^2.
\end{equation}
We set $\<U_,V\>_{\C^2}:=
2^{-1} (u_1  v_1+u_2   v_2 ) $ for  $U=(u_1\   u_2)^t $ and $V=(v_1\   v_2)^t$ in $\C^2$ and
\begin{equation}\label{eq:bilform}
   \<U,V\>:=\int_{\R^3}\<U(x),V(x)\>_{\C^2}\,dx \text{ for $U,V\in L^2(\R^3,\C^2)$. }
\end{equation}
We will also use $\<U,V\>$ as the dual coupling between $H^{-1}(\R^3,\C^2)$ and $H^1(\R^3,\C^2)$.
 By our definition  we have $\< \widetilde{u},\sigma _1\widetilde{v}\>_{\C^2}=\Re u\bar v$, and in particular $\<\widetilde{u},\sigma _1\widetilde{u}\>_{\C^2}=|u|^2$.
For $\widetilde u\in \widetilde \C$, we define $|\widetilde u|^2:=\<\widetilde{u},\sigma _1\widetilde{u}\>_{\C^2}$.

\item
We define the skew-symmetric form $\Omega$ by
\begin{align}
\Omega(\cdot,\cdot):=\<\im \sigma_3 \cdot, \sigma_1  \cdot\>.\label{17.6d}
\end{align}
Notice that $\Omega(\widetilde u, \widetilde v)=\Im \int_{\R^3} u(x)\overline{v(x)}\,dx$.

\item
For $F\in C^1( \Hrad^1(\R^3,\widetilde{\C} ),\R)$, we define the gradient $\nabla
F(\widetilde{u})\in  \Hrad^{-1}(\R^3,\widetilde{\C} )$ by
\begin{align}\label{def:grad}
dF(\widetilde{u})\tilde v =\<\nabla F(\widetilde{u}),\sigma _1\widetilde{v}\>  \text{  for all $\widetilde{v}\in \Hrad^1(\R^3,\widetilde{\C} )$}
\end{align}
where $dF(\widetilde u) \in \mathcal L(  \Hrad^1(\R^3,\widetilde{\C} ), \R)$ is the Fr\'echet
derivative of $F$.
For $F\in C^1(L^2_{rad}(\R^3,\widetilde{\C} ),\R)$, the gradient $\nabla F(\widetilde{u})$ will be similarly defined and it will belong in $L^2_{rad}(\R^3,\widetilde{\C} )$.
Further, for $F\in C^n( \Hrad^1(\R^3,\widetilde{\C} ),\R)$    define
 \begin{equation*}
    \nabla^m F \in C^{n-m}( \Hrad^1(\R^3,\widetilde{\C} ),\mathcal L^{m-1}( \Hrad^1(\R^3,\widetilde{\C} ), \Hrad^{-1}(\R^3,\widetilde{\C} )))
 \end{equation*} inductively by $\nabla^{m} F =d \nabla^{m-1}F$ ($2\leq m\leq n$).

\item
For $s\in \R$ and for $X=\widetilde \C$ or $\C^2$, we set
\begin{equation} \label{eq:Sigma}
    \Sigma^s(\R^3,X):=\{U\in \mathcal S_{\mathrm{rad}}'(\R^3,X):\  \ \|U\|_{\Sigma^s}<\infty\} \text{ where } \|U\|_{\Sigma^s}:=\|(-\Delta + |x|^2+1)^{s/2} U\|_{L^2}.
\end{equation}
We further set $\Sigma^s_c(\R^3,\widetilde \C):=P(\omega_*)\Sigma^s(\R^3,\widetilde \C)$, where $P(\omega)$ is defined in \eqref{55}.

\item
For an operator $A$, we set $\mathcal N(A)$ to be the kernel of $A$ and $\mathcal N_g(A)=\cup_{n=1}^\infty \mathcal N(A^n) $ to be the generalized kernel.

\end{itemize}

\subsection{The linearized operator}

It is a traditional approach to study the stability of solitons by appropriate choices of coordinates
where the problem is reframed as the stability  of 0 and where nonlinear stability is
derived by some form of Lyapunov or linear stability argument, see for example   \cite{Weinstein85,GSS,BP92StP}. In all these approaches a key role is played by
the linearization of the NLS at the soliton $e^{\im t \omega }\phi _{\omega}$,
related to the operator \eqref{36.6} below. Such operator is    $\R$--linear but  not $\C$--linear.
Since it is very important to consider spectral decompositions of this operator, it is quite natural to ``complexify", which leads to viewing a $\C$--valued function $u$ as a
$\widetilde \C(\hookrightarrow \C^2)$--valued function $\widetilde u={^t(u\ \bar u) }$ (see \eqref{eq:identif}).
The map $u\to \widetilde{u}$ yields a natural identification of
$\Hrad^s(\R^3,\C )$   with $  \Hrad^s(\R^3,\widetilde{\C} )\subset \Hrad^s(\R^3,\C^2)$.
By this identification, NLS \eqref{1} can be written as
\begin{align}\label{1tilde}
\im \sigma_3 \partial_t \widetilde u =-\Delta \widetilde u + g(|\widetilde u|^2)\widetilde u.
\end{align}

%\begin{definition}We set $\mathbf C:\C^2\to \C^2$ by $\mathbf C u=\bar u= {^t(\bar u_1\ \bar u_2) }$ and further extend $\mathbf C$ naturally to a $\C^2$--valued function spaces by $(\mathbf C u)(x):=\mathbf C (u(x))$.
  %We set \begin{align} \pi _0:=\frac  1 2 \(1-\sigma_1\mathbf{C}\) . \label{371} \end{align} Then, we have $X(\R^3,\widetilde \C)=\mathcal N(\pi_0)$ in $X(\R^3,\C^2)$, where $X=\Hrad^s$ and $\Sigma^s$ for $s\in\R$. Notice that if $\pi_0 u=0$, then also $\pi_0 \im \sigma_3 u=0$.Thus, the solution $u(t)$ of \eqref{1tilde} will satisfy $\pi_0 u(t)=0$ if $\pi_0 u(0)=0$. \end{definition}

We introduce the energy $E$ and mass $Q$, which are the constant under the flow of \eqref{1tilde}.

\begin{definition}
We define $E\in C^6( \Hrad^1(\R^3,\widetilde{\C} ),\R)$ and $Q \in C^\infty( \Lrad(\R^3,\widetilde{\C} ) ,\R)$ by
\begin{align}
E(\widetilde u) &:= \frac{1}{2}\<(-\Delta)\widetilde{u},\sigma _1\widetilde{u}\>+E_P(\widetilde u) \text{ with }       E_P(\widetilde u):=\frac 1 2 \int_{\R^3}G(|\widetilde{u}(x)|^2)\,dx,\label{15}\\
Q(\widetilde u) &:=\frac{1}{2}\< \widetilde{u},\sigma _1\widetilde{u}\>.\label{16}
\end{align}
where $G(s)=\int_0^s g(\tau)\,d\tau$.
Recall that $|\tilde u(x)|^2=\<\tilde u(x), \sigma_1 \tilde u(x)\>_{\C^2}$.
\end{definition}
\begin{remark}
The fact $E$ is only $C^6$ comes from $H^1(\R^3, {\C} )\hookrightarrow L^p(\R^3, {\C} )$ for $p=6$ but not  for $p>6$.
For example, $F(\widetilde{u})=\int_{\R^3}|\widetilde{u}(x)|^8\,dx$ is not defined for all $\widetilde{u}\in  \Hrad^1(\R^3,\widetilde{\C} )$.
However, $E$ is $C^\infty$ in for example $H^s$ or $\Sigma^s$ for $s>3/2$.
\end{remark}

We further set the action $S_\omega(u)$ by
\begin{align}\label{33}
S_\omega(\widetilde{u} ):=E(\widetilde{u})+\omega Q(\widetilde{u})
\end{align}
and for later use we introduce also
\begin{align}\label{eq:def d}
d(\omega):=S_\omega(\widetilde{\phi}_{\omega} ).
\end{align}
Notice that \eqref{3} is equivalent to $\nabla S_{\omega}(\widetilde \phi_\omega)=0$.
The ``linearized Hamiltonian" $\nabla^2 S_\omega(\widetilde \phi_\omega)$ will be an important operator in our analysis.
However, it lies in $\mathcal{L} ( \Hrad^1(\R^3,\widetilde{\C} ), \Hrad^{-1}(\R^3,\widetilde{\C} ))$ which is not a vector space with scalar field $\C$.
Therefore, we use another operator $H_\omega$ whose restriction in
$\Hrad^1(\R^3,\widetilde \C)$ equals $\nabla^2 S_\omega(\widetilde \phi_{\omega})$ but  which extends  to a $\C$-linear element  in $\mathcal{L} (\Hrad^{ 1}(\R ^3, \C ^2);\Hrad^{- 1}(\R ^3, \C ^2))$ and furthermore is self-adjoint in $L^2(\R^3;\C^2)$.

\begin{lemma}\label{lem:2}
We have $\left.\nabla^2 S_\omega \right|_{\Hrad^1(\R^3,\widetilde \C)} =H_\omega\in\mathcal{L} (\Hrad^1(\R^3,\widetilde{\C} ), \Hrad^{-1}(\R^3,\widetilde{\C} ))$, where
\begin{align}\label{36}
H_\omega:=-\Delta+ \omega +g(\phi_\omega ^2)+g'(\phi_\omega^2)\phi_\omega^2
+  g'(\phi_\omega ^2)\phi_\omega^2 \sigma_1.
\end{align}
\end{lemma}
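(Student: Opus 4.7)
The statement is a direct second-variation computation, so the plan is to expand $S_\omega$ into pieces, differentiate each twice, evaluate at $\widetilde\phi_\omega$, and then verify the mapping and $\C$-linearity/self-adjointness properties separately.

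First I would split
\[
S_\omega(\widetilde u)=E_{\mathrm{kin}}(\widetilde u)+E_P(\widetilde u)+\omega Q(\widetilde u),\qquad E_{\mathrm{kin}}(\widetilde u):=\tfrac12\<-\Delta\widetilde u,\sigma_1\widetilde u\>,
\]
and treat the three pieces separately. Both $E_{\mathrm{kin}}$ and $\omega Q$ are quadratic in $\widetilde u$, so their Hessians are independent of the base point and, via the definition \eqref{def:grad} of $\nabla$, equal to $-\Delta$ and $\omega\,\mathrm{Id}$ respectively on $\Hrad^1(\R^3,\widetilde\C)$ (using that $\sigma_1$ cancels against $\sigma_1$ in the definition of $\nabla$). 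This accounts for the $-\Delta+\omega$ part of $H_\omega$.

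The main step is to compute $\nabla^2 E_P(\widetilde\phi_\omega)$. Using the identity $|\widetilde u|^2=\<\widetilde u,\sigma_1\widetilde u\>_{\C^2}$ and the symmetry $\<U,\sigma_1 V\>_{\C^2}=\<V,\sigma_1 U\>_{\C^2}$, I would first compute
\[
dE_P(\widetilde u)\widetilde v=\int_{\R^3} g(|\widetilde u|^2)\,\<\widetilde u,\sigma_1\widetilde v\>_{\C^2}\,dx,
\]
so that $\nabla E_P(\widetilde u)=g(|\widetilde u|^2)\widetilde u$. Differentiating once more,
\[
\nabla^2 E_P(\widetilde u)\widetilde w=g(|\widetilde u|^2)\widetilde w+2\,g'(|\widetilde u|^2)\<\widetilde u,\sigma_1\widetilde w\>_{\C^2}\widetilde u.
\]
Specializing to $\widetilde u=\widetilde\phi_\omega=(\phi_\omega,\phi_\omega)^t$ (real) gives $|\widetilde\phi_\omega|^2=\phi_\omega^2$ and
$2\<\widetilde\phi_\omega,\sigma_1\widetilde w\>_{\C^2}\widetilde\phi_\omega=\phi_\omega^2(w_1+w_2)(1,1)^t=\phi_\omega^2(\widetilde w+\sigma_1\widetilde w)$ pointwise. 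Combining with the $-\Delta+\omega$ part yields exactly the expression \eqref{36} for $H_\omega$.

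Finally I would verify the three remaining claims. Boundedness $H_\omega\in\mathcal L(\Hrad^1,\Hrad^{-1})$ follows from boundedness of $-\Delta$ between these spaces and from the fact that $\phi_\omega\in L^\infty\cap H^1$ with smooth $g,g'$, so the multiplication operators $g(\phi_\omega^2)$ and $g'(\phi_\omega^2)\phi_\omega^2$ are bounded on $L^2$ and hence on $\Hrad^1$. The $\C$-linear extension to $\Hrad^1(\R^3,\C^2)\to\Hrad^{-1}(\R^3,\C^2)$ is automatic because the formula \eqref{36} has real, scalar coefficients together with the constant matrix $\sigma_1$, which all commute with multiplication by $\im$; and self-adjointness in $L^2(\R^3,\C^2)$ is clear because $-\Delta$, the real multiplication operators, and $\sigma_1$ are each self-adjoint. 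There is no serious obstacle here; the only point that needs care is the chain-rule factor of $2$ in the quadratic-form differentiation, which, evaluated at the real ground state, is precisely what produces the seemingly-asymmetric pair of last terms $g'(\phi_\omega^2)\phi_\omega^2+g'(\phi_\omega^2)\phi_\omega^2\sigma_1$ in $H_\omega$.
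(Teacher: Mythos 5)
Your proof is correct and follows essentially the same route as the paper: compute $\nabla^2 S_\omega(\widetilde u)$ (the quadratic parts giving $-\Delta+\omega$ and the potential part giving $g(|\widetilde u|^2)+2g'(|\widetilde u|^2)\<\widetilde u,\sigma_1\cdot\>_{\C^2}\widetilde u$), then evaluate at $\widetilde\phi_\omega$ and use the pointwise identity $2\<\widetilde\phi_\omega,\sigma_1\widetilde w\>_{\C^2}\widetilde\phi_\omega=\phi_\omega^2(1+\sigma_1)\widetilde w$, which is exactly the paper's computation. The additional remarks on boundedness and self-adjointness are fine (and for the mapping $\Hrad^1\to\Hrad^{-1}$ the $L^2$-boundedness of the multiplication operators already suffices), so there is nothing to correct.
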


\begin{proof}
Differentiating \eqref{33}, we have
$
\nabla^2 S_\omega(\widetilde{u})(\cdot)=-\Delta+ \omega+g(|\widetilde u|^2)\cdot + 2g'(|\widetilde u|^2)\< \sigma _1\widetilde{u},\cdot\>_{\C^2}\widetilde{u}$.
Thus, substituting, $\widetilde u=\widetilde \phi_\omega$, we have
\begin{align}\label{36.6}
\nabla^2 S_\omega(\widetilde \phi_\omega)(\cdot)= \(-\Delta+ \omega +g( \phi_\omega ^2)\)
+2g'( \phi_\omega ^2)\<\widetilde{\phi}_\omega,\cdot\>_{\C^2}\widetilde{\phi}_\omega.
\end{align}
Since for $\widetilde{r}={^t( r  \ \bar r )} $  we have
\begin{align*}
\<\widetilde{\phi}_\omega,\widetilde{r}\>_{\C^2}\widetilde{\phi}_\omega=
\frac 1 2 \phi_\omega\Re \( \bar r +  r\)
\begin{pmatrix} \phi_\omega \\  \phi_\omega \end{pmatrix}=
\frac 1 2  \phi_\omega^2
\begin{pmatrix}  1 & 1 \\ 1 &  1 \end{pmatrix}
\begin{pmatrix} r \\ \bar r
\end{pmatrix}=\frac 1 2 (1+\sigma_1)\phi_\omega^2 \widetilde{r},
\end{align*}
we obtain $ \left.\nabla^2 S_\omega \right|_{\Hrad^1(\R^3,\widetilde \C)} =H_\omega$.
\end{proof}

We now introduce the linearized operator $\mathcal H_\omega$.
\begin{definition}
We set
\begin{align}\label{39}
\mathcal H_\omega &:= \sigma_3 H_\omega =\sigma_3 (-\Delta+ \omega )   +\mathcal{V}(\omega)  \text{ with } \mathcal{V}(\omega):=(g(\phi_\omega ^2)+g'(\phi_\omega^2) \phi_\omega^2)\sigma_3
+  g'(\phi_\omega ^2)\phi_\omega^2 \im\sigma_2 .
\end{align}
Notice that $\mathcal H_\omega$   with domain $\Hrad^2(\R ^3, \C^2)$ defines a closed $\C$--linear operator in $\Lrad(\R ^3, \C^2)$.
\end{definition}
%By elementary computation, see Lemma 2.2 \cite{Cuccagna12Rend},  $\mathcal H_\omega$ is  anti--symmetric  with respect to  $\Omega$:
%\begin{align}\label{40}
%\Omega(\mathcal H_\omega u,v)= -\Omega(u,\mathcal H_\omega v) \text{ for all }u,v\in H^2_{rad}(\R ^3, \C^2).
%\end{align}

\begin{remark}
When we substitute $\widetilde u = e^{\im \omega t \sigma_3}\(\widetilde \phi_\omega+\widetilde r\)$, we obtain
\begin{align*}
\im \partial_t \widetilde r =\mathcal H_\omega \widetilde r + ``\text{higher order of  $\widetilde r$}".
\end{align*}
Thus, actually $-\im \mathcal H_\omega$ is the linearized operator.
However, we adopt this terminology because we would like to handle the linearized equation as a Schr\"odinger equation.
%A drawback is that $\mathcal H_\omega$ does not commute with $\pi_0$ defined in \eqref{371}. However, $\pi_0$ will commute with $-\im \mathcal H_{\omega}$ and by such reason, we sometime use $-\im \mathcal H_\omega$ instead of $\mathcal H_\omega$.
\end{remark}
Since $\mathcal V(\omega)$ decreases  rapidly  at spatial infinity (see Lemma \ref{lem:4.3} below), we have $\sigma_{\mathrm{ess}}(\mathcal H_{\omega})=(-\infty,-\omega]\cup [\omega,\infty)$, where $\sigma_{\mathrm{ess}}(\mathcal H_{\omega})\subset \C$ is the set of essential spectrum of $\mathcal H_{\omega}$.
For the discrete spectrum $\sigma_{\mathrm{d}}(\mathcal H_{\omega})$, we know that $0\in \sigma_{\mathrm{d}}(\mathcal H_{\omega})$ because we have
\begin{align}\label{41}
\mathcal H_\omega \im \sigma_3 \widetilde{\phi}_\omega=0,\quad -\im \mathcal H_\omega \partial_\omega \widetilde{\phi}_\omega=\im \sigma_3 \widetilde{\phi}_\omega.
\end{align}
We make the following assumptions on the spectrum of  $\mathcal H_{\omega_*}$.
\begin{assumption}\label{a:1}
We assume
\begin{enumerate}
\item
$\sigma_{\mathrm{d}}(\mathcal H_{\omega_*})=\{0\}$ and $\mathcal N(\mathcal H_{\omega_*})=\mathrm{span}\{\im \sigma_3 \widetilde \phi_{\omega_*}\}$.
\item
$\mathcal H_{\omega_*}$ has no edge resonance nor embedded eigenvalues.
\end{enumerate}
\end{assumption}

\begin{remark}
The assumption $\mathrm{dim}\mathcal N(\mathcal H_{\omega_*})=1$ is equivalent to the standard assumption (see, e.g.\ \cite{Weinstein85,Weinstein86,GSS})
$\mathcal N(\left.L_+\right|_{\Lrad(\R^3,\C)})=\{0\}$, where $L_{+} :=-\Delta + \omega_*
+g(\phi_{\omega_*}^2)
+2g'(\phi_{\omega_*}^2)\phi_{\omega_*}^2$.
Notice that $L_+ \partial_{x_j}\phi_{\omega_*}=0$ but $\partial_{x_j}\phi_{\omega_*}\not\in \Lrad(\R^2;\C)$.
The hypothesis $\sigma_{\mathrm{d}}(\mathcal H_{\omega_*})=\{0\}$ is here assumed for simplicity, but we expect to be able to relax it, by allowing   the existence of internal modes that we should be able to treat like in \cite{Cuccagna11CMP}.
The 2nd assumption is   technical and is   needed to get  dispersive estimates.
The nonexistence of edge resonance is expected to hold generically and the   nonexistence of embedded eigenvalue is conjectured to be true.
\end{remark}

\noindent By Assumption \ref{a:1}, we can bootstrap the regularity of the map $\omega\mapsto \widetilde{\phi}_\omega$. Specifically, we can show:
\begin{lemma}\label{lem:4.3}
Under  Assumption \ref{a:1}, there exists $\delta>0$ s.t.\ the map $\omega\mapsto\phi_{\omega}$ is $C^\infty$ in $\Sigma^s(\R^3,\R)$ for all $s\geq 0$.
\end{lemma}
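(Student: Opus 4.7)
The plan is to apply the implicit function theorem (IFT) to equation~\eqref{3} at the reference point $(\omega_*,\phi_{\omega_*})$, working in the $\Sigma^{s}$ scale. For each $s\geq 0$ I would consider
\begin{equation*}
F:\R\times \Sigma^{s+2}(\R^3,\R)\to\Sigma^{s}(\R^3,\R),\qquad F(\omega,\phi):= -\Delta\phi+\omega\phi+g(\phi^2)\phi.
\end{equation*}
Elements of $\Sigma^{s+2}$ are radial, rapidly decaying and bounded, so by the Banach-algebra/composition properties of $\Sigma^{s+2}$ together with the smoothness of $g$ and its subcritical growth, $F$ is well defined and of class $C^\infty$ near $(\omega_*,\phi_{\omega_*})$, with
\begin{equation*}
D_\phi F(\omega_*,\phi_{\omega_*})=L_+:=-\Delta+\omega_*+g(\phi_{\omega_*}^2)+2g'(\phi_{\omega_*}^2)\phi_{\omega_*}^2.
\end{equation*}

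The crucial step is to show $L_+:\Sigma^{s+2}(\R^3,\R)\to\Sigma^{s}(\R^3,\R)$ is an isomorphism. Since $\omega_*>0$, a standard Agmon/comparison argument applied to $(-\Delta+\omega_*)\phi_{\omega_*}=-g(\phi_{\omega_*}^2)\phi_{\omega_*}$ yields $\phi_{\omega_*}(x)\lesssim e^{-c|x|}$, and iterating with elliptic regularity gives $\phi_{\omega_*}\in\Sigma^{t}$ for every $t\geq 0$. The potential $V:=g(\phi_{\omega_*}^2)+2g'(\phi_{\omega_*}^2)\phi_{\omega_*}^2$ is then a Schwartz multiplier, so $L_+$ is a self-adjoint, relatively compact perturbation of $-\Delta+\omega_*$ on $\Lrad(\R^3,\R)$ with essential spectrum $[\omega_*,\infty)$. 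By Assumption~\ref{a:1} together with the remark identifying $\mathcal N(L_+|_{\Lrad(\R^3,\R)})$ with the radial part of $\mathcal N(\mathcal H_{\omega_*})$, this kernel is trivial, so $L_+$ is bijective on $\Lrad(\R^3,\R)$. Bijectivity on the full $\Sigma^s$ scale is then transported by iterating the resolvent identity
\begin{equation*}
L_+^{-1}=(-\Delta+\omega_*)^{-1}-(-\Delta+\omega_*)^{-1}\,V\,L_+^{-1},
\end{equation*}
using that $V$ maps $\Sigma^a\to\Sigma^b$ boundedly for all $a,b$ and that $(-\Delta+\omega_*)^{-1}$ gains two Sobolev derivatives while commuting well with the harmonic-oscillator weight.

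With these two ingredients, the IFT produces, for each $s$, a $C^\infty$ branch $\omega\mapsto\widetilde\phi_\omega\in\Sigma^{s+2}(\R^3,\R)$ of solutions of~\eqref{3}. The $H^1$-uniqueness of positive radial ground states forces this branch to coincide with the given $C^2$ map~\eqref{2}. Applying the construction at one large $s$ and then bootstrapping $\omega$-derivatives, each $\partial_\omega^n\phi_\omega$ solves an equation of the form $L_\omega\,\partial_\omega^n\phi_\omega = (\text{lower-order Schwartz-like source})$, whose solutions inherit the Agmon decay; this yields a common interval $(\omega_*-\delta,\omega_*+\delta)$ on which $\omega\mapsto\phi_\omega$ is $C^\infty$ into $\Sigma^s$ for every $s$. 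The principal technical obstacle is the isomorphism property of $L_+$ across the full $\Sigma^s$ scale: one must propagate both Sobolev regularity and the harmonic-oscillator weight through $L_+^{-1}$, and it is the Schwartz character of $V$, ultimately inherited from the exponential decay of $\phi_{\omega_*}$, that allows the resolvent-identity argument to close at no cost in either weight or regularity.
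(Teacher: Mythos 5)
The paper itself gives no proof of this lemma (it is dismissed as a ``standard elliptic regularity argument''), so your proposal has to stand on its own; its overall strategy --- implicit function theorem at $(\omega_*,\phi_{\omega_*})$ using the nondegeneracy $\mathcal N(L_+|_{\Lrad(\R^3,\R)})=\{0\}$ from Assumption \ref{a:1} plus decay of $\phi_{\omega_*}$ --- is indeed the expected one. However, the step you yourself single out as crucial is false as stated: $L_+:\Sigma^{s+2}(\R^3,\R)\to\Sigma^{s}(\R^3,\R)$ is injective but \emph{not} surjective, because inverting $L_+$ (or $-\Delta+\omega_*$) gains two derivatives but gains no spatial decay, while membership in $\Sigma^{s+2}$ requires two more powers of $|x|$-decay than membership in $\Sigma^{s}$. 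Concretely, take the radial $f(x)=\langle x\rangle^{-5/2}\in L^2=\Sigma^0$ and let $u\in H^2_{\mathrm{rad}}$ be the unique solution of $L_+u=f$; writing $u=(-\Delta+\omega_*)^{-1}(f-Vu)$ and using that $Vu$ decays rapidly while convolution with the exponentially decaying Green's function preserves polynomial rates, one gets $u(x)\gtrsim \langle x\rangle^{-5/2}$ at infinity, so $|x|^2u\notin L^2$ and $u\notin\Sigma^2$. In your resolvent identity the obstruction sits in the \emph{first} term: $(-\Delta+\omega_*)^{-1}$ maps $\Sigma^s$ into $H^{s+2}$ with the same decay as the input, not into $\Sigma^{s+2}$; the Schwartz character of $V$ only helps the second term (and, incidentally, multiplication by $V$ gains decay but not derivatives, so it does not map $\Sigma^a\to\Sigma^b$ for $b>a$). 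Hence the IFT cannot be run with the pair $(\Sigma^{s+2},\Sigma^{s})$.

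The repair --- and what the ``standard elliptic regularity'' hint amounts to --- is to decouple regularity from decay: run the IFT in unweighted spaces $H^{s+2}_{\mathrm{rad}}\to H^{s}_{\mathrm{rad}}$ (or in exponentially weighted Sobolev spaces with a small weight $e^{a\langle x\rangle}$, on which $L_{+,\omega}$ remains invertible for small $a>0$ since $0$ lies below the essential spectrum), and then obtain the decay of the branch not from mapping properties of $L_+^{-1}$ on the $\Sigma$-scale but from the equation itself: $\phi_\omega=-(-\Delta+\omega)^{-1}\bigl[g(\phi_\omega^2)\phi_\omega\bigr]$, where the exponentially decaying kernel together with the superlinear vanishing of $g$ at $0$ gives uniform exponential decay, and $\partial_\omega^n\phi_\omega=-L_{+,\omega}^{-1}(\text{exponentially decaying source})$ gives the same for all $\omega$-derivatives; exponential weights dominate every harmonic-oscillator weight, and since $\Sigma^{s'}\hookrightarrow\Sigma^{s}$ for $s'\geq s$ it suffices to treat $s$ large. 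One more point: your identification of the IFT branch with the family \eqref{2} via ``$H^1$-uniqueness of positive radial ground states'' is not available here --- no such uniqueness is assumed, and it is not known for general saturated nonlinearities $g$. Replace it by the local uniqueness built into the IFT, combined with the fact that $\omega\mapsto\phi_\omega$ is continuous into $\Hrad^1$ by hypothesis and therefore, by an elliptic bootstrap applied to $\phi_\omega-\phi_{\omega_*}$ through equation \eqref{3}, continuous into the space in which the IFT is run, so that $\phi_\omega$ falls into the uniqueness neighborhood for $|\omega-\omega_*|$ small.
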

Since the proof of Lemma \ref{lem:4.3} consists of standard elliptic regularity argument, we omit it.

%\begin{lemma}\label{lem:5}
%Under the assumption \ref{a:1}, the kernel of $\mathcal H_\omega$ in $L^2_{rad}(\R^3;\C^2)$ is spanned by $\im \sigma_3 \widetilde{\phi}_\omega$.
%\end{lemma}
%
%\begin{remark}
%We use $\im \sigma_3  \widetilde{\phi}_\omega$ instead of $\sigma_3  \widetilde{\phi}_\omega$ because $\im \sigma_3  \widetilde{\phi}_\omega \in  L^2_{rad}(\R^3,\widetilde{\C} )$, in fact in $\mathcal{S}(\R^3, \widetilde{\C} )$.
%\end{remark}
%
%\begin{proof}
%By \eqref{39} and \eqref{38}, we have
%\begin{align*}
%\mathcal H_\omega U=0\Leftrightarrow H_\omega U=0\Leftrightarrow L_{+,\omega}  \Pi_R U =0\text{ and }L_{-,\omega} \Pi_I U=0.
%\end{align*}
%By assumption \ref{a:1},t we have $\Pi_R U=0$.
%Further, since $L_{-,\omega}  \phi_\omega=0$ and $\phi_\omega$ is positive, we see that the kernel of $L_{-,\omega} $ is spanned by $\phi_\omega$.
%Therefore, we have the conclusion.
%\end{proof}

%As it is well known, the kernel of $\mathcal H_\omega$ is not simple.
%Indeed, by a direct computation, we have
%\begin{align}\label{41}
%\mathcal H_\omega \im \sigma_3 \widetilde{\phi}_\omega=0,\quad -\im \mathcal H_\omega \partial_\omega \widetilde{\phi}_\omega=\im \sigma_3 \widetilde{\phi}_\omega.
%\end{align}
%Therefore, $\mathcal N( \mathcal H_\omega) = \mathrm{span}\{\im\sigma_3\widetilde{\phi}_\omega\}$ and $\mathcal N_g (\mathcal H_\omega) \supseteq \mathrm{span}\{\im\sigma_3\widetilde{\phi}_\omega, \partial_\omega \widetilde{\phi}_\omega\}$.

We set
\begin{align}\label{Psi12}
\Psi_1(\omega):=\im \sigma_3 \widetilde{\phi}_\omega,\quad \Psi_2(\omega):=\partial_\omega \widetilde{\phi}_\omega.
\end{align}

The following proposition is due to Comech--Pelinovsky \cite{CP03CPAM}.

\begin{proposition}\label{prop:1}
For $s\geq 0$, there exists $\delta_s>0$ s.t.\ for $\omega\in D_\R(\omega_*,\delta_s)$, there exist $\Psi_j(\omega)\in \Sigma^s(\R^3,\widetilde \C)$ ($j=3,4$) satisfying the following properties.
\begin{enumerate}
\item
For $1\leq j\leq 4$, $\Psi_j\in C^\infty(D_R(\omega_*,\delta_s),\Sigma^s)$.
\item
We have $\mathcal N_g(\mathcal H_{\omega_*})=\mathrm{span}\{\Psi_j(\omega_*)\ |\ j=1,2,3,4\}$.
\item
We have $A(\omega):=\Omega(\Psi_2(\omega),\Psi_3(\omega))>0$.
\item
There exists $$P_d\in \cap_{s\geq 0} C^\infty(D_\R(\omega_*,\delta_s),\mathcal L(\Sigma^{-s}(\R^3,\C^2),\Sigma^{s}(\R^3,\C^2))),$$ s.t.\ $P_d(\omega)$ is a projection symmetric w.r.t.\ $\Omega$, i.e.\ $\Omega(P_d(\omega)\cdot,\cdot)=\Omega(\cdot,P_d(\omega)\cdot)$ (recall \eqref{17.6d}). The projection $P_d(\omega)$ commutes with $\pi_0$ and $\mathcal H_{\omega}$ and satisfies $\mathcal N_g(\mathcal H_{\omega})=\mathrm{Ran}P_d(\omega)$.
\item
The operator $-\im \mathcal H_\omega P_d(\omega)$ can be represented as
\begin{align}\label{46}
-\im\mathcal H_\omega P_d(\omega)=
\begin{pmatrix}
0 & 1 & 0 & 0\\
0 & 0 & 1 & 0\\
0 & 0 & 0 & 1\\
0 & 0 & a(\omega) & 0
\end{pmatrix}
\end{align}
in the frame $\{\Psi_j(\omega),\ j=1,2,3,4\}$, where $a(\omega)=-A(\omega)^{-1}q'(\omega)$.
\item
We have
\begin{equation}
\begin{aligned}
\Omega(\Psi_1(\omega),\Psi_2(\omega))=-q'(\omega), \Omega(\Psi_1(\omega),\Psi_3(\omega))=0,\quad \Omega(\Psi_1(\omega),\Psi_4(\omega))=-A(\omega),\\
\Omega(\Psi_2(\omega),\Psi_4(\omega))=0,\quad \Omega(\widetilde \phi_{\omega},\Psi_2(\omega))=\Omega(\widetilde \phi_{\omega},\Psi_4(\omega))=0.\label{50}
\end{aligned}
\end{equation}
\end{enumerate}
\end{proposition}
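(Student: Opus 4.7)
I would follow the approach of Comech--Pelinovsky \cite{CP03CPAM}, building the Jordan chain at $\omega_*$ by Fredholm theory and then propagating it smoothly to nearby $\omega$ via a Riesz projection.

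\textbf{Step 1: build $\Psi_3(\omega_*),\Psi_4(\omega_*)$.} From \eqref{41} we already have $-\im\mathcal H_{\omega_*}\Psi_1(\omega_*)=0$ and $-\im\mathcal H_{\omega_*}\Psi_2(\omega_*)=\Psi_1(\omega_*)$. Using $\mathcal H_\omega=\sigma_3 H_\omega$ with $H_\omega$ self-adjoint in $L^2$, the equation $-\im\mathcal H_{\omega_*}\Psi_3=\Psi_2$ becomes the self-adjoint problem $H_{\omega_*}\Psi_3=-\sigma_3\partial_\omega\widetilde\phi_{\omega_*}$, whose Fredholm solvability condition (orthogonality against $\mathcal N(H_{\omega_*})=\mathrm{span}\{\widetilde\phi_{\omega_*}\}$) reduces, after a short computation using $q'(\omega)=\tfrac12\partial_\omega\|\phi_\omega\|_{L^2}^2$, exactly to $q'(\omega_*)=0$, which holds by \eqref{4}. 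A second iteration gives $\Psi_4(\omega_*)$, the corresponding solvability being arranged by adjusting $\Psi_3(\omega_*)$ modulo $\mathcal N(\mathcal H_{\omega_*})$. That the Jordan chain terminates at length $4$ is guaranteed by Assumption \ref{a:1} together with $q''(\omega_*)\neq 0$: a further link would impose a new vanishing condition incompatible with $q''(\omega_*)>0$. Elliptic regularity and the Schwartz decay of $\mathcal V(\omega_*)$ place $\Psi_3(\omega_*),\Psi_4(\omega_*)$ in $\Sigma^s$ for every $s\geq 0$.

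\textbf{Step 2: extend to nearby $\omega$ via a Riesz projection.} Under Assumption \ref{a:1} (no edge resonance, no embedded eigenvalues) and the smoothness of $\omega\mapsto\widetilde\phi_\omega$ from Lemma \ref{lem:4.3}, Kato perturbation theory shows that for $\omega$ close to $\omega_*$ a small circle $\gamma$ around $0$ remains in the resolvent set of $\mathcal H_\omega$ except for the small spectral cluster $\{0,\pm\sqrt{a(\omega)}\}$ that it encloses. I would then define
\begin{equation*}
P_d(\omega):=\frac{1}{2\pi\im}\oint_\gamma (z-\mathcal H_\omega)^{-1}\,dz,
\end{equation*}
which is $\Omega$-symmetric (because $\mathcal H_\omega$ is $\Omega$-symmetric), commutes with $\mathcal H_\omega$ and $\pi_0$, and, using the rapid decay of $\mathcal V(\omega)$ and all its $\omega$-derivatives, is smooth in $\omega$ with values in $\mathcal L(\Sigma^{-s},\Sigma^{s})$ for every $s$. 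Its range is four-dimensional and agrees with $\mathcal N_g(\mathcal H_{\omega_*})$ at $\omega=\omega_*$. I then choose $\Psi_3(\omega),\Psi_4(\omega)\in\mathrm{Ran}\,P_d(\omega)$ so that representation \eqref{46} holds identically in $\omega$; the coefficient $a(\omega)$ is forced by pairing the third column against $\Omega(\Psi_1(\omega),\cdot)$, which using \eqref{50} gives $0=-q'(\omega)-a(\omega)A(\omega)$, i.e.\ $a(\omega)=-A(\omega)^{-1}q'(\omega)$. Smoothness of $\Psi_3,\Psi_4$ in $\omega$ in $\Sigma^s$ follows from that of $P_d(\omega)$ and $\widetilde\phi_\omega$.

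\textbf{Step 3: symplectic relations \eqref{50} and positivity $A(\omega)>0$.} The identity $\Omega(\mathcal H_\omega u,v)=\Omega(u,\mathcal H_\omega v)$ follows directly from the self-adjointness of $H_\omega$ and the definition $\Omega=\langle \im\sigma_3\cdot,\sigma_1\cdot\rangle$. Iterating this against the chain relations in \eqref{46}, every pairing in \eqref{50} either vanishes by an antisymmetry/parity argument or reduces to the tautological value: $\Omega(\Psi_1,\Psi_2)=-q'(\omega)$ reflects the definition of $q$, $\Omega(\Psi_2,\Psi_3)=A(\omega)$ is itself the definition of $A$, and the rest are forced to vanish by the resulting linear system. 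Finally, I would establish $A(\omega_*)>0$ by differentiating the chain identities in $\omega$ at $\omega_*$: this expresses $\Psi_3(\omega_*)$, modulo the kernel, in terms of $\partial_\omega^2\widetilde\phi_{\omega_*}$, and a direct computation then identifies $A(\omega_*)$ as a positive multiple of $q''(\omega_*)$, which is positive by hypothesis.

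\textbf{Expected main obstacle.} The Fredholm constructions and the Riesz-projection smoothness are standard; the truly delicate point is the positivity $A(\omega_*)>0$, which requires careful bookkeeping of the ambiguity of $\Psi_3(\omega_*)$ modulo $\mathcal N(\mathcal H_{\omega_*})$ and a precise identification of the symplectic pairing with a second derivative of $q$, in order to convert the hypothesis $q''(\omega_*)>0$ into the positivity of $A$.
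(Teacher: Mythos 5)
Your Steps 1 and 2 are essentially the route the paper itself takes in the Appendix (Lemmas \ref{lem:7} and \ref{lem:varsigmastar} and the proof of Proposition \ref{prop:1}): build the chain at $\omega_*$ by a Fredholm argument whose solvability condition is exactly $q'(\omega_*)=0$, then propagate to nearby $\omega$ with the Riesz projection $P_d(\omega)$, normalizing $P_d(\omega)\Psi_4(\omega_*)$, setting $\Psi_3(\omega)=-\im\mathcal H_\omega\Psi_4(\omega)$, and reading off $a(\omega)=-A(\omega)^{-1}q'(\omega)$ by pairing with $\Omega(\Psi_1(\omega),\cdot)$. Two small corrections there: the relevant orthogonality is to $\mathcal N(\mathcal H_{\omega_*}^*)=\mathrm{span}\{\sigma_1\widetilde\phi_{\omega_*}\}$ (equivalently, in the self-adjoint reformulation, to $\mathcal N(H_{\omega_*})=\mathrm{span}\{\im\sigma_3\widetilde\phi_{\omega_*}\}$; note $\widetilde\phi_{\omega_*}$ is \emph{not} in $\mathcal N(H_{\omega_*})$), and the smoothness of $P_d$ with values in $\mathcal L(\Sigma^{-s},\Sigma^{s})$ is obtained in the paper only a posteriori, from the explicit rank-four formula for $P_d(\omega)$ in terms of the $\Psi_j(\omega)$, not directly from the contour integral.

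The genuine gap is precisely the point you flag as the main obstacle, $A(\omega_*)>0$, and your plan for it would fail. You propose to express $\Psi_3(\omega_*)$, modulo the kernel, through $\partial_\omega^2\widetilde\phi_{\omega_*}$ and to identify $A(\omega_*)$ as a positive multiple of $q''(\omega_*)$. Structurally this is impossible: $\Psi_3(\omega_*)$ lies in the imaginary sector, $\Pi_R\Psi_3(\omega_*)=0$, while $\partial_\omega^2\widetilde\phi_{\omega_*}$ is real valued; writing $\Psi_3(\omega_*)=\im\sigma_3\widetilde\alpha$ with $\alpha$ real, the chain equation is $L_{-,\omega_*}\alpha=\partial_\omega\phi_{\omega_*}$, which is not produced by $\omega$-differentiation of \eqref{41}. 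More importantly, $A(\omega_*)$ is not proportional to $q''(\omega_*)$: by \eqref{43}, $A(\omega_*)=\langle L_{-,\omega_*}\Psi_3(\omega_*),\sigma_1\Psi_3(\omega_*)\rangle$, i.e.\ in scalar terms $\langle L_{-,\omega_*}^{-1}\partial_\omega\phi_{\omega_*},\partial_\omega\phi_{\omega_*}\rangle_{L^2}$, a quantity independent of $q''$ (for the $L^2$-critical NLS one has $q''\equiv 0$ while this quantity is strictly positive). The paper gets positivity with no use of \eqref{4} beyond $q'(\omega_*)=0$: since $\phi_{\omega_*}>0$, $L_{-,\omega_*}\geq 0$ with kernel $\mathrm{span}\{\phi_{\omega_*}\}$, so $A(\omega_*)\geq 0$, and equality would force $\Psi_3(\omega_*)=c\,\Psi_1(\omega_*)$, hence $\Psi_2(\omega_*)=-\im\mathcal H_{\omega_*}\Psi_3(\omega_*)=0$, absurd. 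For the same reason the chain stops at length four not because a further link is "incompatible with $q''(\omega_*)>0$", but because the solvability condition for a fifth element, $\langle\sigma_1\widetilde\phi_{\omega_*},\Psi_4(\omega_*)\rangle=0$, fails: that pairing equals $A(\omega_*)\neq 0$. The $L_-$-positivity argument is the one ingredient missing from your proposal.
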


\begin{remark}
The matrix in \eqref{46} has eigenvalues $0, \pm\sqrt{a(\omega)}$.
\end{remark}

We will give the proof of Proposition \ref{prop:1} in the appendix of this paper.

\subsection{Modulation coordinates}
In this subsection, we give the first choice of the coordinates related to the linearized operator $\mathcal H_{\omega}$.
We set
\begin{align}\label{55}
P (\omega):=1-P_d(\omega).
\end{align}

We set $\mathcal T_{\omega}=\{e^{\im \theta \sigma_3}\widetilde{\phi}_{\omega}\ |\ \theta\in\R\}$ and recall \eqref{def:tubnbd}.
\begin{lemma}\label{lem:mod1}
For any $s\ge 0 $, there exists $\delta_s>0$ s.t.\ there exist $\vartheta,\omega, \lambda,\mu \in C^\infty (D_{\Sigma^{-s}}(\mathcal T_{\omega_*},\delta_s), \R)$ s.t.\ for $\tilde u \in D_{\Sigma^{-s}}(\mathcal T_{\omega_*},\delta_s)$,
\begin{align}\label{61}
  R(\widetilde{u}):=e^{-\im \vartheta(\widetilde{u})\sigma_3}\widetilde{u} -\widetilde{\phi}_{\omega(\widetilde{u})}-\lambda(\widetilde{u})\Psi_3(\omega(\widetilde{u}))
  -\mu(\widetilde{u})\Psi_4(\omega (\widetilde{u})) \in P (\omega(\widetilde{u}))\Sigma_{-s}.
\end{align}
Furthermore, $\Sigma^s$ can by replaced by $\Hrad^s$.

\end{lemma}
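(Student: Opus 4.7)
The strategy is to apply the Banach--space implicit function theorem to solve simultaneously for the four modulation parameters $(\vartheta,\omega,\lambda,\mu)$. Introduce
\[
F(\vartheta,\omega,\lambda,\mu;\widetilde u) := P_d(\omega)\Bigl[e^{-\im\vartheta\sigma_3}\widetilde u - \widetilde\phi_\omega - \lambda\Psi_3(\omega) - \mu\Psi_4(\omega)\Bigr],
\]
regarded as a map from $\R^4 \times \Sigma^{-s}(\R^3,\widetilde\C)$ into the four--dimensional real subspace $\mathrm{Ran}(P_d(\omega))\cap\Sigma^{-s}(\R^3,\widetilde\C)$. The condition $R(\widetilde u)\in P(\omega)\Sigma^{-s}$ in \eqref{61} is precisely $F=0$, so the lemma reduces to solving this system for the parameters as smooth functions of $\widetilde u$.

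Pick a reference point $\widetilde u_0 := e^{\im\vartheta_0\sigma_3}\widetilde\phi_{\omega_*}\in\mathcal T_{\omega_*}$. At $(\vartheta_0,\omega_*,0,0;\widetilde u_0)$ the bracket above is zero and so $F=0$. Differentiating in $(\vartheta,\omega,\lambda,\mu)$ at this point, the contribution coming from $(\partial_\omega P_d)(\omega_*)$ is killed because it is applied to the vanishing bracket; using that $P_d(\omega_*)$ fixes each $\Psi_j(\omega_*)$ one obtains
\[
\partial_\vartheta F = -\Psi_1(\omega_*),\quad \partial_\omega F = -\Psi_2(\omega_*),\quad \partial_\lambda F = -\Psi_3(\omega_*),\quad \partial_\mu F = -\Psi_4(\omega_*).
\]
By Proposition \ref{prop:1}(ii), $\{\Psi_j(\omega_*)\}_{j=1}^4$ is a basis of $\mathrm{Ran}(P_d(\omega_*))$, its $\R$--linear independence being guaranteed by the non--degeneracy relations \eqref{50}. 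Consequently $D_{(\vartheta,\omega,\lambda,\mu)}F$ is a linear isomorphism $\R^4 \to \mathrm{Ran}(P_d(\omega_*))$, and the implicit function theorem produces $C^\infty$ modulation parameters on an open neighborhood of $\widetilde u_0$ in $\Sigma^{-s}$.

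To extend the definition to the full tube $D_{\Sigma^{-s}}(\mathcal T_{\omega_*},\delta_s)$, I would use the $U(1)$--equivariance of $F$: since $F(\vartheta+\alpha,\omega,\lambda,\mu;e^{\im\alpha\sigma_3}\widetilde u)=F(\vartheta,\omega,\lambda,\mu;\widetilde u)$ for any $\alpha\in\R$, uniqueness in the implicit function theorem guarantees that the local solutions patch consistently along the orbit $\mathcal T_{\omega_*}$; the resulting phase $\vartheta$ is then naturally defined modulo $2\pi$, a harmless ambiguity which should be understood in the statement. The replacement of $\Sigma^{-s}$ by $\Hrad^{-s}$ is immediate from the same argument once one notes that $P_d(\omega)$ has finite--dimensional range contained in $\bigcap_{t\geq 0}\Sigma^t$ and, being expressible as a finite sum of $\Omega$--pairings against Schwartz generalized eigenfunctions of $\mathcal H_\omega^*$, extends continuously from $\Sigma^{-s}$ to $\Hrad^{-s}$.

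The main technical point is to verify that $F$ is jointly $C^\infty$ in all its arguments. This rests on three preparatory smoothness statements collected above: of $\omega\mapsto\widetilde\phi_\omega$ in $\Sigma^s$ (Lemma \ref{lem:4.3}), of $\omega\mapsto\Psi_j(\omega)$ for $j=3,4$ in $\Sigma^s$ (Proposition \ref{prop:1}(i)), and of $\omega\mapsto P_d(\omega)$ as a bounded operator $\Sigma^{-s}\to\Sigma^s$ (Proposition \ref{prop:1}(iv)). With these in hand the hypotheses of the implicit function theorem are satisfied and no further calculation is required beyond bookkeeping; the conceptual subtlety lies entirely in the reduction to this well--posed finite--dimensional problem.
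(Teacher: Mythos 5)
Your proposal follows essentially the same route as the paper: apply the implicit function theorem at the points $e^{\im\theta\sigma_3}\widetilde\phi_{\omega_*}$ of $\mathcal T_{\omega_*}$, with the non-degeneracy coming from the relations \eqref{50}, and extend along the orbit by $U(1)$-equivariance. The one point where your write-up is not literally an application of the implicit function theorem is the codomain: your map $F=P_d(\omega)\bigl[e^{-\im\vartheta\sigma_3}\widetilde u-\widetilde\phi_\omega-\lambda\Psi_3(\omega)-\mu\Psi_4(\omega)\bigr]$ takes values in $\mathrm{Ran}\,P_d(\omega)$, a subspace that moves with the unknown $\omega$, so viewed as a map into the fixed space $\Sigma^{-s}$ its partial derivative in $(\vartheta,\omega,\lambda,\mu)$ is injective but not surjective and the standard theorem does not apply verbatim. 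The paper sidesteps this by replacing the projector equation with the equivalent $\R^4$-valued system $\mathrm F_j=\Omega(R,\Psi_j(\omega))=0$, $j=1,\dots,4$ (equivalent because $P_d(\omega)$ is $\Omega$-symmetric and expressible through these pairings), whose Jacobian is exactly the Gram matrix $\bigl(\Omega(\Psi_i(\omega_*),\Psi_j(\omega_*))\bigr)$ in \eqref{67} with determinant $A(\omega_*)^4>0$; your computation of $\partial_\vartheta F,\dots,\partial_\mu F$ is the same information in disguise, so the fix is a one-line reformulation rather than a missing idea. Your remarks on smoothness (Lemma \ref{lem:4.3}, Proposition \ref{prop:1}) and on the $\Hrad^s$ variant via the Schwartz-class kernels of $P_d$ are consistent with what the paper uses.
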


\begin{proof}
Set $  R(\widetilde{u},\vartheta,\omega,\lambda,\mu):=e^{-\im \vartheta\sigma_3}\widetilde{u} -\widetilde{\phi}_{\omega}-\lambda\Psi_3(\omega)-\mu \Psi_4(\omega)$ and
\begin{align}\label{64}
  \mathrm{F}_j(\widetilde{u},\vartheta,\omega,\lambda,\mu)=\Omega(  R(\widetilde{u},\vartheta,\omega,\lambda,\mu),\Psi_j(\omega)).
\end{align}
Then, by \eqref{50}, we have
\begin{align}\label{67}
\left.\frac{\partial (\mathrm{F}_1, \mathrm{F}_2, \mathrm{F}_3, \mathrm{F}_4)}{\partial(\vartheta,\omega,\lambda,\mu)}\right|_{(\widetilde{u},\vartheta,\omega,\lambda,\mu)=(e^{\im \theta\sigma_3}\widetilde\phi_{\omega_*},\theta,\omega_*,0,0)}&=\(\Omega\(\Psi_i(\omega_*),\Psi_j(\omega_*)\)\)_{1\leq i,j\leq 4}\nonumber\\&
=\begin{pmatrix}
0 & 0 & 0 & -A(\omega_*)\\
0 & 0 & A(\omega_*) & 0 \\
0 & -A(\omega_*) & 0 & B(\omega_*)\\
A(\omega_*) & 0 & -B(\omega_*) & 0
\end{pmatrix},
\end{align}
where $A(\omega)$ is given in 3.\ of Proposition \ref{prop:1} and
\begin{align}\label{70}
B(\omega):=\Omega(\Psi_3(\omega),\Psi_4(\omega)).
\end{align}
Since the determinant of the matrix given in \eqref{67} is $A(\omega_*)^4>0$, it is invertible.
Therefore, since $\mathrm{F}_j(e^{\im \theta \sigma_3}\widetilde{\phi}_{\omega_*},\theta,\omega_*,0,0)=0$, we have the claim by the implicit function theorem.
\end{proof}

\begin{proposition}\label{prop:31}
Let $\delta_1>0$ given in Lemma \ref{lem:mod1} for $\Hrad^1$.
Set
\begin{align*}
\mathcal F_1\in &C^\infty(\R\times  D_{\R^3\times P(\omega_*)\Hrad^1(\R ^3, \widetilde{\C} )}((\omega_*,0,0,0),\delta_1), \Hrad^1(\R ^3, \widetilde{\C} )):\\&(\vartheta,\omega,\lambda,\mu, {\widetilde{r}})\mapsto \mathcal F_1(\vartheta,\omega,\lambda,\mu, {\widetilde{r}}):=e^{\im \vartheta \sigma_3}(\widetilde{\phi}_\omega+\lambda\Psi_3(\omega)+\mu \Psi_4(\omega)+P (\omega) {\widetilde{r}}).
\end{align*}
Then, $\mathcal F_1$ is a $C^\infty$ diffeomorphism onto a neighborhood of $\mathcal T_{\omega_*}$ in $\Hrad^1$.
%Moreover, setting $\widetilde{\mathcal F}_1=\mathcal F_1-e^{\im \vartheta \sigma_3}\widetilde r$ by $\mathcal F_1=\tilde{\mathcal F}_1 + e^{\im \vartheta \sigma_3}\tilde r$for any $s\in\R$, there exists $\delta_s>0$ s.t.\ for
%\begin{align*}
%\tilde {\mathcal F}_1(\vartheta,\omega,\lambda,\mu,\widetilde{r}):=e^{\im \vartheta \sigma_3}(\widetilde{\phi}_\omega+\lambda\Psi_3(\omega)+\mu \Psi_4(\omega)-P_d(\omega)\widetilde{r})
%\end{align*}
% we have $\mathcal F_1=\tilde{\mathcal F}_1 + e^{\im \vartheta \sigma_3}\tilde r$ and for arbitrary $s\geq 0$
% \begin{equation}\label{tildeF1}
%   \tilde{\mathcal F}_1 \in C^\infty(\R\times D(\omega_*, \delta_s)\times D_{\R^2\times P(\omega_*)\Sigma^{-s}_c(\R ^3, \widetilde{\C} )}(0,\delta_{-s}), \Sigma ^{ s}(\R ^3, \widetilde{\C} )).
% \end{equation}
%Further, $\Sigma^s$ can by replaced by $\Hrad^s(\R^3,\widetilde \C)$.

\end{proposition}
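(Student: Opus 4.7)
The plan is to apply the inverse function theorem at a base point of $\mathcal T_{\omega_*}$, and then to exhibit the global inverse on a tubular neighborhood of $\mathcal T_{\omega_*}$ by combining this with Lemma \ref{lem:mod1}.

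First I would verify that $\mathcal F_1$ is $C^\infty$: its building blocks are $(\vartheta,\omega)\mapsto e^{\im\vartheta\sigma_3}\widetilde\phi_\omega$ (smooth by Lemma \ref{lem:4.3}), the $\Psi_j(\omega)$ (smooth by item 1 of Proposition \ref{prop:1}), and $P(\omega)=1-P_d(\omega)$ (smooth in operator norm by item 4 of Proposition \ref{prop:1}), together with linear dependence on $(\lambda,\mu,\widetilde r)$. Next I would compute the Fr\'echet derivative at the base point $(\vartheta_0,\omega_*,0,0,0)$: using \eqref{Psi12} and $P(\omega_*)|_{\mathrm{Ran}\,P(\omega_*)}=\mathrm{id}$, it reads
\begin{align*}
 (\dot\vartheta,\dot\omega,\dot\lambda,\dot\mu,\dot{\widetilde r})\mapsto e^{\im\vartheta_0\sigma_3}\bigl(\dot\vartheta\,\Psi_1(\omega_*)+\dot\omega\,\Psi_2(\omega_*)+\dot\lambda\,\Psi_3(\omega_*)+\dot\mu\,\Psi_4(\omega_*)+\dot{\widetilde r}\bigr).
\end{align*}
Because $\Hrad^1=\mathcal N_g(\mathcal H_{\omega_*})\oplus\mathrm{Ran}\,P(\omega_*)$, and $\{\Psi_j(\omega_*)\}_{j=1}^4$ is a basis of the generalized kernel by item 2 of Proposition \ref{prop:1}, this linear map is bijective, hence a Banach space isomorphism by the open mapping theorem. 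The inverse function theorem therefore produces a local diffeomorphism near every point of the circle $\{(\vartheta_0,\omega_*,0,0,0):\vartheta_0\in\R\}$.

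To upgrade this to the stated global diffeomorphism, I would exhibit the inverse explicitly using Lemma \ref{lem:mod1}. Given $\widetilde u$ in a small $\Hrad^1$-neighborhood of $\mathcal T_{\omega_*}$, that lemma supplies smooth $(\vartheta(\widetilde u),\omega(\widetilde u),\lambda(\widetilde u),\mu(\widetilde u))$ together with $R(\widetilde u)\in\mathrm{Ran}\,P(\omega(\widetilde u))$. Since the fifth slot of $\mathcal F_1$ lives in the fixed subspace $\mathrm{Ran}\,P(\omega_*)$, I must invert the operator $T_\omega:\mathrm{Ran}\,P(\omega_*)\to\mathrm{Ran}\,P(\omega)$, $T_\omega\widetilde r:=P(\omega)\widetilde r$. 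Writing $A_\omega:=P(\omega_*)|_{\mathrm{Ran}\,P(\omega)}$, the two compositions $A_\omega T_\omega$ and $T_\omega A_\omega$ are both of the form $\mathrm{id}+O(|\omega-\omega_*|)$ on their respective domains, by the operator-norm continuity of $\omega\mapsto P(\omega)$, and hence invertible by Neumann series for $\omega$ close to $\omega_*$; this makes $T_\omega$ a smoothly varying isomorphism. Setting $\widetilde r(\widetilde u):=T_{\omega(\widetilde u)}^{-1}R(\widetilde u)$, direct substitution into \eqref{61} gives $\mathcal F_1\bigl(\vartheta(\widetilde u),\omega(\widetilde u),\lambda(\widetilde u),\mu(\widetilde u),\widetilde r(\widetilde u)\bigr)=\widetilde u$, so this is the smooth inverse on a tubular neighborhood of $\mathcal T_{\omega_*}$ (modulo the harmless $2\pi$-periodicity in $\vartheta$).

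The main technical obstacle is this change-of-frame step, namely the smoothness and invertibility of $T_\omega$, which bridges the ``moving'' continuous subspace $\mathrm{Ran}\,P(\omega(\widetilde u))$ arising from Lemma \ref{lem:mod1} and the ``fixed'' subspace $\mathrm{Ran}\,P(\omega_*)$ appearing in the domain of $\mathcal F_1$. Once it is settled, the rest of the argument reduces to standard applications of the inverse function theorem together with the uniqueness encoded in Lemma \ref{lem:mod1}.
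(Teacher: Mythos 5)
Your proposal is correct and follows essentially the same route as the paper: both build the inverse from the modulation map of Lemma \ref{lem:mod1} and then invert the change-of-frame operator $P(\omega)\big|_{P(\omega_*)\Hrad^1}$ by a smallness/Neumann-series argument (the paper writes it as $1+P(\omega)-P(\omega_*)$ restricted to $P(\omega_*)\Hrad^1$, you use the two compositions with $P(\omega_*)\big|_{\mathrm{Ran}\,P(\omega)}$ — the same idea). Your preliminary inverse function theorem computation at the points of $\mathcal T_{\omega_*}$ is harmless but redundant once the explicit smooth inverse is exhibited.
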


\begin{proof}
It suffices to show $\mathcal F_1^{-1}$ exists and it is $C^\infty$.
To show it, since we already have the map $\tilde u \mapsto (\vartheta(\tilde u), \omega(\tilde u), \lambda(\tilde u),\mu(\tilde u), R(\tilde u))$, it suffices to replace $R(\tilde u) \in P(\omega(\tilde u))\Hrad^1$ by $\tilde r(\tilde u)\in P(\omega_*)\Hrad^1$  if the map
\begin{equation}\label{eq:prop:31}
   \left . P(\omega(\tilde u)) \right | _{P(\omega_*)\Hrad^1} =\left . \( 1 + P(\omega(\tilde u))-P(\omega_*) \) \right | _{P(\omega_*)\Hrad^1}: P(\omega_*)\Hrad^1\to P(\omega )\Hrad^1
\end{equation}
is an isomorphism. For   $\tilde u$ sufficiently close to $\mathcal T_{\omega_*}$ then $\| P(\omega(\tilde u))-P(\omega_*)\|_{\mathcal L(\Hrad^1)}<1$ and so the operator
    $1 + P(\omega(\tilde u))-P(\omega_*)$ is invertible
in the whole of $\Hrad^1$, and in particular the map in \eqref{eq:prop:31}
is invertible with  $$\left.(1+P(\omega(\tilde u))-P(\omega_*))^{-1}\right|_{P(\omega(\tilde u))\Hrad^1}=(\left.P(\omega(\tilde u))\right|_{P(\omega_*) \Hrad^1)})^{-1}.$$
Therefore, if we set $\tilde r(\tilde u):= \(\left.P(\omega(\tilde u)\)\right|_{P(\omega_*) \Hrad^1)})^{-1} R(\tilde u) $, we will have the desired property.
\end{proof}

If $q'(\omega)$ is bounded away from $0$ it is natural to    replace $\omega$ by $Q$,
see \cite{Cuccagna12Rend},
since by the conservation of $Q$   this reduces the number of unknowns in the system.
However,   in a neighborhood of $\omega=\omega_*$ this is not possible  and,
instead, we replace $\mu$ by $Q$.

\begin{lemma}\label{lem:defmu}
For any $s\geq 0$, there exists $\delta_s>0$ s.t.\ there exists $$\mu\in \cap_{s\geq 0}C^\infty(D_{\R^4\times \Sigma^{-s}_c}((q(\omega_*),\omega_*,0,0,0),\delta_s)),\R)$$
s.t.\ $\mu=\mu(Q,\omega,\lambda,\rho,\widetilde r)$ satisfies
\begin{align}\label{Qmu}
Q=Q(\mathcal F_1(\vartheta,\omega,\lambda,\mu,\widetilde r))\text{ for }\rho = Q(\widetilde r).
\end{align}
\end{lemma}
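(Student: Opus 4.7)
The plan is to apply the implicit function theorem to the equation $Q = Q(\mathcal F_1(\vartheta,\omega,\lambda,\mu,\widetilde r))$, regarded as a scalar equation for the scalar unknown $\mu$. I first observe that this equation is independent of $\vartheta$, because $Q$ is $U(1)$-invariant: from $\sigma_1\sigma_3=-\sigma_3\sigma_1$ one gets $\sigma_1 e^{\im\vartheta\sigma_3}=e^{-\im\vartheta\sigma_3}\sigma_1$, and combining this with the symmetry $\<e^{\im\vartheta\sigma_3}U,V\>=\<U,e^{\im\vartheta\sigma_3}V\>$ of the bilinear form gives $Q(e^{\im\vartheta\sigma_3}\widetilde v)=Q(\widetilde v)$. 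The equation thus reduces to
\[
G(Q,\omega,\lambda,\mu,\widetilde r):= Q - Q\!\left(\widetilde\phi_\omega + \lambda\Psi_3(\omega) + \mu\Psi_4(\omega) + P(\omega)\widetilde r\right)=0,
\]
with parameters in $\R^3\times\Sigma^{-s}_c$.

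At the base point $(q(\omega_*),\omega_*,0,0,0)$ one has $G=0$, and, since $dQ(\widetilde v)[\widetilde w]=\<\widetilde v,\sigma_1 \widetilde w\>$,
\[
\partial_\mu G\big|_{\text{base}} = -\<\widetilde\phi_{\omega_*},\, \sigma_1 \Psi_4(\omega_*)\>.
\]
The non-vanishing of this quantity is the heart of the lemma. Using $\Psi_1(\omega)=\im\sigma_3\widetilde\phi_\omega$ and $\sigma_3^2=1$, definition \eqref{17.6d} gives
\[
\Omega(\Psi_1(\omega),\Psi_4(\omega)) = \<\im\sigma_3\cdot\im\sigma_3\widetilde\phi_\omega,\,\sigma_1\Psi_4(\omega)\> = -\<\widetilde\phi_\omega,\,\sigma_1\Psi_4(\omega)\>,
\]
which by \eqref{50} equals $-A(\omega)$. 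Hence $\<\widetilde\phi_{\omega_*},\sigma_1\Psi_4(\omega_*)\>=A(\omega_*)>0$ by part 3 of Proposition \ref{prop:1}, so $\partial_\mu G\neq 0$ at the base point.

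The implicit function theorem then yields a unique $C^\infty$ solution $\mu=\mu(Q,\omega,\lambda,\widetilde r)$ on a neighborhood of $(q(\omega_*),\omega_*,0,0)$ in $\R^3\times \Sigma^{-s}_c$; augmenting the argument list with the scalar $\rho$ (whose value will be $Q(\widetilde r)$ in applications, and which is retained for bookkeeping in the Darboux construction that follows) yields the signature $\mu(Q,\omega,\lambda,\rho,\widetilde r)$ stated in the lemma. Smoothness into the intersection $\bigcap_{s\geq 0}C^\infty(\cdot,\R)$ follows from Proposition \ref{prop:1}, which provides $\Psi_3,\Psi_4\in C^\infty(\cdot,\Sigma^s)$ and $P\in C^\infty(\cdot,\mathcal L(\Sigma^{-s},\Sigma^{s}))$ for every $s\geq 0$; the replacement of $\Sigma^{-s}$ by $\Hrad^{-s}$ is verbatim. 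There is no real obstacle beyond the algebraic identity $\<\widetilde\phi_\omega,\sigma_1\Psi_4(\omega)\>=A(\omega)>0$, which is precisely what allows one to trade $\mu$ (rather than $\omega$) for the conserved quantity $Q$ in the degenerate regime $q'(\omega_*)=0$.
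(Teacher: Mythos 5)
Your implicit function theorem set-up and the nondegeneracy computation $\partial_\mu G=-\<\widetilde\phi_{\omega_*},\sigma_1\Psi_4(\omega_*)\>=-A(\omega_*)\neq 0$ coincide with the paper's, but there is a genuine gap in how you handle $\rho$ and the spaces $\Sigma^{-s}_c$. Your function $G(Q,\omega,\lambda,\mu,\widetilde r)=Q-Q(\widetilde\phi_\omega+\lambda\Psi_3+\mu\Psi_4+P(\omega)\widetilde r)$ contains, after expanding the quadratic form, the term $Q(\widetilde r)=\tfrac12\<\widetilde r,\sigma_1\widetilde r\>$ (equivalently $Q(P(\omega)\widetilde r)$), which is simply not defined for $\widetilde r\in\Sigma^{-s}_c$ with $s>0$. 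Your justification that smoothness on $\Sigma^{-s}_c$ follows because ``$P\in C^\infty(\cdot,\mathcal L(\Sigma^{-s},\Sigma^{s}))$'' is incorrect: Proposition \ref{prop:1} gives the smoothing property for $P_d$, and $P=1-P_d$ cannot map $\Sigma^{-s}$ into $\Sigma^{s}$. Consequently your argument only produces $\mu$ on $L^2$ (or $H^1$) neighborhoods, not an element of $\cap_{s\geq0}C^\infty(D_{\R^4\times\Sigma^{-s}_c}(\cdot),\R)$, which is exactly what the lemma asserts.

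The paper's proof avoids this by writing $P(\omega)\widetilde r=\widetilde r-P_d(\omega)\widetilde r$, expanding $Q(\Psi+\widetilde r)=Q(\Psi)+\<\Psi,\sigma_1\widetilde r\>+Q(\widetilde r)$ with $\Psi=\widetilde\phi_\omega+\lambda\Psi_3+\mu\Psi_4-P_d(\omega)\widetilde r$, and then \emph{replacing} the ill-defined quadratic term $Q(\widetilde r)$ by the independent scalar $\rho$ in the defining equation \eqref{def:mu}. The resulting $\mathcal G$ involves $\widetilde r$ only through the smoothing operator $P_d(\omega)$ and through dual pairings of $\Sigma^{s}$ against $\Sigma^{-s}$, so it is smooth on $\R^4\times\Sigma^{-s}_c$ for every $s\geq0$, and \eqref{Qmu} is recovered upon substituting $\rho=Q(\widetilde r)$. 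In particular $\rho$ is not ``bookkeeping'': the solution $\mu$ depends on it nontrivially ($\partial_\rho\mu=S^0_{0,0}\neq 0$ in Lemma \ref{lem:muexpand}), and this genuine dependence is used throughout the symbol calculus of Definition \ref{def:symbols} and the normal form construction. Your proposal, which would make $\mu$ constant in $\rho$ and carry the whole quadratic dependence through $\widetilde r$, does not deliver the extension to negative-index spaces that is the actual content of the lemma.
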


\begin{remark}
The additional variable $\rho$ is introduced to define $\mu$ for $\Sigma^s$ with $s<0$.
In the following, we will always substitute $\rho = Q(\widetilde r)$.
\end{remark}

\begin{proof}
We define $\mu=\mu(Q,\omega,\lambda,\rho,\widetilde r)$ by the implicit function theorem to be the solution of
\begin{align}\label{def:mu}
\mathcal G (\mu,Q,\omega,\lambda,\rho,\tilde r):=&Q(\widetilde \phi_\omega+\lambda\Psi_3(\omega)+\mu \Psi_4(\omega)-P_d(\omega)\widetilde{r})\\&+\<\widetilde \phi_\omega+\lambda\Psi_3(\omega)+\mu \Psi_4(\omega)-P_d(\omega)\widetilde{r},\sigma_1\widetilde r\>+\rho-Q=0.\nonumber
\end{align}
Since $\mathcal G (0,q(\omega_*),\omega_*,0,0,0)=0$ and
$$\left.\partial_\mu \mathcal G \right|_{(\mu,Q,\omega,\lambda,\rho,\widetilde r)=(0,q(\omega_*),\omega_*,0,0,0)}=\<\widetilde \phi_{\omega_*},\sigma_1 \Psi_4(\omega_*)\>=A(\omega_*)\neq 0,$$
 we have the assumptions of implicit function theorem.
From \eqref{def:mu}, it is clear that we have \eqref{Qmu}.
\end{proof}

By using $\mu=\mu(Q,\omega,\lambda,Q(\widetilde r),\widetilde r)$ we can move to coordinates $(\vartheta,Q,\omega,\lambda,\widetilde r)$ instead of the coordinates $(\vartheta, \omega,\lambda,\mu,\widetilde r)$.

\begin{proposition}\label{prop:32}
Consider the $\delta_1>0$   in Lemma \ref{lem:mod1} for $\Hrad^1$.
For $\mathcal F_1$ the  function in Proposition \ref{prop:31}, set
\begin{align*}
\mathcal F_2\in &C^\infty(\R\times  D_{\R^3\times P(\omega_*)\Hrad^1(\R ^3, \widetilde{\C} )}((q(\omega_*),\omega_*,0,0),\delta_s), \Hrad^1(\R ^3, \widetilde{\C} )):\\&(\vartheta,Q,\omega,\lambda, {\widetilde{r}})\mapsto \mathcal F_2(\vartheta,Q,\omega,\lambda, {\widetilde{r}}):=\mathcal F_1(\vartheta,\omega,\lambda,\mu(Q,\omega,\lambda,Q(\widetilde r),\widetilde r), {\widetilde{r}}).
\end{align*}
Then, $\mathcal F_2$ is a $C^\infty$ diffeomorphism onto a neighborhood of $\mathcal T_{\omega_*}$ in $\Hrad^1$.
\end{proposition}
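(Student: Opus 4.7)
The strategy is to factor $\mathcal F_2$ through the diffeomorphism $\mathcal F_1$ of Proposition \ref{prop:31} by means of the coordinate change that swaps the scalar unknown $\mu$ for the mass $Q$. Concretely, on a small neighborhood of $(\vartheta,q(\omega_*),\omega_*,0,0)$ define
\begin{align*}
\Psi(\vartheta,Q,\omega,\lambda,\widetilde r):=\bigl(\vartheta,\omega,\lambda,\mu(Q,\omega,\lambda,Q(\widetilde r),\widetilde r),\widetilde r\bigr),
\end{align*}
so that $\mathcal F_2=\mathcal F_1\circ\Psi$ by definition. Since $\mathcal F_1$ is already a $C^\infty$ diffeomorphism, the proposition is reduced to showing that $\Psi$ is a $C^\infty$ diffeomorphism of a neighborhood of $(\vartheta,q(\omega_*),\omega_*,0,0)$ onto a neighborhood of $(\vartheta,\omega_*,0,0,0)$ in $\R\times\R^3\times P(\omega_*)\Hrad^1$.

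First I would check smoothness and local invertibility of $\Psi$. Smoothness is immediate from Lemma \ref{lem:defmu}, which gives $\mu\in C^\infty$ in its arguments; the components $\vartheta,\omega,\lambda,\widetilde r$ are left untouched. For local invertibility, I would construct the inverse explicitly: given $(\vartheta,\omega,\lambda,\mu,\widetilde r)$ near $(\vartheta,\omega_*,0,0,0)$, set
\begin{align*}
Q:=Q\bigl(\widetilde\phi_\omega+\lambda\Psi_3(\omega)+\mu\Psi_4(\omega)+P(\omega)\widetilde r\bigr)=Q(\mathcal F_1(\vartheta,\omega,\lambda,\mu,\widetilde r)),
\end{align*}
which is a smooth $\R$-valued function since $Q$ is $C^\infty$ on $\Lrad$ and $\mathcal F_1$ is smooth. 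The defining identity \eqref{Qmu} in Lemma \ref{lem:defmu} says precisely that this map is a right-inverse to $\Psi$ at the level of the $(\mu,Q)$ pair (the other four components are unchanged). Uniqueness in Lemma \ref{lem:defmu} shows it is also a left-inverse, so $\Psi^{-1}$ exists and is $C^\infty$.

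If one prefers to avoid invoking Lemma \ref{lem:defmu} a second time, the same conclusion follows from the inverse function theorem applied to $\Psi$: on the base point the Fr\'echet derivative of $\Psi$ is block-triangular with identity blocks in $(\vartheta,\omega,\lambda,\widetilde r)$ and scalar diagonal entry $\partial_Q\mu$; by differentiating the defining relation \eqref{def:mu} one gets $\partial_Q\mu=(\partial_\mu\mathcal G)^{-1}=A(\omega_*)^{-1}\neq 0$, so $d\Psi$ is a topological isomorphism at the reference point and hence on a neighborhood.

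The only real content, beyond bookkeeping, is the nondegeneracy $\<\widetilde\phi_{\omega_*},\sigma_1\Psi_4(\omega_*)\>=A(\omega_*)>0$, which was already verified in the proof of Lemma \ref{lem:defmu} using item~3 of Proposition \ref{prop:1}. Thus the main obstacle -- isolating a scalar direction along which $Q$ depends nontrivially on the parameters at the ground state -- is handled before this proposition even begins, and the proof itself reduces to invoking $\mathcal F_2=\mathcal F_1\circ\Psi$ and combining diffeomorphisms.
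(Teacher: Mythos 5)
Your proposal is correct and follows exactly the route the paper intends (the paper leaves Proposition \ref{prop:32} without an explicit proof, regarding it as immediate): factor $\mathcal F_2=\mathcal F_1\circ\Psi$, with $\Psi$ the swap of $\mu$ for $Q$ furnished by Lemma \ref{lem:defmu}, whose invertibility rests on the same nondegeneracy $\partial_\mu\mathcal G=\<\widetilde\phi_{\omega_*},\sigma_1\Psi_4(\omega_*)\>=A(\omega_*)\neq 0$ already used there. Both your explicit inverse via $Q=Q(\mathcal F_1(\vartheta,\omega,\lambda,\mu,\widetilde r))$ together with identity \eqref{Qmu} and your inverse-function-theorem variant with $\partial_Q\mu=A(\omega_*)^{-1}$ are sound and consistent with Lemma \ref{lem:muexpand}.
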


\subsection{Symbol for errors}
Even though we have replaced $\mu$ by $Q$, as a dependent variable  $\mu$ will appear  frequently.
We define some classes of functions to handle various error terms.

\begin{definition}\label{def:symbols}
%For $\delta>0$ and $s\in \R$, we define
%\begin{align*}
%\mathcal B_{s}(\delta):=(q(\omega_*)-\delta,q(\omega_*)+\delta)\times (\omega_*-\delta,\omega_*+\delta)\times D_{\R^3\times P (\omega_*)\Sigma_{s}(\R^3, \widetilde{\C} )}(0,\delta).
%\end{align*}
For $n=0,1,2$ and $s\geq 0$, we set,
\begin{align*}
  \widehat{\mathcal S}^n_s(\delta):=  C^\infty(D_{\R^5\times P(\omega_*)\Sigma^{-s}(\R ^3, \widetilde{\C} )}((q(\omega_*),\omega_*,0,0,0,0),\delta),X ^n_s ),
\end{align*}
where $X ^0_s:=\R$, $X^1_s:=\Sigma^s(\R^3,\widetilde \C)$ and $X^2_s:=
 \mathcal L\(\Sigma^{-s}(\R ^3, \widetilde{\C} ), \Sigma^s (\R ^3, \widetilde{\C} )\)$.
We define $\mathcal S^n$ by
\begin{align*}
f\in \mathcal S^n \Leftrightarrow \forall s\geq 0,\ \exists \delta_s>0  \text{ s.t.\ } \exists  \widehat{f} \in \cap_{s\geq 0}\widehat{\mathcal S}^n_s(\delta_s) \text{ s.t.} \\ f(Q,\omega,\lambda ,\widetilde{r})= \widehat{f}(Q,\omega,\lambda ,\mu(Q,\omega,\lambda,Q(\widetilde r),\widetilde r),Q(\widetilde{r}),\widetilde{r}).
\end{align*}
Moreover, we define $\mathcal S^n_{i,j}\subset \mathcal S^n$ by $f\in \mathcal S^n_{i,j}$ if and only if $\widehat{f}(Q,\omega,\lambda ,\mu,\rho,\widetilde{r})$ satisfies
\begin{align*}
\|\widehat{f}\|_{X ^n_s}\lesssim_s (|Q-q(\omega_*)|^{1/2}+|\omega-\omega_*|+|\lambda|+|\mu|+\rho^{1/2}+\|\widetilde{r}\|_{\Sigma^{-s}})^i (|\lambda|+|\mu|+\|\widetilde{r}\|_{\Sigma^{-s}})^j.
\end{align*}
We will always use $S^n_{i,j}$ to be a generic  element of $\mathcal S^{n}_{i,j}$.
We also write formulas like
$S^n_{i,j}=S^n_{i,j}+S^n_{i+1,j}$ where $S^n_{i,j}$ in the l.h.s.\ and r.h.s.\ are different (so the equation does not imply $S_{i+1,j}^n=0$).

 \noindent We further define $s^1_{i,j}$ similarly to $S^1_{i,j}$ but replacing
 $ \Sigma^s(\R^3,\widetilde \C)$ in the definition of  $X^1_s $
    with $ \Sigma^s(\R^3,  \R )$.
\end{definition}

%\begin{remark} \textcolor{red}{This remark needs to be formulated with bigger precision} $f\in \mathcal S^n_{i,j,k}$ if and only if \begin{align*} \tilde f(Q,\omega,\lambda,\rho,R)=\sum (Q-Q_*)^{n_Q}(\omega-\omega_*)^{n_\omega} \lambda^{n_\lambda}\rho^{n_\rho} A_{n_Q,n_\omega,n_\lambda,n_\rho,n_R}R^n, \end{align*} with \begin{align*} n_R+n_\lambda<j\Rightarrow A_{n_Q,n_\omega,n_\lambda,n_\rho,n_R}=0,\\ n_Q/2+n_\omega+n_\lambda+n_\rho/2+n_R<i+j\Rightarrow A_{n_Q,n_\omega,n_\lambda,n_\rho,n_R}=0 \end{align*} Here, $A_{n_Q,n_\omega,n_\lambda,n_\rho,n_R}\in \cap_{s\geq 0} \mathcal L^{n_R}(P_c(\omega_*)\Sigma_{-s};\mathcal L^{n-1}(\Sigma_{-s};\Sigma^s))$ and $A_{n_Q,n_\omega,n_\lambda,n_\rho,n_R}R^n=A_{n_Q,n_\omega,n_\lambda,n_\rho,n_R}(R,\cdots,R)$. \end{remark}

In the sequel, other  error terms will appear in the energy expansion which are not of the form $S^0_{i,j}$.
These terms  are essentially like $\int V(s)r^n\,dx$,
  with $r^n$ an $n$--linear form, for and $n\geq 2$, from $\widetilde{\C}$ to $\C$ and
 with $V\in \mathcal S$.
Before introducing these class, we need some preliminary notation.
\begin{definition}\label{def:beta}
Let $\beta_{-1}(s)=1$ and $\beta_n(s)= g^{(n)}(s)$ for $n\geq 0$.
We will write $c g^{(n)}(s)=\beta_n(s)$,   ignoring constants.
With this convention we write $2g'(s)=\beta_1(s)$ and obviously this does not mean
that $2\beta_1(s)=2g'(s)=\beta_1(s)$ implies $\beta_1(s)=0$, because here
the $\beta_1$'s are not the same function.
We extend further the use of the symbol $\beta_n$ as follows. If we set
 \begin{align}\label{eq:beta-1}T_{S^1_{i,j},k}\beta _n(S^1_{0,0},\widetilde r):=\int_0^1 t^k \beta _n(|S^1_{0,0}+t\widetilde r +t S^1_{i,j}|^2)\,dt,\end{align}for $k\geq 0$ and $i,j\geq 0$,
 and if more generally we consider the composition
   \begin{align}\label{eq:beta1} T_{S^1_{i_1,j_1},k_1}T_{S^1_{i_2,j_2},k_2}\cdots T_{S^1_{i_N,j_N},k_N} \beta _n(S^1_{0,0},\widetilde r)  \end{align}
   we will denote the expression  \eqref{eq:beta1}  by $\beta _n(|S^1_{0,0}+ \widetilde r |^2)$.
\end{definition}

\begin{remark}
In Definition \ref{def:beta} we are committing an abuse of notation,
which is however harmless in terms of the estimates or of qualitative properties
like the smoothness and the decay/increasing property.
Indeed the r.h.s.\ of
\eqref{eq:beta-1}, or a more general multiple integral representing \eqref{eq:beta1},
satisfies the same bounds of $\beta _n(|S^1_{0,0}+ \widetilde r |^2)$.
In particular, for $n\geq0$ using our notation we have
\begin{equation}\label{expandbeta}
\begin{aligned}
\beta_n(|S^1_{0,0}+\widetilde r|^2)&=\beta_n(|S^1_{0,0}|^2)+\int_0^1 2\beta_{n}'(|S^1_{0,0}+t\widetilde r|^2)\<S^1_{0,0}+t\widetilde r, \sigma_1 \widetilde r\>\,dt\\&=S^0_{0,0}+\beta_{n+1}(|S^1_{0,0}+\widetilde r|^2)\<S^1_{0,0},\sigma_1\widetilde r\>_{\C^2}+\beta_{n+1}(|S^1_{0,0}+\widetilde r|^2)|\widetilde r|^2.
\end{aligned}
\end{equation}
\end{remark}

With  the above notation we define the classes $\mathbf R_k$.

\begin{definition}\label{def:Rk}
Let $\beta_n$ given in definition \eqref{def:beta}.
\begin{itemize}
\item
By $\mathbf R_2$, we   mean $\R$ valued functions consisting of finite sums of $\<S^2_{1,0}\widetilde r,\sigma_1\widetilde r\>$ and terms in the form
\begin{align}\label{r2}
\int_{\R^3}\beta_n(|S^1_{0,0}+\widetilde r|^2)s^1_{1,0}|\widetilde r|^2\,dx,\quad \int_{\R^3}\beta_n(|S^1_{0,0}+\widetilde r|^2)\<S^1_{1,0},\sigma_1\widetilde r\>_{\C^2}\<S^1_{0,0},\sigma_1\widetilde r\>_{\C^2}\,dx,
\end{align}
with $n\geq -1$.
\item
By $\mathbf R_k$ ($k=3,4,5$), we mean   $\R$ valued functions consisting of finite sums of
\begin{align}\label{r345}
\int_{\R^3}\beta_n(|S^1_{0,0}+\widetilde r|^2)\<S^1_{0,0},\sigma_1 \widetilde r\>_{\C^2}^i|\widetilde r|^{2j}\,dx,\quad i+2j=k,
\end{align}
with $n\geq k-2$.
\item
By $\mathbf R_6$, we mean    $\R$ valued functions consisting of finite sums of
\begin{align}\label{r6}
\int_0^1\int_0^1(1-t)^2\int_{\R^3}g'''(|v(t,s)|^2)\<v(t,s),\sigma_1 S^1_{0,0}\>_{\C^2}\<v(t,s),\sigma_1 S^1_{1,1}\>_{\C^2}\<v(t,s),\sigma_1 \widetilde r\>_{\C^2}^2\,dxdsdt,
\end{align}
with $v(t,s)=sS^1_{0,0}+t (\widetilde r+S^1_{0,0})$.
\item
By $\mathbf R_7$, we express an $\R$ valued function consisting of finite sum of
\begin{align}\label{R7}
\mathbf R_7=\frac12\int_0^1\int_0^1
(1-t)^2 \<\nabla^4 E_P(v(t,s))(S^1_{0,0},\widetilde r, \widetilde r),\sigma_1 \widetilde r\> dsdt
\end{align}
with  $v(t,s)=sS^1_{0,0}+t (\widetilde r+S^1_{0,0})$.
\end{itemize}

\end{definition}

%Notice that in the coordinate system $(Q, \vartheta
% , \omega , \lambda , \widetilde{r})$ the function $\mu$ is a dependent variable.
%Starting from
%\begin{equation*}
%    Q=Q(\widetilde{\phi}_\omega + \lambda \Psi_3(\omega)+\mu \Psi_4(\omega)+P (\omega)\widetilde{r})
%\end{equation*}
% we have
%\begin{equation*} \begin{aligned}
%     & Q=  q(\omega ) + \lambda ^2 Q(  \Psi_3(\omega) )+\mu ^2 Q(  \Psi_ 4(\omega) ) + Q(  P (\omega)\widetilde{r}  )\\&  +   \mu \langle \widetilde{\phi}_\omega ,\sigma _1\Psi_4(\omega) \rangle + \lambda \langle \Psi_3(\omega) ,\sigma _1P (\omega)\widetilde{r} \rangle +  \mu \langle  \Psi_4(\omega), \sigma _1P (\omega)\widetilde{r} \rangle ,
%\end{aligned}
%\end{equation*}
%where  we used $ \langle \widetilde{\phi}_\omega ,\Psi_3  \rangle =0$.
%Hence, by $\sigma _1\Psi _j =(-)^j \Psi _j$,
%\begin{equation}\label{72}\begin{aligned}
%         \mu \left   ( A(\omega )    +\mu Q(  \Psi_ 4(\omega) )
%     \right . + &\left .  \langle  \Psi_4(\omega), P (\omega)\widetilde{r} \rangle\right )  =
%     Q-  q(\omega ) - Q( P (\omega) \widetilde{r}  ) \\& - \lambda ^2 Q(  \Psi_3(\omega) )   +    \lambda \langle \Psi_3(\omega) ,P (\omega)\widetilde{r} \rangle ,
%\end{aligned}
%\end{equation}
%where we used $\<\widetilde{\phi}_{\omega },\Psi_4(\omega )\>=-\Omega (\Psi_1(\omega ),\Psi_4(\omega )) =\Omega(\Psi_2(\omega ),\Psi_3(\omega ))=A(\omega )>0$.
%It is clear that $\mu$ can  be obtained by implicit function theorem in \eqref{72}.

\subsection{Hamilton structure of NLS and symplectic form}\label{subsec:Hamstruc}

We introduce the notion of symplectic form.

\begin{definition}
Let $X$ be a Banach space on $\R$ and let $X'$ be its dual.
A strong symplectic form is a  $2$-form $\tilde\Omega$ on $X$ s.t.\ $d \tilde \Omega=0$ (i.e.\ $\tilde \Omega$ is closed) and s.t.\ the map $X\ni x\to \tilde \Omega(x,\cdot)\in X'$
is an isomorphism.
\end{definition}
Setting $\Gamma(\widetilde u):=\frac12 \Omega(\widetilde u,\cdot)$, we have $d \Gamma=\Omega$ (recall that given two vector fields $X$ and $Y$ we have $d\Gamma (X,Y):= X \Gamma (Y)- Y \Gamma (X)-  \Gamma ([X,Y])$, see \cite{AMRBook}).
Therefore, the skew-symmetric form $\Omega$ defined in \eqref{17.6d} is a strong symplectic form on $\Hrad^j(\R^3,\widetilde{\C} )$ for $j=0,1$.
%Using elementary rules on differential forms, see \cite{AMRBook}, for
%\begin{align}
%\Gamma (\widetilde{u})=\frac 1 2 \Omega(\widetilde{u},\cdot) \text{  and }
%\Omega(\cdot,\cdot):=\<  \im \sigma_1 \sigma_3\cdot,\cdot\> \end{align}
%it is elementary to see that $\Omega$ is a strong symplectic form  in
%and that  $d\Gamma   =\Omega$ (recall that given two vector fields $X$ and $Y$ we have $d\Gamma (X,Y):= X \Gamma (Y)- Y \Gamma (X)-  \Gamma ([X,Y])$).

\noindent
Notice that   $\Omega (\widetilde{u},\widetilde{v})= \Im \int _{\R ^3}u(x)\overline{v}(x)dx$ and that $\Omega $ extends naturally in a symplectic form on $\Hrad^j(\R^3, {\C} ^2)$ for $j=0,1$.

\begin{definition}
Let $\tilde \Omega$ be a strong symplectic form in $\Hrad^1(\R^3,\widetilde{\C} )$  and $F,G\in C^1( \Hrad^1(\R^3,\widetilde{\C} ) ,\R)$.
We define the Hamiltonian vector field $X_F(\widetilde{u})$ with respect to $\tilde \Omega$
by
\begin{align*}
\tilde\Omega(\widetilde{u})(X_F(\widetilde{u}),\widetilde{y}):=dF(\widetilde{u})\widetilde{y}=\<\nabla F(\widetilde u),\sigma_1 \widetilde y\> \text{ for all $\widetilde{y}\in \Hrad^1(\R^3,\widetilde{\C} )$.}
\end{align*}
%Further, we define the Poisson bracket $\{F,G\}$ by
%\begin{align*}
%\{F,G\}(\widetilde u):=\tilde \Omega(\widetilde u)(X_F(\widetilde u), X_G(\widetilde u))=dF(\widetilde u)X_G(\widetilde u).
%\end{align*}
\end{definition}

For the case $\tilde \Omega=\Omega$ defined in \eqref{17.6d}, we have $X_F(\widetilde{u})=-\im \sigma_3 \nabla F(\widetilde{u})$.

The Hamiltonian vector field of $E$ with respect to $\Omega$     is
\begin{align}\label{18}
X_E(\widetilde{u})=-\im \sigma_3 \nabla E=-\im \sigma_3\( -\Delta \widetilde{u} + g(  |\widetilde{u}|^2)\widetilde{u}\)\in \Hrad^{-1}(\R^3,\widetilde{\C}) .
\end{align}
Thus, we see that \eqref{1} can be rewritten in the Hamiltonian formulation  as
\begin{align}\label{24}
\partial_t {\widetilde{u}}  = X_E(\widetilde{u}).
\end{align}
The symplectic form $\Omega$     looks simple.
However, when   expressed  in terms of   the modulation coordinates,     $\Omega$ appears very complicated and full of cross terms (see \eqref{Xs7} below).
For this reason, we introduce a new symplectic form $\Omega_0:= d \Gamma_0$, where
\begin{align}
\Gamma _0( \widetilde{u} ):=&   Q d \theta
-
\lambda  A(\omega)d \omega + \lambda B(\omega) \partial_Q \mu \ dQ
  + \frac 1 2 \Omega(\widetilde{r},d\widetilde{r}) ,\label{eq:gamma1}
\end{align}
For $A(\omega)$  defined in Prop. \ref{prop:1} and $B(\omega)$  defined in
\eqref{70}.

\noindent Using the notation in Definition \ref{def:symbols}, we have
\begin{align}\label{def:Omega0}
\Omega_0=  dQ\wedge d\theta +A(\omega)d \omega \wedge d \lambda +  \Omega(d\widetilde{r},d\widetilde{r})   +  \(S^0_{0,0}d \lambda + S^0_{0,1}d \omega + \<S^0_{0,1}\widetilde{r}+S^1_{0,1},\sigma_1 d\widetilde{r} \>\) \wedge dQ.
\end{align}
This represention of $\Omega_0$ in \eqref{def:Omega0} still may look complicated due to the 4th term.
However, the fourth term  affects   only  the dynamics of $\vartheta$, which we will ignore.
If $F=F(Q,\omega,\lambda,\widetilde{r})$ is $C^1$ and does not depend on $\vartheta$, like the Energy $E$, if we decompose its differential as
\begin{equation}\label{eq:1formdec}
    dF= \sum _{\tau =  Q ,\omega ,\lambda}\partial _\tau F d\tau + \< \nabla _{\widetilde{r}}F, \sigma _1 d\widetilde{r} \> ,
\end{equation}
where $\nabla  _{\widetilde{r}}F\in \text{Ran}P(\omega_*)^*$ is defined by the above formula, then
the Hamiltonian vector field of $F$ associated to $\Omega_0$ satisfies
\begin{align*}
&\Omega_0(X_F,Y)=\partial_Q F (Y)_Q+\partial_\omega F (Y)_\omega+ \partial_\lambda F (Y)_\lambda + \<\nabla_{\widetilde{r}} F,\sigma _1(Y)_{\widetilde{r}}\>\\&
= (X_F)_Q (Y)_\vartheta - (X_F)_\vartheta (Y)_Q + A(\omega) (X_F)_\omega (Y)_\lambda- A(\omega)(X_F)_\lambda (Y)_\omega +   \Omega((X_F)_{\widetilde{r}},(Y)_{\widetilde{r}})\\&\quad
+\(S^0_{0,0}(X_F)_\lambda+S^0_{0,1} (X_F)_\omega +\<S^0_{0,0}{\widetilde{r}}+S^1_{0,1},(X_F)_{\widetilde{r}}\> \)(Y)_Q\\&\quad-
\(S^0_{0,0}(Y)_\lambda+S^0_{0,1} (Y)_\omega +\<S^0_{0,0}{\widetilde{r}}+S^1_{0,1},(Y)_{\widetilde{r}}\>\) (X_F)_Q,
\end{align*}
where $(X)_\tau=d\tau X$ for $\tau=\vartheta,Q,\omega,\lambda,\widetilde r$.
Therefore, we have the following simple formulas for $X_F$
\begin{align}\label{eq:HamVector}
(X_F)_Q=0,\quad (X_F)_\omega=A(\omega)^{-1}\partial_\lambda F,\quad (X_F)_\lambda=-A(\omega)^{-1}\partial_\omega F,\quad
(X_F)_{\widetilde{r}}=-\im P(\omega_*) \sigma_3 \nabla_{\widetilde{r}} F,
\end{align}
and a more complicated formula only for the $\vartheta$--th component
\begin{align}\label{eq:HamVector1}
(X_F)_\vartheta =-\partial_Q F +S^0_{0,0}(X_F)_\lambda+S^0_{0,1} (X_F)_\omega +\<S^0_{0,0}{\widetilde{r}}+S^1_{0,1},(X_F)_{\widetilde{r}}\>.
\end{align}
Since a vector can be decomposed as
\begin{align}&
X=e^{\im \vartheta \sigma_3}\(\im \sigma_3 \underline{u}  (X) _{\vartheta} + \partial_\omega \underline{u}  (X) _ \omega   + \Psi_3 (X) _ \lambda    + \Psi_4  (X) _  \mu   + P  (X)  _{\widetilde{r}}  \)  \text{ where}\nonumber\\& (X) _{\tau}:=d\tau X  \text{ for }\tau = \vartheta ,\omega , \lambda ,\mu , \widetilde{r} \text{ and }  \underline{u}:=\widetilde{\phi}_\omega + \lambda  \Psi_3+ \mu \Psi_4+P \widetilde{r},\label{eq:vfieldcomp}
\end{align}
we can recover $X_F$ from $(X_F)_\tau$ for $\tau=\vartheta,Q,\omega,\lambda,\widetilde r$ given in \eqref{eq:HamVector} and \eqref{eq:HamVector1}.

\subsection{Main theorems}\label{subsec:main}
We are now in the position to state are main theorems precisely.
First, we will give   appropriate coordinates which will be a modification of the modulation coordinate given in Proposition \ref{prop:3}.
These coordinates will transform the symplectic form $\Omega$ into $\Omega_0$ defined in \eqref{def:Omega0} and will decouple the continuous and discrete coordinate to arbitrary high order. The proof will be given in Sect.\ \ref{sec:normal}.

%Recall $\mathcal T_{\omega_*}=\{e^{\im \theta \sigma_3}\widetilde \phi_{\omega}\ |\ \theta\in\R\}$.

\begin{theorem}\label{thm:normal}
For any $N\geq 2$, there exist $\delta_N>0$ and a $C^\infty$--diffeomorphism
onto a neighborhood of $\mathcal T_{\omega_*}$   in $\Hrad^1$ (recall $\mathcal T_{\omega_*}=\{e^{\im \theta \sigma_3}\widetilde \phi_{\omega}\ |\ \theta\in\R\}$),
$\mathfrak F_N \in C^\infty(D_{\Hrad^1}(\mathcal T_{\omega_*},\delta_N),\Hrad^1(\R^3,\widetilde \C))$,   s.t.
\begin{align}\label{error1}
\mathfrak F_N^* Q=Q,\ \mathfrak F_N^*\omega=\omega+S^0_{0,1},\ \mathfrak F_N^* \lambda=\lambda+S^0_{1,1},\ \mathfrak F_N^* \widetilde r = e^{\im S^0_{0,1} \sigma_3}(\widetilde r + S^1_{1,1})
\end{align}
and
\begin{align}\label{Darb}
\mathfrak F_N^* \Omega=\Omega_0,
\end{align}
where $\mathfrak F_N^*$ is the pull-back operator.
Further, for $E_N :=\mathfrak F_N^* E(= E\circ \mathfrak F_N)$, we have
\begin{equation}\label{eq:mmain1} \begin{aligned}
 E_N =  E_{f,N}(Q,\omega, \lambda,Q(\widetilde r))
      +\frac 1 2\<H _{\omega _*}  \widetilde{r}, \sigma _1  \widetilde{r}\>+E_P(\widetilde{r})
   + S_{0, N +1}^0+ \sum_{k=2}^7 \mathbf R_k,
\end{aligned}\end{equation}
where $\mathbf R_k$ are terms are defined in Definition \ref{def:Rk} and
\begin{align}\label{eq:fenergy}
E_{f,N}(Q,\omega, \lambda,Q(\widetilde r))=d (\omega ) -\omega Q+(\omega-\omega_*)Q(\widetilde r) + \frac{1}{2} A (\omega ) \lambda ^2     +\sum  _{m=2}^{N}   \lambda ^m     e _{m,0}^{(N)}(Q, \omega , Q(\widetilde{r}))
\end{align}
with $e_{2,0}^{(N)}=S^0_{1,0}$ and $e_{m,0}^{(N)}=S^0_{0,0}$ for $m\geq 3$ and where
 $
d(\omega)  $ was defined in \eqref{eq:def d}.
%Here, $S^k_{i,j}$ are defined in \eqref{def:symbols} and $\mathbf R_k$ are defined in \eqref{def:Rk}.
%\begin{equation}\label{eq:mmain2} \begin{aligned}
%  \mathcal{Z}_{0}^{(N + 1)}(\omega , \lambda ,  Q (\widetilde{r})  )=
% \frac{\lambda ^2}{2}   A (\omega )     +\sum  _{m=2}^{N+1}   \lambda ^m     e _{m,0}^{(N + 1)}( \omega , Q(\widetilde{r})),
%\end{aligned}\end{equation}
%the transformation $\phi _1$ is that of Lemma \ref{lem:nform1}, for $j=2,..., $
%each transformation $\phi _j $  is a transformation like in Lemma \ref{lem:ODE}
%with $l=j-1$
%and where the following is true:
%\begin{itemize}
%\item[(i)] we have  $ e _{m,0}^{(N)}( \omega , Q(\widetilde{r})) =S_{1,0}^{0}   $  for $m=2$  and $ e _{m,0}^{(N + 1)}( \omega , Q(\widetilde{r})) =S_{0,0}^{0}   $  for $m \ge 3$;
%
%\item[(ii)]   we have $ S_{1, N +1}^{0(N + 1)}= S_{1, N +1}$ and     $ S_{1,2}^{0(N + 1)}(\widetilde{r}) =S_{1,2} (\widetilde{r})$;
%
%\item[(iii)] $\mathbf{{R}}_2^{(N + 1)}$ has the same properties  of the function   $\mathbf{{R}}_1$  in formula
%\eqref{eq:darexpand1}.
%\end{itemize}
%
\end{theorem}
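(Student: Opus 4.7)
The plan is to obtain $\mathfrak F_N$ as the composition of a Darboux diffeomorphism (Proposition~\ref{prop:3}) and a sequence of Birkhoff canonical transformations, working throughout in the modulation chart $(\vartheta,Q,\omega,\lambda,\widetilde r)$ provided by Proposition~\ref{prop:32}. In that chart the symplectic form $\Omega$ can be computed explicitly using the frame $\{\Psi_j(\omega)\}_{j=1}^4$ together with the identities of Proposition~\ref{prop:1}.6 and Lemma~\ref{lem:defmu}; I expect the outcome to be $\Omega_0$ plus an exact $2$-form whose components belong to the classes $\mathcal S^n_{i,j}$, vanishing on $\mathcal T_{\omega_*}$. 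Simultaneously, a Taylor expansion of $E$ around $\widetilde\phi_\omega$, using $\nabla S_\omega(\widetilde\phi_\omega)=0$ and eliminating $\mu$ by Lemma~\ref{lem:defmu}, gives an initial version of the right-hand side of \eqref{eq:mmain1} with a finite-dimensional part depending only on $(Q,\omega,\lambda)$, a Hessian piece $\tfrac12\<H_{\omega_*}\widetilde r,\sigma_1\widetilde r\>+E_P(\widetilde r)$ plus $\omega$-dependent corrections, and cubic/quartic/remainder integrals that fit into the $\mathbf R_k$ classes of Definition~\ref{def:Rk}.

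Step one (Darboux): apply Moser's deformation argument to $\Omega_t:=\Omega_0+t(\Omega-\Omega_0)$. Since $\Omega-\Omega_0$ is exact with primitive $\alpha\in\mathcal S^?$ vanishing on $\mathcal T_{\omega_*}$, I solve $i_{X_t}\Omega_t=-\alpha$ for a time-dependent vector field $X_t$ and integrate its flow to obtain a $C^\infty$ diffeomorphism $\mathfrak D$ of a neighborhood of $\mathcal T_{\omega_*}$ in $\Hrad^1$ with $\mathfrak D^*\Omega=\Omega_0$. Because the correction in \eqref{def:Omega0} only couples to $dQ$, the resulting $X_t$ has components that are pure symbols in $\mathcal S^n_{i,j}$; invariance of the class under flow and composition then gives \eqref{error1} for $\mathfrak D$. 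Regularity in the scale $\Sigma^s$ is guaranteed by the smoothing properties of $P_d(\omega)$ in part~4 of Proposition~\ref{prop:1}.

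Step two (Birkhoff normal form): with the symplectic form now equal to $\Omega_0$, Hamilton's equations take the simple form \eqref{eq:HamVector}, so for any $\chi$ independent of $\vartheta$ the time-$1$ map of $X_\chi$ is an explicit canonical transformation. I would inductively remove all mixed monomials of the form $\lambda^j\<\Psi,\sigma_1\widetilde r\>^k$ with $2\le j+k\le N$ present in $\mathfrak D^*E$, at each step solving the homological equation $\{\chi_{j,k},H_0\}=\text{(mixed term)}$, where $H_0=\tfrac12 A(\omega_*)\lambda^2+\tfrac12\<H_{\omega_*}\widetilde r,\sigma_1\widetilde r\>$. Solvability of each homological equation uses Assumption~\ref{a:1}: $\mathcal H_{\omega_*}$ restricted to $\mathrm{Ran}\,P(\omega_*)$ has spectrum separated from the discrete point $0$, has no resonance and no embedded eigenvalue, so the small-divisor/resolvent issues do not arise. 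After $N-1$ such steps, all remaining off-diagonal terms carry at least one extra factor of $\lambda$ or $\widetilde r$ combining to order $N+1$, yielding the $S^0_{0,N+1}$ and $\mathbf R_k$ remainders in \eqref{eq:mmain1}, and the finite-dimensional coefficients have the advertised structure $e_{2,0}^{(N)}\in S^0_{1,0}$ and $e_{m,0}^{(N)}\in S^0_{0,0}$ for $m\ge 3$.

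The main obstacle is the bookkeeping: showing that the Darboux flow and each Birkhoff canonical transformation preserve the polynomial-in-$\lambda$ / symbol-class structure, the $\mathbf R_k$ form of the non-polynomial residues, and the regularity in the scale $\Sigma^s$. One must verify that composition of symbols stays in the same class, that $\nabla_{\widetilde r}\chi_{j,k}$ lands in $\mathrm{Ran}\,P(\omega_*)^*$ with the smoothing provided by Proposition~\ref{prop:1}, and that $\mathfrak F_N:=(\text{Birkhoff maps})\circ\mathfrak D$ is defined on a uniform neighborhood $D_{\Hrad^1}(\mathcal T_{\omega_*},\delta_N)$. Once these closure properties are established, equations \eqref{error1}, \eqref{Darb} and the expansion \eqref{eq:mmain1} follow by direct substitution and collection of terms.
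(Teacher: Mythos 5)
Your Step one is essentially the paper's own Darboux argument: a Moser deformation along $\Omega_s=\Omega_0+s(\Omega-\Omega_0)$, solving $i_{\mathcal X(s)}\Omega_s=-\widetilde\Gamma$ with components in the symbol classes and integrating the flow (Lemma \ref{lem:dar1} and Proposition \ref{prop:3}), so that part of the proposal is fine.

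Step two, however, has a genuine gap. You propose to solve homological equations $\{\chi_{j,k},H_0\}=\text{(mixed term)}$ with $H_0=\tfrac12 A(\omega_*)\lambda^2+\tfrac12\<H_{\omega_*}\widetilde r,\sigma_1\widetilde r\>$ and to justify solvability by Assumption \ref{a:1}. But at the degenerate point the finite--dimensional quadratic part carries no frequency: $a(\omega_*)=-A(\omega_*)^{-1}q'(\omega_*)=0$, so the $(\omega,\lambda)$--flow generated by $\tfrac12 A\lambda^2$ is a nilpotent shear and $\{\cdot,H_0\}$ is not invertible on the finite--dimensional monomials. Concretely, the term $\tfrac12 B(\omega)\mu^2$ (with $\mu\approx -A^{-1}(q(\omega)-Q+Q(\widetilde r))$ a function of $(Q,\omega,Q(\widetilde r))$) is $\lambda$--free and is not of the admissible form in \eqref{eq:fenergy}, yet it cannot be written as $\{\chi,H_0\}$ for any admissible $\chi$; the same applies to the $\lambda^m\mu^n$ terms with $n\ge 1$. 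The mechanism that actually works, and that the paper uses, is to bracket against the full leading part, in particular $d(\omega)-\omega Q$: since $\partial_\omega\bigl(d(\omega)-\omega Q\bigr)=q(\omega)-Q\approx -A\mu$, the flow of a generator of the form \eqref{eq:chi01} converts $\lambda^m\mu^n$ into $\lambda^{m-1}\mu^{n+1}$ (and produces $\Omega(-\im\mathcal H_{\omega_*}C_{m,n},\widetilde r)$ from the quadratic part in $\widetilde r$), leading to the triangular system \eqref{homeq1}--\eqref{homeq2} with matrix $\mathbf H_N+A_2A^{-2}q'\mathbf A_N$; this is inverted using the upper--triangular structure, the invertibility of $-\im\mathcal H_{\omega_*}$ on $\Sigma^s_c$ (Lemma \ref{lem:lininverse} — this, not the absence of resonances or embedded eigenvalues, is what Assumption \ref{a:1} contributes here), and a Neumann series in the small factor $q'(\omega)$. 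In particular pure powers $\lambda^m$ are never removed — they are retained as the coefficients $e^{(N)}_{m,0}$ in $E_{f,N}$ — and the very first cancellation, of $\tfrac12 B(\omega)\mu^2$ via $\chi=\mathrm{c}\,\lambda\mu$ (Lemma \ref{lem:nform1}), already requires this $d(\omega)-\omega Q$ mechanism rather than your $H_0$. Without replacing your homological operator by this structure, the induction in your Step two does not close.
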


\begin{remark}
In \eqref{error1}, we can expand $\mathfrak F_N^*\omega=\omega+S^0_{0,1}$ as
\begin{align}\label{sigmaexpand}
\mathfrak F_N^*\omega=\omega+\frac{B(\omega)}{2A(\omega)}\mu+S^0_{1,1}=\omega-\frac{B(\omega)}{2A(\omega)^2}(q(\omega)-Q+Q(\widetilde r))+S^0_{1,1},
\end{align}
see  Lemma \ref{lem:nform1} for the 1st equality and Lemma \ref{lem:muexpand} for the 2nd.
\end{remark}

\begin{remark}
For \eqref{error1}, for example $\mathfrak F_N^* \omega =\omega+ S^0_{0,1}$, we mean that $\mathcal F_2^*\(\mathfrak F_N^* \omega-\omega\)$, initially defined in the neighborhood of $\R\times (q(\omega_*),\omega_*,0,0)$ in $\R^3\times P(\omega_*)\Hrad^1$ will not depend on $\vartheta$ and can be extended to $D_{\R^3\times P(\omega_*)\Sigma^{-s}}((q(\omega_*),\omega_*,0,0),\delta_s)$ for arbitrary $s\geq 0$ and some $\delta_s>0$ to belong in $\mathcal S^0_{0,1}$.
Actually, we will be constructing $\mathfrak F_N$ by determining $\mathcal F_2^*\(\mathfrak F_N^* \tau-\tau\)$ $(\tau=\vartheta,Q,\omega,\lambda, \widetilde r)$ in $D_{\R^3\times P(\omega_*)\Sigma^{-s}}((q(\omega_*),\omega_*,0,0),\delta_s)$ and then define $\mathfrak F_N$ by them so the there will no ambiguity for the extension.
\end{remark}

\begin{remark}
Let $v=\mathfrak F_N^{-1}(u)$.
Then, by Theorem \ref{thm:normal}, in the $v$ variable, the symplectic form $\Omega$ will be transformed to $\Omega_0$ and the energy $E$ will be transformed to $E_N$.
Thus, our coordinate will be given by $\mathcal F_2^{-1}\circ \mathfrak F_N^{-1}(u)=(\vartheta(v),Q(u),\omega(v),\lambda(v),\widetilde r(v))$.
\end{remark}

Before proceeding, we prepare another notation.
\begin{definition}\label{def:tildenabla}
For $F\in C^1(D_{\Hrad^1}(\mathcal T_{\omega_*},\delta),\R)$ written in the form $F(u)=f(Q,\omega,\lambda,Q(\widetilde r),\widetilde r)$, we define
\begin{align}\label{difrho}
\partial_\rho F = \left.\partial_\rho\right|_{\rho=Q(\widetilde r)}f (Q,\omega,\lambda,\rho,\widetilde r),
\end{align}
and
\begin{align}\label{diftilder}
\widetilde \nabla_{\widetilde r} F=\left.\nabla_{\widetilde r}\right|_{\rho=Q(\widetilde r)}f(Q,\omega,\lambda,\rho,\widetilde r).
\end{align}
In particular, we have
\begin{align}
\nabla_{\widetilde r}F = \widetilde \nabla_{\widetilde r} F + \partial_\rho F \widetilde r.
\end{align}
\end{definition}

We set $\mathbf R=S^0_{0,N+1}+\sum_{k=2}^7\mathbf R_k$, where r.h.s.\ contains  the terms   in \eqref{eq:mmain1}.
In the coordinate system given in Theorem \ref{thm:normal}, NLS \eqref{1} is reduced to
\begin{equation}\label{normeq}
\begin{aligned}
\dot \omega &= A(\omega)^{-1}\partial_\lambda E_{f,N} + \mathbf R_\omega,\\
 \dot \lambda &=- A(\omega)^{-1}\partial_{\omega}E_{f,N} + \mathbf R_\lambda,\\
\im \partial_t \widetilde r &= \(\mathcal H_{\omega_*}+\partial_{\rho}E_{N}P(\omega_*)\sigma_3\)\widetilde r + P(\omega_*)\sigma_3g(|\widetilde r|^2)\widetilde r + \mathbf R_{\widetilde r},\\& \text{with initial data $\omega_0=\omega(u_0)$, $\lambda_0=\lambda(u_0)$ and $\widetilde r_0=\widetilde r(u_0)$,}
\end{aligned}
\end{equation}
where we have ignored the motion of $\vartheta$ and
\begin{equation}
\begin{aligned}\label{Rsigmalambdar}
\mathbf R_\omega=A(\omega)^{-1}\partial_\lambda \mathbf R,\quad \mathbf R_\lambda=-A(\omega)^{-1}\partial_\omega \mathbf R,\quad \mathbf R_{\widetilde r}= P(\omega_*)\sigma_3 \widetilde \nabla_{\widetilde r}\mathbf R.
\end{aligned}
\end{equation}
Let us think of  $\widetilde r$ as a perturbation and let us set $(\omega_f, \lambda_f)$ to be the solution of
\begin{equation}\label{finiteeq}
\begin{aligned}
\dot \omega_f &= A(\omega_f)^{-1}\partial_\lambda E_{f,N}(Q,\omega_f,\lambda_f,0),\\
 \dot \lambda_f &=- A(\omega_f)^{-1}\partial_{\omega}E_{f,N}(Q,\omega_f,\lambda_f,0), \text{ with $\left.(\omega_f(t),\lambda_f(t))\right|_{t=0}=(\omega_0,\lambda_0)$.}
\end{aligned}
\end{equation}
We introduce the ``kinetic energy" and the ``potential energy"  as
\begin{align*}
K_Q(\omega,\lambda):=\frac{1}{2} A (\omega ) \lambda ^2     +\sum  _{m=2}^{N}   \lambda ^m     e _{m,0}^{(N)}(Q, \omega , 0) \text{ and } V_{Q}(\omega)=d(\omega)-\omega Q.
\end{align*}
Then, we have $K_Q(\omega,\lambda)+V_Q(\omega)=E_{f,N}(Q,\omega,\lambda,0)$.
Notice that in a neighborhood of $\mathcal T_{\omega_*}$, $K_Q(\omega,\lambda)\sim 2^{-1}A(\omega_*) \lambda^2$, which is the reason we call  $K_Q$ the ``kinetic energy".
For  $\omega_+$ a value to be defined momentarily and omitting $N$    we  denote
\begin{align}\label{def:fenergy2}
E_Q(\omega,\lambda):=E_{f,N}(Q,\omega,\lambda,0)-V_Q(\omega_+)=K_Q(\omega,\lambda)
+V_Q(\omega)-V_Q(\omega_+).
\end{align}
Since $\partial_\omega V_{Q}(\omega)=q(\omega)-Q$, we see that if $Q<q(\omega_*)$  then $V_{Q}(\omega)$ becomes monotonic increasing, which implies that $\omega_f$ will fall to left monotonically until it will be ejected out of the region where we can use the coordinate (this was essentially what Comech--Pelinovsky \cite{CP03CPAM} proved).
If $0<Q-q(\omega_*)\ll 1$, the equation $\partial_{\omega}V_{Q}=q(\omega)-Q=0$ will have two solutions $\omega_{\pm}=\omega_{Q,\pm}$ s.t.\   $\omega_-<\omega_*<\omega_+$.
Here, $\omega_-$ is a local maximum   and $\omega_+$   a local minimum of $V_{Q}$ (see Lemma \ref{lem:findimEcrt} for details).
Therefore, if $0<Q-q(\omega_*)\ll1$, $V_Q$ works as a trapping potential around $\omega=\omega_+$.

If $(\omega_f,\lambda_f)$ is trapped in the potential well $V_Q$, we have the oscillation in \eqref{finiteeq}.

\begin{proposition}\label{finitedimosc}
Fix $N\geq 2$.
There exists $\epsilon_0>0$ s.t.\ for $Q$ satisfying $0<Q-q(\omega_*)<\epsilon_0^2$ and $c\in (0,1)$,
if we take $(\omega_0,\lambda_0)$ s.t.\
\begin{align}\label{finiteenergybound}
\omega_0>\omega_- \text{ and }0< E_{Q}(\omega_0,\lambda_0)<c \(V_{Q}(\omega_-)-V_{Q}(\omega_+)\),
\end{align}
then     there exists $T_f>0$ s.t.\ the solution $(\omega_f(t),\lambda_f(t))$
 of \eqref{finiteeq} is $T_f$-periodic with $T_f\sim (Q-q(\omega_*))^{-1/4}$.
\end{proposition}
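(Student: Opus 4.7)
The plan is to treat \eqref{finiteeq} as a one-degree-of-freedom autonomous Hamiltonian system with $Q$ as a fixed parameter: first I would verify that $E_{f,N}(Q,\cdot,\cdot,0)$ is conserved, then analyze the potential-well geometry, deduce periodicity by a standard phase-portrait argument, and finally extract the period asymptotics via rescaling.

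Direct differentiation yields $\frac{d}{dt}E_{f,N}(Q,\omega_f,\lambda_f,0)\equiv 0$, so trajectories stay on level sets of $E_Q$. For the geometry of $V_Q(\omega)=d(\omega)-\omega Q$, Taylor-expanding $q$ around $\omega_*$ using $q'(\omega_*)=0$, $q''(\omega_*)>0$, and $\epsilon^2:=Q-q(\omega_*)>0$, I obtain for $\epsilon$ small exactly two zeros $\omega_\pm$ of $V_Q'(\omega)=q(\omega)-Q$ with
\[
\omega_\pm-\omega_*=\pm\sqrt{2/q''(\omega_*)}\,\epsilon+O(\epsilon^2),\qquad V_Q''(\omega_\pm)=q'(\omega_\pm)=\pm\sqrt{2q''(\omega_*)}\,\epsilon+O(\epsilon^2),
\]
so that $\omega_+$ is a nondegenerate local minimum and $\omega_-$ a nondegenerate local maximum of $V_Q$, and $V_Q(\omega_-)-V_Q(\omega_+)=\int_{\omega_+}^{\omega_-}(Q-q(\sigma))\,d\sigma\sim\epsilon^3$. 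Because $K_Q(\omega,\lambda)=\tfrac{1}{2}A(\omega)\lambda^2+O(\epsilon\lambda^2+\lambda^3)$ is positive definite near $(\omega_+,0)$ (using $A(\omega_*)>0$), the hypothesis \eqref{finiteenergybound} with $c<1$ forces the connected component of $\{E_Q=E_Q(\omega_0,\lambda_0)\}$ containing $(\omega_0,\lambda_0)$ (selected by $\omega_0>\omega_-$) to be a compact simple closed curve enclosing $(\omega_+,0)$ and bounded away from the saddle $(\omega_-,0)$. The standard two-dimensional phase-portrait argument then gives periodicity of the flow on this component, with some period $T_f>0$.

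For the period estimate I would use the rescaling
\[
\omega-\omega_+=\epsilon\,\tilde\omega,\qquad \lambda=\epsilon^{3/2}\,\tilde\lambda,\qquad \tau=\epsilon^{1/2}\,t.
\]
Using $e_{2,0}^{(N)}\in\mathcal S^0_{1,0}$, $e_{m,0}^{(N)}\in\mathcal S^0_{0,0}$ for $m\geq 3$, and Taylor-expanding $q$ at $\omega_+$, a direct calculation turns \eqref{finiteeq} into
\begin{align*}
\frac{d\tilde\omega}{d\tau}&=\tilde\lambda+O(\epsilon),\\
\frac{d\tilde\lambda}{d\tau}&=-A(\omega_*)^{-1}\Bigl(\sqrt{2q''(\omega_*)}\,\tilde\omega+\tfrac{1}{2}q''(\omega_*)\,\tilde\omega^2\Bigr)+O(\epsilon),
\end{align*}
uniformly on compact subsets of $(\tilde\omega,\tilde\lambda)$. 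The $\epsilon=0$ system is a fixed anharmonic oscillator with Hamiltonian $\widetilde H(\tilde\omega,\tilde\lambda)=\tfrac{1}{2}\tilde\lambda^2+A(\omega_*)^{-1}\bigl(\tfrac{1}{2}\sqrt{2q''(\omega_*)}\,\tilde\omega^2+\tfrac{1}{6}q''(\omega_*)\,\tilde\omega^3\bigr)$, having a local minimum at the origin and a saddle at $\tilde\omega=-\sqrt{2/q''(\omega_*)}$. Under \eqref{finiteenergybound}, the rescaled energy $\widetilde E:=\epsilon^{-3}E_Q(\omega_0,\lambda_0)$ lies in a compact subinterval $[0,c'\widetilde E_{\mathrm{sep}}]$ with $c'<1$, on which the unperturbed period $\widetilde T_0(\widetilde E)$ is continuous and therefore bounded above and below. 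Regular perturbation of ODEs over one period then gives $\widetilde T(\widetilde E)=\widetilde T_0(\widetilde E)+O(\epsilon)$; undoing the time rescaling yields $T_f=\epsilon^{-1/2}\widetilde T\sim \epsilon^{-1/2}=(Q-q(\omega_*))^{-1/4}$.

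The main obstacle is uniformity of the $O(\epsilon)$ errors as the orbit approaches the separatrix: the unperturbed period $\widetilde T_0(\widetilde E)$ diverges logarithmically as $\widetilde E\to\widetilde E_{\mathrm{sep}}$, so the perturbation step would fail without the strict inequality $c<1$ in \eqref{finiteenergybound}. This controlled separation from the separatrix, together with smoothness of $E_{f,N}$ on the relevant compact set, is precisely what closes the argument.
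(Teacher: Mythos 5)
Your proposal is correct and follows essentially the same route as the paper: conservation of $E_{f,N}(Q,\cdot,\cdot,0)$, the location of the critical points $\omega_\pm$ of $V_Q$ and the well depth $V_Q(\omega_-)-V_Q(\omega_+)\sim \epsilon^3$ (Lemmas \ref{lem:findimEcrt}, \ref{lem:finiteest} and \eqref{potentialhigh}), trapping and periodicity from the planar Hamiltonian phase portrait, and then the identical rescaling \eqref{rescale:f} leading to the system \eqref{rescale:feq2}. The only real difference is the last step: the paper extracts the order-one bound on the rescaled period by an explicit phase--plane computation, integrating $\frac{d\kappa^2}{d\zeta}=-2c_0\bigl(1+O(\epsilon)+O(\zeta)\bigr)\zeta$ and estimating $\int d\zeta/\kappa(\zeta)\approx 1$ in the regime where the rescaled variables are small, whereas you compare with the period function $\widetilde T_0(\widetilde E)$ of the limiting anharmonic oscillator, which is continuous (hence bounded above and below) on the compact energy interval $[0,c'\widetilde E_{\mathrm{sep}}]$ guaranteed by $c<1$, and then perturb; this treats the whole admissible energy range uniformly in one stroke, and your closing remark about the logarithmic divergence of $\widetilde T_0$ at the separatrix correctly identifies why $c<1$ is needed, matching the paper's remark after the proposition. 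Two cosmetic points: the saddle of the limiting system sits at $\tilde\omega=-2\sqrt{2/q''(\omega_*)}$ (since $\omega_--\omega_+\approx -2\epsilon\sqrt{2/q''(\omega_*)}$), not $-\sqrt{2/q''(\omega_*)}$, and for the conclusion $T_f\sim (Q-q(\omega_*))^{-1/4}$ you only need $\widetilde T=\widetilde T_0+o(1)$ uniformly on that compact interval, which is what the continuity/compactness argument gives without worrying about the sharpness of the $O(\epsilon)$ error near $\widetilde E=0$.
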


\begin{remark}
Condition \eqref{finiteenergybound} is given for $\omega_f$ to stay in the potential well.
For   energy trapping, $c=1$  would work but we need for $c$ to be a fraction of 1    to prove a uniform bound on the period $T_f$.
Indeed, if we start from $(\omega_\epsilon,0)$ with $\omega_\epsilon>\omega_+$ s.t.\ $E_{f,N}(Q,\omega_\epsilon,0,\rho_0)=V_{Q,\rho_0}(\omega_-)-\epsilon$, then $T_f$ diverges to $+\infty$ as $\epsilon \to +0$.
\end{remark}

The proof of Proposition \ref{finitedimosc} is in Section \ref{sec:findim}.
Here we remark  that, since \eqref{finiteeq} is just a two dimensional Hamilton system, it is clear that $(\omega_f(t),\lambda_f(t))$ is periodic.
So, what we only need to prove is the estimate of the period.

We now go back to the full system \eqref{normeq}.
The first observation is that since $(\omega,\lambda)\in \R^2$, the constraint of the finite part energy $E_Q=C$ gives us a $1$ dimensional torus (a circle).
Therefore, the position of $(\omega,\lambda)$ is determined by $ E_Q(\omega,\lambda)$ modulo the phase of the circle.
We will show that $E_Q$ is almost conserved for a long time. For the proof
of the following three results  see Sect.
\ref{sec:proofs fs}.

\begin{theorem}\label{thm:dynamics}
For  $N\geq 2$, $c\in (0,1)$ and $C_0>0$  there exists $\epsilon_0>0$ s.t.\  for   $Q\in (q(\omega_*),q(\omega_*)+\epsilon ^2_0)$, if $(\omega_0,\lambda_0,\widetilde r_0)$ satisfies \eqref{finiteenergybound} and $\|\widetilde r_0\|_{H^1}\leq C_0 (Q-q(\omega_*))^{3/4}$ ,   for $T:= (Q-q(\omega_*))^{-\frac32(N-1)+\frac12}$  then
 for the solution of  \eqref{normeq} we have
\begin{align}\label{rbound}
\sup_{t\in [0,T]}\|\widetilde r(t)\|_{H^1}\lesssim \ (Q-q(\omega_*))^{3/4},
\end{align}
and
\begin{align}\label{eq:almostcons}
\sup_{t\in [0,T]}\left|  E_{Q}(\omega(t),\lambda(t))- E_{Q}(\omega_0,\lambda_0)\right|\lesssim  (Q-q(\omega_*))^{2}.
\end{align}

\end{theorem}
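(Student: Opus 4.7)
The plan is a bootstrap/continuity argument in the coordinates produced by Theorem \ref{thm:normal}. Write $\epsilon^2:=Q-q(\omega_*)$ and consider the maximal interval $[0,T^*]\subset[0,T]$ on which the reinforced bounds $\|\widetilde r(t)\|_{H^1}\leq 2M\epsilon^{3/2}$ and $|E_Q(\omega(t),\lambda(t))-E_Q(\omega_0,\lambda_0)|\leq 2K\epsilon^4$ hold, for constants $M,K$ to be chosen. The objective is to improve each bound by removing the factor $2$, forcing $T^*=T$. The initial hypothesis \eqref{finiteenergybound} together with the $E_Q$-bootstrap keeps $(\omega(t),\lambda(t))$ trapped in the well around $\omega_+$, giving $|\omega-\omega_*|\lesssim\epsilon$, $|\lambda|\lesssim\epsilon^{3/2}$, and via \eqref{sigmaexpand} also $|\mu|\lesssim\epsilon^2$. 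With these scales every element of $\mathcal S^n_{i,j}$ is of size $\epsilon^{i+3j/2}$, which will be used freely.

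\textbf{Improving the bound on $\widetilde r$.} Since $\mathfrak F_N$ is symplectic, $E_N=E\circ\mathfrak F_N$ is conserved along the flow. Rearranging \eqref{eq:mmain1} yields
\[
\tfrac12\langle H_{\omega_*}\widetilde r,\sigma_1\widetilde r\rangle = E_N(0) - E_{f,N}(Q,\omega,\lambda,Q(\widetilde r)) - E_P(\widetilde r) - S^0_{0,N+1} - \sum_{k=2}^{7}\mathbf R_k.
\]
On $\mathrm{Ran}\,P(\omega_*)\cap\Hrad^1$, the quadratic form $\langle H_{\omega_*}\cdot,\sigma_1\cdot\rangle$ is coercive and equivalent to $\|\cdot\|_{H^1}^2$: this is the Weinstein-type coercivity made available by Assumption \ref{a:1} and $Q>q(\omega_*)$. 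The initial value of the right-hand side is $O(\epsilon^3)$ from $\|\widetilde r_0\|_{H^1}\leq C_0\epsilon^{3/2}$; at later times, Definition \ref{def:Rk} shows that each $\mathbf R_k$ carries a coefficient of type $S^1_{1,0}$ or $s^1_{1,0}$ (of size $\epsilon$) multiplying $\|\widetilde r\|^2$, plus higher-order pieces, so $|\mathbf R_k|+|S^0_{0,N+1}|\lesssim\epsilon^4$. The variation of $E_{f,N}(Q,\omega,\lambda,Q(\widetilde r))$ between $0$ and $t$ is controlled by the $E_Q$-bootstrap up to $|\partial_\rho E_{f,N}|\cdot Q(\widetilde r)\lesssim\epsilon^3$. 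Collecting these bounds gives $\|\widetilde r(t)\|_{H^1}^2\lesssim\epsilon^3$, which improves the bootstrap constant once $M$ is large enough.

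\textbf{Improving the bound on $E_Q$.} A direct pointwise estimate on $\dot E_Q$ integrated over $T=\epsilon^{-3(N-1)+1}$ is too crude. Instead, use conservation of $E_N$ in the difference form
\[
E_Q(t)-E_Q(0)=-\bigl[\mathcal E(\widetilde r(t))-\mathcal E(\widetilde r(0))\bigr]-\bigl[\mathbf R(t)-\mathbf R(0)\bigr]-\bigl[F(t)-F(0)\bigr],
\]
where $\mathcal E(\widetilde r):=\tfrac12\langle H_{\omega_*}\widetilde r,\sigma_1\widetilde r\rangle+E_P(\widetilde r)$, $\mathbf R:=S^0_{0,N+1}+\sum_{k=2}^{7}\mathbf R_k$, and $F:=E_{f,N}(Q,\omega,\lambda,Q(\widetilde r))-E_{f,N}(Q,\omega,\lambda,0)$. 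The task thus reduces to controlling \emph{variations} of the right-hand side. For $\mathcal E$, differentiate along \eqref{normeq}: the combined contribution of $\mathcal H_{\omega_*}\widetilde r$ and $P(\omega_*)\sigma_3 g(|\widetilde r|^2)\widetilde r$ vanishes, being exactly the Hamiltonian vector field of $\mathcal E$ itself; only $\partial_\rho E_N\,P(\omega_*)\sigma_3\widetilde r$ and $\mathbf R_{\widetilde r}=P(\omega_*)\sigma_3\widetilde\nabla_{\widetilde r}\mathbf R$ contribute. The normal form of Theorem \ref{thm:normal} forces $\|\mathbf R_{\widetilde r}\|_{H^{-1}}\lesssim\epsilon^{3N/2}+\epsilon\|\widetilde r\|$, hence $|\dot{\mathcal E}|\lesssim\epsilon^{3(N+1)/2}$, and integration over $T$ gives $|\mathcal E(t)-\mathcal E(0)|\lesssim\epsilon^4$. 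The variations of $\mathbf R$ and $F$ are handled analogously: each integrand is at most $\epsilon^{3(N+1)/2}$ by Definition \ref{def:Rk} and the same scaling argument. Choosing $K$ large enough closes the bootstrap.

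The \textbf{main obstacle} is closing the $E_Q$-bound on the long time scale $T=\epsilon^{-3(N-1)+1}$: no pointwise estimate on $\dot E_Q$ is of the required size, so cancellations between leading terms must be exploited. These are exactly what Theorem \ref{thm:normal} provides: its Birkhoff normal form ensures that any interaction between the continuous coordinate $\widetilde r$ and the discrete coordinates $(\omega,\lambda)$ is pushed to order at least $N+1$ in the natural scaling $\epsilon$, and it is this high-order decoupling that yields controlled dynamics over times of order $\epsilon^{-3(N-1)+1}$.
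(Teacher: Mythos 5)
There is a genuine gap in the second half of your argument: you never invoke any dispersive information on $\widetilde r$, and without it the almost conservation \eqref{eq:almostcons} cannot be closed on the time scale $T=\epsilon^{-3(N-1)+1}$ (here $\epsilon^2=Q-q(\omega_*)$, as in your notation). Your claim $|\dot{\mathcal E}|\lesssim \epsilon^{3(N+1)/2}$ is not correct: the normal form of Theorem \ref{thm:normal} pushes to order $N+1$ only the part of $E_N$ which is at most linear in $\widetilde r$, while the remainders $\mathbf R_2,\dots,\mathbf R_7$ of Definition \ref{def:Rk} are quadratic (or higher) in $\widetilde r$, with spatially localized coefficients of size $\epsilon$, and they are present for every $N$. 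Consequently $\mathbf R_{\widetilde r}$, and likewise the term $\partial_\rho E_N P(\omega_*)\sigma_3\widetilde r$ (through the non\-commutation of $\sigma_3$ with the $\sigma_1$ part of $H_{\omega_*}$, exactly as in the computation of Lemma \ref{lem:estddtQ}), produce in $\dot{\mathcal E}$ -- and in $\mathbf R_\omega$, $\mathbf R_\lambda$, $\frac{d}{dt}Q(\widetilde r)$ -- contributions of size $\epsilon\|\widetilde r(t)\|_{L^6}^2$, which pointwise in time is only $O(\epsilon^4)$. Integrating a pointwise $O(\epsilon^4)$ bound over $[0,T]$ gives $T\epsilon^4$, which for $N=2$ is only $\epsilon^{2}$ and for $N\geq 3$ even grows as $\epsilon\to 0$; so the $E_Q$ bootstrap does not close. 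The paper's proof closes precisely because of the dispersive machinery of Lemmas \ref{lem:stz}, \ref{lem:03RMP}, \ref{lem:nonlinearity} and \ref{lem:errorstzbound}: the Strichartz bootstrap yields $\|\widetilde r\|_{\stzo([0,T])}\lesssim\epsilon^{3/2}$, hence $\int_0^T\|\widetilde r(t)\|_{L^6}^2\,dt\lesssim\epsilon^3$ \emph{uniformly in} $T$, and it is this time integrability (not the normal form alone) that turns the quadratic-in-$\widetilde r$ errors into an $O(\epsilon^4)$ total drift of $E_{f,N}$, and at the same time gives \eqref{rbound} through $\stzo\hookrightarrow L^\infty H^1$. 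Note also that Assumption \ref{a:1}(2) (no edge resonance, no embedded eigenvalues) is consumed exactly at this point; your scheme never uses it, which is a further sign the decisive ingredient is missing.

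A secondary issue: your route to \eqref{rbound} rests on the asserted coercivity of $\langle H_{\omega_*}\cdot,\sigma_1\cdot\rangle$ on $P(\omega_*)\Hrad^1$, with equivalence to $\|\cdot\|_{H^1}^2$. This is plausible (the single negative direction of the form can be located inside the four--dimensional $\mathcal N_g(\mathcal H_{\omega_*})$), but it is precisely at the degenerate point $q'(\omega_*)=0$ that the classical Weinstein/GSS coercivity argument breaks down, and the condition $Q>q(\omega_*)$ you invoke is irrelevant to it; the paper neither proves nor uses such a statement, obtaining \eqref{rbound} instead from the Strichartz estimate. So even the first half of your scheme rests on an unproved lemma, and in any case it feeds into the second half, which is where the argument genuinely fails for the reason explained above.
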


\begin{remark}\label{rem:trapener}
By elementary computation, we see $V_{Q}(\omega_-)-V_{Q}(\omega_+)\sim (Q-q(\omega_*))^{3/2}$ (see \eqref{potentialhigh} below).
Thus, for $Q-q(\omega_*) >0$ small enough, the   $(\omega(t),\lambda(t))$
{Theorem} \ref{thm:dynamics}
are still in the potential well of $V_Q$ and satisfy \eqref{finiteenergybound} with $c<1$ replaced by a slightly larger $c'<1$.
In particular, since
\begin{align*}
E_Q(\omega(t),\lambda(t))\leq E_Q(\omega_0,\lambda_0)+C(Q-q(\omega_*))^2< c(1+C(Q-q(\omega_*))^{1/2})(Q-q(\omega_*))^{3/2},
\end{align*}
taking $0<Q-q(\omega_*) < \(\frac{1-c}{2cC}\)^2$, we have $E_Q(\omega(t),\lambda(t))<\frac{1+c}{2}(V_Q(\omega_-)-V_Q(\omega_+))$.
\end{remark}

%\begin{remark} We can relax the assumption $\|\widetilde r_0\|_{H^1}\lesssim (Q-q(\omega_*))^{\frac12+ \delta}$ for arbitrary $\delta>0$. The specific choice $\delta=1/4$ is for notational simplicity and to balance the energy of the finite dimensional part $E_{f,N}$ and the infinite dimensional part, which has the main part $\frac12\<H_{\omega_*}\widetilde r,\widetilde r\>\sim (Q-q(\omega_*))^{\frac32}$. \end{remark}

As a direct Corollary of Theorem \ref{thm:dynamics}, we see  that $(\omega(t),\lambda(t))$ stays near the $1$ dimensional torus $\{(\omega,\lambda)\in \R^2\ |\  E_Q(\omega,\lambda)= E_Q(\omega_0,\lambda_0),\ |\lambda|+|\omega-\omega_*|\ll1, \omega>\omega_-\}$.
\begin{corollary}\label{corollary:longorb}
Let $$O_{\omega_0,\lambda_0}:=\{(\omega,\lambda)\in \R^2\ |\ E_Q(\omega,\lambda)=  E_Q(\omega_0,\lambda_0),\ |\lambda|+|\omega-\omega_*|\ll1\}.$$ Assume $E_Q(\omega_0,\lambda_0)\sim (Q-q(\omega_*))^{3/2}$.
Then under the assumption of Theorem \ref{thm:dynamics} for the   solution of  \eqref{normeq}
we have
\begin{align*}
\sup_{t\in[0,T]}\inf_{(\omega,\lambda)\in O_{\omega_0,\lambda_0}} \((Q-q(\omega_*))^{-3/2}|\omega(t)-\omega|+(Q-q(\omega_*))^{-5/4}|\lambda(t)-\lambda|\)\lesssim 1,
\end{align*}
where $T=(Q-q(\omega_*))^{-\frac32(N-1)+\frac12}$.
\end{corollary}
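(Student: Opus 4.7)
The plan is to deduce the bound from the almost-conservation of $E_Q$ proved in Theorem \ref{thm:dynamics} together with a quantitative analysis of the geometry of the level set $O_{\omega_0,\lambda_0}$ of the reduced two-dimensional Hamiltonian $E_Q$ near the elliptic equilibrium $(\omega_+,0)$. Setting $\epsilon^2:=Q-q(\omega_*)$, Theorem \ref{thm:dynamics} gives
\begin{equation*}
|E_Q(\omega(t),\lambda(t))-E_Q(\omega_0,\lambda_0)|\lesssim \epsilon^{4}\qquad \text{for all } t\in[0,T],
\end{equation*}
so the trajectory remains in a very thin neighborhood of the reference level set $O_{\omega_0,\lambda_0}=\{E_Q=C\}$ with $C:=E_Q(\omega_0,\lambda_0)\sim \epsilon^{3}$. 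By hypothesis and Remark \ref{rem:trapener}, $C$ is bounded away from the barrier height $V_Q(\omega_-)-V_Q(\omega_+)\sim \epsilon^{3}$, so $O_{\omega_0,\lambda_0}$ is a smooth closed curve enclosing $(\omega_+,0)$.

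Next I would work out the scaling of the orbit and of $\nabla E_Q$ on it. From $q'(\omega_*)=0$ and $q''(\omega_*)>0$ one gets $\omega_+-\omega_*\sim\epsilon$ and $V_Q''(\omega_+)=q'(\omega_+)\sim\epsilon$, whereas $A(\omega_+)\sim 1$. The orbit equation $\tfrac12 A(\omega)\lambda^2+V_Q(\omega)-V_Q(\omega_+)=C$ then yields that the semi-axes of $O_{\omega_0,\lambda_0}$ satisfy $|\omega-\omega_+|\lesssim \epsilon$ and $|\lambda|\lesssim \epsilon^{3/2}$, and consequently on $O_{\omega_0,\lambda_0}$
\begin{equation*}
|\partial_\omega E_Q|=|q(\omega)-Q|\lesssim \epsilon^{2},\qquad |\partial_\lambda E_Q|=|A(\omega)\lambda+\cdots|\lesssim \epsilon^{3/2}.
\end{equation*}

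The key step is then, for each $t\in[0,T]$, to construct a point $(\omega^*(t),\lambda^*(t))\in O_{\omega_0,\lambda_0}$ whose weighted distance to $(\omega(t),\lambda(t))$ is $O(1)$. On a generic portion of the orbit I would use the implicit function theorem applied to $E_Q$ along the direction of $\nabla E_Q$, which is nondegenerate there: the projection displacement is of size $|\Delta E_Q|/|\nabla E_Q|$, and splitting it into coordinates gives $|\omega-\omega^*|\lesssim \epsilon^{4}|\partial_\omega E_Q|/|\nabla E_Q|^{2}\lesssim \epsilon^{3}$ and $|\lambda-\lambda^*|\lesssim \epsilon^{4}|\partial_\lambda E_Q|/|\nabla E_Q|^{2}\lesssim \epsilon^{5/2}$, matching the stated weights $(Q-q(\omega_*))^{-3/2}=\epsilon^{-3}$ and $(Q-q(\omega_*))^{-5/4}=\epsilon^{-5/2}$.

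The main obstacle is the uniform control near the two turning points of the orbit, where $\lambda\to 0$ and $\partial_\lambda E_Q$ degenerates, so that the direction of $\nabla E_Q$ becomes nearly parallel to the $\omega$-axis while $|\nabla E_Q|$ drops to $\sim\epsilon^{2}$. To handle this I would not project along $\nabla E_Q$ but rather parametrize the orbit by an action-angle-type variable $\theta$ and, from the representation $(\omega-\omega_+,\lambda)=(a(C)\cos\theta+\cdots,\,b(C)\sin\theta+\cdots)$ with $a\sim\epsilon$, $b\sim\epsilon^{3/2}$, choose $\theta$ so that the trajectory point is matched modulo a controlled motion both in $C$ (picked up as $\Delta E$) and in $\theta$ (along the orbit). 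This allows one to trade a small $\omega$-displacement at the turning point for a small $\lambda$-displacement at the orbit point selected, keeping both weighted contributions $\lesssim 1$ uniformly along $O_{\omega_0,\lambda_0}$.
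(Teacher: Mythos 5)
Your first two steps (almost-conservation from Theorem \ref{thm:dynamics} and the scaling of the orbit, $|\omega-\omega_+|\lesssim\epsilon$, $|\lambda|\lesssim\epsilon^{3/2}$, $|\partial_\omega E_Q|\lesssim\epsilon^2$, $|\partial_\lambda E_Q|\lesssim\epsilon^{3/2}$ with $\epsilon^2:=Q-q(\omega_*)$) agree with the paper, and your normal-projection estimate is correct on the portion of the orbit where $|\lambda|\sim\epsilon^{3/2}$, since there $|\nabla E_Q|\gtrsim\epsilon^{3/2}$. The genuine gap is exactly the turning-point step, which you leave as a sketch: the ``trade'' between an $\omega$-displacement and a $\lambda$-displacement is never quantified, and it cannot be quantified so as to keep the weight $(Q-q(\omega_*))^{-3/2}=\epsilon^{-3}$ bounded. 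At a turning point $\partial_\lambda E_Q$ vanishes, $|\partial_\omega E_Q|=|q(\omega)-Q|\sim\epsilon^2$, and the level curve bends away quadratically, $|\Delta\omega|\sim\epsilon^{-2}(\Delta\lambda)^2$ along $O_{\omega_0,\lambda_0}$; hence if the trajectory sits at its own turning point with an energy mismatch of the worst-case size $\epsilon^4$ permitted by Theorem \ref{thm:dynamics} (and with the unfavorable sign), then \emph{every} point of $O_{\omega_0,\lambda_0}$ is at $\omega$-distance $\gtrsim\epsilon^4/|q(\omega)-Q|\sim\epsilon^2$, and moving along the reference curve only increases both gaps. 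So no choice of comparison point, via action-angle variables or otherwise, makes $\epsilon^{-3}|\omega(t)-\omega|$ of order one there; what is attainable at the turning points is an $\omega$-offset of size $\epsilon^2=Q-q(\omega_*)$, and your plan as written does not close.

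The paper proceeds quite differently and avoids normal projection altogether: for each $t$ it looks for the comparison point on the ray through $(\omega_+,0)$, i.e.\ it solves \eqref{aadjust}, $E_Q(\omega_++(1+\alpha)(\omega(t)-\omega_+),(1+\alpha)\lambda(t))=E_Q(\omega_0,\lambda_0)$, for a dilation parameter $\alpha$. Expanding $E_Q$ as in \eqref{EQalpha}, the coefficient of $\alpha$ is comparable to $E_Q(\omega(t),\lambda(t))\sim\epsilon^3$ thanks to the lower bound \eqref{nondeglow} (which uses $\omega(t)>\omega_+-c'(\omega_+-\omega_-)$), so the $\epsilon^4$ energy error is absorbed by some $|\alpha|\lesssim\epsilon$, uniformly along the orbit and with no case distinction at the turning points. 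This yields $|\omega(t)-\omega|=|\alpha|\,|\omega(t)-\omega_+|\lesssim\epsilon^2$ and $|\lambda(t)-\lambda|=|\alpha|\,|\lambda(t)|\lesssim\epsilon^{5/2}$, i.e.\ exactly the $\lambda$-weight in the statement and an $\omega$-displacement of size $Q-q(\omega_*)$ (note this is also all that the turning-point obstruction above allows, so the paper's construction is the natural one to emulate). If you want to salvage your argument, you should replace the action-angle repair near the turning points by a transversal one-parameter family of this dilation type and solve for the parameter by Taylor expansion plus a nondegeneracy bound such as \eqref{nondeglow}, rather than projecting along $\nabla E_Q$.
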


One may view Corollary \ref{corollary:longorb} as a long (but not infinite) time orbital stability result.
As with the usual orbital stability results on ground states, the control of the phase for $(\omega,\lambda)$ near $O_{\omega_0,\lambda_0}$ is more subtle.
However, we can show that for fixed $n$, the difference of $(\omega(t),\lambda(t))$ between $(\omega_f(t),\lambda_f(t))$ is small for $t\in [0,nT_f]$, where $T_f$ is given in Proposition \ref{finitedimosc}.
\begin{theorem}\label{thm:shadow}
Let $N\geq 2$, $n\in \N$, $c\in (0,1)$ and $C_0>0$.
Then, there exist $\epsilon_n>0$ s.t.\ for $Q\in (q(\omega_*),q(\omega_*)+\epsilon_n^2)$, if $(\omega_0,\lambda_0,\widetilde r_0)$ satisfies \eqref{finiteenergybound} and $\|\widetilde r_0\|_{H^1}\leq C_0 (Q-q(\omega_*))^{3/4}$, then
comparing the solutions of    \eqref{normeq} and of \eqref{finiteeq}
we have
\begin{align}\label{diff:est}
\sup_{t\in [0,nT_f]}\((Q-q(\omega_*))^{-3/2}|\omega(t)-\omega_f(t)|+(Q-q(\omega_*))^{-5/4}|\lambda(t)-\lambda_f(t)|\)\lesssim_n 1,
\end{align}
where $T_f$ is the period of $(\omega_f,\lambda_f)$ given in Proposition \ref{finitedimosc}.
\end{theorem}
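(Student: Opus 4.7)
The plan is to compare the full trajectory $(\omega(t),\lambda(t))$ solving \eqref{normeq} with the finite--dimensional orbit $(\omega_f(t),\lambda_f(t))$ solving \eqref{finiteeq} by a linearization/Gronwall argument carried out in action--angle coordinates for the Hamiltonian $E_{f,N}(Q,\cdot,\cdot,0)$. By Remark \ref{rem:trapener}, for $\epsilon^2:=Q-q(\omega_*)$ small enough the trajectory $(\omega(t),\lambda(t))$ stays trapped in the potential well of $V_Q$ on the entire interval $[0,nT_f]$, and since \eqref{finiteeq} is a two--dimensional autonomous Hamiltonian system with a non--degenerate elliptic equilibrium at $(\omega_+,0)$ and $(\omega_f,\lambda_f)$ is periodic by Proposition \ref{finitedimosc}, smooth action--angle coordinates $(I,\varphi)$ are available in the well. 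In them \eqref{finiteeq} becomes $\dot I_f=0$, $\dot\varphi_f=\omega_{\mathrm{osc}}(I_f)=2\pi/T_f\sim\epsilon^{1/2}$, and the natural scalings are $|\lambda|\sim \epsilon^{3/2}$, $|\omega-\omega_*|\sim\epsilon$, $I\sim\epsilon^{5/2}$.

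Setting $(\eta,\zeta):=(\omega-\omega_f,\lambda-\lambda_f)$ and subtracting \eqref{finiteeq} from \eqref{normeq} one obtains, by the fundamental theorem of calculus, the linear system
\begin{equation*}
\begin{pmatrix}\dot\eta\\\dot\zeta\end{pmatrix}=J_f(t)\begin{pmatrix}\eta\\\zeta\end{pmatrix}+\mathbf F(t),\qquad \mathbf F(t)=\mathbf F_{\mathrm r}(t)+(\mathbf R_\omega,\mathbf R_\lambda)(t),
\end{equation*}
where $J_f(t)$ is the Jacobian of the unperturbed vector field evaluated on $(\omega_f(t),\lambda_f(t))$, the piece $\mathbf F_{\mathrm r}$ collects the contributions from the $\rho=Q(\widetilde r)$--dependence of $E_{f,N}$ and is estimated using $|Q(\widetilde r)|\lesssim\|\widetilde r\|_{L^2}^2\lesssim (Q-q(\omega_*))^{3/2}$ from Theorem \ref{thm:dynamics}, and the size of $\mathbf R_\omega=A^{-1}\partial_\lambda\mathbf R$ and $\mathbf R_\lambda=-A^{-1}\partial_\omega\mathbf R$ is read off from Definitions \ref{def:symbols}--\ref{def:Rk} using $\mathbf R\in\mathcal S^0_{0,N+1}+\sum_k\mathbf R_k$ together with the high--order smallness supplied by the normal form of Theorem \ref{thm:normal}.

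To solve the linear system, one uses that the fundamental matrix $\Phi(t)$ of the variational equation of \eqref{finiteeq} along a periodic orbit of a $2$D Hamiltonian system splits into a tangential component (bounded) and a transverse component with at most linear growth in $t$: this is the degenerate unit Floquet multiplier forced by Liouville's theorem. Equivalently, in $(I,\varphi)$ the linearization reads $(\delta I)^{\,\cdot}=0$, $(\delta\varphi)^{\,\cdot}=\omega_{\mathrm{osc}}'(I_f)\,\delta I$, so $\|\Phi(t)\|\lesssim 1+t/T_f\lesssim_n 1$ on $[0,nT_f]$. Applied with Duhamel and the bootstrap bounds for $\widetilde r$, this gives first the crude control $|(\eta,\zeta)(t)|\lesssim_n\int_0^t|\mathbf F(s)|ds$. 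The sharper separated bounds \eqref{diff:est} then follow by projecting onto the $(I,\varphi)$--coordinates: the action component of the error is additionally sharpened by the approximate conservation of $E_Q$ from Theorem \ref{thm:dynamics} (yielding $|I(t)-I_0|\lesssim_n\epsilon^{7/2}$), whereas the angle component accumulates a further factor of $t$ through the anharmonicity $\omega_{\mathrm{osc}}'(I)$. Translating back through the smooth map $(I,\varphi)\mapsto(\omega,\lambda)$, whose Jacobian components scale like appropriate powers of $\epsilon$ because of the anharmonicity of the well, converts these into the rescaled bounds of \eqref{diff:est}.

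The main obstacle is a careful bookkeeping of the various $\epsilon$--scales for $(\omega,\lambda,\mu,\widetilde r)$ and of the losses incurred when differentiating the symbolic error classes $\mathcal S^0_{i,j}$ and the $\mathbf R_k$ with respect to $\omega$ and $\lambda$; one must check that all resulting contributions to $\mathbf F$ stay small enough, for $N$ large, to close the bootstrap at the sharp orders $(Q-q(\omega_*))^{3/2}$ and $(Q-q(\omega_*))^{5/4}$ after integration against $\|\Phi\|$ on a time interval of length $nT_f\sim n(Q-q(\omega_*))^{-1/4}$. The normal form of Theorem \ref{thm:normal} is indispensable in this step, for it pushes the purely symbolic interaction between discrete and continuous coordinates to arbitrarily high order in $\epsilon$. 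The linear--in--$t$ growth of $\Phi(t)$ around the periodic orbit is the sole source of the $n$--dependence of the constant in \eqref{diff:est} and is what ultimately prevents passing from trajectory shadowing on $[0,nT_f]$ to shadowing on the long intervals of Theorem \ref{thm:dynamics}.
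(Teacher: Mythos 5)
Your plan hinges on the bound $\|\Phi(t)\|\lesssim 1+t/T_f\lesssim_n 1$ for the fundamental matrix of the variational equation of \eqref{finiteeq} along the periodic orbit, with a constant implicitly uniform in $\epsilon=(Q-q(\omega_*))^{1/2}$. This is precisely the step the paper flags as problematic: Remark \ref{rem:group} points out that already at the elliptic point $(\omega_+,0)$ the group of the linearization has norm that blows up as $Q\downarrow q(\omega_*)$ (the well is strongly anisotropic, $|\omega-\omega_*|\sim\epsilon$ versus $|\lambda|\sim\epsilon^{3/2}$, so within a quarter period the linear flow mixes the two scales), that it is not obvious how to bound the group along the time-dependent orbit $(\omega_f(t),\lambda_f(t))$ in terms of the equilibrium one, and that the continuity argument of Marzuola--Weinstein cannot be used here. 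The same $\epsilon$-degeneration afflicts the action--angle reduction you invoke: the statement ``$\delta I$ constant, $\delta\varphi$ grows linearly'' is correct qualitatively, but the conclusion \eqref{diff:est} is quantitative in powers of $Q-q(\omega_*)$, so you would have to control, uniformly as $\epsilon\to 0$, the Jacobian of the map $(I,\varphi)\mapsto(\omega,\lambda)$, the anharmonicity $\omega_{\mathrm{osc}}'(I_f)$ (naively of size $\epsilon^{-2}$ since $\omega_{\mathrm{osc}}\sim\epsilon^{1/2}$ and $I\sim\epsilon^{5/2}$), and the resulting growth over $t\sim n\epsilon^{-1/2}$. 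You defer all of this as ``bookkeeping,'' but it is the actual content of the theorem; as written the key estimates are asserted, not proved, and the uniform bound on $\Phi$ is in fact false in the unscaled variables.

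The paper avoids the variational equation altogether. It rescales as in \eqref{rescale:inf}, $\omega=\omega_++\epsilon\zeta$, $\lambda=\epsilon^{3/2}\kappa$, $t=\epsilon^{-1/2}\tau$, under which both \eqref{normeq} (using \eqref{rbound} to control $\mathbf R_\omega$, $\mathbf R_\lambda$ and the $Q(\widetilde r)$-dependence) and \eqref{finiteeq} become the same $O(1)$ planar system up to $O(\epsilon)$ errors, while the interval $[0,nT_f]$ becomes a rescaled interval of length $\tau_0\lesssim_n 1$. Subtracting the two nonlinear systems and applying Gronwall directly to $f(\tau)=|\zeta-\zeta_f|+|\kappa-\kappa_f|$ gives $f\lesssim_{\tau_0}\epsilon$, and undoing the scaling yields \eqref{diff:est}; the only inputs are Theorem \ref{thm:dynamics} and the trapping bounds of Lemma \ref{lem:finiteest}, and no Floquet/monodromy information is needed. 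If you want to salvage your route, you should first perform this rescaling (after which your linearization bounds become the trivial $O_n(1)$ Gronwall constant), but at that point the argument coincides with the paper's; in the unscaled variables your proposal has a genuine gap.
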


Theorem \ref{thm:shadow} works for much shorter time $nT_f\sim (Q-q(\omega_*))^{-1/4}$ than the time given by Theorem \ref{thm:dynamics}, which is $T=(Q-q(\omega_*))^{-\frac32(N-1)+\frac12}$.
This is quite natural since the time evolution of the linear operator obtained linearizing
system \eqref{finiteeq} at the solution $(\omega_f,\lambda_f)$ appears to be growing linearly.
Hence, expressing $(\omega,\lambda)$ as a perturbation of $(\omega_f,\lambda_f)$,
their  difference tends to grow in a way similar to the "secular modes" discussed in p.473 \cite{Weinstein85}.
However, a combination of Theorems \ref{thm:dynamics} and  \ref{thm:shadow} shows that $(\omega,\lambda)$   oscillates   for $0<t< T=(Q-q(\omega_*))^{-\frac32(N-1)+\frac12} $ (and not just $0<t< nT_f$) with an oscillation  similar to that of  $(\omega_f,\lambda_f)$.
This is because, fixing $n=2$ (say), we can apply Theorem \ref{thm:shadow} for any time in $t\in [0,T]$.
So, taking $t_m$, $m=1,2,\cdots$ the time when $\omega(t_m)$ is (say) at a local maximum, then applying Theorem \ref{thm:shadow}  we see that $\|(t_{m+1}-t_m)-T_f^{(m)}\|\ll T_f^{(m)} $, with  $T_f^{(m)}$ the period of the solution $(\omega_{f,m}(t),\lambda_{f,m}(t))$ of \eqref{finiteeq} with $(\omega_{f,m}(t_m),\lambda_{f,m}(t_m))=(\omega(t_m),\lambda(t_m))$,
and in   each time interval $(t_{m}, t_{m}+2 T_f^{(m)} )$ the quantity $|\omega(t)-\omega_{f,m}(t)|+|\lambda(t)-\lambda_{f,m}(t)|$ satisfies \eqref{diff:est}.

\noindent Therefore, by the main Theorems \ref{thm:normal}, \ref{thm:dynamics} and \ref{thm:shadow}, we have proved the long time oscillation of $\omega(t)$ as informally stated in Theorem \ref{main:informal}.
\begin{remark}\label{rem:group}
Notice that at the point $( \omega _+, 0)$ the linearization of
\eqref{finiteeq}  is, for  $a(\omega)=-A(\omega)^{-1}q'(\omega)$,   \begin{equation*}
   B_0 = \begin{pmatrix}
0&    1 \\
 a (\omega _{+} ) &   0
\end{pmatrix}   \text{ with group }  e ^{tB_0}=  \begin{pmatrix}
\cos  \left ( t\sqrt{-a (\omega _{+} )} \right ) &
\frac{\sin  \left ( t\sqrt{-a (\omega _{+} )} \right )}{\sqrt{-a  (\omega _{+} )} }\\
 -\sqrt{-a (\omega _{+} )}    \sin  \left ( t\sqrt{-a (\omega _{+} )} \right )     &  \cos  \left ( t\sqrt{-a (\omega _{+} )} \right )
\end{pmatrix}    ,
 \end{equation*}
 with norm $ \| e ^{tB_0} \| _{L^\infty (\R )}\sim (-a  (\omega _{+} )) ^{-\frac{1}{2}}\sim (Q- q(\omega _*)) ^{-\frac{1}{2}}$.  In Lemma \ref{lem:finiteest} we consider periodic orbits $(\omega_f(t),\lambda_f(t))$ for  \eqref{finiteeq}  where $|\omega_f(t)-\omega_*|\lesssim (Q- q(\omega _*)) ^{\frac{1}{2}}$.
 It is clear now that it is not obvious how to bound the norm of the group of the  linearization
 of  \eqref{finiteeq} at $(\omega_f(t),\lambda_f(t))$ in terms of the norm   $ \| e ^{tB_0} \| _{L^\infty (\R )}$ and that  here it is not possible to use the continuity argument   in
  \cite[Proposition 3.3]{MW10DCDS}.

\end{remark}

\section{Normal form argument}\label{sec:normal}

In this section we prove Theorem \ref{thm:normal}.
We will start   summarizing the properties of the symbols $S^k_{i,j}$ and the dependent variable $\mu$.
Then we will construct a transform $\mathfrak F_0$ satisfying \eqref{error1} and \eqref{Darb}.
After that, we will successively construct canonical change of coordinates ($\phi_N^* \Omega_0=\Omega_0$) so that we can erase the interaction term to achieve the expansion \eqref{eq:mmain1}.

\subsection{Properties of symbols $\mathcal S^k_{i,j}$}\label{subsec:sandmu}

In this subsection we summarize the properties of $S^k_{i,j}$.

\begin{lemma}\label{lem:pdsymbol}
We have $P_d(\omega)P (\omega_*)= S^2_{1,0}$, $q'(\omega)=S^0_{1,0}$ and $q(\omega)-Q+Q(\widetilde r)=S^0_{2,0}$.

\end{lemma}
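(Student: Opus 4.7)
The plan is to verify each of the three assertions using the smoothness of the objects involved together with the key vanishing conditions $q'(\omega_*)=0$ (from \eqref{4}) and $P_d(\omega_*)^2=P_d(\omega_*)$ (from Proposition \ref{prop:1}). All three bounds will be trivial Taylor expansions once I identify the right representative $\widehat{f}$ of Definition \ref{def:symbols}.

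For the first claim, I would write
\begin{equation*}
P_d(\omega)P(\omega_*) = P_d(\omega)\bigl(1-P_d(\omega_*)\bigr) = \bigl(P_d(\omega)-P_d(\omega_*)\bigr)P(\omega_*) + P_d(\omega_*)P(\omega_*),
\end{equation*}
and note that $P_d(\omega_*)P(\omega_*)=P_d(\omega_*)-P_d(\omega_*)^2=0$ because $P_d(\omega_*)$ is a projection. Since Proposition \ref{prop:1} gives $\omega\mapsto P_d(\omega)\in C^\infty\bigl(D_\R(\omega_*,\delta_s),\mathcal L(\Sigma^{-s},\Sigma^{s})\bigr)$, a first-order Taylor expansion yields $\|P_d(\omega)-P_d(\omega_*)\|_{\mathcal L(\Sigma^{-s},\Sigma^{s})}\lesssim|\omega-\omega_*|$, producing the $\mathcal S^2_{1,0}$ bound (the representative $\widehat f$ depends only on $\omega$, so the dependence on $Q,\lambda,\mu,\rho,\widetilde r$ is trivial).

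For the second claim, $q\in C^\infty$ near $\omega_*$ by Lemma \ref{lem:4.3}, and hypothesis \eqref{4} gives $q'(\omega_*)=0$. Taylor's theorem then delivers $|q'(\omega)|\lesssim|\omega-\omega_*|$, which is precisely the weight needed for membership in $\mathcal S^0_{1,0}$. For the third claim, I take the representative
\begin{equation*}
\widehat f(Q,\omega,\lambda,\mu,\rho,\widetilde r) := q(\omega)-Q+\rho,
\end{equation*}
which is manifestly smooth in all six arguments. Writing $\widehat f = \bigl(q(\omega)-q(\omega_*)\bigr) - \bigl(Q-q(\omega_*)\bigr)+\rho$, the first term is $O((\omega-\omega_*)^2)$ by $q'(\omega_*)=0$, the second is bounded by $\bigl(|Q-q(\omega_*)|^{1/2}\bigr)^{2}$, and the third by $(\rho^{1/2})^2$. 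Each piece is therefore dominated by the square of the full small-parameter weight $|Q-q(\omega_*)|^{1/2}+|\omega-\omega_*|+|\lambda|+|\mu|+\rho^{1/2}+\|\widetilde r\|_{\Sigma^{-s}}$, so $\widehat f\in\widehat{\mathcal S}_s^0(\delta_s)$ satisfies the $\mathcal S^0_{2,0}$ estimate. Substituting $\rho=Q(\widetilde r)$ (and the implicit $\mu=\mu(Q,\omega,\lambda,Q(\widetilde r),\widetilde r)$, on which $\widehat f$ does not actually depend) returns $q(\omega)-Q+Q(\widetilde r)$, as required.

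There is no real obstacle here; the only bookkeeping point is to check that the representatives $\widehat f$ are genuinely smooth on the enlarged domain $D_{\R^5\times P(\omega_*)\Sigma^{-s}}$ used in Definition \ref{def:symbols} before the substitutions are made, which in each of the three cases is automatic because the representatives are either operator-valued smooth in $\omega$ alone or polynomial in $(Q,\omega,\rho)$.
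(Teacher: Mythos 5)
Your proof is correct and follows essentially the same route as the paper: reduce $P_d(\omega)P(\omega_*)$ to $(P_d(\omega)-P_d(\omega_*))P(\omega_*)$ via the projection identity and Taylor-expand in $\omega$ using the smoothness from Proposition \ref{prop:1}, use $q'(\omega_*)=0$ for the second claim, and split $q(\omega)-Q+Q(\widetilde r)$ through the intermediate value $q(\omega_*)$ for the third. The extra bookkeeping about the representative $\widehat f$ is fine but adds nothing beyond what the paper's two-line argument already contains.
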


\begin{proof}
$P_d(\omega)P (\omega_*)=(P_d(\omega)-P_d(\omega_*))P (\omega_*)
=(\omega-\omega_*)\partial_{\omega}P_d(\omega_*+\theta(\omega-\omega_*))P (\omega_*)$.
The 2nd and 3rd claim follow  from $q'(\omega_*)=0$ and
$
q(\omega)-Q=q(\omega)-q(\omega_*)+q(\omega_*)-Q
$.
\end{proof}

We next consider the expansion of the dependent variable $\mu$.

\begin{lemma}\label{lem:muexpand}
We have
\begin{align}
\mu=-A(\omega)^{-1}\(q(\omega)-Q+Q(\widetilde r)\)+S^0_{0,2}.\label{muexpand}
\end{align}
and
\begin{align}\label{mudsymbol}
\partial_Q\mu=S^0_{0,0},\quad \partial_\omega \mu=-A^{-1}(\omega)q'(\omega)+S^0_{0,1},\quad \partial_\lambda \mu =S^0_{0,1},\quad \partial_\rho \mu=S^0_{0,0}\text{ and }\widetilde \nabla_{\widetilde r}\mu=S^1_{0,1}.
\end{align}
Recall that $\widetilde \nabla_{\widetilde r}$ is defined in \eqref{diftilder} and we have $\nabla_{\widetilde r} \mu=S^1_{0,1}+S^0_{0,0}\widetilde r$.
\end{lemma}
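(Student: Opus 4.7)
My plan is to expand the defining relation $\mathcal G=0$ from Lemma \ref{lem:defmu} directly, identify the leading $\mu$--term, solve for $\mu$, and then obtain the derivative formulas by implicit differentiation.

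First I would expand
\[
\mathcal G = q(\omega) + \<\widetilde\phi_\omega,\sigma_1\widetilde v\> + Q(\widetilde v) + \<\widetilde\phi_\omega+\widetilde v,\sigma_1\widetilde r\> + \rho - Q,
\]
where $\widetilde v := \lambda\Psi_3(\omega)+\mu\Psi_4(\omega)-P_d(\omega)\widetilde r$, using the quadratic structure $Q(\widetilde a+\widetilde b)=Q(\widetilde a)+\<\widetilde a,\sigma_1\widetilde b\>+Q(\widetilde b)$. Since $\im\sigma_3\widetilde\phi_\omega=\Psi_1(\omega)$ we have $\<\widetilde\phi_\omega,\sigma_1 W\>=-\Omega(\Psi_1(\omega),W)$, and the orthogonality relations \eqref{50} then give $\<\widetilde\phi_\omega,\sigma_1\Psi_3(\omega)\>=0$ and $\<\widetilde\phi_\omega,\sigma_1\Psi_4(\omega)\>=A(\omega)$, extracting the leading contribution $A(\omega)\mu$. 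The crucial observation is that the two linear--in--$\widetilde r$ pieces (one from $Q$, one from the cross term) combine to $\<\widetilde\phi_\omega+\lambda\Psi_3+\mu\Psi_4,\sigma_1 P(\omega)\widetilde r\>$; the $\widetilde\phi_\omega$--part of this vanishes because $P(\omega)$ is $\Omega$--symmetric and $P(\omega)\Psi_1(\omega)=0$, while the $\lambda,\mu$--part is in $S^0_{0,2}$. All remaining bilinear pieces of $Q(\widetilde v)$ and of the cross term land in $S^0_{0,2}$ after invoking Lemma \ref{lem:pdsymbol} (i.e.\ $P_d(\omega)P(\omega_*)=S^2_{1,0}$) to tame the $P_d(\omega)\widetilde r$ factors.

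The above will collapse $\mathcal G$ to the simple form $\mathcal G=A(\omega)\mu+(q(\omega)-Q+\rho)+S^0_{0,2}$. Since $A(\omega_*)>0$ is smooth, I can divide and read off $\mu=-A(\omega)^{-1}(q(\omega)-Q+\rho)+S^0_{0,2}$, and substituting $\rho=Q(\widetilde r)$ yields \eqref{muexpand}; the consistency statement $q(\omega)-Q+Q(\widetilde r)=S^0_{2,0}$ from Lemma \ref{lem:pdsymbol} confirms the whole r.h.s.\ is indeed small. For the derivatives \eqref{mudsymbol} I would apply implicit differentiation $\partial_\tau\mu=-(\partial_\mu\mathcal G)^{-1}\partial_\tau\mathcal G$, noting that $\partial_\mu\mathcal G=A(\omega)+S^0_{0,1}$ gives $(\partial_\mu\mathcal G)^{-1}=A(\omega)^{-1}+S^0_{0,1}$. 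Then a direct computation shows $\partial_Q\mathcal G=-1+S^0_{0,1}$, $\partial_\omega\mathcal G=q'(\omega)+S^0_{0,1}$, $\partial_\lambda\mathcal G=S^0_{0,1}$ and $\partial_\rho\mathcal G=1$, giving the four scalar formulas. For the functional gradient, $\widetilde\nabla_{\widetilde r}\mathcal G$ at fixed $\rho$ arises from the linear--in--$\widetilde r$ part $\<\lambda\Psi_3+\mu\Psi_4,\sigma_1 P(\omega)\widetilde r\>$ together with one--side differentiations of the quadratic--in--$\widetilde r$ pieces; both land in $S^1_{0,1}$, yielding $\widetilde\nabla_{\widetilde r}\mu=S^1_{0,1}$.

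The main obstacle, such as it is, is the symbol bookkeeping: specifically, verifying that the linear--in--$\widetilde r$ term $\<\widetilde\phi_\omega,\sigma_1 P(\omega)\widetilde r\>$ vanishes identically rather than being merely an $S^0_{1,1}$ remainder. Without this cancellation (which rests on $P(\omega)\Psi_1(\omega)=0$), the error in \eqref{muexpand} would only be $S^0_{0,1}$ and $\widetilde\nabla_{\widetilde r}\mu$ would only be $S^1_{0,0}$, invalidating the rest of the normal form argument. Once this identity is in hand everything else is a routine application of the implicit function theorem with the symbol class tracking built in.
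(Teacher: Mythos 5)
Your proposal is correct and follows essentially the same route as the paper: expand the defining relation \eqref{def:mu}, use the identities $\<\widetilde\phi_\omega,\sigma_1\Psi_4\>=A(\omega)$ and $\<\widetilde\phi_\omega,\sigma_1(\lambda\Psi_3+P(\omega)\widetilde r)\>=0$ (the cancellation you rightly flag, resting on $P(\omega)\Psi_1(\omega)=0$ and the $\Omega$--symmetry of $P_d$) to isolate $A(\omega)\mu$ and obtain \eqref{muexpand}, and then implicit differentiation of \eqref{def:mu} for \eqref{mudsymbol}. The only cosmetic difference is that you compute the fixed--$\rho$ gradient $\widetilde\nabla_{\widetilde r}\mu$ directly while the paper first computes $\nabla_{\widetilde r}\mu=S^1_{0,1}+S^0_{0,0}\widetilde r$ and then removes the $\partial_\rho\mu\,\widetilde r$ piece, which amounts to the same bookkeeping.
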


\begin{remark}
In formula \eqref{mudsymbol} the term $\nabla_{\widetilde r}\mu$ is defined by $\<\nabla_{\widetilde r}\mu, \sigma_1\widetilde v\>=d_{\widetilde r}F(\widetilde r)\widetilde v$ where $P(\omega)\widetilde v=\widetilde v$ and $F(\widetilde r)=\mu(Q,\omega,\lambda,Q(\widetilde r),\widetilde r)$ for fixed $Q,\omega$ and $\lambda$.
\end{remark}

\begin{proof}
By \eqref{def:mu}, we have
\begin{align*}
Q(\tilde \phi_\omega)+\<\widetilde \phi_\omega,\sigma_1(\lambda\Psi_3(\omega)+\mu\Psi_4(\omega)-P_d(\omega)\widetilde r\>+\<\widetilde \phi_\omega,\sigma_1 \widetilde r\>+S^0_{0,2}+Q(\widetilde r)-Q=0.
\end{align*}
Therefore, since
\begin{align*}
\<\widetilde \phi_\omega, \sigma_1(\lambda\Psi_3 +P(\omega) \widetilde r)\>=-\Omega(\Psi_1,\lambda \Psi_3+P \widetilde r)=0\text{ and }\<\tilde \phi_\omega,\Psi_4(\omega)\>=-\Omega(\Psi_1(\omega),\Psi_4(\omega))=A(\omega),
\end{align*}
we have \eqref{muexpand}.
Next, set $\underline u:=\widetilde{\phi}_\omega + \lambda  \Psi_3(\omega)+ \mu \Psi_4(\omega)+P (\omega)\widetilde{r}$.
Then differentiating \eqref{def:mu} (with $\rho=Q(\widetilde r)$) with respect to $Q$, $\omega$, $\lambda$ and $\widetilde r$, we have
\begin{align*}
\<\underline u, \sigma_1\Psi_4\>\partial_Q \mu &=1  \text{ where from \eqref{Psi12} and \eqref{50} we have } \<\underline u, \sigma_1\Psi_4\>=A+S^{0}_{0,1},\\
\<\underline u, \sigma_1\Psi_4\>\partial_\omega \mu &=-\<\underline u, \sigma_1(\Psi_2+\lambda\partial_\omega\Psi_3+\mu\partial_\omega \Psi_4 +\partial_\omega P \widetilde r)\>=-q'(\omega)+S^0_{0,1},\\
\<\underline u, \sigma_1\Psi_4\>\partial_\lambda \mu &=-\<\underline u, \sigma_1\Psi_3\>=S^0_{0,1},\\
\<\underline u, \sigma_1\Psi_4\>\<\nabla_{\widetilde r} \mu,\sigma_1 \cdot\> &=\<\underline u, \sigma_1 P \cdot\>=\<S^1_{0,1}+\widetilde r, \sigma_1 \cdot\>.
\end{align*}
Therefore, we have the conclusion.
\end{proof}

We next consider the relation of the indexes $i,j$ of $S^k_{i,j}$ and the differentiation with respect to $Q,\omega,\lambda$ and $\widetilde r$.
\begin{lemma}\label{difsymbol}
We set
\begin{equation}\nonumber
  I(i)= \left\{\begin{matrix}
  i  \text{  for $i=0,1$ }  ,   \\
i-2   \text{  for $i\geq 2$. }
\end{matrix}\right.
\end{equation}
Let $i,j,k\in \Z_{\geq 0}$.
We have
\begin{align*}
&\partial_Q S^k_{i,j}=S^k_{I(i),j}+S^k_{i,j-1},\quad \partial_\rho S^k_{i,j}=S^k_{I(i),j}+S^k_{i,j-1},\quad \partial_\omega S^k_{i,j}=S^k_{i-1,j}+S^k_{i+1,j-1},\quad \partial_\lambda S^k_{i,j}=S^k_{i,j-1},
\end{align*}
and
\begin{align*}
\widetilde\nabla_{\widetilde r} S^0_{i,j}&=S^1_{i,j-1},\\ d_{\widetilde r} S^1_{i,j}&=S^2_{i,j-1}+\<S^1_{0,1},\sigma_1\cdot\>S^1_{i,j-1}+\(S^1_{I(i),j}+ S^1_{i,j-1}\)\<\widetilde r,\sigma_1\cdot\>.
\end{align*}
Here, for $i,j=0$, the indexes $i-1,j-1$ are always understood as $0$.
\end{lemma}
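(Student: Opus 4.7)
The plan is to prove each identity by a careful application of the chain rule, using the representation $f(Q,\omega,\lambda,\widetilde r)=\widehat f(Q,\omega,\lambda,\mu,\rho,\widetilde r)$ evaluated at $\mu=\mu(Q,\omega,\lambda,Q(\widetilde r),\widetilde r)$ and $\rho=Q(\widetilde r)$, together with the derivative formulas for $\mu$ furnished by Lemma \ref{lem:muexpand}. Concretely, for each scalar variable $\tau\in\{Q,\omega,\lambda\}$ one has
\begin{equation*}
\partial_\tau f=\partial_\tau\widehat f+(\partial_\mu\widehat f)(\partial_\tau\mu),
\end{equation*}
for $\partial_\rho$ in the sense of \eqref{difrho} the analogous formula with $\tau$ replaced by $\rho$, and for the $\widetilde r$-derivatives a similar decomposition through $\mu$ and, only for the full Fr\'echet derivative $d_{\widetilde r}$, an additional $\rho$-chain producing a factor $\<\widetilde r,\sigma_1\cdot\>$ arising from $d_{\widetilde r}(Q(\widetilde r))$.

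The direct derivatives of $\widehat f$ are read off from the defining bound $\|\widehat f\|_{X^k_s}\lesssim_s \Delta^i\delta^j$ (with $\Delta$ and $\delta$ the two parentheses in Definition \ref{def:symbols}) combined with the smoothness of $\widehat f$. The variables $Q$ and $\rho$ enter $\Delta$ only through $|Q-q(\omega_*)|^{1/2}$ and $\rho^{1/2}$, and smoothness forces the effective order of vanishing in each of them to be $\lceil i/2\rceil$; hence $\partial_Q\widehat f,\partial_\rho\widehat f\in\widehat{\mathcal S}^k_{I(i),j}$. The variable $\omega$ enters $\Delta$ linearly, giving $\partial_\omega\widehat f\in\widehat{\mathcal S}^k_{i-1,j}$. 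The variables $\lambda$, $\mu$ and $\widetilde r$ each appear in both $\Delta$ and $\delta$; after absorbing their contribution to $\Delta$ by the pointwise majorization $|\lambda|,|\mu|,\|\widetilde r\|_{\Sigma^{-s}}\le\delta$, differentiation reduces only the $\delta$-power by one while leaving $\Delta^i$ intact, so $\partial_\lambda\widehat f,\partial_\mu\widehat f\in\widehat{\mathcal S}^k_{i,j-1}$, $\widetilde\nabla_{\widetilde r}\widehat f\in\widehat{\mathcal S}^1_{i,j-1}$, and analogously $d_{\widetilde r}\widehat f\in \widehat{\mathcal S}^2_{i,j-1}$.

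Combining these direct bounds with Lemma \ref{lem:muexpand} ($\partial_Q\mu,\partial_\rho\mu\in\mathcal S^0_{0,0}$, $\partial_\omega\mu\in\mathcal S^0_{1,0}+\mathcal S^0_{0,1}$ via $q'(\omega)\in\mathcal S^0_{1,0}$ from Lemma \ref{lem:pdsymbol}, $\partial_\lambda\mu\in\mathcal S^0_{0,1}$, and $\widetilde\nabla_{\widetilde r}\mu\in\mathcal S^1_{0,1}$), and with the elementary product rule $\mathcal S^k_{i,j}\cdot\mathcal S^0_{i',j'}\subseteq\mathcal S^k_{i+i',j+j'}$, each identity follows by straightforward index arithmetic. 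The only less routine case is $\partial_\omega$, where the leading contribution $-A^{-1}q'$ of $\partial_\omega\mu$ produces the $\mathcal S^k_{i+1,j-1}$ term; and the last identity, whose three summands correspond respectively to $d_{\widetilde r}\widehat f$, the $\mu$-chain $\partial_\mu\widehat f\cdot\<\nabla_{\widetilde r}\mu,\sigma_1\cdot\>$ expanded through $\nabla_{\widetilde r}\mu=S^1_{0,1}+S^0_{0,0}\widetilde r$, and the $\rho$-chain $\partial_\rho\widehat f\cdot\<\widetilde r,\sigma_1\cdot\>$ (absorbed via the already established $\partial_\rho$ identity). The main technical point, and the part that requires genuine smoothness rather than just the size bound, is the refined estimate $\partial_\tau\widehat f\in\mathcal S^k_{i,j-1}$ rather than the naive $\mathcal S^k_{i-1,j-1}$ for $\tau\in\{\lambda,\mu,\widetilde r\}$, secured by trading $|\tau|\lesssim\Delta$ for $|\tau|\lesssim\delta$ when writing the factor $\Delta^i$.
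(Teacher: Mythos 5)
Your proposal is correct and takes essentially the same route as the paper: the paper's proof likewise writes the chain rule $\partial_Q S^0_{i,j}=\partial_Q\hat f+\partial_\mu\hat f\,\partial_Q\mu$, invokes \eqref{mudsymbol} for the derivatives of $\mu$, attributes the index $I(i)$ to the regularity of $\hat f$ in the half-power variables $Q,\rho$, and dispatches the remaining identities ``in a similar manner.'' Your write-up merely makes explicit the index arithmetic, the trade $|\lambda|,|\mu|,\|\widetilde r\|_{\Sigma^{-s}}\le\delta\le\Delta$, and the three-term structure of the $d_{\widetilde r}$ identity that the paper leaves to the reader.
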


\begin{proof}
Let $S^0_{i,j}=\hat f(Q,\omega,\lambda,\mu(Q,\omega,\lambda,Q(\widetilde r),\widetilde r),Q(\widetilde r),\widetilde r)$.
Then, by \eqref{mudsymbol}
\begin{align*}
\partial_Q S^0_{i,j}=\partial_Q \hat f + \partial_\mu \hat f \partial_Q \mu=S^0_{I(i),j}+S^0_{i,j-1}S^0_{0,0}.
\end{align*}
Here, notice that $I(i)$ appears because of the regularity of $\hat f$.
We obtain the others by similar manner.
\end{proof}

We next consider a $C^\infty$-diffeomorphism $\mathfrak F\in C^\infty(D_{\Hrad^1}(\mathcal T_{\omega_*},\delta),\Hrad^1(\R^3,\widetilde \C))$ s.t.
\begin{align}\label{Ferror}&
\mathfrak F^* Q=Q,\ \mathfrak F^*\omega=\omega+S^0_{0,l},\ \mathfrak F^* \lambda=\lambda+S^0_{1,l},\ \mathfrak F^* \widetilde r = e^{\im S^0_{0,l} \sigma_3}(\widetilde r + S^1_{\sigma(l),l})\\& \label{eq:sigma(i)}\text{for some $l\geq 1$ and  }  \sigma(l)= \left\{\begin{matrix}
  1  \text{  for $l= 1,2$ }  ,   \\
0   \text{  for $l\geq 3$. }
\end{matrix}\right.
\end{align}
We note that all the transforms we use in this paper satisfy \eqref{Ferror} for some $l\geq 1$.
In particular, $\mathfrak  F_0$ in Proposition \ref{prop:3}, $\phi_1$ in Lemma \ref{lem:nform1} and $\phi_2$ in Lemma \ref{lem:nform2} satisfy \eqref{Ferror} with $l=1$ and $\phi_{N+1}$ appearing in Lemma \ref{lem:ind} satisfies \eqref{Ferror} with $l=N$.
It is obvious that we have
\begin{align}\label{Sijtrans}
\mathfrak  F^* S^k_{i,j}=S^k_{i,j}.
\end{align}
We study the expansion of $\mathfrak  F^* Q(\widetilde r)$ and $\mathfrak  F^* \mu$.

\begin{lemma}\label{lem:muQtrans}
Let $l\geq 1$, $\delta>0$ and let $\mathfrak  F\in C^\infty(D_{\Hrad^1}(\mathcal T_{\omega_*},\delta),\Hrad^1(\R^3,\widetilde \C))$ be a $C^\infty$-diffeomorphism satisfying \eqref{Ferror}.
Then, for the $\sigma(l)$  in \eqref{eq:sigma(i)} we have
\begin{align*}
\mathfrak  F^* Q(\widetilde r)=Q(\widetilde r)+S^0_{\sigma(l),l+1},\quad \mathfrak  F^* \mu =\mu + S^0_{1,l}.
\end{align*}
\end{lemma}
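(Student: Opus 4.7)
The two assertions are essentially chain-rule computations that exploit (a) the gauge invariance of $Q$ in its argument and (b) the explicit first derivatives of $\mu$ recorded in \eqref{mudsymbol}. My first step is to dispose of the phase factor in $\mathfrak F^*\widetilde r$. Since $Q(\widetilde v)=\tfrac12\int |v|^2\,dx$ depends only on $|v|^2$, and since $e^{\im\theta\sigma_3}\widetilde v$ corresponds to the function $e^{\im\theta}v$, we have $Q(e^{\im\theta\sigma_3}\widetilde v)=Q(\widetilde v)$ for any scalar $\theta$. Applying this with $\theta=S^0_{0,l}$ gives
\begin{align*}
\mathfrak F^* Q(\widetilde r)=Q(\mathfrak F^*\widetilde r)=Q\bigl(\widetilde r+S^1_{\sigma(l),l}\bigr)=Q(\widetilde r)+\bigl\langle \widetilde r,\sigma_1 S^1_{\sigma(l),l}\bigr\rangle+Q\bigl(S^1_{\sigma(l),l}\bigr).
\end{align*}
The middle term is $S^0_{\sigma(l),l+1}$ by the symbolic multiplication rules (testing an $S^1_{\sigma(l),l}$ element against $\widetilde r$ raises the $j$-index by one), and the last term is $S^0_{2\sigma(l),2l}\subseteq S^0_{\sigma(l),l+1}$ since $l\ge1$. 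This proves the first identity.

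For the second identity I write
\begin{align*}
\mathfrak F^*\mu-\mu=\int_0^1\frac{d}{dt}\mu\bigl(Q,\omega+tS^0_{0,l},\lambda+tS^0_{1,l},Q(\widetilde r)+tS^0_{\sigma(l),l+1},\widetilde r(t)\bigr)\,dt,
\end{align*}
along the straight-line path $\widetilde r(t)=\widetilde r+t(\mathfrak F^*\widetilde r-\widetilde r)$, having first used the previously-established first identity to replace $\mathfrak F^*Q(\widetilde r)$. The chain rule turns this into a sum of four contributions corresponding to $\partial_\omega\mu,\partial_\lambda\mu,\partial_\rho\mu$ and $\langle\nabla_{\widetilde r}\mu,\sigma_1\cdot\rangle$; note that the $Q$-term drops out because $\mathfrak F^*Q=Q$, and the symbol classes of the derivatives of $\mu$ are preserved along the path by \eqref{Sijtrans}. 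Invoking \eqref{mudsymbol} and $q'(\omega)=S^0_{1,0}$ from Lemma \ref{lem:pdsymbol}, the pointwise products are:
\begin{align*}
\partial_\omega\mu\cdot S^0_{0,l}&\in (S^0_{1,0}+S^0_{0,1})\cdot S^0_{0,l}=S^0_{1,l}+S^0_{0,l+1},\\
\partial_\lambda\mu\cdot S^0_{1,l}&\in S^0_{0,1}\cdot S^0_{1,l}=S^0_{1,l+1},\\
\partial_\rho\mu\cdot S^0_{\sigma(l),l+1}&\in S^0_{\sigma(l),l+1}.
\end{align*}
All of these lie in $S^0_{1,l}$, using the elementary embedding $S^0_{0,j+1}\subseteq S^0_{1,j}$ coming from the fact that the ``second factor'' $|\lambda|+|\mu|+\|\widetilde r\|$ in Definition \ref{def:symbols} is bounded above by the ``first factor''.

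The only slightly delicate part is the $\widetilde r$-contribution, because $\mathfrak F^*\widetilde r-\widetilde r$ contains both the pure deformation $S^1_{\sigma(l),l}$ and the phase perturbation $(e^{\im S^0_{0,l}\sigma_3}-1)\widetilde r$. For the first piece I use $\nabla_{\widetilde r}\mu=S^1_{0,1}+S^0_{0,0}\widetilde r$ and the pairing rules to get $\langle S^1_{0,1}+S^0_{0,0}\widetilde r,\sigma_1 S^1_{\sigma(l),l}\rangle\in S^0_{\sigma(l),l+1}\subseteq S^0_{1,l}$. For the second piece, I expand $e^{\im S^0_{0,l}\sigma_3}-1=\im S^0_{0,l}\sigma_3\int_0^1 e^{\im s S^0_{0,l}\sigma_3}\,ds$, so that the contribution $\langle\nabla_{\widetilde r}\mu,\sigma_1(e^{\im S^0_{0,l}\sigma_3}-1)\widetilde r\rangle$ carries a factor $S^0_{0,l}$ multiplied by an inner product of the form $\langle S^1_{0,1}+S^0_{0,0}\widetilde r,\text{(bounded op)}\,\widetilde r\rangle\in S^0_{0,2}$, giving a total in $S^0_{0,l+2}\subseteq S^0_{1,l}$. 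The main obstacle is precisely this bookkeeping around the phase factor; once these embeddings are verified, summing the four contributions yields $\mathfrak F^*\mu-\mu\in S^0_{1,l}$, completing the proof.
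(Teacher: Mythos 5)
Your proposal is correct and follows essentially the same route as the paper: the first identity is obtained exactly as in the text (the phase drops out since $e^{\im S^0_{0,l}\sigma_3}\sigma_1 e^{\im S^0_{0,l}\sigma_3}=\sigma_1$, then bilinear expansion gives $S^0_{\sigma(l),l+1}+S^0_{2\sigma(l),2l}$), and the second is the same Taylor/chain-rule expansion of $\mu$ in the transformed arguments using the derivative symbols \eqref{mudsymbol} of Lemma \ref{lem:muexpand}, merely written in integral form along a straight-line path. The symbol bookkeeping, including the embedding $\mathcal S^0_{i,j+1}\subset\mathcal S^0_{i+1,j}$ and the treatment of the phase factor acting on $\widetilde r$, matches the paper's argument.
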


\begin{proof}
First, since  $ e^{\im S^0_{0,l} \sigma_3}\sigma _1e^{\im S^0_{0,l} \sigma_3}=\sigma _1$ by $\sigma_3 \sigma _1=-\sigma_1 \sigma _3$, we have
\begin{align*}
\mathfrak  F^* Q(\widetilde r)=\<\widetilde r+ S^1_{\sigma(l),l},\sigma_1\(\widetilde r+ S^1_{\sigma(l),l}\)\>=Q(\widetilde r)+S^0_{\sigma(l),l+1}+S^0_{2\sigma(l),2l}=Q(\widetilde r)+S^0_{\sigma(l),l+1}.
\end{align*}
Next, from Lemma \ref{lem:muexpand}, we have
\begin{align*}
\mathfrak  F^*\mu&=\mu\(Q,\omega+S^0_{0,l},\lambda+S^0_{1,l},Q(\widetilde r)+S^0_{\sigma(l),l+1},\widetilde r+(S^0_{0,l}\sigma_3+S^0_{0,2l})\widetilde r+S^1_{\sigma(l),l}\)\\&
=\mu(Q,\omega,\lambda,Q(\widetilde r),\widetilde r)+S^0_{1,0}S^0_{0,l}+S^0_{0,1}S^0_{1,l}+S^0_{0,0}S^0_{\sigma(l),l+1}+S^0_{0,l+1}\\&=S^1_{1,l}.
\end{align*}
Therefore, we have the conclusion.
\end{proof}

\subsection{Darboux coordinates}
\label{sec:Darboux}

As usual in the proof of Darboux theorem, we start from studying the 1--form  $\Gamma(\widetilde u)=\frac12 \Omega(\widetilde u,\cdot)$ introduced in Sect.\  \ref{subsec:Hamstruc}.
\begin{lemma}\label{lem:gamma}
We have
\begin{equation}\label{eq:gamma2} \begin{aligned}  \Gamma (\widetilde{u})=\Gamma _0(\widetilde{u})
 +\Omega(S^0_{0,1}\widetilde{r}+S^1_{1,1}, d\widetilde{r}) +S^0_{1,1} d \omega +S^0_{0,2}d \lambda + (\text{exact})_1
  \end{aligned}
\end{equation}
where $(\text{exact})_1
:=d \left [ \Omega(\widetilde{\phi}_\omega,P  \widetilde{r})+  \Omega(\widetilde{\phi}_\omega,\Psi_3) \lambda + B \lambda \mu  \right ]$ and $\Gamma_0$ is defined in \eqref{eq:gamma1}.

\end{lemma}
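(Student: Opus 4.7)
The plan is to compute $\Gamma(\widetilde u) = \frac{1}{2}\Omega(\widetilde u, d\widetilde u)$ directly in the modulation coordinates $(\vartheta, Q, \omega, \lambda, \widetilde r)$ and then regroup the result into $\Gamma_0$, an exact differential, and $\mathcal S^k_{i,j}$--small remainders. Writing $\widetilde u = e^{\im \vartheta \sigma_3}\underline u$ with $\underline u := \widetilde \phi_\omega + \lambda \Psi_3 + \mu \Psi_4 + P(\omega)\widetilde r$, and using $\sigma_1 \sigma_3 = -\sigma_3 \sigma_1$ to deduce $\sigma_1 e^{\im \vartheta \sigma_3} = e^{-\im \vartheta \sigma_3}\sigma_1$, one sees that $\Omega$ is invariant under the diagonal action of $e^{\im \vartheta \sigma_3}$, while $\Omega(\widetilde u, \im \sigma_3 \widetilde u) = \int |u|^2\, dx = 2Q$. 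Thus
\[
\Gamma = Q\, d\vartheta + \tfrac{1}{2}\Omega(\underline u, d\underline u).
\]

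The next step is to expand
\[
d\underline u = \Psi_2\, d\omega + \Psi_3\, d\lambda + \Psi_4\, d\mu + P(\omega)\, d\widetilde r + \bigl[\lambda\, \partial_\omega \Psi_3 + \mu\, \partial_\omega \Psi_4 + (\partial_\omega P)\widetilde r\bigr] d\omega,
\]
evaluate each bilinear pairing $\Omega(\underline u, \cdot)$, and invoke Proposition~\ref{prop:1}. The key cancellations are
\[
\Omega(P(\omega)\widetilde r, \Psi_j(\omega)) = \Omega(P_d(\omega) P(\omega)\widetilde r, \Psi_j(\omega)) = 0, \qquad \Omega(\Psi_j, \partial_\omega \Psi_j) = \tfrac{1}{2}\partial_\omega \Omega(\Psi_j, \Psi_j) = 0,
\]
the first from the $\Omega$--symmetry of $P_d$ together with $P_d P = 0$. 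Using Proposition~\ref{prop:1} they yield $\Omega(\underline u, \Psi_2) = -\lambda A$, $\Omega(\underline u, \Psi_3) = \Omega(\widetilde \phi_\omega, \Psi_3) - \mu B$, $\Omega(\underline u, \Psi_4) = \lambda B$, and, via Lemma~\ref{lem:pdsymbol} giving $P(\omega) - P(\omega_*) = -(P_d(\omega) - P_d(\omega_*)) \in S^2_{1,0}$ on the range of $P(\omega_*)$,
\[
\Omega(\underline u, P\, d\widetilde r) = \Omega(\widetilde \phi_\omega, P\, d\widetilde r) + \Omega(\widetilde r, d\widetilde r) + \Omega(S^1_{1,1}, d\widetilde r).
\]

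One then eliminates $d\mu$ by Lemma~\ref{lem:muexpand}:
\[
d\mu = \partial_Q \mu\, dQ + \bigl(-A^{-1}q'(\omega) + S^0_{0,1}\bigr) d\omega + S^0_{0,1}\, d\lambda + \langle S^1_{0,1} + S^0_{0,0}\widetilde r, \sigma_1 d\widetilde r\rangle,
\]
and substitutes into $\tfrac{1}{2}\Omega(\underline u, \Psi_4)\, d\mu = \tfrac{1}{2}\lambda B\, d\mu$. Combined with the contributions of $d[B\lambda\mu]$, $d[\lambda\, \Omega(\widetilde \phi, \Psi_3)]$ and $d\Omega(\widetilde \phi, P\widetilde r)$ that make up $(\text{exact})_1$, the pieces proportional to $-\lambda A\, d\omega$, $\lambda B \partial_Q \mu\, dQ$ and $\tfrac{1}{2}\Omega(\widetilde r, d\widetilde r)$ assemble into $\Gamma_0$. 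The cross terms $\lambda\, \Omega(\underline u, \partial_\omega \Psi_3)\, d\omega$, $\mu\, \Omega(\underline u, \partial_\omega \Psi_4)\, d\omega$ and $\Omega(\underline u, (\partial_\omega P)\widetilde r)\, d\omega$, after the $P_d + P$ decomposition, produce $\mathcal S^0_{1,1} d\omega$ remainders (using $\lambda\mu \in S^0_{1,1}$ via $\mu = S^0_{2,0} + S^0_{0,2}$ from Lemma~\ref{lem:muexpand}, and the vanishing of $\Omega(\Psi_j,\partial_\omega \Psi_j)$), together with the non-error pieces $\lambda A\, d\omega$, $\lambda\, \Omega(\widetilde \phi, \partial_\omega \Psi_3)\, d\omega$ and $\Omega(\widetilde \phi, (\partial_\omega P)\widetilde r)\, d\omega$, which precisely match the $d\omega$--components of $d[\lambda\, \Omega(\widetilde \phi, \Psi_3) + \Omega(\widetilde \phi, P\widetilde r)]$ in $(\text{exact})_1$.

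The main obstacle is the bookkeeping: tracking the precise $\mathcal S^k_{i,j}$--orders of the roughly sixteen bilinear pairings produced by the expansion of $\Omega(\underline u, d\underline u)$, systematically using Lemmas~\ref{lem:pdsymbol}--\ref{difsymbol}, and verifying that the residual non-error terms recombine into the exact differential $(\text{exact})_1$. This recombination is essentially forced by the need to dispose of 1--forms such as $\Omega(\widetilde \phi_\omega, \Psi_3)\, d\lambda$ (of order $S^0_{1,0}\, d\lambda$, strictly outside the admissible $S^0_{0,2}\, d\lambda$ error class) and $\Omega(\widetilde \phi_\omega, (\partial_\omega P)\widetilde r)\, d\omega$ (of order $S^0_{0,1}\, d\omega$, outside $S^0_{1,1}\, d\omega$), which cannot be absorbed into any listed error symbol and must instead be packaged as components of an exact differential.
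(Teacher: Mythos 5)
Your proposal follows essentially the same route as the paper's proof: expand $\Gamma=Q\,d\vartheta+\tfrac12\Omega(\underline u,d\underline u)$ in the modulation coordinates, use the orthogonality relations of Proposition \ref{prop:1} together with the $\Omega$--symmetry of $P_d$ and $P_dP=0$, eliminate $d\mu$ via Lemma \ref{lem:muexpand}, and regroup the leftover non-error $1$--forms into the exact differential $(\text{exact})_1$, so it is correct and matches the paper. The only slip is the justification $\Omega(\Psi_j,\partial_\omega\Psi_j)=\tfrac12\partial_\omega\Omega(\Psi_j,\Psi_j)$, which is vacuous for an antisymmetric form; these pairings do vanish, but by the reality/imaginarity structure of the $\Psi_j$ (as in Lemma \ref{lem:6}), and in any case they enter only with $\lambda^2,\mu^2,\lambda\mu$ prefactors and so lie in the admitted error classes, leaving the argument unaffected.
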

\proof  It is a simple linear algebra argument. Recalling the definition of $ \underline{u}$ in \eqref{eq:vfieldcomp}, using the functions $(\vartheta,\omega,\lambda,\mu, {\widetilde{r}})$ we have
\begin{align*}
2\Gamma ( \widetilde{u} )=2Q d \vartheta + \Omega (\underline{u},\partial_\omega \underline{u}) d \omega + \Omega(\underline{u}, \Psi_3)d \lambda  + \Omega(\underline{u}, \Psi_4)d \mu + \Omega(\underline{u}, P  d\widetilde{r}).
\end{align*}
Since $\partial_\omega \underline{u} = \Psi_2+ \lambda  \partial_\omega \Psi_3+ \mu \partial_\omega \Psi_4 -\partial_\omega P_d \widetilde{r}$, we have
\begin{align*}
\Omega(\underline{u}, \partial_\omega\underline{u})=&\lambda \Omega(\widetilde{\phi}_\omega,\partial_\omega\Psi_3)- \Omega(\widetilde{\phi}_\omega,\partial_\omega P_d \widetilde{r})-A \lambda + S_{0,2}^{0}
\end{align*}
and
\begin{align*}
\Omega(\underline{u}, \Psi_3)&=\Omega(\widetilde{\phi}_\omega,\Psi_3)-\mu \Omega(\Psi_3,\Psi_4)=\Omega(\widetilde{\phi}_\omega,\Psi_3)-B \mu ,\\
\Omega(\underline{u}, \Psi_4)&=\lambda \Omega(\Psi_3,\Psi_4)=B \lambda ,\\
\Omega(\underline{u}, P  d\widetilde{r})&=\Omega(\widetilde{\phi}_\omega,P  d\widetilde{r})+ \Omega(P  \widetilde{r},d\widetilde{r})=
d[ \Omega(\widetilde{\phi}_\omega, P  \widetilde{r})]+\Omega(\widetilde{\phi}_\omega,\partial_\omega P_d \widetilde{r})d \omega + \Omega(P\widetilde{r},d\widetilde{r}).
\end{align*}
Here, recall $A=A(\omega)$ and $B=B(\omega)$ are given in Proposition \ref{prop:1} and \eqref{70} respectively.
Therefore, we have
\begin{align*}
2\Gamma (\underline{u})=&2Q d\vartheta + \lambda\Omega  (\widetilde{\phi}_\omega,   \partial_\omega \Psi_3 )d \omega-\lambda A(\omega) d \omega+ S^0_{0,2}d \omega\\&
+
\Omega(\widetilde{\phi}_\omega,\Psi_3) d \lambda-\mu B(\omega)d \lambda+ \lambda B(\omega)d \mu + d \left [ \Omega(\widetilde{\phi}_\omega, P \widetilde{r}) \right ]+ \Omega(P \widetilde{r},d\widetilde{r}) .
\end{align*}
We   use the following identities:
\begin{equation*}  \begin{aligned}   \lambda\Omega(\widetilde{\phi}_\omega,\partial_\omega \Psi_3)  d \omega+\Omega(\widetilde{\phi}_\omega,\Psi_3) d \lambda &=d \(\Omega(\widetilde{\phi}_\omega,\Psi_3)\lambda \) - A\lambda   d \omega ,
\\
 -B \mu  d \lambda  &=-
 d\(B \lambda \mu  \) +B\lambda  d \mu +S^0_{0,2} d \omega .
\end{aligned}
\end{equation*}
Substituting we obtain by Lemma \ref{lem:pdsymbol},
\begin{align*}
2\Gamma( \underline{u}  )=&2Q d\vartheta -2 A\lambda  d \omega+2 B\lambda  d\mu + \Omega(\widetilde{r},d\widetilde{r})
\\& + \Omega(S^1_{1,1}, d\widetilde{r}) +S^0_{0,2} d \omega
+d\left [ \Omega(\widetilde{\phi}_\omega,P  \widetilde{r})+ \Omega(\widetilde{\phi}_\omega,\Psi_3) \lambda - B\lambda \mu \right ] .\end{align*}
From \eqref{mudsymbol}, we have
\begin{align*}
B\lambda d\mu &=B\lambda \(\partial_Q \mu dQ+S^0_{1,0}d \omega + S^0_{0,1}d \lambda + S^0_{0,0}\< \widetilde{r},\sigma_1 d\widetilde{r} \>+\<S^1_{0,1}, \sigma_1 d\widetilde{r}\>\)\\&=
B\lambda \partial_Q \mu dQ+S^0_{1,1}d \omega + S^0_{0,2}d \lambda + S^0_{0,1}\<\widetilde{r},\sigma_1 d\widetilde{r}\>+\<S^1_{0,2},\sigma_1 d\widetilde{r}\>
\end{align*}
Therefore, we have  the following, which implies \eqref{eq:gamma2}:
\begin{align}
2\Gamma (\underline{u})=&2Q d\vartheta -2A \lambda  d \omega + \Omega(\widetilde{r},\sigma_1 d\widetilde{r}) +2B\lambda  \partial_Q \mu dQ \label{gamma3}\\&
+ \<S^0_{0,1}\widetilde{r}+S^1_{1,1}, \sigma_1 d\widetilde{r}\> +S^0_{1,1} d \omega +S^0_{0,2}d \lambda
+d( \Omega(\widetilde{\phi}_\omega,P  \widetilde{r})+ \Omega(\widetilde{\phi}_\omega,\Psi_3) \lambda - B \lambda \mu ).\nonumber
\end{align}
\qed

We now state the Darboux theorem, which corresponds to the first half of Theorem \ref{thm:normal}.
Recall $\mathcal T_{\omega_*}=\{e^{\im \theta \sigma_3}\widetilde \phi_{\omega_*}\ |\ \theta\in\R\}$.

\begin{proposition}[Darboux theorem]\label{prop:3}
There exists $\delta_0>0$ s.t.
there exists a $C^\infty$--diffeomorphism $\mathfrak F_0 \in C^\infty(D_{\Hrad^1}(\mathcal T_{\omega_*},\delta_0),\Hrad^1(\R^3,\widetilde \C))$ onto a neighborhood of $\mathcal T_{\omega_*}$ in $\Hrad^1$ s.t.\
\begin{align}\label{darerror}
\mathfrak F_0^*\omega=\omega+S^0_{0,2},\ \mathfrak F_0^* \lambda=\lambda+S^0_{1,1},\ \mathfrak F_0^*\widetilde r =e^{\im S^{0}_{0,1}\sigma_3}\(\widetilde r + S^1_{1,1}\)\text{ and }\mathfrak F_0^* Q=Q
\end{align}
 and
\begin{align*}
\mathfrak F_0^* \Omega = \Omega_0,
\end{align*}
Furthermore, we have $\mathfrak F_0^* \mu =\mu +S^0_{1,1}$  and $\mathfrak F_0^* Q(\widetilde{r}) =Q(\widetilde{r}) +S^0_{1,2}$.
\end{proposition}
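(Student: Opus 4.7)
The plan is to apply Moser's deformation argument to Lemma~\ref{lem:gamma}. Set
\[
\alpha := \langle S^0_{0,1}\widetilde r + S^1_{1,1}, \sigma_1\, d\widetilde r\rangle + S^0_{1,1}\, d\omega + S^0_{0,2}\, d\lambda,
\]
so that $\Gamma - \Gamma_0 = \alpha + (\text{exact})_1$ and, taking exterior derivatives, $\Omega - \Omega_0 = d\alpha$. Since the coefficients of $\alpha$ all vanish on $\mathcal T_{\omega_*}$, the interpolation $\Omega_t := \Omega_0 + t\, d\alpha$, $t\in[0,1]$, is a smooth family of closed 2-forms coinciding with $\Omega_0$ on $\mathcal T_{\omega_*}$. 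Because $\Omega_0$ at leading order equals $dQ\wedge d\vartheta + A(\omega_*)\, d\omega\wedge d\lambda + \Omega(d\widetilde r, d\widetilde r)$, which is manifestly non-degenerate, a continuity argument gives non-degeneracy of every $\Omega_t$ on a single neighborhood $\mathcal U$ of $\mathcal T_{\omega_*}$ uniformly in $t$.

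Next, I would solve $\iota_{X_t}\Omega_t = -\alpha$ for a time-dependent vector field $X_t$ on $\mathcal U$, and let $\varphi_t$ denote its flow with $\varphi_0 = \mathrm{id}$. The standard computation
\[
\frac{d}{dt}(\varphi_t^*\Omega_t) = \varphi_t^*\bigl(d\iota_{X_t}\Omega_t + d\alpha\bigr) = 0
\]
then yields $\varphi_1^*\Omega = \Omega_0$, so $\mathfrak F_0 := \varphi_1$ has the required symplectic property.

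To verify the expansions in \eqref{darerror}, I would decompose $X_t$ in the coordinates $(\vartheta, Q, \omega, \lambda, \widetilde r)$ by the same linear algebra underlying \eqref{eq:HamVector}--\eqref{eq:HamVector1}, treating the correction $t\, d\alpha$ in $\Omega_t$ as a perturbation of $\Omega_0$ and inverting by a Neumann series so as to stay inside the symbol classes. This should give
\[
(X_t)_Q = 0,\quad (X_t)_\omega = S^0_{0,2},\quad (X_t)_\lambda = S^0_{1,1},\quad (X_t)_{\widetilde r} = S^1_{1,1} + S^0_{0,1}\widetilde r,\quad (X_t)_\vartheta = S^0_{0,1}.
\]
The crucial identity $(X_t)_Q = 0$ follows because $\alpha$ has no $d\vartheta$ component, and the only $d\vartheta$ term in $\Omega_t$ occurs in $dQ\wedge d\vartheta$ with coefficient $1$; this forces $\mathfrak F_0^* Q = Q$ exactly. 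Integrating the flow on $[0,1]$ and tracking the symbol indices via Lemma~\ref{difsymbol} yields $\mathfrak F_0^*\omega = \omega + S^0_{0,2}$ and $\mathfrak F_0^*\lambda = \lambda + S^0_{1,1}$. The phase factor $e^{\im S^0_{0,1}\sigma_3}$ in $\mathfrak F_0^*\widetilde r$ emerges by combining the $\vartheta$-shift $S^0_{0,1}$ with the representation $\widetilde u = e^{\im \vartheta\sigma_3}(\widetilde\phi_\omega + \lambda\Psi_3 + \mu\Psi_4 + P\widetilde r)$, while the residual shift $S^1_{1,1}$ absorbs the contributions of the shifts in $\omega, \lambda, \mu$ together with the $\widetilde r$-component of the flow. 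Finally, the expansions $\mathfrak F_0^*\mu = \mu + S^0_{1,1}$ and $\mathfrak F_0^* Q(\widetilde r) = Q(\widetilde r) + S^0_{1,2}$ follow from Lemma~\ref{lem:muQtrans} applied to $\mathfrak F_0$ with $l = 1$, once the form of $\mathfrak F_0$ in \eqref{Ferror} has been established.

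The main technical obstacle is the symbol-class bookkeeping. One must verify that the inverse pairing $\alpha \mapsto X_t$ associated with $\Omega_t$ preserves the classes $\mathcal S^n_{i,j}$ uniformly in $t \in [0,1]$, and then that these classes are stable under composition with the flow $\varphi_t$. Existence of $\varphi_t$ up to $t=1$ on a neighborhood of $\mathcal T_{\omega_*}$ independent of $t$ is a standard Gronwall argument, using the smallness of the coefficients of $X_t$ near $\mathcal T_{\omega_*}$ that was just established.
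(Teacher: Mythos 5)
Your proposal follows essentially the same route as the paper: Moser's deformation argument applied to the decomposition of Lemma \ref{lem:gamma}, solving $i_{X_t}\Omega_t=-\widetilde\Gamma$ componentwise via the linear algebra behind \eqref{eq:HamVector} and a Neumann series (this is the paper's Lemma \ref{lem:dar1}), then integrating the flow and invoking Lemma \ref{lem:muQtrans} for the statements about $\mu$ and $Q(\widetilde r)$. One small correction: the $\widetilde r$-component of the deformation field is $S^0_{0,1}\,\im\sigma_3\,\widetilde r + S^1_{1,1}$ (the $\im\sigma_3$ comes from the pairing $\<\im\sigma_3\,\cdot\,,\sigma_1\,\cdot\>$ in $\Omega_0$), and it is the integration of this $\im\sigma_3\widetilde r$ term along the flow — not the $\vartheta$-shift combined with the modulation ansatz — that produces the factor $e^{\im S^0_{0,1}\sigma_3}$ in $\mathfrak F_0^*\widetilde r$.
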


\begin{remark}
The coordinates $(\vartheta, Q,\omega,\lambda,\widetilde r)=(\vartheta(u), Q(u),\omega(u),\lambda(u),\widetilde r(u))$ in \eqref{darerror} are given by $(\vartheta(u), Q(u),\omega(u),\lambda(u),\widetilde r(u))=\mathcal  F_2^{-1}(u)$, where $\mathcal  F_2$ is the modulation coordinate given in Proposition \ref{prop:32}.
We make no care about $\mathfrak F_0^* \vartheta-\vartheta$, even though we can compute  it by looking in to the proof.
\end{remark}

For the proof of Proposition \ref{prop:3}, we consider the following lemma.

\begin{lemma}\label{lem:dar1}
There exists $\delta>0$ s.t.\ there exists $\mathcal X\in C^\infty(D_{\R}(0,2)\times D_{\Hrad^1}(0,\delta),\Hrad^1)$ s.t.
\begin{align}\label{dar1est}
\mathcal X(s)_Q=0, \quad \mathcal X(s)_\omega=S^0_{0,2}, \quad \mathcal X(s)_\lambda=S^0_{1,1},\quad \mathcal X(s)_{\widetilde r}=S^0_{0,1}\im \sigma_3 \widetilde r + S^1_{1,1}
\end{align}
and if  $\Omega_s:=\Omega_0+s(\Omega-\Omega_0)$, we have $i_{\mathcal X(s)}\Omega_s= -\widetilde \Gamma$, where
$\widetilde \Gamma = \Gamma-\Gamma_0 -(exact)_1$ and $i_X \Omega _s=\Omega _s(X,\cdot)$.
\end{lemma}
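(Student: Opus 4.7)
The plan is to construct the vector field $\mathcal X(s)$ by inverting the closed 2-form $\Omega_s$ at each point, via the Moser trick adapted to our symbol-class setting. The key algebraic input is the identity $d\widetilde \Gamma = d(\Gamma - \Gamma_0) = \Omega - \Omega_0$ (since $(\text{exact})_1$ contributes nothing after $d$), so that $\Omega_s$ is closed for every $s$. The problem reduces to inverting $\Omega_s$ as a skew bilinear form, uniformly for $s\in[0,2]$, on a sufficiently small neighborhood of $\mathcal T_{\omega_*}$ in $\Hrad^1$, and then verifying that each component of $\mathcal X(s)$ lies in the claimed symbol class.

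First I would exploit the $U(1)$-invariance of the original symplectic form to isolate the $(\vartheta,Q)$-block. The phase rotation $\widetilde u\mapsto e^{\im\theta\sigma_3}\widetilde u$ is a translation in $\vartheta$ in our coordinates and preserves both $\Omega$ and $\Omega_0$; inspection of \eqref{def:Omega0} and of $\Gamma$ shows that both 2-forms have the structure $dQ\wedge d\vartheta + (\text{terms containing no }d\vartheta)$, hence the same is true of each $\Omega_s$. Since $\widetilde\Gamma$, by Lemma \ref{lem:gamma}, has no $d\vartheta$ and no $dQ$ component, testing $i_{\mathcal X(s)}\Omega_s=-\widetilde\Gamma$ against $Y=\partial_\vartheta$ immediately yields $\mathcal X(s)_Q=0$, and testing against $Y=\partial_Q$ determines $\mathcal X(s)_\vartheta$ (an unclaimed component that we can discard).

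On the complementary $(\omega,\lambda,\widetilde r)$-block, the leading part of $\Omega_s$ is the non-degenerate form $A(\omega)\,d\omega\wedge d\lambda +\Omega(d\widetilde r,d\widetilde r)$, whose inverse is given explicitly by the formulas \eqref{eq:HamVector}. The remaining pieces of $\Omega_s$ (both the explicit cross-terms in $\Omega_0$ from \eqref{def:Omega0} and the correction $s(\Omega-\Omega_0)$) have coefficients in symbol classes that vanish on $\mathcal T_{\omega_*}$; so a Neumann series built from the explicit leading inverse converges on $D_{\Hrad^1}(\mathcal T_{\omega_*},\delta)$ for $\delta$ small enough, giving $\mathcal X(s)$ smoothly in $s\in[0,2]$. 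Inverting against $-\widetilde\Gamma$ at leading order produces $\mathcal X(s)_\omega = A(\omega)^{-1}\widetilde\Gamma_\lambda \in \mathcal S^0_{0,2}$, $\mathcal X(s)_\lambda = -A(\omega)^{-1}\widetilde\Gamma_\omega\in \mathcal S^0_{1,1}$ and $\mathcal X(s)_{\widetilde r} \in S^0_{0,1}\im\sigma_3\widetilde r + \mathcal S^1_{1,1}$ (the $\im\sigma_3$ appearing because $\Omega(u,v)=\langle \im\sigma_3 u,\sigma_1 v\rangle$ in our conventions); the successive Neumann corrections only multiply these by bounded factors in $\mathcal S^0_{0,0}$ and hence preserve the estimates \eqref{dar1est}. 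The $C^\infty$ dependence on $s$ and on the base point is inherited from the smoothness of the symbols and of $A(\omega)$.

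The main obstacle is the bookkeeping in infinite dimensions: one must verify that $\Omega-\Omega_0$ extends to a bounded bilinear form between the relevant $\Hrad^1$, $\Sigma^{-s}$ and $\Sigma^s$ spaces, with operator-norm controlled by quantities in symbol classes $\mathcal S^n_{i,j}$ with $i+j\geq 1$, so that the Neumann series converges in the correct topology and the inversion respects the indices $(i,j)$. Once this is done, the closedness of each $\Omega_s$ plays no role in the present lemma, but will be crucial in Proposition \ref{prop:3}, where Cartan's formula $\mathcal L_{\mathcal X(s)}\Omega_s = di_{\mathcal X(s)}\Omega_s = -d\widetilde\Gamma = -(\Omega-\Omega_0)$ allows the flow of $\mathcal X(s)$ to pull $\Omega$ back to $\Omega_0$.
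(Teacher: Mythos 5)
Your proposal is correct and follows essentially the same route as the paper: one writes $i_{\mathcal X(s)}\Omega_s=-\widetilde\Gamma$, uses that $\widetilde\Gamma$ and $\Omega-\Omega_0=d\widetilde\Gamma$ carry no $d\vartheta$ component to conclude $\mathcal X(s)_Q=0$ (and to read off $\mathcal X(s)_\vartheta$ from the $dQ$ coefficient), and then solves the remaining $(\omega,\lambda,\widetilde r)$ components by a Neumann series perturbing the non-degenerate leading block $A(\omega)d\omega\wedge d\lambda+\Omega(d\widetilde r,d\widetilde r)$, tracking the symbol classes throughout. The paper simply carries this out more explicitly, computing $d\widetilde\Gamma$ term by term and splitting $\mathcal X(s)_{\widetilde r}=x(s)\im\sigma_3\widetilde r+\widetilde{\mathcal X}(s)_{\widetilde r}$ before inverting, which is the concrete form of the bookkeeping you flag as the main obstacle.
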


\begin{proof}
We will directly solve the equation
\begin{align}\label{Xs1}
\Omega_0(\mathcal X(s),\cdot)+s\(\Omega(\mathcal X(s),\cdot)- \Omega_0(\mathcal X(s),\cdot)\)=-\widetilde \Gamma.
\end{align}
By \eqref{eq:vfieldcomp}, it suffices to determine $\mathcal X(s)_\tau$ for $\tau=\vartheta,Q,\omega,\lambda,\widetilde r$.
By \eqref{gamma3} in the proof of Lemma \ref{lem:gamma}, we have
\begin{align}\label{Xs2}
\widetilde \Gamma=S^0_{1,1}d\omega +S^0_{0,2}d \lambda +\<S^0_{0,1}\widetilde r +S^1_{1,1},\sigma_1 d\widetilde r\>.
\end{align}
For the first term of the r.h.s.\ of \eqref{Xs1}, we have
\begin{align}
\Omega_0(\mathcal X(s),\cdot)=&\mathcal X(s)_Qd\vartheta-\mathcal X(s)_\vartheta dQ+A(\omega)\mathcal X(s)_\omega d \lambda-A(\omega)\mathcal X(s)_\lambda d\omega+\<\im \sigma_3 \mathcal X(s)_{\widetilde r},\sigma_1 d\widetilde r\>\label{Xs3}\\&+ \(S^0_{0,0}\mathcal X(s)_\lambda + S^0_{0,1}\mathcal X(s)_\omega + \<S^0_{0,1}\widetilde{r}+S^1_{0,1},\sigma_1\mathcal X(s)_{\widetilde r} \>\)  dQ\nonumber\\&+\mathcal X(s)_Q\(S^0_{0,0}d \lambda + S^0_{0,1}d \omega + \<S^0_{0,1}\widetilde{r}+S^1_{0,1},\sigma_1 d\widetilde{r} \>\).\nonumber
\end{align}
By \eqref{Xs2} and Lemma \ref{difsymbol}, we have
\begin{align}
d \left[S^0_{1,1} d \omega\right]&= S^0_{1,0}dQ\wedge d\omega+S^0_{1,0} d\omega \wedge d \lambda +d\omega\wedge \<S^1_{1,0}+S^0_{1,0}\widetilde r,\sigma_1 d\widetilde r\>\label{Xs4}\\
d\left[S^0_{0,2}d \lambda\right]& =S^0_{0,1}dQ\wedge d \lambda+S^0_{1,1}d\omega \wedge d \lambda+d \lambda\wedge \<S^1_{1,1}+S^0_{0,1}\widetilde r,\sigma_1 d\widetilde r\>,\label{Xs5}
\end{align}
and
\begin{align}
&d\<S^0_{0,1}\widetilde{r}+S^1_{1,1}, \sigma_1 d\widetilde{r}\>=dQ\wedge \<S^0_{0,0}\widetilde r+S^1_{1,0},\sigma_1 d\widetilde r\>+d\omega\wedge \<S^0_{1,0}\widetilde r+S^1_{1,0},\sigma_1 d\widetilde r\>+\label{Xs6}\\&
d \lambda\wedge \<S^0_{0,0}\widetilde r+S^1_{1,0},\sigma_1 d\widetilde r\>+\<\widetilde r, \sigma_1 d\widetilde r\>\wedge \<S^1_{0,0},\sigma_1 d\widetilde r\>+\<S^1_{0,1}, \sigma_1 d\widetilde r\>\wedge \<S^1_{1,0},\sigma_1 d\widetilde r\>+\<S^2_{1,0}d \widetilde r, \sigma_1 d\widetilde r\>.\nonumber
\end{align}
Here, notice that terms such as $\<S^0_{0,1}d\widetilde r,\sigma_1 d\widetilde r\>$ and $S^0_{0,0}\<\widetilde r, \sigma_1 d\widetilde r\>\wedge \<\widetilde r, \sigma_1 d\widetilde r\>$ are symmetric and do not appear when we take the exterior derivative.
By \eqref{Xs2}, \eqref{Xs4}--\eqref{Xs6} and $\Omega-\Omega_0=d \widetilde \Gamma$, we have
\begin{align}
&\Omega(\mathcal X(s),\cdot)-\Omega_0(\mathcal X(s),\cdot)=\(S^0_{1,0}\mathcal X(s)_\omega + S^0_{1,0}\mathcal X(s)_\lambda +\<S^1_{1,0}+S^0_{0,0}\widetilde r, \sigma_1 \mathcal X(s)_{\widetilde r}\>\)dQ\label{Xs7}\\&+\(S^0_{1,0}\mathcal X(s)_Q + S^0_{1,0}\mathcal X(s)_\lambda +\<S^1_{1,0}+S^0_{1,0}\widetilde r, \sigma_1 \mathcal X(s)_{\widetilde r}\>\)d\omega
\nonumber\\
&+\(S^0_{1,0}\mathcal X(s)_Q + S^0_{1,0}\mathcal X(s)_\omega +\<S^1_{1,0}+S^0_{0,0}\widetilde r, \sigma_1 \mathcal X(s)_{\widetilde r}\>\)d\lambda
\nonumber\\
&+\mathcal X(s)_Q\<S^1_{0,0}+S^0_{1,0}\widetilde r,\sigma_1 d\widetilde r\> + \mathcal X(s)_\omega\<S^1_{1,0}+S^0_{1,0}\widetilde r,\sigma_1 d\widetilde r\>+\mathcal X(s)_\lambda \<S^1_{1,0}+S^0_{0,0}\widetilde r,\sigma_1 d\widetilde r\>
\nonumber\\
&
+\<S^2_{1,0}\mathcal X(s)_{\widetilde r},\sigma_1 d\widetilde r\>+\<S^1_{0,0},\sigma_1\mathcal X(s)_{\widetilde r}\>\<\widetilde r, \sigma_1 d\widetilde r\>+\<\widetilde r,\sigma_1 \mathcal X(s)_{\widetilde r}\>\<S^1_{0,0},\sigma_1 d\widetilde r\>,\nonumber
\end{align}
where we have used $\<S^1_{i,j} \sigma_1 \mathcal X(s)_{\widetilde r}\>S^1_{j,i}=S^2_{1,0} \mathcal X(s)_{\widetilde r}$ for $(i,j)=(1,0)$ or $(0,1)$.

We substitute \eqref{Xs2}, \eqref{Xs3} and \eqref{Xs7} into \eqref{Xs1} and compare the coefficients of $d\vartheta$, $dQ$, $d \omega$, $d \lambda$ and $d\widetilde r$.
First, looking at the coefficients of $d\vartheta$, we have
\begin{align*}
\mathcal X(s)_Q=0.
\end{align*}
Next, the coefficients of  $d\omega$, $d \lambda$, $d\widetilde r$ have to satisfy
\begin{align}
&-A(\omega)\mathcal X(s)_\lambda +s\( S^0_{1,0}\mathcal X(s)_\lambda +\<S^1_{1,0}+S^0_{1,0}\widetilde r, \sigma_1 \mathcal X(s)_{\widetilde r}\>\)=S^0_{1,1},\label{Xs10}\\
&A(\omega)\mathcal X(s)_\omega+s\(S^0_{1,0}\mathcal X(s)_Q + S^0_{1,0}\mathcal X(s)_\omega +\<S^1_{1,0}+S^0_{0,0}\widetilde r, \sigma_1 \mathcal X(s)_{\widetilde r}\>\)=S^0_{0,2},\label{Xs11}\\
&\im \sigma_3 \mathcal X(s)_{\widetilde r}+s\(
\mathcal X(s)_\omega \(S^1_{1,0}+S^0_{1,0}\widetilde r\)
+\mathcal X(s)_\lambda \(S^1_{1,0}+S^0_{0,0}\widetilde r\)
+S^2_{1,0}\mathcal X(s)_{\widetilde r}\)\label{Xs12}\\&
+s\(
\<S^1_{0,0},\sigma_1\mathcal X(s)_{\widetilde r}\>\widetilde r+\<\widetilde r,\sigma_1 \mathcal X(s)_{\widetilde r}\>S^1_{0,0}
\)=S^0_{0,1} \widetilde r +S^1_{1,1}.\nonumber
\end{align}
We further decompose the third equation by setting $\mathcal X(s)_{\widetilde r}=x(s)\im \sigma_3 \widetilde r + \widetilde {\mathcal X}(s)_{\widetilde r}$.
Then, we have
\begin{align}
&-x(s)+s\(
\mathcal X(s)_\omega S^0_{1,0}
+\mathcal X(s)_\lambda S^0_{0,0}
+
\<S^1_{0,0},\sigma_1\mathcal X(s)_{\widetilde r}\>
\)=S^0_{0,1} ,\label{Xs15}\\
&\im \sigma_3 \widetilde{\mathcal X}(s)_{\widetilde r}+s\(
\mathcal X(s)_\omega S^1_{1,0}
+\mathcal X(s)_\lambda S^1_{1,0}
+S^2_{1,0}\mathcal X(s)_{\widetilde r}+
\<\widetilde r,\sigma_1 \mathcal X(s)_{\widetilde r}\>S^1_{0,0}
\)=S^1_{1,1}.\label{Xs16}
\end{align}
We can solve the system \eqref{Xs10}, \eqref{Xs11}, \eqref{Xs15} and \eqref{Xs16} by Neumann series and obtain
\begin{align*}
\mathcal X(s)_\omega=S^0_{0,2}, \quad \mathcal X(s)_\lambda=S^0_{1,1},\quad \mathcal X(s)_{\widetilde r}=S^0_{0,1}\im \sigma_3 \widetilde r + S^1_{1,1}.
\end{align*}

Finally, we define $\mathcal X(s)_\omega$ by looking at the coefficients of $dQ$.
\begin{align*}
&-\mathcal X(s)_\vartheta+\(S^0_{0,0}\mathcal X(s)_\lambda + S^0_{0,1}\mathcal X(s)_\omega + \<S^0_{0,1}\widetilde{r}+S^1_{0,1},\sigma_1\mathcal X(s)_{\widetilde r} \>\) \\&+s \(S^0_{1,0}\mathcal X(s)_\omega + S^0_{1,0}\mathcal X(s)_\lambda +\<S^1_{1,0}+S^0_{0,1}\widetilde r, \sigma_1 \mathcal X(s)_{\widetilde r}\>\)=0.
\end{align*}
Therefore, we have constructed the desired vector field $\mathcal X(s)$
\end{proof}

We now prove Proposition \ref{prop:3}.
\begin{proof}[Proof of Proposition \ref{prop:3}]
The desired change of coordinate $\mathfrak F_0$ will given by the transform $\left.\mathcal Y(s)\right|_{s=1}$, where $\mathcal Y(s)$ is the solution of
\begin{align}\label{prdar1}
\frac{d}{ds}\mathcal Y(s)= \mathcal X(s)(\mathcal Y(s)),\quad \mathcal Y(0)=\mathrm{Id},
\end{align}
see \cite{Cuccagna12Rend}.
The existence of such $\mathcal Y$ follows from standard argument.
If we set $$\mathcal Y^*(s)\omega=\omega+y_\omega(s),\quad \mathcal Y^*(s)\lambda=\lambda+y_\lambda(s)\text{ and }\mathcal Y^*(s)\widetilde r=e^{\widetilde y_{\widetilde r}(s)\im \sigma_3}(\widetilde r+ y_{\widetilde r}(s)),$$
\eqref{prdar1} can be written as
\begin{align*}
\frac{d}{ds}y_\omega(s)&=S^0_{0,2}\\
\frac{d}{ds}y_\lambda(s)&=S^0_{1,1}\\
\frac{d}{ds}\(e^{\widetilde y_{\widetilde r}(s)\im \sigma_3}(\widetilde r+ y_{\widetilde r}(s))\)&=S^0_{0,1}\im \sigma_3 e^{\widetilde y_{\widetilde r}(s)\im \sigma_3}(\widetilde r+ y_{\widetilde r}(s))+S^1_{1,1}.
\end{align*}
Thus, we have
\begin{align*}
y_\omega=y_\omega(1)=S^0_{0,2},\quad y_\lambda=y_\lambda(1)=S^0_{1,1},\quad y_{\widetilde r}=y_{\widetilde r}(1)=S^1_{1,1}\text{ and }\widetilde y_{\widetilde r}=\widetilde y_{\widetilde r}(1)=S^0_{0,1},
\end{align*}
 by elementary argument.
Further, since $\mathcal X(s)_Q=0$, we have $\mathcal Y^*Q=Q$.

Finally, the last statement of Proposition \ref{prop:3} follows from Lemma \ref{lem:muQtrans}.
% for $\mathcal Y^*\mu$, by \eqref{mudsymbol} and $e^{S^0_{0,1}\im \sigma_3}(\widetilde r+S^1_{1,1})=(1+S^0_{0,2})\widetilde r+ S^0_{0,1}\im \sigma_3 \widetilde r +S^1_{1,1}$, we have
%\begin{align*}
%\mathcal Y^*\mu&=\mu(Q,\omega+S^0_{0,2},\lambda+S^0_{1,1},Q(\widetilde r)+S^0_{1,2},(1+S^0_{0,2})\widetilde r+ S^0_{0,1}\im \sigma_3 \widetilde r +S^1_{1,1})\\&=\mu(Q,\omega,\lambda,Q(\widetilde r),\widetilde r)+S^0_{1,0}  S^0_{0,2}+S^0_{0,1}S^0_{1,1}+S^0_{0,0}S^0_{1,2}+\<S^1_{0,1},\sigma_1\(S^0_{0,1}\im\sigma_3 \widetilde r+S^0_{0,2}\widetilde r +S^1_{1,1}\)\>\\&=\mu+S^0_{1,2}.
%\end{align*}
%This finishes the proof.
\end{proof}

%
%

%
%
%
%
%
%
%
%
%
%\qed

\subsection{Energy expansion}
\label{subsec:en exp1}

%
%First of all,
%we state this elementary lemma, see Lemma 4.5  \cite{Cuccagna12Rend}.
%
%\begin{lemma}
%  \label{lem:Taylor ex}
%For fixed vectors   $ \widetilde{{u}}_0$ and $ \widetilde{{v}}_0 $  in $\widetilde{\C} $
%we have
%\begin{equation}\label{ExpEP0}\begin{aligned}
%    G (|\widetilde{{u}}_0 +\widetilde{{v}}_0 |^2 )  &=G \left (   |\widetilde{{u}}_0 | ^2
%   \right )+ G (| \widetilde{{v}}_0  | ^2) + \sum _{ j =0}^{3}\int _{[0,1]^2} \frac{t^j}{j!}   (\partial _t ^{j+1})_{|t=0} \partial _s   [G (|s\widetilde{{u}}_0 +t\widetilde{{v}}_0 |^2  ) ]\  dt ds  \\&  +\int _{[0,1]^2}  dt ds \int _0^t  \partial _\tau ^5 \partial _s [G (|s\widetilde{{u}}_0 +\tau \widetilde{{v}}_0 | ^2)] \frac{(t-\tau )^3}{3!} \ d\tau  .
%\end{aligned}\end{equation}
%\end{lemma}
%\qed

We have the following expansion.

\begin{lemma}
  \label{lem:expansion}  In the  coordinates  $( \vartheta , Q ,\omega ,\lambda ,\widetilde{r}) $ given in Proposition \ref{prop:32}  we have the expansion
\begin{align}  \label{eq:expan1}
E =& d (\omega ) -\omega Q + \frac{\lambda ^2}{2}  A (\omega )   +  \frac{\mu ^2}{2} B (\omega )  +(\omega-\omega_*)Q(\widetilde r)+\frac 1 2\<H_{\omega_*}  \widetilde{r}, \sigma _1 \widetilde{r}\>+E_P( \widetilde{r})\\& +S_{1,2}^{0} +\mathbf{R}_2+\mathbf{R}_7,\nonumber \end{align}
   where $\mathbf R_k$ are terms given in Definition \ref{def:Rk}

  \end{lemma}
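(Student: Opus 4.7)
The plan is to Taylor-expand the action $S_\omega(\underline u) = E(\underline u)+\omega Q(\underline u)$ at the critical point $\widetilde\phi_\omega$, where $\underline u := \widetilde\phi_\omega+v$ with $v:=\lambda\Psi_3(\omega)+\mu\Psi_4(\omega)+P(\omega)\widetilde r$. Phase invariance of $E$ gives $E(\widetilde u)=E(\underline u)$, and the defining property $Q(\underline u)=Q$ from Lemma \ref{lem:defmu}, combined with $\nabla S_\omega(\widetilde\phi_\omega)=0$ and $\nabla^2 S_\omega(\widetilde\phi_\omega)=H_\omega$ (Lemma \ref{lem:2}), yield
\begin{equation*}
E(\underline u) = d(\omega) - \omega Q + \tfrac12\langle H_\omega v,\sigma_1 v\rangle + R(v),
\end{equation*}
where $R(v)$ is the Taylor remainder of order $\ge 3$ and arises solely from $E_P$, since the kinetic and mass parts are exactly quadratic.

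For the Hessian $\tfrac12\langle H_\omega v,\sigma_1 v\rangle$ I would use the splitting $v=v_d+v_c$ with $v_d:=\lambda\Psi_3+\mu\Psi_4$ and $v_c:=P(\omega)\widetilde r$. For the cross term, combining $\sigma_3\sigma_1 = \im\sigma_2$ with $\Omega(\cdot,\cdot)=\langle\cdot,\sigma_2\cdot\rangle$ gives $\langle H_\omega v_d,\sigma_1 v_c\rangle = \im\,\Omega(\mathcal H_\omega v_d,v_c)$; the commutation $[\mathcal H_\omega,P_d]=0$ (Proposition \ref{prop:1}(4)) puts $\mathcal H_\omega v_d$ in $\mathrm{Ran}\,P_d$, and the symplectic symmetry of $P_d$ forces $\Omega(\mathrm{Ran}\,P_d,\mathrm{Ran}\,P)=0$, so this cross term vanishes. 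For the discrete block, \eqref{46} gives $\mathcal H_\omega\Psi_3 = \im(\Psi_2+a(\omega)\Psi_4)$ and $\mathcal H_\omega\Psi_4=\im\Psi_3$, and combining with the pairings \eqref{50} yields
\begin{equation*}
\tfrac12\langle H_\omega v_d,\sigma_1 v_d\rangle = \tfrac{\lambda^2}{2}A(\omega)+\tfrac{\mu^2}{2}B(\omega)-\tfrac{\lambda^2}{2}a(\omega)B(\omega),
\end{equation*}
the last term lying in $S^0_{1,2}$ since $a(\omega)=-A^{-1}q'(\omega)\in S^0_{1,0}$ by \eqref{4}. For the continuous block, substituting $H_\omega = H_{\omega_*}+(\omega-\omega_*)+(V_\omega-V_{\omega_*})$ and $P(\omega)\widetilde r = \widetilde r - (P_d(\omega)-P_d(\omega_*))\widetilde r$ (with the operator difference in $S^2_{1,0}$ by Lemma \ref{lem:pdsymbol}) produces $\tfrac12\langle H_{\omega_*}\widetilde r,\sigma_1\widetilde r\rangle + (\omega-\omega_*)Q(\widetilde r)$ plus terms in $\mathbf R_2$ collecting both the potential defect $\langle(V_\omega-V_{\omega_*})\widetilde r,\sigma_1\widetilde r\rangle$ and the projection correction, each matching a template of Definition \ref{def:Rk}.

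The main obstacle is decomposing the higher-order remainder $R(v)$ into $E_P(\widetilde r)+\mathbf R_7$ modulo already-absorbed terms. My plan is to set $w:=\widetilde\phi_\omega+v_d-P_d(\omega)\widetilde r$, so that $\underline u = w+\widetilde r$ and $w-\widetilde\phi_\omega\in S^1_{1,0}$, and Taylor-expand $E_P(w+\widetilde r)$ in the $\widetilde r$-direction with Cauchy integral remainder up to fourth order; simultaneously Taylor-expand $E_P(\widetilde r)$ at $0$. The pure-$\widetilde r$ contributions of orders $0$ through $3$ match once each coefficient $\beta_n(|w|^2)$ is expanded about $\widetilde\phi_\omega\to 0$ using the identity \eqref{expandbeta}, while the fourth-order Cauchy remainder, with one slot relabeled via $w=\widetilde\phi_\omega+(w-\widetilde\phi_\omega)$, becomes precisely $\mathbf R_7$. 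Every remaining mixed term carries an explicit factor from $w-\widetilde\phi_\omega$, hence at least one power of $|\omega-\omega_*|+|\lambda|+|\mu|$, and so fits either $\mathbf R_2$ (when it is quadratic in $\widetilde r$) or $S^0_{1,2}$ (when linear or independent of $\widetilde r$). The delicate bookkeeping consists in verifying, via Lemma \ref{lem:muexpand} ($\mu=S^0_{0,2}$) and a systematic use of \eqref{expandbeta}, that the cubic-in-$\widetilde r$ cross terms always carry the required smallness to stay inside $\mathbf R_2\cup S^0_{1,2}$ and do not leak into $\mathbf R_3,\ldots,\mathbf R_6$ at this stage.
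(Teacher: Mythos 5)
Your reduction of $E$ to $d(\omega)-\omega Q+\frac12\langle H_\omega v,\sigma_1 v\rangle+R(v)$ and your treatment of the Hessian (exact vanishing of the discrete--continuous cross term by the $\Omega$-symmetry of $P_d(\omega)$, the block computation giving $\frac{\lambda^2}{2}A+\frac{\mu^2}{2}B+S^0_{1,2}$, and the replacements $H_\omega\to H_{\omega_*}$, $P(\omega)\widetilde r\to\widetilde r$ producing $(\omega-\omega_*)Q(\widetilde r)+\mathbf R_2$) are correct and essentially parallel to the paper's computation of $S_\omega(\Psi)$ and of the quadratic block. The genuine gap is exactly at the step you yourself call the main obstacle. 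If you expand $E_P(w+\widetilde r)$ in the $\widetilde r$-direction to third order with a fourth-order integral remainder, that remainder is $\frac{1}{3!}\int_0^1(1-t)^3\langle\nabla^4E_P(w+t\widetilde r)(\widetilde r,\widetilde r,\widetilde r),\sigma_1\widetilde r\rangle\,dt$: all slots are $\widetilde r$, there is no $S^1_{0,0}$ slot, so it is quartic in $\widetilde r$ and is not of the form \eqref{R7}; ``relabeling one slot via $w=\widetilde\phi_\omega+(w-\widetilde\phi_\omega)$'' acts on the base point, not on a slot, and cannot create the $(S^1_{0,0},\widetilde r,\widetilde r)$ structure of $\mathbf R_7$. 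The order-by-order matching with $E_P(\widetilde r)$ also fails at order three: every term of $\nabla^3E_P(u)$ in \eqref{E3} carries a factor $u$, so the cubic Taylor coefficient of $E_P(\widetilde r)$ at $0$ vanishes, while $\frac16\langle\nabla^3E_P(w)(\widetilde r,\widetilde r),\sigma_1\widetilde r\rangle$ does not; its conversion, and the comparison of the two quartic remainders, would require a further derivative ($\nabla^5E_P$) and produce cubic and quartic terms of $\mathbf R_3$--$\mathbf R_5$ type, which are not admitted by the statement ($S^0_{1,2}+\mathbf R_2+\mathbf R_7$ only). (A small additional slip: Lemmas \ref{lem:muexpand} and \ref{lem:pdsymbol} give $\mu=S^0_{2,0}+S^0_{0,2}$, not $\mu=S^0_{0,2}$.)

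The repair is to stop the $\widetilde r$-expansion one order earlier: write $E_P(w+\widetilde r)$ as its second-order Taylor polynomial at $w$ plus the cubic remainder $\frac12\int_0^1(1-t)^2\langle\nabla^3E_P(w+t\widetilde r)(\widetilde r,\widetilde r),\sigma_1\widetilde r\rangle\,dt$; since $g(0)=0$ forces the orders $0,1,2$ of $E_P$ at the origin to vanish, $E_P(\widetilde r)$ is the same cubic remainder with $w$ replaced by $0$, and the difference of the two remainders equals, by the fundamental theorem of calculus in the base point, $\frac12\int_0^1\int_0^1(1-t)^2\langle\nabla^4E_P(sw+t\widetilde r)(w,\widetilde r,\widetilde r),\sigma_1\widetilde r\rangle\,ds\,dt$, which has exactly the $\mathbf R_7$ structure (the same degenerate reading of $v(t,s)$ that the paper itself uses in \eqref{eq:expand1}); the order $0$, $1$, $2$ mismatches then go into $S^0_{1,2}$ and $\mathbf R_2$ as you describe. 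The paper reaches the same end more directly through the mixed Taylor identity \eqref{Taylor} applied to $F(s,t)=S_\omega(s\Psi+t\widetilde r)$ with $\Psi=w$: the term $F(0,1)=S_\omega(\widetilde r)$ keeps the pure-$\widetilde r$ nonlinearity unexpanded (so $E_P(\widetilde r)$ appears exactly and no matching of orders is needed), every other term carries at least one $\partial_s$, hence a $\Psi$ slot, and the $(1-t)^2\partial_s\partial_t^3$ remainder is $\mathbf R_7$ by definition.
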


\begin{proof}
Set $F(s,t)=S_\omega(s\Psi+t\widetilde r)$, where $S_\omega=E+\omega Q$ and $\Psi=\widetilde\phi_\omega+\lambda\Psi_3(\omega)+\mu \Psi_4(\omega)-P_d\widetilde r=S^1_{0,0}$.
Then, by Taylor expansion, we have
\begin{align}\label{Taylor}
F(1,1)=F(0,1)+F(1,0)+\sum_{j=1}^2\int_0^1 \frac{1}{j!} \partial_s\partial_t^j F(s,0)\,ds+\int_0^1\int_0^1\frac{(1-t)^2}{2}\partial_s\partial_t^3 F(s,t)\,dtds.
\end{align}
Thus, we have
\begin{align}\label{eq:expand1}
S_\omega(\Psi+ \widetilde r) =& S_\omega(\Psi)+S_\omega(\widetilde r)+\sum_{j=1}^2\int_0^1 \frac{1}{j!}\partial_s\left.\partial_t^j\right|_{t=0}S_\omega(s\Psi+t \widetilde r)\,ds\\&+ \int_0^1\int_0^1\frac{(1-t)^2}{2}\<\nabla^4E_P(s\Psi+t\widetilde r)(\Psi,\widetilde r,\widetilde r),\sigma_1\widetilde r\>\,dtds.\nonumber
\end{align}
Notice that we can replace $S_\omega$ by $E_P$ in the second line of \eqref{eq:expand1} because the $S_\omega-E_P$ consist  of quadratic terms.

We expand all terms  of \eqref{eq:expand1}.
For the first, we have
\begin{align*}
S_\omega(\Psi)&=S_\omega(\widetilde \phi_\omega)+\frac12\<H_{\omega}\(\lambda\Psi_3+\mu \Psi_4-P_d\widetilde r\),\sigma_1\(\lambda\Psi_3+\mu \Psi_4-P_d\widetilde r\)\>+S^0_{0,3}\\&=
d(\omega)+\frac12 \Omega(-\im \mathcal H_\omega(\lambda\Psi_3+\mu\Psi_4),\lambda\Psi_3+\mu\Psi_4)+S^0_{1,2}\\&=
d(\omega)+\frac12 A(\omega)\lambda^2+\frac{1}{2}B(\omega)\mu^2+S^0_{1,2},
\end{align*}
where we have used  $P_d(\omega)\widetilde r=S^1_{1,1}$ in the second line and  \eqref{46} and $a(\omega)=S^0_{1,0}$ in the third line.
For the third term with $j=1$ in the r.h.s.\ of \eqref{eq:expand1}, we have
\begin{align*}
\int_0^1 \partial_s\left.\partial_t\right|_{t=0}S_\omega(s\Psi+t \widetilde r)\,ds&=\int_0^1 \partial_s\<\nabla S_\omega(s\Psi),\sigma_1 P\widetilde r\>\,ds
=\<\nabla S_\omega(\Psi),\sigma_1 P\widetilde r\>\\&=\<H_\omega (\lambda\Psi_3+\mu \Psi_4-P_d\widetilde r),\sigma_1 P \tilde r\>+S^0_{0,3}=S^0_{0,3},
\end{align*}
where we have used $\nabla S_\omega(0)=0$ in the second equality and  orthogonality in the fourth equality.
For the remaining terms of the 1st line of \eqref{eq:expand1}, we have
\begin{align*}
&S_\omega(\widetilde r)+\int_0^1\frac{1}{2} \partial_s\left.\partial_t^2\right|_{t=0}S_\omega(s\Psi+t \widetilde r)\,ds=S_\omega(\widetilde r)+\frac{1}{2}\int_0^1 \partial_s\<\nabla^2 S_\omega(s\Psi) \widetilde r,\sigma_1  \widetilde r\>\,ds\\&
=S_\omega(\widetilde r)-\frac12\<\nabla^2 S_\omega (0)\widetilde r,\sigma_1 \widetilde r\>+\frac{1}{2}\<H_\omega \widetilde r,\sigma_1 \widetilde r\>\\&\quad+\frac12\int_0^1\<\nabla^3E_P(\tilde \phi_\omega+t(\lambda\Psi_3+\mu\Psi_4-P_d\widetilde r))(\lambda\Psi_3+\mu\Psi_4-P_d\widetilde r,\widetilde r),\sigma_1 \widetilde r \>\,dt\\&
=E_P(\widetilde r)+\frac{1}{2}\<H_{\omega_*} \widetilde r,\sigma_1 \widetilde r\>+(\omega-\omega_*)Q(\widetilde r)+\mathbf{R}_2.
\end{align*}
Here, since
\begin{align}
\nabla^3E_P(u)(v_1,v_2)=&2g'(|u|^2)\(\<u,\sigma_1 v_1\>_{\C^2}v_2+\<u,\sigma_1 v_2\>_{\C^2}v_1+\<v_1,\sigma_1 v_2\>_{\C^2}u\)\label{E3}\\&+4g''(|u|^2)\<u,\sigma_1 v_1\>_{\C^2}\<u,\sigma_1 v_2\>_{\C^2}u,\nonumber
\end{align}
with  $\tilde \phi_\omega+t(\lambda\Psi_3+\mu\Psi_4-P_d\widetilde r)=S^1_{0,0}$ and $\lambda\Psi_3+\mu\Psi_4-P_d\widetilde r=S^1_{0,1}$, we have
\begin{align*}
&\frac12\int_0^1\<\nabla^3E_P(\tilde \phi_\omega+t(\lambda\Psi_3+\mu\Psi_4-P_d\widetilde r))(\lambda\Psi_3+\mu\Psi_4-P_d\widetilde r,\widetilde r),\sigma_1 \widetilde r \>\,dt\\&= \<s^1_{0,1}\widetilde r,\sigma_1\widetilde r\>+\int_{\R^3} \<S^1_{0,0},\sigma_1 \widetilde r\>_{\C^2}\<S^1_{0,1},\sigma_1 \widetilde r\>_{\C^2}\,dx=\mathbf{R}_2.
\end{align*}
Finally, the 2nd line of \eqref{eq:expand1} is $\mathbf R_7$ by   definition.

\end{proof}

We next study the expansion of $\mathfrak F_0^* E$.
However, before that we study what happens for $\mathfrak F^* \mathbf R_k$ as a preliminary.

\begin{lemma}\label{lem:Rexpand}
Let $l\geq 1$, $\delta>0$ and let $\mathfrak F\in C^\infty(D_{\Hrad^1}(\mathcal T_{\omega_*},\delta),\Hrad^1(\R^3,\widetilde \C))$ satisfy \eqref{Ferror}--\eqref{eq:sigma(i)}.
Then,            we have
\begin{align}
\mathfrak F^*\(\sum_{k=2}^7\mathbf R_k\)&=\sum_{k=2}^7\mathbf R_k +S^0_{\sigma(l)+1,l+1}+S^0_{0,l+2}.\label{Rexpand}
\end{align}

\end{lemma}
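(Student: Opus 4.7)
The plan is to apply $\mathfrak F^*$ to each $\mathbf R_k$, $k=2,\ldots,7$, separately and show that the result lies in $\mathbf R_k + S^0_{\sigma(l)+1, l+1} + S^0_{0,l+2}$. Two preliminary remarks make this systematic. First, from $\sigma_1\sigma_3 = -\sigma_3\sigma_1$ one obtains, for any scalar-valued $\alpha$,
\begin{equation*}
\<e^{\im\alpha\sigma_3}u,\, \sigma_1 e^{\im\alpha\sigma_3} v\>_{\C^2} = \<u,\sigma_1 v\>_{\C^2},\qquad \<u, \sigma_1 e^{\im\alpha\sigma_3} v\>_{\C^2} = \<e^{-\im\alpha\sigma_3} u, \sigma_1 v\>_{\C^2}.
\end{equation*}
Since multiplication by the unitary $e^{\pm \im S^0_{0,l}\sigma_3}$ preserves each class $\mathcal S^k_{i,j}$, the phase in $\mathfrak F^*\widetilde r = e^{\im S^0_{0,l}\sigma_3}(\widetilde r + S^1_{\sigma(l),l})$ is absorbed into whatever $S^1_{0,0}$, $S^1_{1,0}$ or $S^2_{1,0}$ symbol is paired with it through $\sigma_1$; thus effectively we may substitute $\widetilde r \mapsto \widetilde r + S^1_{\sigma(l),l}$ in every expression defining the $\mathbf R_k$. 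Second, by \eqref{Sijtrans} the pullback acts trivially on the symbol classes, i.e.\ $\mathfrak F^* S^k_{i,j} = S^k_{i,j}$.

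Once this substitution is in place, each $\mathbf R_k$ is a multilinear form in $\widetilde r$ multiplied by the nonlinearity $\beta_n(|S^1_{0,0}+\widetilde r|^2)$. I would expand the multilinear factor multinomially in $\widetilde r + S^1_{\sigma(l),l}$ and rewrite
\begin{equation*}
\beta_n(|S^1_{0,0}+\widetilde r + S^1_{\sigma(l),l}|^2)
\end{equation*}
by a single use of \eqref{expandbeta}, yielding $\beta_n(|S^1_{0,0}+\widetilde r|^2)$ plus a linear term in $S^1_{\sigma(l),l}$ and a quadratic remainder. For $\mathbf R_6$ and $\mathbf R_7$ the same substitution is made inside the $(s,t)$-parameter integrals, where now $g'''$ and $\nabla^4 E_P$ evaluated at $v(t,s)$ are likewise expanded by a Taylor expansion in $S^1_{\sigma(l),l}$. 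In every case, the ``diagonal'' piece of the multinomial expansion (all $\widetilde r$'s unchanged, $\beta_n$ unperturbed) reproduces an element of the same class $\mathbf R_k$ on the right-hand side.

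The remaining terms are corrections, and the final step is the $(i,j)$ bookkeeping. Each correction contains at least one factor $S^1_{\sigma(l),l}$ substituted for a factor $\widetilde r$, so that one weight of size $\|\widetilde r\|\leq \delta$ is replaced by a weight of size $\eta^{\sigma(l)}\delta^l$, where $\eta$ and $\delta$ are the ``full'' and ``dynamic'' scales of Definition \ref{def:symbols}. For each $\mathbf R_k$ the original integrand has total order $\eta^a\delta^b$ with $b\geq 1$, hence a single substitution upgrades it to $\eta^{a+\sigma(l)}\delta^{b-1+l}$, which is contained in $S^0_{\sigma(l)+1,l+1}$. A substitution of two or more factors of $\widetilde r$ gains an additional $\delta^{l-1}\geq 1$ and places the term in $S^0_{0,l+2}$; the Taylor remainders from $\beta_n$, $g'''$ and $\nabla^4 E_P$ are handled in exactly the same way. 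The main obstacle is the patient case-by-case bookkeeping across the six classes — in particular checking that for $\mathbf R_6$ and $\mathbf R_7$ the correction terms produced inside the $s,t$ integrals, after applying the $\sigma_1$-absorption lemma to the symmetric bilinear pairings, remain in the asserted classes uniformly in $(s,t)\in[0,1]^2$.
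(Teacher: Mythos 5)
Your mechanical setup is the same as the paper's: use $\sigma_3\sigma_1=-\sigma_1\sigma_3$ to absorb the phase $e^{\im S^0_{0,l}\sigma_3}$, so that the pullback amounts to substituting $\widetilde r\mapsto\widetilde r+S^1_{\sigma(l),l}$, then expand multilinearly and use \eqref{expandbeta} together with \eqref{Sijtrans}. The gap is in your final bookkeeping step, where you claim that every correction with one substituted factor lands in $S^0_{\sigma(l)+1,l+1}$ and every correction with two or more lands in $S^0_{0,l+2}$. That is an order-counting argument, but membership in the classes $\mathcal S^0_{i,j}$ of Definition \ref{def:symbols} is not just a matter of size: it requires a representative $\widehat f$ that is smooth on $D_{\R^5\times P(\omega_*)\Sigma^{-s}}$ and bounded in terms of $\|\widetilde r\|_{\Sigma^{-s}}$ for \emph{every} $s\ge 0$. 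A single-substitution correction such as
\begin{equation*}
\int_{\R^3}\beta_n(|S^1_{0,0}+\widetilde r|^2)\,\langle S^1_{\sigma(l)+1,l},\sigma_1\widetilde r\rangle_{\C^2}\,dx
\end{equation*}
still contains $\beta_n$ evaluated at $|S^1_{0,0}+\widetilde r|^2$, i.e.\ a genuinely pointwise dependence on $\widetilde r$ that cannot be controlled by (or even defined on) $\Sigma^{-s}$ for large $s$; this is exactly the reason the paper introduces the classes $\mathbf R_k$ in the first place ("terms which are not of the form $S^0_{i,j}$").

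What the paper does instead, and what your proof is missing, is a second application of \eqref{expandbeta} to each such correction: the piece in which $\beta_n$ is frozen at $|S^1_{0,0}|^2$ is linear in $\widetilde r$ against a $\Sigma^s$ weight and is a true symbol $S^0_{\sigma(l)+1,l+1}$, while the remaining pieces, e.g.\ $\int\beta_{n+1}(|S^1_{0,0}+\widetilde r|^2)\langle S^1_{0,0},\sigma_1\widetilde r\rangle\langle S^1_{\sigma(l)+1,l},\sigma_1\widetilde r\rangle\,dx$ and $\int\beta_{n+1}(|S^1_{0,0}+\widetilde r|^2)|\widetilde r|^2\langle S^1_{\sigma(l)+1,l},\sigma_1\widetilde r\rangle\,dx$, are \emph{not} symbols and must be recognized as new elements of $\mathbf R_2$ and $\mathbf R_3$ (checking the index condition $n+1\ge k-2$ and the $n=-1$ degenerate case). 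The same leakage across classes happens for $k=3,\dots,7$: the pullback of $\mathbf R_k$ produces new terms in $\mathbf R_j$ for $2\le j\le k$, which is precisely why the statement \eqref{Rexpand} carries the full sum $\sum_{k=2}^7\mathbf R_k$ on the right-hand side rather than just reproducing each class separately. Without this reclassification step your argument would be asserting that non-symbol quantities belong to $\mathcal S^0_{i,j}$, which is false; with it, the proof reduces to the paper's case-by-case computation.
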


For the   elementary but long proof, see  Sect.\ \ref{app:Rexpand} in the Appendix.

We next study the expansion of $\mathfrak F^* E_P$.

\begin{lemma}\label{lem:transEp}
Let $l\geq 1$, $\delta>0$ and let $\mathfrak F\in C^\infty(D_{\Hrad^1}(\mathcal T_{\omega_*},\delta),\Hrad^1(\R^3,\widetilde \C))$ satisfy \eqref{Ferror}--\eqref{eq:sigma(i)}.
Then, we have
\begin{align}
\mathfrak F^*E_P(\widetilde r)&=E_P(\widetilde r)+\mathbf R_2 +\mathbf R_7+S^0_{4\sigma(l),4l}.\label{Epexpand}
\end{align}

\end{lemma}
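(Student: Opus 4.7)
The plan is to combine the phase invariance of $E_P$ with a two-variable Taylor expansion analogous to the one in the proof of Lemma \ref{lem:expansion}, then identify each term with $\mathbf R_2$, $\mathbf R_7$, or the error $S^0_{4\sigma(l),4l}$.

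First, since $E_P(\widetilde u)=\frac{1}{2}\int G(|\widetilde u|^2)\,dx$ depends only on $|\widetilde u|^2=\langle \widetilde u,\sigma_1 \widetilde u\rangle_{\C^2}$ and since $\sigma_3\sigma_1=-\sigma_1\sigma_3$ gives $|e^{\im\theta\sigma_3}\widetilde v|^2=|\widetilde v|^2$, the assumption \eqref{Ferror} yields
\[
\mathfrak F^*E_P(\widetilde r)=E_P\bigl(e^{\im S^0_{0,l}\sigma_3}(\widetilde r+H)\bigr)=E_P(\widetilde r+H),\qquad H:=S^1_{\sigma(l),l},
\]
so the task reduces to expanding $E_P(\widetilde r+H)-E_P(\widetilde r)$.

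Next, apply the Taylor formula \eqref{Taylor} with $F(s,t):=E_P(sH+t\widetilde r)$, yielding
\begin{align*}
E_P(\widetilde r+H)
&=E_P(\widetilde r)+E_P(H)+\sum_{j=1}^{2}\tfrac{1}{j!}\int_0^1\partial_s\partial_t^j F(s,0)\,ds\\
&\quad+\tfrac{1}{2}\int_0^1\int_0^1(1-t)^2\partial_s\partial_t^3 F(s,t)\,ds\,dt.
\end{align*}
The pure-$H$ term satisfies $|E_P(H)|\lesssim\|H\|_{L^4}^4\lesssim\|H\|_{\Sigma^1}^4\in S^0_{4\sigma(l),4l}$ because $g(0)=0$ forces $G(s)=O(s^2)$ near zero. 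The Taylor remainder equals
\[
\tfrac{1}{2}\int_0^1\int_0^1(1-t)^2\langle\nabla^4 E_P(sH+t\widetilde r)(H,\widetilde r,\widetilde r),\sigma_1\widetilde r\rangle\,ds\,dt
\]
after invoking the symmetry of $\nabla^4 E_P$, and this is exactly an element of $\mathbf R_7$ upon setting $S^1_{0,0}:=H$ (legitimate since $H\in S^1_{\sigma(l),l}\subset S^1_{0,0}$), producing the $\mathbf R_7$ contribution.

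For the two intermediate terms ($j=1,2$), I would use Lemma \ref{lem:2} and \eqref{E3} to expand $\nabla^2E_P(sH)$ and $\nabla^3E_P(sH)(\widetilde r,\widetilde r)$ explicitly, using $g(s^2|H|^2)=s^2|H|^2\,\beta_1(s^2|H|^2)$ (which follows from $g(0)=0$). The quadratic-in-$\widetilde r$ pieces (predominantly from $j=2$) fit the two integral forms in \eqref{r2} of $\mathbf R_2$: in the $\int\beta_n\cdot s^1_{1,0}|\widetilde r|^2\,dx$ form one takes $s^1_{1,0}$ proportional to $|H|^2$ times a smooth function of $|H|^2$ (lying in $s^1_{2\sigma(l),2l}\subset s^1_{1,0}$ since $l\geq 1$); in the $\int\beta_n\langle S^1_{1,0},\sigma_1\widetilde r\rangle\langle S^1_{0,0},\sigma_1\widetilde r\rangle\,dx$ form one takes both $S^1$-arguments equal to $H$, using \eqref{expandbeta} to reconcile $\beta_n(|H|^2)$ with $\beta_n(|H+\widetilde r|^2)$ modulo higher-order corrections. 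The remaining odd-in-$\widetilde r$ pieces carry sufficient powers of $H$ that they fit into $S^0_{4\sigma(l),4l}$ via the trade inclusions $S^0_{i,j}\subset S^0_{i+k,j-k}$ for $0\leq k\leq j$, provided the combinatorial condition $m(\sigma(l)+l)+n\geq 4(\sigma(l)+l)$ holds (where $m$ counts $H$-factors and $n$ counts $\widetilde r$-factors).

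The principal obstacle is verifying this combinatorial condition uniformly, particularly for linear-in-$\widetilde r$ contributions from $j=1$ (with three factors of $H$) when $l=1$; here the bookkeeping relies on matching these contributions against the leading order of the $\mathbf R_7$ integrand, computed explicitly via $\langle\nabla^4 E_P(0)(H,\widetilde r,\widetilde r),\sigma_1\widetilde r\rangle=6g'(0)\int\langle H,\sigma_1\widetilde r\rangle|\widetilde r|^2\,dx$, thereby providing the necessary absorption into the $\mathbf R_7$ term and leaving only higher-order corrections for the error bucket $S^0_{4\sigma(l),4l}$.
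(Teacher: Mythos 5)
Your skeleton is the same as the paper's: use the phase invariance of $E_P$ to reduce to $E_P(\widetilde r+H)$ with $H=S^1_{\sigma(l),l}$, apply the Taylor formula \eqref{Taylor} to $F(s,t)=E_P(sH+t\widetilde r)$, put $E_P(H)$ into $S^0_{4\sigma(l),4l}$, identify the third-order remainder with $\mathbf R_7$ and the $j=2$ term with $\mathbf R_2$ via \eqref{E3}. The gap is in your last paragraph, i.e.\ the treatment of the $j=1$ term. By the fundamental theorem of calculus and $\nabla E_P(0)=0$, that term equals $\langle \nabla E_P(H),\sigma_1\widetilde r\rangle=\int_{\R^3}g(|H|^2)\langle H,\sigma_1\widetilde r\rangle_{\C^2}\,dx$: it is cubic in $H$ and exactly linear in $\widetilde r$. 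Every element of $\mathbf R_7$ in \eqref{R7} carries the explicit trilinear factor $(\,\cdot\,,\widetilde r,\widetilde r)$ paired with $\sigma_1\widetilde r$, hence is at least cubic in $\widetilde r$; in particular the leading term you compute, $6g'(0)\int\langle H,\sigma_1\widetilde r\rangle_{\C^2}|\widetilde r|^2\,dx$ (which is correct), is linear in $H$ and cubic in $\widetilde r$. An $H^3\widetilde r$ monomial can never be matched against or absorbed into an $H\widetilde r^3$ structure — there is no cancellation mechanism between distinct monomials — so this "absorption into $\mathbf R_7$" step fails.

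What the paper actually does with the $j=1$ term is simply to compute it through $\nabla^2E_P(sH)H$ and dump it into the scalar symbol class (it records it as $S^0_{4\sigma(l),4l+1}$; a careful count of $g(|H|^2)H$ gives only $S^0_{3\sigma(l),3l+1}$, and, as your own combinatorial worry indicates, $S^0_{3\sigma(l),3l+1}\not\subset S^0_{4\sigma(l),4l}$, since the required inequality $3\sigma(l)+3l+1\geq 4\sigma(l)+4l$ fails for every $l\geq1$). The harmless repair is to send the linear-in-$\widetilde r$ term directly to the error class $S^0_{3\sigma(l),3l+1}$ and to read the lemma with this slightly weaker error (note $S^0_{4\sigma(l),4l}\subset S^0_{3\sigma(l),3l+1}$); this loses nothing downstream, because wherever the lemma is invoked (Lemma \ref{lem:back}, Lemmas \ref{lem:nform1}--\ref{lem:ind}) the error only needs to lie in $S^0_{1,2}$, respectively $S^0_{0,l+2}$-type classes, and $S^0_{3\sigma(l),3l+1}$ does. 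So: keep your expansion, drop the $\mathbf R_7$ absorption idea, and treat the $j=1$ term as a scalar symbol error rather than trying to force it into $S^0_{4\sigma(l),4l}$.
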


\begin{proof}
By  Taylor expansion as in \eqref{Taylor} we have
\begin{align*}
\mathfrak F_0^* E_P(\widetilde r)&=E_P(\widetilde r+S^1_{\sigma(l),l})=E_P(\widetilde r)+E_P(S^1_{\sigma(l),l})+\sum_{j=1,2}\int_0^1\partial_s\left.\partial_t^j\right|_{t=0} E_P(t\widetilde r+s S^1_{\sigma(l),l})\,ds + \mathbf R_7.
\end{align*}
Further, we have $E_P(S^1_{\sigma(l),l})=\int_{\R^3} G(s^1_{2\sigma(l),2l})\,dx=S^0_{4\sigma(l),4l}$, and by \eqref{36} we have
\begin{align*}
\int_0^1\partial_s\left.\partial_t\right|_{t=0} E_P(t\widetilde r+s S^1_{1,1})\,ds=\<\nabla^2 E_P(sS^1_{\sigma(l),l})S^1_{\sigma(l),l},\sigma_1 \widetilde r\>=S^0_{4\sigma(l),4l+1}
\end{align*}
and by \eqref{E3} we have
\begin{align*}
\int_0^1\partial_s\left.\partial_t^2\right|_{t=0} E_P(t\widetilde r+s S^1_{1,1})\,ds=\int_0^1 \<\nabla^3 E_P(s S^1_{\sigma(l),l})(S^1_{\sigma(l),l},\widetilde r),\sigma_1\widetilde r\>\,ds=\mathbf R_2.
\end{align*}
Therefore, we obtain \eqref{Epexpand}.
\end{proof}

We next consider the expansion in the Darboux coordinates.
\begin{lemma}
  \label{lem:back}
Let  $\mathfrak F_0$ be the transformation  in Proposition \ref{prop:3}.
Then $ \mathfrak F_0^* E $ has the expansion
\begin{align}  \label{eq:darexpand1}
\mathfrak F_0^* E =& d (\omega ) -\omega Q + \frac{\lambda ^2}{2}  A (\omega )   +  \frac{\mu ^2}{2} B (\omega )  +(\omega-\omega_*)Q(\widetilde r)+\frac 1 2\<H_{\omega_*}  \widetilde{r}, \sigma _1 \widetilde{r}\>+E_P( \widetilde{r})\nonumber\\& +S_{1,2}^{0} +\sum_{j=2}^7\mathbf{R}_j
%+\int_0^1\int_0^14(1-t)^2 \<g'''(|\widetilde v(t,s)|^2)\<\widetilde v(t,s),\sigma_1S^1_{0,0}\>_{\C^2}\<\widetilde v(t,s),\sigma_1\widetilde r\>^2\widetilde v(t,s),\sigma_1 \widetilde r\>\,dsdt,
\end{align}
where $\mathbf R_j$are terms in the form given in Definition \ref{def:Rk}.
\end{lemma}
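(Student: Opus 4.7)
The plan is to apply $\mathfrak F_0^*$ term by term to the expansion \eqref{eq:expan1} of Lemma \ref{lem:expansion}. Since $\mathfrak F_0$ satisfies \eqref{Ferror}--\eqref{eq:sigma(i)} with $l = 1$ (so $\sigma(l) = 1$), the explicit rules from Proposition \ref{prop:3}, combined with Lemmas \ref{lem:muQtrans} and \ref{lem:muexpand}, provide $\mathfrak F_0^* Q = Q$, $\mathfrak F_0^* \omega = \omega + S^0_{0,2}$, $\mathfrak F_0^* \lambda = \lambda + S^0_{1,1}$, $\mathfrak F_0^* \mu = \mu + S^0_{1,1}$, $\mathfrak F_0^* Q(\widetilde r) = Q(\widetilde r) + S^0_{1,2}$, and $\mathfrak F_0^* \widetilde r = e^{\im S^0_{0,1}\sigma_3}(\widetilde r + S^1_{1,1})$. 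These are the only ingredients needed to compute each pulled-back term.

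For the modulation-polynomial pieces $d(\omega) - \omega Q + \tfrac12 A(\omega)\lambda^2 + \tfrac12 B(\omega)\mu^2 + (\omega-\omega_*) Q(\widetilde r)$, I would Taylor expand in each shift. Using $d'(\omega) = q(\omega)$ and $q(\omega) - Q = S^0_{2,0}$ from Lemma \ref{lem:pdsymbol}, the combined linear correction from $d(\omega + S^0_{0,2}) - (\omega + S^0_{0,2})Q$ is $(q(\omega)-Q)\,S^0_{0,2} \in \mathcal S^0_{2,2}$. Using Lemma \ref{lem:muexpand} to write $\mu = S^0_{2,0} + S^0_{0,2}$, every cross-product arising from expanding $\tfrac12 B(\omega)(\mu + S^0_{1,1})^2$ and $\tfrac12 A(\omega)(\lambda + S^0_{1,1})^2$ is a product of two symbols at least one of which is in $\mathcal S^0_{1,1}$ or $\mathcal S^0_{0,2}$; a restriction to a small enough neighborhood of $\mathcal T_{\omega_*}$ gives the inclusion $\mathcal S^0_{i,j}\subset \mathcal S^0_{1,2}$ for all $(i,j)$ with $i+j\ge 3$ and $j\ge 2$, and the remaining mixed products land in $\mathcal S^0_{1,2}$.

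The quadratic form $\tfrac12 \langle H_{\omega_*}\widetilde r, \sigma_1\widetilde r\rangle$ requires more care. I would split $H_{\omega_*}$ into its $\C^2$-diagonal part $H_c := -\Delta + \omega_* + g(\phi_{\omega_*}^2) + g'(\phi_{\omega_*}^2)\phi_{\omega_*}^2$, which commutes with the $x$-independent rotation $e^{\im S^0_{0,1}\sigma_3}$, and the off-diagonal multiplier $V(\omega_*) := g'(\phi_{\omega_*}^2)\phi_{\omega_*}^2\,\sigma_1$, which satisfies $\sigma_1 e^{\im S^0_{0,1}\sigma_3} = e^{-\im S^0_{0,1}\sigma_3}\sigma_1$. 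Taylor expanding $e^{\im S^0_{0,1}\sigma_3} = 1 + \im S^0_{0,1}\sigma_3 + O((S^0_{0,1})^2)$ and exploiting bilinearity and self-adjointness, the pullback produces the original form $\tfrac12 \langle H_{\omega_*}\widetilde r, \sigma_1\widetilde r\rangle$ plus cross-terms of the shape $\int V(x)|\widetilde r|^2\,dx$ and $\int V(x)\langle S^1_{0,0},\sigma_1\widetilde r\rangle_{\C^2}\langle S^1_{1,0},\sigma_1\widetilde r\rangle_{\C^2}\,dx$ with smooth, rapidly decaying $V$ of class $\mathcal S^1_{1,0}$; by Definition \ref{def:Rk} these are in $\mathbf R_2$. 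The cross-terms involving $S^1_{1,1}$ (from the translation part $\widetilde r + S^1_{1,1}$) are similarly elements of $\mathbf R_2$ together with symbol remainders in $\mathcal S^0_{1,2}$.

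The nonlinear contributions are handled by direct application of Lemmas \ref{lem:transEp} and \ref{lem:Rexpand} with $l = 1$: these give $\mathfrak F_0^* E_P(\widetilde r) = E_P(\widetilde r) + \mathbf R_2 + \mathbf R_7 + S^0_{4,4}$ and $\mathfrak F_0^*(\mathbf R_2 + \mathbf R_7) \in \sum_{j=2}^7 \mathbf R_j + S^0_{2,2} + S^0_{0,3}$, where all symbolic remainders are absorbed into $\mathcal S^0_{1,2}$ in a small enough neighborhood, while $\mathfrak F_0^* S^0_{1,2}\in \mathcal S^0_{1,2}$ follows from the chain rule applied to the definition of $\mathcal S$ and the smallness of the coordinate shifts. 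Assembling all contributions yields \eqref{eq:darexpand1}. The main obstacle is purely bookkeeping: tracking every cross-term generated by the simultaneous Taylor expansion in the four shifted variables and verifying membership in either $\mathcal S^0_{1,2}$ or one of the $\mathbf R_j$ templates. The key structural reason this works cleanly is that every shift induced by $\mathfrak F_0$ has order at least $\mathcal S^0_{0,2}$ (in $\omega$) or $\mathcal S^0_{1,1}$ (in $\lambda$ and $\mu$), so no correction can ever land in the dangerous classes $\mathcal S^0_{0,0}$, $\mathcal S^0_{1,0}$, or $\mathcal S^0_{0,1}$.
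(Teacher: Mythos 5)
Your proposal is correct and follows essentially the same route as the paper: pull back the expansion of Lemma \ref{lem:expansion} term by term using the rules of Proposition \ref{prop:3}, Taylor expand the finite-dimensional pieces with $d'(\omega)=q(\omega)$, classify the rotation-induced correction to $\frac12\<H_{\omega_*}\widetilde r,\sigma_1\widetilde r\>$ as an $\mathbf R_2$ term via the anticommutation $\sigma_1\sigma_3=-\sigma_3\sigma_1$, and invoke Lemmas \ref{lem:transEp} and \ref{lem:Rexpand} with $l=1$ for $E_P$ and the $\mathbf R_k$'s. The only cosmetic difference is that you split $H_{\omega_*}$ into diagonal and off-diagonal parts, whereas the paper writes the same computation through the conjugated operator $e^{-\widetilde y\im\sigma_3}H_{\omega_*}e^{\widetilde y\im\sigma_3}-H_{\omega_*}$ and then uses the rank-one decomposition of $\sigma_1,\sigma_2$ to exhibit the $\mathbf R_2$ template explicitly.
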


%we have the expansion
%\begin{align}
%E \circ \mathfrak F_0&= d (\omega ) -\omega Q + \frac{\lambda ^2}{2}   \left ( A (\omega ) -   {B (\omega )}a(\omega ) \right )  +  \frac{\mu ^2}{2} B (\omega )  +\frac 1 2\<H_\omega P(\omega ) \widetilde{r}, \sigma _1P(\omega )  \widetilde{r}\>\nonumber \\& +S_{1,2}^{0} +  \mathbf{{R}}_1+E_P(P(\omega ) \widetilde{r})   \text{  where}\label{eq:back1}
%\\     \mathbf{{R}}_1&:=     \sum _{d=2}^4
%\langle S^{d -1} _{  i_{d},0  }(   Q, \zeta ,\lambda , \mu ,  Q (\widetilde{r}) ,\widetilde{r})\widetilde{r}  ^{   d-1}, \widetilde{r}    \rangle
%      +\int _{\mathbb{R}^3}\langle
%E_5 (x, Q, \zeta , \lambda ,Q(\widetilde{r}), \widetilde{r} , \widetilde{r} (x)  )   \widetilde{r} ^{   4}(x), \widetilde{r}  (x)\rangle  _{\C^2} dx \nonumber
%\end{align}
%with:
%
%
%
%  \begin{itemize}
%
%
%\item   $i_d=1$ for $d=2$ and  $i_d=0$ for $d>2$;
%
%\item $\widetilde{r}^d(x)$  represent $d-$products of components of $ \widetilde{r}$;
%
%
% \item  for
%$ \widetilde{z} \in \widetilde{\mathbb{C} }$  and $\rho =Q(\widetilde{r}) $
%\begin{equation} \label{eq:B5}\begin{aligned} &  \| \nabla _{ Q, \zeta , \lambda ,\rho , \widetilde{r} , \widetilde{z}  }
%^iE_5 (  Q, \zeta , \lambda ,\rho , \widetilde{r} , \widetilde{z}   ) \| _{\Sigma _k(\mathbb{R}^3,    \mathcal{L}^{4}   (
% \widetilde{\C} ,\widetilde{\C}}) \lesssim _{i,k}1 .
% \end{aligned}  \end{equation}
%\end{itemize}

\begin{proof} We consider the expansion of $E$ in \eqref{eq:expan1} and we apply
the pull back $\mathfrak F_0^*$  to each of the terms in the r.h.s.
First, if we set $ \omega '=\mathfrak F_0^* \omega = \omega + y _{\omega}$ where $y _{\omega}=S^{0} _{0,2}$, we have  by $d '(\omega  )=q(\omega  )$
\begin{align}
  d (\omega ') -\omega 'Q &=   d (\omega  ) -\omega  Q  +  (q(\omega  )-Q ) y _{\omega} +O(y _{\omega} ^2)= d (\omega  ) -\omega  Q +S^{0} _{1,0}S^{0} _{0,2}+O\left ( (S^{0} _{0,2})^2\right )\nonumber  \\&= d (\omega  ) -\omega  Q +S^{0} _{1,2}.\label{darexp1}
\end{align}
Next,
\begin{align}
&\mathfrak F_0^*\(\frac{\lambda ^2}{2}   A (\omega )  +  \frac{\mu ^2}{2} B (\omega )  +(\omega-\omega_*)Q(\widetilde r)\)\nonumber\\&=\frac{(\lambda+S^0_{1,1})^2}{2}  A (\omega+S^0_{0,2} )   +  \frac{\(\mu+S^0_{1,2}\) ^2}{2} B (\omega+S^0_{0,2} ) +(\omega-\omega_*+S^0_{0,2})\(Q(\widetilde r)+S^0_{1,2}\)\nonumber\\&
=\frac{\lambda ^2}{2}  A (\omega )   +  \frac{\mu ^2}{2} B (\omega )  +(\omega-\omega_*)Q(\widetilde r)+S^0_{1,2}.\label{darexp2}
\end{align}
For $\mathfrak F_0^*\(\frac12\<H_\omega \widetilde r, \sigma_1 \widetilde r\>\)$, recall by \eqref{36} that  $H_{\omega_*}=-\Delta + \omega_* + g(\phi_{\omega_*}^2)+g'(\phi_{\omega_*}^2)\phi_{\omega_*}^2+g'(\phi_{\omega_*}^2)\phi_{\omega_*}^2\sigma_1$.
Thus,
\begin{align}
&\mathfrak F_0^*\(\frac 1 2\<H_{\omega_*}  \widetilde{r}, \sigma _1 \widetilde{r}\>\)=\frac12 \< H_{\omega_*} e^{\widetilde y_{\widetilde r}\im \sigma_3}\(\widetilde r + S^1_{1,1}\),\sigma_1e^{\widetilde y_{\widetilde r}\im \sigma_3}\(\widetilde r+ S^1_{1,1}\)\>\nonumber\\&\quad+\frac12\<\(e^{-\widetilde y_{\widetilde r}\im \sigma_3}H_{\omega_*}e^{\widetilde y_{\widetilde r}\im \sigma_3}-H_{\omega_*}\)\(\widetilde r+S^1_{1,1}\), \sigma_1 \(\widetilde r+S^1_{1,1}\)\>\nonumber\\&=
\frac12 \< H_{\omega_*} \widetilde r ,\sigma_1 \widetilde r \>+S^0_{1,2}
+\<g'(\phi_{\omega_*}^2)\phi_{\omega_*}^2\sigma_1
\(e^{2\widetilde y_{\widetilde r}\im \sigma_3}-1\)\widetilde r,\sigma_1 \widetilde r\>.\label{darexp3}
\end{align}
Now, by $y_{\widetilde r}=S^0_{0,1} $ and $e^{2\widetilde y_{\widetilde r}\im \sigma_3}-1=S^0_{0,1}\im \sigma_3+S^0_{0,2} $, we have $\sigma_1\(e^{2\widetilde y_{\widetilde r}\im \sigma_3}-1\)=S^0_{0,1}\sigma_2+S^0_{0,2}\sigma_1$.
Thus, by
\begin{align*}
\sigma_1=\frac{1}{2}\( e \cdot {^te} + f\cdot {^tf}\),\quad \sigma_2=\frac12 \( e\cdot {^tf} - f\cdot {^te}\),\quad e=\begin{pmatrix}1 \\ 1\end{pmatrix} ,\ f=\begin{pmatrix}\im \\ -\im\end{pmatrix},
\end{align*}
where ${^te}$, ${^tf}$ are transpose of $e$, $f$, we have
\begin{align*}
g'(\phi_{\omega_*}^2)\phi_{\omega_*}^2\sigma_1
\(e^{2\widetilde y_{\widetilde r}\im \sigma_3}-1\)=g'(\phi_{\omega_*}^2)\phi_{\omega_*}^2\sum_{x,y=e,f} \(S^0_{0,1}x\)\cdot {^t \(S^0_{0,0}y\) } =S^1_{0,1}\cdot {^t\(S^1_{0,0}\)},
\end{align*}
where we have omitted the sum in the r.h.s.
Now, since $\<u,v\>_{\C^2}={^tu}\cdot v$, we have
\begin{align*}
&\<g'(\phi_{\omega_*}^2)\phi_{\omega_*}^2\sigma_1
\(e^{2\widetilde y_{\widetilde r}\im \sigma_3}-1\)\widetilde r,\sigma_1 \widetilde r\>_{\C^2}={^t\( S^1_{0,1}\cdot {^t\(S^1_{0,0}\)}  \widetilde r\) } \cdot \(\sigma_1 \widetilde r\)=\({^t\(\widetilde r\) } \cdot S^1_{0,0}\)\({^t\(S^1_{0,1}\) } \cdot\(\sigma_1 \widetilde r\)\)\\&=
\<\sigma_1 S^1_{0,0},\sigma_1 \widetilde r\>_{\C^2}\<S^1_{0,1},\sigma_1 \widetilde r\>_{\C^2}.
\end{align*}
Since $\sigma_1 S^1_{0,0}=S^1_{0,0}$, we have
\begin{align}\label{darexp4}
\<g'(\phi_{\omega_*}^2)\phi_{\omega_*}^2\sigma_1
\(e^{2\widetilde y_{\widetilde r}\im \sigma_3}-1\)\widetilde r,\sigma_1 \widetilde r\>=\mathbf R_2.
\end{align}

Finally, from Lemmas \ref{Rexpand} and \ref{lem:transEp}, we have
\begin{align*}
\mathfrak F_0^*\(S^0_{1,2}+\mathbf R_2+\mathbf R_7 + E_P(\widetilde r)\)=S^0_{1,2}+\sum _{k=2}^7\mathbf R_k + E_P(\widetilde r),
\end{align*}
where, we note that $S^0_{1,2}$ and $\mathbf R_k$ in the l.h.s.\ and r.h.s.\ are different.

Therefore, we have the conclusion.
\end{proof}

\subsection{Canonical transformations}
\label{sec:normalf}

We consider the coordinates of Proposition \ref{prop:3}. In these coordinates
the energy $E$ has the expansion   of  formula \eqref{eq:darexpand1}.
In the sequel we will consider real valued  functions, for $l \in \N$,  of the form
 \begin{equation}
\label{eq:chi01}\chi   =\sum _{m+n=l  +1} c_{m,n}(Q,\omega ,Q(\widetilde{r}))     \lambda^{m} \mu^{n} + \sum _{m+n=l } \lambda^{m} \mu^{n}
 \Omega(C_{m,n
}(Q,\omega ,Q(\widetilde{r}))
  , \widetilde{r} ).
\end{equation}
The fact  that $\chi$ is real valued is equivalent to
\begin{equation}
\label{eq:chi02}  c_{m,n} =\overline{c}_{m,n} \text{  and }     C_{m,n
} =\sigma _1 \overline{{C}}_{m,n}.
\end{equation}
We consider the flow $\phi ^{s}$ obtained from the hamiltonian vector field $X_{\chi}$ associated to $\Omega_0$.
Notice that for $\phi:=\phi^1$, we have $\phi ^*\Omega _0=\Omega _0$.  The first question is if $\phi $ exists.

\begin{lemma} \label{lem:ODE}  Fix an arbitrary $l\in \N$ and, for $\epsilon >0$,  set
\begin{equation} \label{eq:domain0}   \begin{aligned}   &   \U _{\epsilon  }  :=\mathcal F_2\(\R\times  D_{\R^4\times P(\omega_*)\Hrad^1}((q(\omega_*),\omega_*,0,0, 0) ,\epsilon ) \) \subset  \Hrad^1.
\end{aligned}    \end{equation}
For the case $l=1$, we assume that $c_{m,n}=S^0_{1,0}$ and $C_{m,n}=S^1_{1,0}$.
Then, there exists $\epsilon_0>0$ s.t.\ if $\epsilon\in (0,\epsilon_0)$, we have
 \begin{equation} \label{eq:reg1}\begin{aligned} &\phi  ^s \in C^\infty(D_\R(0,2)\times  \U _{\epsilon}
  ,   \Hrad^1(\R ^3, \widetilde{\C} )
 )
\end{aligned}   \end{equation}
and there     exist   $\epsilon _1>0$
such that
\begin{equation} \label{eq:main1}\begin{aligned} &  \phi  ^s( \U _{\epsilon _1})\subset   \U _{\epsilon } \text{ for all $|s|< 2$
.}
\end{aligned}   \end{equation}
For every $|s|< 2$  and for $(Q^s, \omega ^s, \lambda ^s, \widetilde{r}^s):=(\phi^{s,*}Q, \phi^{s,*}\omega ,\phi^{s,*} \lambda , \phi^{s,*}\widetilde{r})$  we have $Q^s=Q$ and
\begin{equation} \label{eq:main1symb}\begin{aligned} &  \omega ^s =\omega +S ^{0}_{0,l},\quad
   \lambda ^s =\lambda +S ^{0}_{1,l}, \quad
  \widetilde{r} ^s = e^{\im \sigma _3S ^{0}_{0,l }}\widetilde{r}+S ^{1}_{0,l}.
\end{aligned}   \end{equation}	

	\end{lemma}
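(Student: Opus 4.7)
The plan is to apply Picard--Lindel\"of to the Hamilton vector field $X_\chi$ associated with $\Omega_0$, and then to track the symbol classes along the flow by integration.

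First I would compute the four components of $X_\chi$ using \eqref{eq:HamVector}. Because $\chi$ in \eqref{eq:chi01} does not depend on $\vartheta$, we immediately get $(X_\chi)_Q = 0$, hence $Q\circ \phi^s = Q$, which proves $Q^s = Q$. For the other components, direct differentiation of \eqref{eq:chi01}, together with Lemma \ref{lem:muexpand} (symbol classes of $\mu$ and its partials) and Lemma \ref{difsymbol} (symbol arithmetic), shows $\partial_\omega \chi, \partial_\lambda \chi \in \mathcal S^0_{0,l}$, $\widetilde \nabla_{\widetilde r}\chi \in \mathcal S^1_{0,l}$, and $\partial_\rho \chi \in \mathcal S^0_{0,l}$. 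The case $l=1$ is the only delicate one: with the generic choice $c_{m,n}\in \mathcal S^0_{0,0}$, $C_{m,n}\in \mathcal S^1_{0,0}$, the function $\chi$ would only be of class $\mathcal S^0_{0,2}$ and its $(\omega,\lambda)$-derivatives would not vanish to the correct order at $\mathcal T_{\omega_*}$, preventing the Gronwall argument below from closing; the stronger hypothesis $c_{m,n}=S^0_{1,0}$ and $C_{m,n}=S^1_{1,0}$ supplies exactly the extra factor of smallness needed.

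Next I would invoke classical ODE theory on the Banach space $\Hrad^1$. Since $X_\chi$ is $C^\infty$ on a neighborhood of $\mathcal T_{\omega_*}$, Picard--Lindel\"of yields a unique $C^\infty$ flow $\phi^s$, giving \eqref{eq:reg1}. A Gronwall estimate, applied to the coupled ODE for $(\omega^s,\lambda^s,\widetilde r^s)$ using the bounds from the previous paragraph, shows that for $\epsilon_1$ small enough any solution starting in $\mathcal U_{\epsilon_1}$ remains in $\mathcal U_\epsilon$ for all $|s|\leq 2$, which is \eqref{eq:main1}; invariance of $P(\omega_*)\Hrad^1$ under $s\mapsto \widetilde r^s$ is built into the projector appearing in $(X_\chi)_{\widetilde r}$.

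Finally I would extract \eqref{eq:main1symb}. Integrating $\dot\omega^s=A(\omega^s)^{-1}\partial_\lambda \chi$ and $\dot\lambda^s=-A(\omega^s)^{-1}\partial_\omega \chi$ and inserting the symbol bounds gives $\omega^s-\omega\in \mathcal S^0_{0,l}$ and $\lambda^s-\lambda\in \mathcal S^0_{1,l}$. For $\widetilde r^s$, I would use the decomposition $\nabla_{\widetilde r}\chi=\widetilde\nabla_{\widetilde r}\chi+\partial_\rho \chi\cdot\widetilde r$ from Definition \ref{def:tildenabla} to rewrite the equation as
\begin{align*}
\dot{\widetilde r}^s = -\im \partial_\rho \chi\, P(\omega_*)\sigma_3 \widetilde r^s - \im P(\omega_*)\sigma_3 \widetilde \nabla_{\widetilde r}\chi ,
\end{align*}
separating an essentially rotational drift with scalar angular velocity in $\mathcal S^0_{0,l}$ from a forcing term in $\mathcal S^1_{0,l}$. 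Duhamel's formula with homogeneous propagator $e^{\im \sigma_3 \int_0^s \partial_\rho \chi\,dt}$ then yields $\widetilde r^s = e^{\im \sigma_3 S^0_{0,l}}\widetilde r + S^1_{0,l}$. The principal obstacle lies here: since $e^{\im \sigma_3 \alpha}$ does not exactly commute with $P(\omega_*)$, one must verify that the resulting commutators and all errors produced by the Duhamel composition are absorbed into $\mathcal S^1_{0,l}$, so that the form claimed in \eqref{eq:main1symb} is exact.
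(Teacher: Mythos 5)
Your overall route coincides with the paper's: compute the components of $X_\chi$ from \eqref{eq:HamVector}, get the flow from standard ODE theory, and then obtain \eqref{eq:main1symb} by integrating the $(\omega,\lambda)$ equations and using an integrating factor (your Duhamel formula is literally the identity the paper writes for $e^{-\im\sigma_3\int_0^s S^0_{0,l}\circ\phi^{s'}ds'}\,\widetilde r^s$). However, there is one concrete gap in your symbol bookkeeping, and it is exactly at the point the lemma is designed to capture: you only claim $\partial_\omega\chi\in\mathcal S^0_{0,l}$, and integrating $\dot\lambda^s=-A(\omega^s)^{-1}\partial_\omega\chi\circ\phi^s$ with that bound yields only $\lambda^s=\lambda+S^0_{0,l}$, not the asserted $\lambda^s=\lambda+S^0_{1,l}$ of \eqref{eq:main1symb}. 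The extra factor of vanishing must be extracted: every monomial of $\chi$ in \eqref{eq:chi01} has total degree $l+1$ in $(\lambda,\mu,\widetilde r)$, so when $\partial_\omega$ falls on the coefficients $c_{m,n}$, $C_{m,n}$ the resulting terms are $S^0_{0,l+1}\subset S^0_{1,l}$, while the chain-rule terms through the dependent variable $\mu$ carry the factor $\partial_\omega\mu=-A(\omega)^{-1}q'(\omega)+S^0_{0,1}=S^0_{1,0}+S^0_{0,1}$ from Lemma \ref{lem:muexpand}, which is precisely where the degeneracy $q'(\omega_*)=0$ enters. Together these give $\partial_\omega\chi=S^0_{1,l}$, i.e.\ $(X_\chi)_\lambda=-A(\omega)^{-1}\partial_\omega\chi=S^0_{1,l}$, as recorded in the paper's proof; without this refinement your argument does not deliver the $\lambda$-part of \eqref{eq:main1symb}, which is the part actually used in the normal form iteration.

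By contrast, the issue you single out as the ``principal obstacle'' is not one. You never need $e^{\im\sigma_3\alpha}$ to commute with $P(\omega_*)$: writing $\nabla_{\widetilde r}\chi=\widetilde\nabla_{\widetilde r}\chi+\partial_\rho\chi\,\widetilde r$ and then $-\im P(\omega_*)\sigma_3\,\partial_\rho\chi\,\widetilde r=-\im\sigma_3\,\partial_\rho\chi\,\widetilde r+\im P_d(\omega_*)\sigma_3\,\partial_\rho\chi\,\widetilde r$, the correction involves the smoothing operator $P_d(\omega_*)$, so $P_d(\omega_*)\sigma_3\widetilde r=S^1_{0,1}$ and the whole correction is $S^0_{0,l}\cdot S^1_{0,1}\subset S^1_{0,l}$; it is simply moved into the forcing term, after which the scalar rotation acts with no projector in the way and the Duhamel representation gives \eqref{eq:main1symb} exactly (the paper does not even display the projector in its computation of $(X_\chi)_{\widetilde r}$, for this reason). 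Your explanation of why the $l=1$ case needs the hypothesis $c_{m,n}=S^0_{1,0}$, $C_{m,n}=S^1_{1,0}$ is in the same spirit as the paper's remark that the hypothesis makes the error terms of order $2$ rather than $1$.
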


\begin{proof}
By \eqref{eq:HamVector}  we have
\begin{equation} \nonumber \begin{aligned}
(X _{\chi})_{\lambda}&= -A(\omega)^{-1}\partial_\omega \chi = S ^{0}_{1,l} \\
(X _{\chi})_{\omega}&= A(\omega)^{-1}\partial_\lambda \chi = S ^{0}_{\delta_{1l},l }
\\ (X _{\chi})_{\widetilde{r}}&=-\im \sigma_3 \nabla_{\widetilde{r}}\chi  = -\im \sigma_3 (\partial_{\rho}\chi +\partial_{\mu}\chi \partial_{\rho}\mu ) \ \widetilde{r} + \sum _{m+n=l } \lambda^{m} \mu^{n}
   C_{m,n
} +\partial_{\mu}\chi \nabla_{\widetilde r}\mu\\&=  S ^{0}_{\delta_{1l},l } \im \sigma _3  \widetilde{r}+S ^{1}_{\delta_{1l},l },
\end{aligned}   \end{equation}
where $\delta_{1l}=1$ for $l=1$ and $\delta_{1l}=0$ otherwise.
The proof of Lemma \ref{lem:ODE}  is based on
\begin{equation} \label{eq:odeest} \begin{aligned} &
\tau ^s= \tau +\int _0^s  \{\tau , \chi \} \circ \phi ^{s'}ds' \text{ for }\tau = \omega , \lambda,
\end{aligned}   \end{equation}
and
\begin{equation} \nonumber \begin{aligned} &
 e^{-\im \sigma _3\int _0^sS ^{0}_{0,l } \circ \phi ^{s'}ds'} \widetilde{r} ^s=\widetilde{r}+  \int _0^{s
 } e^{-\im \sigma _3 \int _0^{s ^{\prime }}S ^{0}_{0,l } \circ \phi ^{s ^{\prime\prime}}d{s ^{\prime\prime}}} S ^{1}_{0,l } \circ \phi ^{s'}ds'.
\end{aligned}   \end{equation}
In the case $l\ge 2$ the error term represented by the integral in the r.h.s.\ is of order $l$ and this leads to the desired result by elementary ODE analysis, see Lemma 3.8  \cite{Cuccagna12Rend}. Even for the case $l=1$, the error term is order $2$  by hypothesis.  The proof of \eqref{eq:main1symb} is similar to that in Lemma 3.8  \cite{Cuccagna12Rend}.
\end{proof}

We now exploit Lemma \ref{lem:ODE} in order to eliminate the term $  \frac{\mu ^2}{2} B (\omega ) $
from $E$ using   $\chi$  with $N=1$ defined  for some for some $\mathrm{c} =S^{0}_{0,0}$ by
 \begin{equation}
\label{eq:chi02}\chi   =     \mathrm{c}(Q,\omega,Q(\widetilde r) )  \ \lambda \mu   .
\end{equation}

\begin{lemma} \label{lem:nform1}
There exists $\mathrm{c}=S^0_{0,0}$ s.t.\ for the  function $\chi$ in  \eqref{eq:chi02}  and  for $\phi _1$ the map associated to $\chi$,
we have $\mathfrak F_1=\mathfrak F_0\circ \phi_1 \in C^\infty(D_{\Hrad^1}(\mathcal T_{\omega_*},\delta),\Hrad^1)$ for some $\delta>0$ satisfying \eqref{error1} and \eqref{Darb}.
Moreover, $\mathfrak F_1$ is a $C^\infty$-diffeomorphism onto the neighborhood of $\mathcal T_{\omega_*}$ and we have
\begin{align}  E_1&:= E \circ \mathfrak F_1\nonumber\\& =d (\omega ) -\omega Q     +(\omega-\omega_*)Q(\widetilde r)+ \frac{\lambda ^2}{2}  A (\omega )+\frac 1 2\<H_{\omega_*}  \widetilde{r}, \sigma _1 \widetilde{r}\>+E_P( \widetilde{r})+S_{1,2}^{0} +\sum_{j=2}^7\mathbf{R}_j,\label{eq:nform11}
\end{align}
where $\mathbf R_j$ are terms in the form given in Definition \ref{def:Rk}.
Moreover,
 \end{lemma}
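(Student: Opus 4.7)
The plan is to first determine $\mathrm{c}$ by the requirement that the Poisson bracket $\{E_0,\chi\}$ with respect to $\Omega_0$ cancel the $\tfrac12 B(\omega)\mu^2$ term in the expansion of $E_0:=\mathfrak F_0^* E$ from Lemma \ref{lem:back}. Using the Hamiltonian vector field formulas \eqref{eq:HamVector}, this bracket reads
\begin{align*}
\{E_0,\chi\}=A(\omega)^{-1}\partial_\omega E_0\,\partial_\lambda\chi - A(\omega)^{-1}\partial_\lambda E_0\,\partial_\omega\chi+\bigl\langle\nabla_{\widetilde r}E_0,\sigma_1(X_\chi)_{\widetilde r}\bigr\rangle.
\end{align*}
Since $\partial_\omega E_0=q(\omega)-Q+\mathrm{h.o.t.}=-A(\omega)\mu+S^0_{0,2}$ by Lemma \ref{lem:muexpand} and $\partial_\lambda\chi=\mathrm{c}\mu+S^0_{0,2}$, the dominant contribution is $-\mathrm{c}\mu^2+S^0_{1,2}$ (the other two terms in the bracket produce only $S^0_{1,2}$ once one uses $q'(\omega_*)=0$ and the bounds on $\partial_\omega\mu$ and $\nabla_{\widetilde r}\mu$). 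Matching $-\mathrm{c}\mu^2=-\tfrac12 B(\omega)\mu^2$ then fixes $\mathrm{c}=B(\omega)/2\in S^0_{0,0}$, independent of $Q$ and $Q(\widetilde r)$.

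With this choice, I would construct the flow $\phi^s$ via the ODE argument of Lemma \ref{lem:ODE}; although the literal hypothesis $c_{1,1}\in S^0_{1,0}$ is not met, the factor $\mu\in S^0_{0,1}$ built into $\chi$ together with $q'(\omega_*)=0$ supply exactly the smallness that the hypothesis was designed to ensure, so the same ODE analysis applies. Symbol bookkeeping via Lemma \ref{difsymbol} yields $(X_\chi)_Q=0$, $(X_\chi)_\omega\in S^0_{0,1}$, $(X_\chi)_\lambda\in S^0_{1,1}+S^0_{0,2}$, and $(X_\chi)_{\widetilde r}=-\im\sigma_3 P(\omega_*)(S^0_{0,1}\widetilde r+S^1_{0,2})$, so integrating over $s\in[0,1]$ gives $\phi_1^* Q=Q$, $\phi_1^*\omega=\omega+S^0_{0,1}$, $\phi_1^*\lambda=\lambda+S^0_{1,1}$, and $\phi_1^*\widetilde r=e^{\im S^0_{0,1}\sigma_3}(\widetilde r+S^1_{0,2})$. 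Since $S^1_{0,2}\subset S^1_{1,1}$, composing with $\mathfrak F_0$ and applying Proposition \ref{prop:3} and Lemma \ref{lem:muQtrans} yields \eqref{error1} for $\mathfrak F_1=\mathfrak F_0\circ\phi_1$, while \eqref{Darb} is immediate from $\mathfrak F_1^*\Omega=\phi_1^*(\mathfrak F_0^*\Omega)=\phi_1^*\Omega_0=\Omega_0$.

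Finally, the energy expansion \eqref{eq:nform11} follows from the first-order Taylor formula along the flow,
\begin{align*}
E_1=\phi_1^* E_0=E_0+\{E_0,\chi\}+\int_0^1(1-s)\,\phi^{s,*}\{\{E_0,\chi\},\chi\}\,ds,
\end{align*}
combined with the expansion of $E_0$ from Lemma \ref{lem:back}. The first bracket cancels $\tfrac12 B(\omega)\mu^2$ up to an $S^0_{1,2}$ error, the second-order Taylor remainder is of order at least four in $(\lambda,\mu,\widetilde r)$ and hence lies in $S^0_{1,2}$, and the pullback by $\phi_1$ of the pieces $\tfrac12\langle H_{\omega_*}\widetilde r,\sigma_1\widetilde r\rangle+E_P(\widetilde r)$ and of the $\mathbf R_j$ terms is controlled exactly as in Lemmas \ref{lem:Rexpand} and \ref{lem:transEp}, producing at worst further $S^0_{1,2}$ and $\mathbf R_j$ contributions. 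The main technical obstacle is checking carefully that every remainder arising in the Poisson bracket computation and under the pullback $\phi_1^*$ fits into the allowed classes $S^0_{1,2}$ or $\mathbf R_j$; this is a careful bookkeeping exercise powered by Lemma \ref{difsymbol} and the stability of these symbol classes under transformations of the form \eqref{Ferror}.
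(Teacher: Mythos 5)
Your proposal is correct and, in its essentials, follows the paper's construction: the same generating function $\chi=\mathrm{c}\,\lambda\mu$, the same direct flow estimates standing in for Lemma \ref{lem:ODE} (whose $l=1$ hypothesis is indeed not met, as the paper itself notes), the same composition $\mathfrak F_1=\mathfrak F_0\circ\phi_1$, and the same arguments for \eqref{error1} and \eqref{Darb}. Where you genuinely differ is in how $\mathrm{c}$ is fixed and how $E_1$ is expanded: the paper computes $\omega^1=\omega+A^{-1}\mathrm{c}\mu\sum_{n\geq1}\frac{(D\mathrm{c})^{n-1}}{n!}+S^0_{0,2}$ exactly (with $D=A^{-1}\partial_\omega\mu(Q,\omega,0,Q(\widetilde r),0)=S^0_{1,0}$), substitutes this into $d(\omega)-\omega Q+(\omega-\omega_*)Q(\widetilde r)$ using $d'=q$ and Lemma \ref{lem:muexpand}, and then solves $\mathrm{c}\sum_{n\geq1}\frac{(D\mathrm{c})^{n-1}}{n!}=\frac12 B(\omega)$ by Neumann series; you instead use the Lie--Taylor formula $E_1=E_0+\{E_0,\chi\}+\int_0^1(1-s)\,\phi^{s,*}\{\{E_0,\chi\},\chi\}\,ds$ and take the leading-order choice $\mathrm{c}=B(\omega)/2$. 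Your shortcut is legitimate: the discrepancy with the paper's $\mathrm{c}$ is of size $S^0_{1,0}$, so the uncancelled part of the $\mu^2$ coefficient is $S^0_{1,0}\mu^2\subset S^0_{1,2}$, which \eqref{eq:nform11} tolerates; the two cancellation mechanisms are literally the same derivative, computed once via the bracket versus resummed via the explicit formula for $\omega^1$. Two caveats on your bookkeeping: the second-order remainder is not literally of order four in $(\lambda,\mu,\widetilde r)$ — it contains order-three contributions such as $S^0_{1,1}\cdot S^0_{0,1}$ — but these carry the extra smallness factor and land in $S^0_{1,2}$ (and $S^0_{0,j}\subset S^0_{1,2}$ for $j\geq3$), so your conclusion stands; and the pairing $\langle\nabla_{\widetilde r}E_0,\sigma_1(X_\chi)_{\widetilde r}\rangle$ produces, besides $S^0_{1,2}$, genuinely new $\mathbf R_2$/$\mathbf R_3$-type terms (from the $\sigma_1$ part of $H_{\omega_*}$, from $P_d$, and from $E_P$), which your final sentence does allow for but which must be matched against Definition \ref{def:Rk} by exactly the kind of manipulations carried out in Lemmas \ref{lem:back}, \ref{lem:Rexpand} and \ref{lem:transEp}.
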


\begin{remark}
This case is not covered by Lemma \ref{lem:ODE}.
However, by the proof, it will be clear that  $\phi^s$ satsifies \eqref{eq:reg1} and \eqref{eq:main1}.
\end{remark}

\begin{proof}
  We  pick $ \mathrm{c} =\beta(Q,\omega,Q(\widetilde r) )     \lambda \mu$ without initially specifying $\mathrm{c}$.
Let $\phi^s$ the flow obtained from the Hamilton vector field $X_\chi$.
We set $\tau^s=(\phi^s)^*\tau=\tau\circ \phi^s$ for $\tau=\omega,\lambda,\widetilde r$.
By Lemma \ref{lem:ODE}
\begin{align*}
\omega^s=\omega+S^0_{0,1},\ \lambda^s=\lambda+S^0_{1,1}.
\end{align*}
Further, since
\begin{align*}
(X_\chi)_{\widetilde r}=-\im \sigma_3(\partial_\rho \beta \mu \lambda+\beta \partial_\rho \mu \lambda) \widetilde r-\sigma_3\beta \lambda \nabla_{\widetilde r}\mu =-\im \sigma_3 S^0_{0,1}\widetilde r +S^1_{0,2},
\end{align*}
following the proof of Lemma \ref{lem:ODE}, we have
\begin{align*}
\widetilde r^s=e^{\im \sigma_3 S^0_{0,1}}\(\widetilde r+S^1_{0,2}\).
\end{align*}
Notice that the change of coordinate made by $\phi^1$ has almost the same property as the change of coordinate by $\mathfrak F_0$ given in Proposition \ref{prop:3}.
The only difference is that  $(\phi^1)^*\omega=\omega+S^0_{0,1}$ whereas the change of coordinates by Darboux theorem we had $\mathfrak F_0^* \omega =\omega+S^0_{0,2}$.

Thus, we expand $\omega^s$ more carefully.
Since $(X_\chi)_\omega=A^{-1}(\omega)\mathrm{c} (\mu+\lambda\partial_\lambda \mu)$, and by
\eqref{mudsymbol} and \eqref{Sijtrans}     we have
\begin{align*}
\omega^s&=\omega+\int_0^s A^{-1}(\omega^\tau)\mathrm{c}(Q,\omega^\tau,Q(\widetilde r^\tau))\(\mu^\tau+\lambda^\tau \partial_\lambda \mu^\tau\)\,d\tau\\&
=\omega + A^{-1}(\omega)\mathrm{c}(Q,\omega,Q(\widetilde r))\int_0^s \mu(Q,\omega^\tau,\lambda,Q(\widetilde r),\widetilde r)\,d\tau + S^0_{0,2}\\&=
\omega + A^{-1}(\omega)\mathrm{c}\mu s +A^{-1}(\omega)\mathrm{c} \partial_\omega \mu(Q,\omega,0,Q(\widetilde r),0) \int_0^s (\omega^\tau-\omega)\,d\tau + S^0_{0,2},
\end{align*}
where we have used the Taylor expansion in the third line.
Notice that $S^0_{0,2}$ in the second and third line are different.
We set $D(Q,\omega,Q(\widetilde r)):=A^{-1}(\omega)\partial_\omega \mu(Q,\omega,0,Q(\widetilde r),0)=S^0_{1,0}$.
Then, we have
\begin{align*}
\omega^1 = \omega+ A^{-1}\mathrm{c} \mu  \(\frac{e^{D\mathrm{c} }-1}{D\mathrm{c}}\)+S^0_{0,2}=\omega+ A^{-1}\mathrm{c} \mu \sum_{n=1}^\infty \frac{(D\mathrm{c})^{n-1}}{n!}+S^0_{0,2}.
\end{align*}
We now compute $E\circ \mathfrak F_0\circ \phi^1$.
First,
\begin{align*}
d(\omega^1)-\omega^1 Q +(\omega^1-\omega_*)Q(\widetilde r^1)=d(\omega)-\omega Q +(\omega-\omega_*)Q(\widetilde r)+	 \(q(\omega)-Q+Q(\widetilde r)\)(\omega^1-\omega)+S^0_{1,2},
\end{align*}
where we have used $d'(\omega)=q(\omega)$ and $q'(\omega)=S^0_{1,0}$.
Now, since $q(\omega)-Q+Q(\widetilde r)=-A(\omega)\mu+S^0_{0,2}$ by Lemma \ref{lem:muexpand}, we have
\begin{align*}
d(\omega^1)-\omega^1 Q +(\omega^1-\omega_*)Q(\widetilde r^1)=d(\omega)-\omega Q +(\omega-\omega_*)Q(\widetilde r)-\mathrm{c} \mu^2\sum_{n=1}^\infty \frac{(D\mathrm{c})^{n-1}}{n!}+S^0_{1,2}.
\end{align*}
For the expansion of other parts,
from Lemmas \ref{Rexpand} and \ref{lem:transEp}, we have
\begin{align*}
\mathfrak \phi_1^*\(S^0_{1,2}+\sum_{k=2}^7\mathbf R_k+ E_P(\widetilde r)\)=S^0_{1,2}+\sum_{k=2}^7\mathbf R_k + E_P(\widetilde r).
\end{align*}
However, we remark that $S^0_{1,2}$, $\mathbf R_k$ and $S^0_{0,0}$ and $\tilde v$ in the l.h.s.\ and r.h.s.\ are not the same.

\noindent Thus, if we set $\mathrm{c}$ to be the solution of
\begin{align*}
-\mathrm{c} \sum_{n=1}^\infty \frac{(D\mathrm{c})^{n-1}}{n!}+\frac{1}{2}B(\omega)=0,
\end{align*}
we are able to erase the term $\frac{\mu^2}{2}B(\omega)$ from the energy expansion.
Recalling $D=D(Q,\omega,Q(\widetilde r))=S^0_{1,0}$, we can easily solve the above equation by Neumann series.
Therefore, we have the conclusion.
\end{proof}

Before, proceeding to the next step, we remark the following.
\begin{lemma}\label{lem:lininverse}
For any $s\geq 0$,
$\left.-\im \mathcal H_{\omega_*}\right|_{\Sigma^s_c}$ is invertible.
That is, $(\left.-\im \mathcal H_{\omega_*}\right|_{\Sigma^s_c})^{-1}:\Sigma^s_c\to \Sigma^s_c$ exists.
\end{lemma}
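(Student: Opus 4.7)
I would split the proof into two steps: first establish invertibility on $L^2_c:=P(\omega_*)\Lrad(\R^3,\C^2)$ by an elementary spectral argument, and then bootstrap to $\Sigma^s_c$ for $s>0$ by elliptic regularity.

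For Step~1, Assumption~\ref{a:1} together with $\sigma_{\mathrm{ess}}(\mathcal H_{\omega_*})=(-\infty,-\omega_*]\cup[\omega_*,\infty)$ implies that $0$ is an isolated point of $\sigma(\mathcal H_{\omega_*})$. By Proposition~\ref{prop:1}(4), $P_d(\omega_*)$ is a bounded projection commuting with $\mathcal H_{\omega_*}$ whose range is the generalized kernel $\mathcal N_g(\mathcal H_{\omega_*})$, so $P_d(\omega_*)$ coincides with the Riesz projection associated to the isolated eigenvalue $0$. The standard Riesz decomposition then gives $\sigma(\mathcal H_{\omega_*}|_{L^2_c})=\sigma(\mathcal H_{\omega_*})\setminus\{0\}\subset(-\infty,-\omega_*]\cup[\omega_*,\infty)$, so $0$ lies in the resolvent set of the restriction and $-\im\mathcal H_{\omega_*}|_{L^2_c}$ is a bijection with bounded inverse.

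For Step~2, given $u\in\Sigma^s_c\subset L^2_c$, let $v\in L^2_c$ be the unique solution of $-\im\mathcal H_{\omega_*}v=u$ from Step~1; uniqueness in $\Sigma^s_c$ is immediate since $\Sigma^s_c\subset L^2_c$. Rewriting the equation as
\begin{equation*}
(-\Delta+\omega_*)v=\im\sigma_3 u-\sigma_3\mathcal V(\omega_*)v,
\end{equation*}
I would iterate regularity. By Lemma~\ref{lem:4.3}, $\mathcal V(\omega_*)$ is a smooth Schwartz-class matrix potential, so multiplication by $\mathcal V(\omega_*)$ gains arbitrary decay and regularity at the cost of two powers in the resolvent $(-\Delta+\omega_*)^{-1}\colon\Sigma^{\sigma-2}\to\Sigma^{\sigma}$, which is bounded for every $\sigma\geq 0$. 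An induction $v\in\Sigma^0\Rightarrow v\in\Sigma^2\Rightarrow v\in\Sigma^4\Rightarrow\cdots$ produces $v\in\Sigma^s$ for arbitrary $s\geq 0$. Moreover $P_d(\omega_*)v=(-\im\mathcal H_{\omega_*})^{-1}P_d(\omega_*)u=0$ by commutativity of $P_d(\omega_*)$ with $\mathcal H_{\omega_*}$, so $v\in\Sigma^s_c$. Tracking the constants through the induction (or invoking the closed graph theorem) then yields boundedness of the inverse on $\Sigma^s_c$.

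The main technical point is the simultaneous control of $|x|$-weights and derivatives. The mapping property $(-\Delta+\omega_*)^{-1}\colon\Sigma^{\sigma-2}\to\Sigma^{\sigma}$ rests on the commutator $[-\Delta+|x|^2+1,\,-\Delta+\omega_*]=[|x|^2,-\Delta]$ being a first-order operator with polynomial coefficients, so conjugating $\mathcal H_{\omega_*}$ by $(-\Delta+|x|^2+1)^{\sigma/2}$ produces controllable error terms; together with the Schwartz nature of $\mathcal V(\omega_*)$, which ensures $[-\Delta+|x|^2+1,\mathcal V(\omega_*)]=[-\Delta,\mathcal V(\omega_*)]$ is a lower-order perturbation on every $\Sigma^{\sigma}$, this closes the a priori estimate $\|v\|_{\Sigma^s}\lesssim_s\|\mathcal H_{\omega_*}v\|_{\Sigma^s}+\|v\|_{L^2}$ that underlies the bootstrap.
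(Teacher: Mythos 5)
Your overall route is the same as the paper's: invertibility on $P(\omega_*)\Lrad$ via the spectral Assumption \ref{a:1} (zero is an isolated eigenvalue whose Riesz projection is $P_d(\omega_*)$, so the restriction of $-\im\mathcal H_{\omega_*}$ to $\mathrm{Ran}\,P(\omega_*)$ has $0$ in its resolvent set), followed by an elliptic bootstrap to pass from $L^2_c$ to $\Sigma^s_c$; this is exactly the two-step proof in the paper, and your Step 1 is correct.

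There is, however, a genuine flaw in the quantitative claim driving your Step 2: the mapping property $(-\Delta+\omega_*)^{-1}\colon\Sigma^{\sigma-2}\to\Sigma^{\sigma}$ is false. That resolvent gains two derivatives but does not gain two powers of $|x|$: its kernel decays exponentially, so it preserves polynomial weights but cannot improve them. Concretely, in $\R^3$ take $f(x)=\langle x\rangle^{-2}\in L^2=\Sigma^0$; then $u=(-\Delta+\omega_*)^{-1}f$ behaves like $\omega_*^{-1}\langle x\rangle^{-2}$ at spatial infinity, so $|x|^2u\notin L^2$ and $u\notin\Sigma^2$. (The operator that gains both derivatives and weights is $(-\Delta+|x|^2+1)^{-1}$, and your commutator sketch appears to conflate the two.) As a consequence the induction $v\in\Sigma^0\Rightarrow v\in\Sigma^2\Rightarrow\cdots$, as you have written it, does not close at the weight level. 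The repair stays entirely within your framework and is what the paper's terse "elliptic regularity" argument actually uses: from $(-\Delta+\omega_*)v=\im\sigma_3u-\sigma_3\mathcal V(\omega_*)v$, the decay of $v$ must be read off from the right-hand side, where $u\in\Sigma^s$ already carries the weight $|x|^s$ and $\mathcal V(\omega_*)$ is rapidly decaying (Lemma \ref{lem:4.3}), so $\mathcal V(\omega_*)v$ has arbitrary polynomial decay as soon as $v\in L^2$; the resolvent then gains two derivatives while preserving this decay, and iterating (using $\|w\|_{\Sigma^\sigma}\sim\|w\|_{H^\sigma}+\||x|^\sigma w\|_{L^2}$, with the weighted part obtained by a weighted energy estimate or conjugation by $\langle x\rangle^{\sigma}$ with cutoffs) yields $v\in H^s\cap |x|^{-s}L^2=\Sigma^s$. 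This is precisely the a priori bound $\|v\|_{\Sigma^s}\lesssim_s\|\mathcal H_{\omega_*}v\|_{\Sigma^s}+\|v\|_{L^2}$ you state at the very end — that inequality (note: $\Sigma^s$, not $\Sigma^{s-2}$, on the right) is the correct one and suffices; with it in place of the false resolvent gain, your proof coincides with the paper's.
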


\begin{proof}
For $s=0$, the lemma is a consequence of the fact that $\left.-\im \mathcal H_{\omega_*}\right|_{P(\omega_*)\Lrad}$ has no $0$ spectrum.
For $s>0$, for $v\in \Sigma^s_c \subset P(\omega_*)\Lrad$, we can find $u\in P(\omega_*)\Lrad$ s.t.\ $-\im \mathcal H_{\omega_*}u=v$.
Then, by elliptic regularity, we can show $u\in \Sigma^s_c$ (actually, we gain two more derivatives).
\end{proof}

%We define $\omega _+$ by  \begin{equation}\label{eq:omega+}  q(\omega _+ )=Q  \text{   with }  \omega _+> \omega _*.\end{equation} Obviously, expanding the l.h.s.  we have \begin{equation}\nonumber  \begin{aligned}   &   (\omega _+ - \omega _*)  (1+O(\omega _+ - \omega _*))= \frac{2(Q- q (\omega _*))}{q ^{\prime\prime}(\omega _*)} \Rightarrow \\& \omega _+ = \omega _*+\left ( \frac{2}{q ^{\prime\prime}(\omega _*)} (Q- q (\omega _*))\right ) ^{\frac{1}{2}} (1+O(Q- q (\omega _*))). \end{aligned}\end{equation} We fix a large $N\in \N$ and consider  for the term in   \eqref{eq:nform11} we consider a Taylor expansion \begin{equation} \label{eq:expsym}  \begin{aligned}      S_{1,2}^{0}   =& \sum  _{2\le m+n\le N+1}   \lambda ^m \mu ^n   e _{m,n}(  Q(\widetilde{r}))  \\& + \sum  _{1\le m+n\le  N}    \lambda ^m \mu ^n  \langle \im \sigma _1 \sigma _3E _{m,n} (  Q(\widetilde{r}) ) , \widetilde{r}\rangle + S_{1, N+2}^{0}+ S_{1,2}^{0}(\widetilde{r}). \end{aligned}\end{equation}

We will prove Theorem \ref{thm:normal} by induction.
We first prove the base case $N=2$.

\begin{lemma}[Theorem \ref{thm:normal} for $N=2$]\label{lem:nform2}
There exists $\chi$ of the form given in \eqref{eq:chi01} with $l=2$ s.t.\ for $\phi _2$ the map associated to $\chi$ we have
$\mathfrak F_2=\mathfrak F_1\circ \phi_2 \in C^\infty(D_{\Hrad^1}(\mathcal T_{\omega_*},\delta),\Hrad^1)$ for some $\delta>0$ satisfying \eqref{error1} and \eqref{Darb}.
Moreover, $\mathfrak F_2$ is a $C^\infty$-diffeomorphism onto the neighborhood of $\mathcal T_{\omega_*}$ and $ E_2:= E_1 \circ \phi_2$ satisfies \eqref{eq:fenergy} with $N=2$.
%\begin{align}  E_2:= E_1 \circ \phi_2 =&d (\omega ) -\omega Q + \frac{\lambda ^2}{2}  A_2(Q,\omega,Q(\widetilde r) )    +(\omega-\omega_*)Q(\widetilde r)+\frac 1 2\<H_{\omega_*}  \widetilde{r}, \sigma _1 \widetilde{r}\>+E_P( \widetilde{r})\label{eq:nform11}\\& \quad+S_{0,3}^{0} +\sum_{j=2}^7\mathbf{R}_j,\nonumber
%\end{align}
%where $A_2(Q,\omega,Q(\widetilde r))=A(\omega)+S^0_{1,0}$, $\mathbf R_j$ $j=2,\cdots,6$ are terms in the form given in Definition \ref{def:Rk} and $\mathbf R_7$ is given by \eqref{R7} with $\widetilde v(t,s)=s S^1_{0,0}+t S^1_{0,0}+t \widetilde r$.
\end{lemma}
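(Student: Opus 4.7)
The plan is to take $\phi_2$ to be the time-$1$ map of the $\Omega_0$-Hamilton flow of a generator $\chi$ of the shape \eqref{eq:chi01} with $l=2$. Any such flow automatically preserves $\Omega_0$, so $\mathfrak F_2^*\Omega=\phi_2^*(\mathfrak F_1^*\Omega)=\phi_2^*\Omega_0=\Omega_0$, giving \eqref{Darb}. By Lemma \ref{lem:ODE} applied with $l=2$ (the generic case, where it suffices that $c_{m,n}\in S^0_{0,0}$ and $C_{m,n}\in S^1_{0,0}$), $\phi_2$ is $C^\infty$ on a neighborhood of $\mathcal T_{\omega_*}$ in $\Hrad^1$, satisfies $\phi_2^*Q=Q$, and has the displacements \eqref{eq:main1symb} with $l=2$. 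Composing with the corresponding symbol estimates for $\mathfrak F_1$ from Lemma \ref{lem:nform1} and applying \eqref{Sijtrans} yields \eqref{error1} for $\mathfrak F_2$. The only real content of the lemma therefore lies in the correct choice of $\chi$.

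To determine $\chi$, denote the ``integrable'' part of $E_1$ by
\begin{equation*}
E_0 := d(\omega)-\omega Q + \frac{1}{2} A(\omega)\lambda^2 + (\omega-\omega_*)Q(\widetilde r) + \frac{1}{2}\<H_{\omega_*}\widetilde r,\sigma_1\widetilde r\>,
\end{equation*}
and use the Lie expansion $E_1\circ\phi_2 = E_1 + \{E_1,\chi\} + \int_0^1(1-s)\{\{E_1,\chi\},\chi\}\circ\phi_2^s\,ds$. Via \eqref{eq:HamVector} the bracket $\{E_0,\chi\}$ produces two kinds of output: pure monomials in $(\lambda,\mu)$ of total degree $3$, coming from pairing the scalar part $c_{m,n}\lambda^m\mu^n$ of $\chi$ with $d(\omega)-\omega Q+\frac12 A(\omega)\lambda^2$; and terms of schematic shape $\lambda^m\mu^n\Omega(\im\mathcal H_{\omega_*}C_{m,n}+\text{lower},\widetilde r)$, coming from pairing $\lambda^m\mu^n\Omega(C_{m,n},\widetilde r)$ with $\frac12\<H_{\omega_*}\widetilde r,\sigma_1\widetilde r\>$. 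On the $E_1$ side, I Taylor-expand the error $S^0_{1,2}$ and the relevant pieces of $\mathbf R_2,\mathbf R_3$ into a polynomial of total degree $3$ in $(\lambda,\mu,\widetilde r)$ with coefficients depending only on the slow variables $(Q,\omega,Q(\widetilde r))$. Every summand not of one of the two cancellable shapes above is either already of an admissible form (absorbable into $\lambda^2 e_{2,0}^{(2)}(Q,\omega,Q(\widetilde r))$ with $e_{2,0}^{(2)}=S^0_{1,0}$, or into some $\mathbf R_k$) or lies in $S^0_{0,3}$. Setting $\{E_0,\chi\}=-(\text{non-resonant part of }E_1)$ then decouples into a finite triangular linear system for $(c_{3,0},c_{2,1},c_{1,2},c_{0,3})$, solvable by Neumann series, and, for each $(m,n)$ with $m+n=2$, an equation $\im\mathcal H_{\omega_*}C_{m,n}=V_{m,n}$ in $\Sigma^s_c$; the latter is uniquely solvable by Lemma \ref{lem:lininverse}.

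The hard part is the bookkeeping required to verify that, after substituting the chosen $\chi$, the full $E_1\circ\phi_2$ fits exactly into the form \eqref{eq:mmain1} with $N=2$ and remainder $S^0_{0,3}+\sum_{k=2}^7\mathbf R_k$. Three contributions have to be tracked: (i) the second-order Lie term $\frac12\{\{E_1,\chi\},\chi\}$, which is of order one higher in smallness than $\{E_1,\chi\}$ and so fits inside $S^0_{0,3}+\sum_{k=2}^7\mathbf R_k$; (ii) the pull-back of $E_P(\widetilde r)$ and of the $\mathbf R_k$ pieces of $E_1$ through $\phi_2$, handled by Lemmas \ref{lem:Rexpand} and \ref{lem:transEp} applied with $l=2$ and $\sigma(l)=1$; and (iii) the brackets of $\chi$ with $E_P(\widetilde r)$ and with the $\mathbf R_k$, which remain inside $\sum_{k=2}^7\mathbf R_k+S^0_{0,3}$ once one uses $\mu=-A^{-1}(q(\omega)-Q+Q(\widetilde r))+S^0_{0,2}$ (Lemma \ref{lem:muexpand}) together with the differentiation rules of Lemma \ref{difsymbol} to rewrite every stray $\mu$ in terms of the independent coordinates $(Q,\omega,\lambda,\widetilde r)$. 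The coefficient $e_{2,0}^{(2)}\in S^0_{1,0}$ is then read off as the aggregate of all $\lambda^2$-weighted slow-variable residues that remain after the cancellations.
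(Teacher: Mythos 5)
Your overall architecture (flow of a generator $\chi$ preserving $\Omega_0$, Lemma \ref{lem:ODE} for regularity and the symbol estimates \eqref{eq:main1symb}, Lemmas \ref{lem:Rexpand} and \ref{lem:transEp} for the pull-back of $E_P$ and the $\mathbf R_k$) is sound, and the Lie-series formulation is a legitimate alternative to the paper's direct substitution of the flow into $E_1$. The gap is in the identification of the terms to be removed and, consequently, in the shape of $\chi$. To reach \eqref{eq:mmain1}--\eqref{eq:fenergy} with $N=2$ one must eliminate the terms produced by expanding $S^0_{1,2}$ as in \eqref{expandS02}, namely $e_{1,1}\lambda\mu+e_{0,2}\mu^2$ with $e_{m,n}=S^0_{1,0}$ and $\lambda\,\Omega(E_{1,0},\widetilde r)+\mu\,\Omega(E_{0,1},\widetilde r)$ with $E_{m,n}=S^1_{1,0}$: these are quadratic in $(\lambda,\mu)$ (resp.\ of degree one times linear in $\widetilde r$) with once-small coefficients, and they are neither of the admissible form $\lambda^m e_{m,0}(Q,\omega,Q(\widetilde r))$ nor inside $S^0_{0,3}+\sum_k\mathbf R_k$ (re-expanding $\mu$ via Lemma \ref{lem:muexpand} turns $e_{1,1}\lambda\mu$ into an $S^0_{3,1}$-type term, still not admissible). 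By contrast, the genuinely cubic terms with slow coefficients that you propose to kill are already absorbed in the allowed remainder $S^0_{0,3}$, so nothing needs to be done about them at this stage.

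Because of this, a generator literally of the form \eqref{eq:chi01} with $l=2$ (scalar monomials of degree $3$ with $c_{m,n}=S^0_{0,0}$, vector terms of degree $2$) cannot do the job: the decisive bracket is with $d(\omega)-\omega Q+(\omega-\omega_*)Q(\widetilde r)$, whose $\omega$-derivative is $q(\omega)-Q+Q(\widetilde r)=-A(\omega)\mu+S^0_{0,2}$, so by \eqref{eq:HamVector} every bracket with $E_0$ preserves or raises the $(\lambda,\mu)$-degree; your $\chi$ therefore produces scalar corrections of degree $\geq 3$ and vector corrections of degree $\geq 2$, and the degree-two scalar and degree-one vector obstructions listed above survive unchanged in $E_2$, so your homological equation has no solution in the proposed class and the conclusion fails. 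What the paper actually uses (and what the somewhat loosely worded ``$l=2$'' in the statement refers to, $l$ being best read as the order of the resulting error rather than the degree of $\chi$) is $\chi=c_{2,0}\lambda^2+c_{1,1}\lambda\mu+\lambda\,\Omega(G_{1,0},\widetilde r)+\mu\,\Omega(G_{0,1},\widetilde r)$ with \emph{small} coefficients $c_{m,n}=S^0_{1,0}$, $G_{m,n}=S^1_{1,0}$ --- exactly the situation covered by the special hypothesis of Lemma \ref{lem:ODE} for $l=1$. Moreover, since the unknown coefficients also enter the auxiliary symbols generated by the flow, the coefficients are not obtained from a clean linear homological equation but from a coupled system (note, e.g., that $G_{1,0}$ reappears in the equation for $G_{0,1}$ through the $-\mu\,\Omega(G_{1,0},\widetilde r)$ contribution of $\omega^1-\omega$), solved by the implicit function theorem together with the invertibility of $-\im\mathcal H_{\omega_*}$ on $\Sigma^s_c$ (Lemma \ref{lem:lininverse}). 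Your proposal would need to be rewritten around this choice of $\chi$ and this solvability argument.
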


\begin{proof}
We expand $S^0_{1,2}$ in \eqref{eq:nform11}.
Then, we have
\begin{align}
S^0_{1,2}=\sum_{m+n=2}e_{m,n}(Q,\omega,Q(\widetilde r)) \lambda^m \mu^n + \sum_{m+n=1}\lambda^m\mu^n \Omega( E_{m,n}(Q,\omega, Q(\widetilde r)), \widetilde r) +\mathbf R_2+S^0_{0,3},\label{expandS02}
\end{align}
where $e_{m,n}(Q,\omega,Q(\widetilde r))=S^0_{1,0}$ and $E_{m,n}(Q,\omega, Q(\widetilde r))=S^1_{1,0}$.
Further, notice that we can assume $P(\omega_*)E_{m,n}=E_{m,n}$.
We want to erase the terms
\begin{align}\label{nonresonant21}
e_{1,1}\lambda\mu+e_{0,2}\mu^2+\lambda \Omega( E_{1,0}, \widetilde r)+\mu \Omega( E_{0,1}, \widetilde r).
\end{align}
Set
\begin{align*}
\chi=&c_{2,0}\lambda^2+c_{1,1} \lambda \mu+\lambda \Omega (G_{1,0}, \widetilde r)+\mu \Omega( G_{0,1}, \widetilde r),
\end{align*}
where $c_{m,n}=c_{m,n}(Q,\omega, Q(\widetilde r))$ and $G_{m,n}=G_{m,n}(Q,\omega, Q(\widetilde r))$ are to be determined.
We can assume $P(\omega_*)G_{m,n}=G_{m,n}$.
In the end, we will have $c_{m,n}=S^0_{1,0}$ and $G_{m,n}=S^1_{1,0}$.

We define symbols $\widetilde S^k_{i,j}$ similar  $S^k_{i,j}$ but
which are also a function of of $c_{m,n}$ and $G_{m,n}$.
Specifically, we write $ f=f(Q,\omega,\lambda,\widetilde r)=\widetilde S^k_{i,j}$ if there exists $\hat f=\hat f(Q,\omega,\lambda,\mu,\rho,\widetilde r,c_{m,n},G_{m,n})$ with
\begin{align*}
\|\widehat{f}\|_{X ^n_s}\lesssim_s &(|Q-q(\omega_*)|^{1/2}+|\omega-\omega_*|+|\lambda|+|\mu|+\rho^{1/2}+\|\widetilde{r}\|_{\Sigma_{-s}}+|c_{m,n}|+\|G_{m,n}\|_{\Sigma_{-s}})^i\\&\times (|\lambda|+|\mu|+\|\widetilde{r}\|_{\Sigma_{-s}})^j
\end{align*}
and
\begin{align*}
f(Q,\omega,\lambda,\widetilde r)=\hat f(Q,\omega,\lambda,\mu(Q,\omega,\lambda,Q(\widetilde r),\widetilde r),Q(\widetilde r),\widetilde r, c_{m,n}(Q,Q(\widetilde r),\widetilde r), G_{m,n}(Q,Q(\widetilde r),\widetilde r)).
\end{align*}

We now compute the pullbacks by $\phi_2$.
First, since
\begin{align*}
(X_\chi)_\lambda=&-A(\omega)^{-1}\(\partial_\omega \mu c_{1,1}\lambda+\partial_\omega \mu\<\im \sigma_3 G_{0,1},\sigma_1 \widetilde r\>\)+S^0_{0,2}
=\widetilde S^0_{2,1}+S^0_{0,2},
\end{align*}
we have
\begin{align*}
\lambda^s:=\lambda\circ \phi_2^s=\lambda+\widetilde S^0_{2,1}+S^0_{0,2}.
\end{align*}
Similarly,
\begin{align*}
(X_\chi)_\omega&=A(\omega)^{-1}\(2c_{2,0} \lambda+c_{1,1}\mu +\Omega( G_{1,0}, \widetilde r)\)+S^0_{0,2}=\widetilde S^0_{1,1}+S^0_{0,2},\\
(X_\chi)_{\widetilde r}&=-\im \sigma_3\(c_{1,1}\lambda \partial_\rho \widetilde r + \lambda \im \sigma_3 G_{1,0}+\mu \im \sigma_3 G_{0,1}\)+S^0_{0,2}=\widetilde S^0_{1,1} \im \sigma_3 \widetilde r + \lambda G_{1,0}+\mu G_{0,1}+S^1_{2,0}
\end{align*}
Thus, we have
\begin{align*}
\omega^s&:=\omega\circ \phi_2^s=\omega+\widetilde S^0_{1,1}+S^0_{0,2},\\
\widetilde r^s&:=\widetilde r\circ \phi_2^s =e^{ \widetilde S^0_{1,1}\im \sigma_3}\(\widetilde r +\widetilde S^1_{1,1}+S^1_{0,2}\)
\end{align*}
Furthermore, we have
\begin{align*}
Q(\widetilde r^s)&=Q(\widetilde r)+\widetilde S^0_{1,2}+S^0_{1,3},\\
\mu^s&:=\mu\circ \phi_2^s=\mu+\partial_\omega \mu (\omega_1-\omega)+S^0_{0,2}=\mu+\widetilde S^0_{2,1}+S^0_{0,2}.
\end{align*}
Using the above, we can rewrite the expansion of $\omega^1$ and $\widetilde r^1$ as
\begin{align*}
\omega^1&=\omega+A(\omega)^{-1}\(2c_{2,0}\int_0^1 \lambda^s\,ds+c_{1,1}\int_0^1 \mu^s\,ds +\Omega (G_{1,0}, \int_0^1 \widetilde r^s\,ds)\)+S^0_{0,2}\\&=\omega+A(\omega)^{-1}\(2 c_{2,0} \lambda + c_{1,1}\mu +\Omega( G_{1,0},  \widetilde r)\)+\widetilde S^0_{2,1}+S^0_{0,2} \ ;\\
\widetilde r^1&=e^{\im \sigma_1 \widetilde S^0_{1,1}}\(\widetilde r +(\lambda+\widetilde S^0_{2,1})G_{1,0}+(\mu+\widetilde S^0_{2,1})G_{0,1} +S^1_{0,2}\).
\end{align*}
We now substitute this into the energy expansion \eqref{eq:nform11}.
 One of the contributions is
\begin{align*}
&d(\omega^1)-\omega^1 Q + (\omega^1-\omega_*) Q(\widetilde r^1)=d(\omega)-\omega Q + (\omega-\omega_*) Q(\widetilde r)+(q'(\omega)-Q+Q(\widetilde r))(\omega^1-\omega)\\&\quad+\widetilde S^0_{2,2}+S^0_{0,3}\\&
=d(\omega)-\omega Q + (\omega-\omega_*) Q(\widetilde r)-2 c_{2,0} \lambda \mu- c_{1,1}\mu^2 -\mu\Omega( G_{1,0}, \widetilde r)+\widetilde S^0_{2,2}+S^0_{0,3}.
\end{align*}
The next term will not give a significant contribution:
\begin{align*}
\frac{1}{2}A(\omega) (\lambda^1)^2=\frac{1}{2}A(\omega) \lambda^2+\widetilde S^0_{2,2}+S^0_{0,3}.
\end{align*}
For the main term in $\widetilde r$, we have
\begin{align*}
\frac12 \<H_{\omega_*}\widetilde r^1, \sigma_1 \widetilde r^1\>=&\frac12 \<H_{\omega_*}\widetilde r, \sigma_1 \widetilde r\>+(\lambda+\widetilde S^0_{2,1})\Omega(-\im\mathcal H_{\omega_*}G_{1,0}, \widetilde r)+(\mu+\widetilde S^0_{2,1})\Omega(-\im \mathcal H_{\omega_*}G_{0,1}, \widetilde r)\\&+\widetilde S^0_{2,2}+S^0_{0,3}+\mathbf R_2.
\end{align*}
There will be no significant contributions from the remaining terms.
Thus, we have
\begin{align*}
&E^{(2)}=d (\omega ) -\omega Q     +(\omega-\omega_*)Q(\widetilde r)+ \frac{\lambda ^2}{2}  A (\omega )+\frac 1 2\<H_{\omega_*}  \widetilde{r}, \sigma _1 \widetilde{r}\>+E_P( \widetilde{r})\\&
\quad-2 c_{2,0} \lambda \mu- c_{1,1}\mu^2 -\mu \Omega( G_{1,0},\widetilde r)+(\lambda+\widetilde S^0_{2,1})\Omega(-\im \mathcal H_{\omega_*}G_{1,0}, \widetilde r)+(\mu+\widetilde S^0_{2,1})\Omega(-\im H_{\omega_*}G_{0,1}, \widetilde r )\\&\quad+\sum_{m+n=2}e_{m,n} \lambda^m \mu^n + \sum_{m+n=1}\lambda^m\mu^n \Omega(E_{m,n},  \widetilde r) +\widetilde S^0_{2,2}+S^0_{0,3} +\sum_{j=2}^7\mathbf{R}_j
\end{align*}
Therefore, to eliminate the terms \eqref{nonresonant21}, it suffices to solve the system
\begin{align*}
-2 c_{2,0}+\widetilde S^0_{2,0}+e_{1,1}=0, \text{ where }e_{1,1}=e_{1,1}(Q,\omega , \rho )\\
-c_{1,1}+\widetilde S^0_{2,0}+e_{0,2}=0,\text{ where }e_{0,2}=e_{0,2}(Q,\omega , \rho )\\
G_{1,0}+(-\im \mathcal H_{\omega_*}) ^{-1}\widetilde S^1_{2,0}+(-\im \mathcal H_{\omega_*}) ^{-1}E_{1,0}=0,\text{ where }E_{1,0}=E_{1,0}(Q,\omega , \rho )\\
- G_{0,1}- (-\im \mathcal H_{\omega_*}) ^{-1}G_{1,0}+(-\im \mathcal H_{\omega_*}) ^{-1}\widetilde S^1_{2,0}+(-\im \mathcal H_{\omega_*}) ^{-1}E_{0,1}=0\text{ where }E_{0,1}=E_{0,1}(Q,\omega , \rho ),
\end{align*}
where we exploited the invertibility of $\left . -\im \mathcal H_{\omega_*}\right |_{\Sigma ^s_c}$
and where the $\widetilde S^i_{2,0}$ are $C^\infty$ functions in the independent variables
$(c_{2,0},c_{1,1}, G_{1,0} , G_{0,1}, Q,\omega , \rho )$. Now, at
\begin{equation*}
   (c_{2,0},c_{1,1}, G_{1,0} , G_{0,1}, Q,\omega , \rho ) = (0,0, 0 , 0, q(\omega _*),\omega _*, 0 )
\end{equation*}
all the terms in the above system are 0, because of $e_{m,n} =S^0_{1,0}$ and $E_{m,n} =S^1_{1,0}$ and of the
properties of the $\widetilde S^i_{2,0}$. The functions
$c_{m,n}(Q,\omega , \rho )= S^0_{1,0}$ and $G_{m,n}(Q,\omega , \rho )= S^1_{1,0}$
remain determined by Implicit Function Theorem.

%However, this system can be easily solved by Neumann series. For example, solve the first equation for $c_{2,0}$ assuming $c_{1,1}=S^0_{1,0}$, $G_{1,0}=S^1_{1,0}$ and $G_{0,1}=S^1_{1,0}$ are given. Then proceed to the second equation by substituting $c_{2,0}=c_{2,0}(Q,\omega,Q(\widetilde r),c_{2,0},G_{1,0},G_{0,1})$ and solve it for $c_{1,1}$ assuming $G_{1,0}$ and $G_{0,1}$ are given. For the third term, assuming $G_{0,1}$ is given, we can solve it because $E_{1,0}=P(\omega_*)E_{1,0}$ and we can assume $\widetilde S^1_{2,0}$ appearing in the third equation also satisfy $S^1_{2,0}=P(\omega_*)S^1_{2,0}$. Thus, since $-\im \mathcal H_{\omega_*}$ is invertible in all $\Sigma^s$ by Lemma \ref{lem:lininverse}, we have \begin{align*}G_{1,0}=(\im \mathcal H_{\omega_*})^{-1}E_{1,0}+\widetilde S^1_{2,0},\end{align*} which can be again solved by Neumann series. Finally, we solve the fourth equation as above and have the conclusion.
\end{proof}

We now consider the inductive step for $N>2$.

\begin{lemma}\label{lem:ind}
Suppose that for $N\geq 2$, there exists a $C^\infty$-diffeomorphism $$\mathfrak F_N\in C^\infty(D_{\Hrad^1}(\mathcal T_{\omega_*},\delta),\Hrad^1),$$ for some $\delta>0$ satisfying \eqref{error1}, \eqref{Darb} and \eqref{eq:mmain1}.
Then, there exists $\chi$ of the form given in \eqref{eq:chi01} with $l=N+1$ s.t.\ for $\phi_{N+1}$ the canonical map associated to $\chi$, we have $\mathfrak F_{N+1}=\mathfrak F_N\circ \phi_{N+1}\in C^\infty(D_{\Hrad^1}(\mathcal T_{\omega_*}),\Hrad^1)$ satisfying \eqref{error1}--\eqref{Darb}.
Moreover, $\mathfrak F_{N+1}$ is a $C^\infty$-diffeomorphism onto the neighborhood of $\mathcal T_{\omega_*}$ and $E_{N+1}:=E_{N}\circ \phi_{N+1}$ satisfies \eqref{eq:mmain1} with $N$ replaced by $N+1$.
\end{lemma}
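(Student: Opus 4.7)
The plan is a Lie transform at order $N+1$: first expand the remainder $S^0_{0,N+1}$ in \eqref{eq:mmain1} into its polynomial content, then choose $\chi$ of the form \eqref{eq:chi01} with $l=N+1$ whose time-1 flow $\phi_{N+1}$ kills the non-resonant monomials. Using Taylor expansion in $(\lambda,\mu,\widetilde r)$ and Lemma \ref{lem:muexpand} to express $\mu$ in terms of the independent variables, write
\begin{align*}
S^0_{0,N+1} = \sum_{m+n=N+1} e_{m,n}(Q,\omega,Q(\widetilde r))\,\lambda^m\mu^n + \sum_{m+n=N} \lambda^m\mu^n\,\Omega(E_{m,n}(Q,\omega,Q(\widetilde r)),\widetilde r) + \mathbf R_2 + S^0_{0,N+2},
\end{align*}
with $e_{m,n}=S^0_{0,0}$, $E_{m,n}=S^1_{0,0}$ and $P(\omega_*)E_{m,n}=E_{m,n}$.

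Define the generator
\begin{align*}
\chi = \sum_{m+n=N+2} c_{m,n}(Q,\omega,Q(\widetilde r))\,\lambda^m\mu^n + \sum_{m+n=N+1} \lambda^m\mu^n\,\Omega(C_{m,n}(Q,\omega,Q(\widetilde r)),\widetilde r),
\end{align*}
with $P(\omega_*)C_{m,n}=C_{m,n}$ and unknowns $c_{m,n}$, $C_{m,n}$ to be determined. Lemma \ref{lem:ODE} (with $l=N+1$) produces the time-1 flow $\phi_{N+1}$, which is canonical ($\phi_{N+1}^*\Omega_0=\Omega_0$) and satisfies $\phi_{N+1}^*Q=Q$ together with the symbol bounds \eqref{eq:main1symb}. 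Setting $\mathfrak F_{N+1}:=\mathfrak F_N\circ\phi_{N+1}$, the Darboux identity \eqref{Darb} follows from $\mathfrak F_N^*\Omega=\Omega_0$, and \eqref{error1} with $N$ replaced by $N+1$ follows from the inclusion $S^0_{0,N+1}\subset S^0_{0,1}$ together with Lemma \ref{lem:muQtrans} applied to $\phi_{N+1}$.

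The main computation is the Lie series $E_{N+1}=E_N+\int_0^1\{E_N,\chi\}\circ\phi_{N+1}^s\,ds$. Following the template of Lemma \ref{lem:nform2}, the leading contributions at order $N+1$ come from three brackets. First, the bracket with $d(\omega)-\omega Q+(\omega-\omega_*)Q(\widetilde r)$ gives $(q(\omega)-Q+Q(\widetilde r))\,A(\omega)^{-1}\partial_\lambda\chi=-\mu\,\partial_\lambda\chi$ modulo higher order, by Lemma \ref{lem:muexpand}. Second, the bracket with $\tfrac12 A(\omega)\lambda^2$ gives $\lambda\,\partial_\omega\chi$ modulo higher order. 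Third, the bracket with $\tfrac12\<H_{\omega_*}\widetilde r,\sigma_1\widetilde r\>$ produces $\sum_{m+n=N+1}\lambda^m\mu^n\,\Omega(-\im\mathcal H_{\omega_*}C_{m,n},\widetilde r)$ modulo higher order. Demanding that the sum of these contributions cancel the expansion of $S^0_{0,N+1}$ modulo admissible remainders $S^0_{0,N+2}+\mathbf R_2$ decouples into a triangular algebraic system for the $c_{m,n}$ with $n\geq 1$ (solvable because lowering $n$ by $1$ never vanishes, the relevant coefficients being analytic functions of $A(\omega_*)\neq 0$), together with an operator equation $-\im\mathcal H_{\omega_*}C_{m,n}=(\text{data})$ which is solvable in $\Sigma^s_c$ by Lemma \ref{lem:lininverse}. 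Implicit function theorem at $(c_{m,n},C_{m,n},Q,\omega,\rho)=(0,0,q(\omega_*),\omega_*,0)$ then produces smooth solutions with $c_{m,n}=S^0_{0,0}$, $C_{m,n}=S^1_{0,0}$.

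The unique resonant monomial from the expansion which is not cancelled is $e_{N+1,0}\lambda^{N+1}$: it carries no $\mu$-factor, and any term in $\chi$ proportional to $\lambda^{N+2}$ contributes to $\{A(\omega)\lambda^2/2,\chi\}$ only at strictly higher order via $\partial_\omega c_{N+2,0}$. This monomial is absorbed as the new coefficient $e_{N+1,0}^{(N+1)}=S^0_{0,0}$ of $E_{f,N+1}$, consistent with \eqref{eq:fenergy}. The quadratic form $\tfrac12\<H_{\omega_*}\widetilde r,\sigma_1\widetilde r\>+E_P(\widetilde r)$ and each $\mathbf R_k$ class transform back into themselves modulo admissible error by Lemmas \ref{lem:Rexpand} and \ref{lem:transEp} applied to $\phi_{N+1}$ (with $\sigma(N+1)=0$ for $N\geq 2$), while the higher iterations of the Lie series and the quadratic Taylor tail produce terms of class $S^0_{0,2(N+1)}\subset S^0_{0,N+2}$. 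Assembling all pieces yields \eqref{eq:mmain1} with $N+1$ in place of $N$. The main obstacle is the bookkeeping in the previous paragraph: because $\mu$ depends on $\omega$ and $\widetilde r$ through Lemma \ref{lem:muexpand}, each Poisson bracket produces cross-terms, and one has to verify that every such cross-term gains a strictly higher order of smallness (an extra $\mu$, $\omega-\omega_*$, $\lambda$, or $\widetilde r$), thereby falling into $S^0_{0,N+2}$ or into some $\mathbf R_k$. The $C^\infty$-diffeomorphism property of $\mathfrak F_{N+1}$ then follows from that of $\mathfrak F_N$ together with the fact that $\phi_{N+1}$, being the time-1 flow of a smooth vector field vanishing to order $N\geq 2$ on $\mathcal T_{\omega_*}$, is a local $C^\infty$ diffeomorphism on a neighborhood of $\mathcal T_{\omega_*}$.
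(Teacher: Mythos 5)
Your outline follows the paper's strategy (expand the $S^0_{0,N+1}$ remainder, apply a canonical flow generated by a polynomial $\chi$, solve homological equations using the invertibility of $-\im\mathcal H_{\omega_*}$ on the continuous range, absorb the resonant $e_{N+1,0}\lambda^{N+1}$ into $E_{f,N+1}$, propagate the $\mathbf R_k$ classes via Lemmas \ref{lem:Rexpand} and \ref{lem:transEp}), but there is a concrete degree mismatch that makes your central cancellation step fail as written. The terms you must remove are $\sum_{m+n=N+1}e_{m,n}\lambda^m\mu^n$ and $\sum_{m+n=N}\lambda^m\mu^n\Omega(E_{m,n},\widetilde r)$, i.e.\ of total order $N+1$, while your generator has polynomial part of degree $N+2$ and $\widetilde r$-coupled part of degree $N+1$. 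With that choice, the brackets you list produce only order-$(N+2)$ terms: $-\mu\,\partial_\lambda\chi$ has polynomial degree $N+2$ and coupled degree $N+1$, and the quadratic form yields $\sum_{m+n=N+1}\lambda^m\mu^n\Omega(-\im\mathcal H_{\omega_*}C_{m,n},\widetilde r)$, which visibly cannot match $\sum_{m+n=N}\lambda^m\mu^n\Omega(E_{m,n},\widetilde r)$; the only order-$(N+1)$ contributions your $\chi$ generates come through $\partial_\omega\mu$ and carry the degenerate factor $q'(\omega)=S^0_{1,0}$, which vanishes at $\omega_*$ and so cannot cancel coefficients of size $O(1)$. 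The uncancelled terms (e.g.\ $e_{0,N+1}\mu^{N+1}$ or $\mu^{N}\Omega(E_{0,N},\widetilde r)$) are $S^0_{0,N+1}$ but neither $S^0_{0,N+2}$ nor of the pure $\lambda^m$ form allowed by \eqref{eq:fenergy}, so \eqref{eq:mmain1} at level $N+1$ is not achieved. The generator must have exactly the degrees of the terms to be removed, $\chi=\sum_{m+n=N+1}c_{m,n}\lambda^m\mu^n+\sum_{m+n=N}\lambda^m\mu^n\Omega(C_{m,n},\widetilde r)$, which is what the paper's proof actually uses (compare the base case Lemma \ref{lem:nform2}: quadratic polynomial part, linear coupled part, to kill quadratic/linear terms). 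The ``$l=N+1$'' in the statement is an off-by-one relative to \eqref{eq:chi01}; the paper itself records, before Lemma \ref{lem:muQtrans}, that $\phi_{N+1}$ satisfies \eqref{Ferror} with $l=N$. You took the statement literally instead of letting the homological matching dictate the degrees.

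With the degrees corrected, your argument essentially becomes the paper's proof, with one further point you treat too loosely: the homological system does not decouple into a triangular scalar system plus isolated operator equations. The equations \eqref{homeq1}--\eqref{homeq2} couple neighboring indices through the shift $(m+1)c_{m+1,n-1}$, resp.\ $(m+1)C_{m+1,n-1}$, together with the small perturbation $A_2A^{-2}q'(n+1)c_{m-1,n+1}$, resp.\ the analogous term in $C$. The paper solves the scalar part by a Neumann series in the small factor $q'(\omega)=S^0_{1,0}$, and the vector part by writing it as $\bigl(\mathbf H_N+A_2A^{-2}q'\mathbf A_N\bigr)\mathbf C_N=-\mathbf E_N$ with $\mathbf H_N$ upper triangular, diagonal $-\im\mathcal H_{\omega_*}$ invertible on $\Sigma^s_c$ by Lemma \ref{lem:lininverse}, and the perturbation small in operator norm; no implicit function theorem is needed in the induction step (it appears only in the base case, Lemma \ref{lem:nform2}). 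Your identification of the resonant monomial $e_{N+1,0}\lambda^{N+1}$, its absorption into $E_{f,N+1}$, and the treatment of \eqref{error1}, \eqref{Darb} and of the classes $\mathbf R_k$ are consistent with the paper once the index $\sigma$ is taken at $l=N$ rather than $N+1$.
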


\begin{proof}
We expand the $S^0_{0,N+1}$ term in $E_N$ as
\begin{align*}
S^0_{N+1}=\sum_{m+n=N+1} e_{m,n}(Q,\omega,Q(\widetilde r)) \lambda^m \mu^n + \sum_{m+n=N} \lambda^m \mu^n \Omega(E_{m,n}(Q,\omega,Q(\widetilde r)),\widetilde r)+\mathbf R_2+ S^0_{0,N+2}.
\end{align*}
Our aim is to erase the first two terms except the term $e_{N+1,0} \lambda^{N+1}$, which is absorbed into $E _{f,N+1}$.
We consider a transformation $\phi_{N+1}$ like in Lemma \ref{lem:ODE} with $l=N+1$.
By \eqref{eq:HamVector}, \eqref{mudsymbol}  and Lemma \ref{lem:ODE}, we have
\begin{align*}
\omega^1=&\omega+A^{-1}
\(\sum_{m+n=N+1}m c_{m,n} \lambda^{m-1}\mu^n +\sum_{m+n=N}m \lambda^{m-1}\mu^n \Omega(C_{m,n},\widetilde r)\)+S^0_{0,N+1},\\
\lambda^1=&\lambda+ A^{-2}q'(\omega)\(\sum_{m+n=N+1}n c_{m,n} \lambda^m \mu^{n-1}+\sum_{m+n=N}n \lambda^m \mu^{n-1} \Omega(C_{m,n},\widetilde r)\)+S^0_{0,N+1},\\
\widetilde r=&e^{\im \sigma_3 S^0_{0,N}}\(\widetilde r +\sum_{m+n=N}\lambda^m \mu^n C_{m,n}+S^0_{0,N+1}\).
\end{align*}
Thus, we obtain, for $A_2= A +  e _{2,0}^{(N)}=A+S^{0}_{1,0}$,
\begin{align*}
E_{N+1}=&d(\omega)-(\omega-\omega_*)Q(\widetilde r)+\frac12 \lambda^2 A (\omega)    + \sum  _{m=2}^{N}   \lambda ^m     e _{m,0}^{(N)} +e_{N+1,0} \lambda^{N+1}   +\frac12 \Omega(-\im\mathcal H_{\omega_*}\widetilde r, \widetilde r)\\&
+\sum_{m+n=N+1}m c_{m,n} \lambda^{m-1}\mu^{n+1}+\sum_{m+n=N}m \lambda^{m-1}\mu^{n+1} \Omega(C_{m,n},\widetilde r)\\&+  A_2(\omega)A^{-2}(\omega)q'(\omega)\(\sum_{m+n=N+1}nc_{m,n}\lambda^{m+1}\mu^{n-1}
+\sum_{m+n=N}n \lambda^{m+1}\mu^{n-1}\Omega(C_{m,n},\widetilde r)\)\\&
+\sum_{m+n=N}\lambda^m \mu^n\Omega(-\im \mathcal H_{\omega_*}C_{m,n},\widetilde r)
+\sum_{m+n=N+1}e_{m,n} \lambda^m \mu^m +\sum_{m+n=N}\lambda^m \mu^m \Omega(E_{m,n},\widetilde r)\\&
+E_P(\widetilde r)+S^0_{0,N+2}+\sum_{k=2}^7 \mathbf R_k.
\end{align*}
Thus, it suffices choose $c_{m,n}$ and $C_{m,n}$ s.t.
\begin{align}
&(m+1)c_{m+1,n-1}+  A_2  A^{-2}  q' (n+1)c_{m-1,n+1}+e_{m,n}=0,\quad 0\leq m\leq N,\label{homeq1}\\&
-\im \mathcal H_{\omega_*} C_{m,n}+(m+1)C_{m+1,n-1}+ A_2  A^{-2}  q' (n+1)C_{m-1,n+1}+E_{m,n}=0,\quad 0\leq m\leq N.\label{homeq2}
\end{align}
Here, we are setting $c_{-1,N+2}=0$ and $C_{N+1,-1}=C_{-1,N+1}=0$.
One can easily solve \eqref{homeq1}   by Neumann series.
For the second equation, we express it in a matrix form
\begin{align*}
\(\mathbf H_N+A_2A^{-2} q' \mathbf A_N\)\mathbf C_N =-\mathbf E_N,
\end{align*}
where
\begin{align*}
\mathbf C_N={^t(C_{0,N},C_{1,N-1},\cdots,C_{N,0})},\quad \mathbf E_N={^t(E_{0,N},E_{1,N-1},\cdots,E_{N,0})}
\end{align*}
and
\begin{align*}
\mathbf H_N:= -\im \mathcal H_{\omega_*} I_{N} +\mathbf B_N,
\end{align*}
and $I_N$ is a $(N+1)\times (N+1)$ unit matrix, $\mathbf A_N$, $\mathbf B_N$ are $(N+1)\times (N+1)$ matrices with
\begin{align*}
(\mathbf A_N)_{i,j}= (N+2-i)\delta_{i-1,j},\quad (\mathbf B_N)_{i,j}=i \delta_{i+1,j}.
\end{align*}
Now, $\mathbf H_N$ is invertible in $\Sigma^s$ for arbitrary $s$ because $\mathbf H_N$ is an upper triangular matrix and $-\im \mathcal H_{\omega_*}$ is invertible by Lemma \ref{lem:lininverse}.
Next, for each $s\geq 0$, taking $\delta_s$   small s.t.\ $\|  A_2A^{-2} q' \mathbf H_N^{-1}\mathbf A_N\|_{\mathcal L(\(\Sigma^s)^{N+1}\)}<1$, we can invert $\mathbf H_N + A_2A^{-2} q'\mathbf A_N$.
\end{proof}

By Lemmas \ref{lem:nform2} and \ref{lem:ind}, we have completed the proof of Theorem \ref{thm:normal}.
\begin{remark}\label{rem:FGR}   Prop. \ref{prop:1} tells us that the nonzero eigenvalues of $-\im \mathcal{H}_{\omega}$ are  given by $\pm \im \lambda$ with $\lambda (\omega ) =\pm \sqrt{ A(\omega) ^{-1}q'(\omega )}$. For any given $N$ here    we take $\omega$ sufficiently close to $\omega _*$
 s.t.\ $N \lambda (\omega )\ll \omega  $ when $\omega   > \omega _*$, so that we do not see the dissipation by radiation phenomena detected in \cite{Bambusi13CMPas}, \cite{BP95}--\cite{BS03AIHPN},
 \cite{Cuccagna03RMP}--\cite{CPV05CPAM},  \cite{SW99IM} and other works referenced in these papers.  Taking a really large number of iterates $N$, so that  $N \lambda ( \omega )\sim \omega   $,
 would lead to a $A_2(\omega )A^{-2}(\omega ) q' (\omega ) \mathbf H_N^{-1}\mathbf A_N$ not small, obviously disrupting the above proof. Also,
 by the large number of coordinate changes, the size of   the coefficients of the polynomials
 \eqref{eq:chi01} could be not simple to control. Furthermore,   $\omega$ is oscillating
 around $\omega  _*$ and so it is natural to ask which $\omega$ should be
 the ones used for the iterates.   It would be natural to choose values of
 $\omega$ close to  its limiting value, assuming   Conjecture 1.1  \cite{MRS10JNS} true.
 But, to the best of our knowledge, there is  no information on  this limit value,
 save for the fact that it should be strictly larger that $\omega _*$. All this suggests that, as we mentioned in  Sect.\ref{sec:introduction},  the framework in this paper is not the correct one to prove the conjecture and that a better choice of modulation coordinates is required.
\end{remark}

\section{Long time oscillation}
\label{sec:contspec}
In this section, we consider the dynamics of \eqref{normeq}, which is equivalent (modulo $\vartheta$) to NLS \eqref{1} in the coordinate system given by Theorem \ref{thm:normal}.
We will consider only the situation $0<Q-q(\omega_*)\ll 1$.
Therefore, we set
\begin{align}\label{defep}
\epsilon:=(Q-q(\omega_*))^{1/2},
\end{align}
and use $\epsilon$ to measure the smallness.

\subsection{Dynamics of finite dimensional system}\label{sec:findim}
Fix $N\geq 2$.
We start from studying the finite dimensional system \eqref{finiteeq}:
\begin{equation}\tag{\ref{finiteeq}}
\begin{aligned}
\dot \omega_f &= A(\omega_f)^{-1}\partial_\lambda E_Q(\omega_f,\lambda_f ),\\
 \dot \lambda_f &=- A(\omega_f)^{-1}\partial_{\omega}E_Q(\omega_f,\lambda_f ),
\end{aligned}
\end{equation}
where $E_Q(\omega,\lambda)=E_{f,N}(Q, \omega,\lambda , 0 )$, with $E_{f,N}$ defined in \eqref{eq:fenergy}.
As explained in Sect.\ \ref{subsec:main}, $V_Q$ can be thought as the potential energy and the remaining terms, which sum up  approximately to $2^{-1}A(\omega_*)\lambda^2$, can be thought as the kinetic energy.

We first study the critical points of $V_Q$.
The following is elementary.
\begin{lemma}\label{lem:findimEcrt}
Let $0<Q-q(\omega_*)\ll1$.
Then, near $\omega _*$ the potential $V_Q$ has two critical points, $\omega _\pm $  with \begin{equation}\label{eq:findimEcrt}  \begin{aligned}   &      \omega _\pm  =\omega _*\pm \epsilon \sqrt{\frac{2}{q ^{\prime\prime}(\omega _*)}}  +O(\epsilon^2)  , \end{aligned}\end{equation}
  with $\omega _-$ a local maximum and $ \omega _+$ a local minimum of $V_Q$,  with $\epsilon$ defined in \eqref{defep} .
\end{lemma}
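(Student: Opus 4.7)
The plan is to reduce the critical point equation to a single equation in $\omega$ by noting that $\partial_\omega V_Q(\omega) = d'(\omega) - Q = q(\omega) - Q$, since by \eqref{eq:def d} we have $d'(\omega) = q(\omega)$. Hence critical points of $V_Q$ are exactly solutions of $q(\omega) = Q$, and their nature is determined by the sign of $V_Q''(\omega) = q'(\omega)$.

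First I would Taylor expand $q$ around $\omega_*$, using the hypothesis $q'(\omega_*)=0$ and $q''(\omega_*)>0$ from \eqref{4}, to obtain
\begin{equation*}
q(\omega_* + u) = q(\omega_*) + \tfrac{1}{2} q''(\omega_*) u^2 + O(u^3).
\end{equation*}
Setting $u = \omega - \omega_*$ and $Q - q(\omega_*) = \epsilon^2$, the equation $q(\omega) = Q$ becomes
\begin{equation*}
\tfrac{1}{2} q''(\omega_*) u^2 + O(u^3) = \epsilon^2.
\end{equation*}
Rescaling via $u = \epsilon v$ yields $\tfrac{1}{2} q''(\omega_*) v^2 + \epsilon R(v,\epsilon) = 1$ with $R$ smooth. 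At $\epsilon = 0$ this has exactly the two nondegenerate roots $v = \pm\sqrt{2/q''(\omega_*)}$, so the Implicit Function Theorem produces two smooth branches $v_\pm(\epsilon) = \pm\sqrt{2/q''(\omega_*)} + O(\epsilon)$, which unwinds to the expansion \eqref{eq:findimEcrt}.

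To classify the critical points I would compute $V_Q''(\omega_\pm) = q'(\omega_\pm)$. Using $q'(\omega_\pm) = q''(\omega_*)(\omega_\pm - \omega_*) + O((\omega_\pm - \omega_*)^2)$ together with \eqref{eq:findimEcrt} gives
\begin{equation*}
V_Q''(\omega_\pm) = \pm \epsilon\, \sqrt{2\, q''(\omega_*)} + O(\epsilon^2),
\end{equation*}
which is strictly positive at $\omega_+$ and strictly negative at $\omega_-$ for $\epsilon$ small enough, so $\omega_+$ is a local minimum and $\omega_-$ a local maximum, as claimed.

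There is no real obstacle: the only mild point is ensuring that the two branches produced by IFT exhaust \emph{all} critical points in a fixed neighborhood of $\omega_*$. This follows from the fact that $q(\omega) - q(\omega_*) = \tfrac{1}{2} q''(\omega_*)(\omega-\omega_*)^2(1 + o(1))$ is strictly monotone on each side of $\omega_*$ in a neighborhood, so the equation $q(\omega) = q(\omega_*) + \epsilon^2$ admits exactly one solution on each side, matching the two branches already found. This completes the verification.
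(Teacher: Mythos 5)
Your proposal is correct and follows essentially the same route as the paper: you reduce the critical point equation to $q(\omega)=Q$ via $V_Q'(\omega)=d'(\omega)-Q=q(\omega)-Q$, Taylor expand $q$ at $\omega_*$ using $q'(\omega_*)=0$, $q''(\omega_*)>0$, solve for the two roots (your rescaling plus Implicit Function Theorem in place of the paper's fixed point argument is an inessential variant), and classify them by the sign of $V_Q''(\omega_\pm)=q'(\omega_\pm)$. Your sign computation and the extra remark on exhaustiveness of the two branches are fine as well.
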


\begin{proof}
Since (recall  $d(\omega ):=E(\widetilde{\phi}_\omega ) +\omega q(\omega )$
and so $d'(\omega )=q(\omega ),  $ since \eqref{3} implies $\partial _{\omega} E(\widetilde{\phi}_\omega ) + \omega q'(\omega )=0$)
$$V_Q'(\omega)=q(\omega)-Q=q(\omega_*)+\frac12 q''(\omega_*)(\omega-\omega_*)^2+O((\omega-\omega_*)^3)-Q,$$
we only have to find the solutions of
\begin{align*}
\omega-\omega_* =\pm\sqrt{\frac{2}{q''(\omega_*)}\(\epsilon^2+O((\omega-\omega_*)^3)\)}.
\end{align*}
We can solve this for example by fixed point argument and obtain the conclusion
using $\text{sign} V_Q ^{\prime\prime}(\omega_\pm )=\mp$.
\end{proof}

Since $E_{Q}$ is the Hamiltonian of \eqref{finiteeq}, it is a constant of the motion.
Notice also that $(\omega_+,0) $ is an isolated local minimum  $(\omega_-,0) $ is a
 saddle for
$(\omega , \lambda )\to E_{Q}( \omega ,\lambda  )$.  Thus,
  if $E_{Q}( \omega_f,\lambda_f )<V_Q(\omega_-)$ and $\omega_f(0)>\omega_-$, then the solution $(\omega_f(t),\lambda_f(t))$ will be trapped in this potential well.

\begin{lemma}\label{lem:finiteest}
Let $0<Q-q(\omega_*)\ll1$ and
\begin{align}\label{trap}
\omega_f(0)>\omega_-,\quad E_{Q}( \omega_f(0),\lambda_f(0) )<V_Q(\omega_-).
\end{align}
Then, for all $t\in \R$ we have
\begin{align}\label{varsigmalambdabound}
|\omega_f(t)-\omega_*|\lesssim \epsilon,\quad |\lambda_f(t)|\lesssim \epsilon^{3/2},
\end{align}
where $(\omega_f(t),\lambda_f(t))$ are solutions of \eqref{finiteeq}.
Moreover, there exists $T_f\ge  0$ depending on $Q$ and $E_{f,N}(Q,\omega(0),\lambda_f(0),0)$ s.t.\ $(\omega_f(t),\lambda_f(t))$ is a  periodic functions of $t$ with period $T_f$.
\end{lemma}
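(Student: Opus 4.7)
The plan is to use conservation of the Hamiltonian $E_Q$ for the planar system \eqref{finiteeq}, together with a sharp analysis of the local geometry of $V_Q$ near $\omega_*$, to trap the orbit in a compact region surrounding the local minimum $(\omega_+,0)$ of $E_Q$. The two quantitative bounds then fall out of size estimates on this trapping region, and periodicity is a standard consequence of the theory of planar Hamiltonian flows.

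First, I would sharpen the picture of $V_Q$ near $\omega_*$. Since $V_Q'(\omega)=q(\omega)-Q$ with $V_Q''(\omega_*)=q'(\omega_*)=0$ and $V_Q'''(\omega_*)=q''(\omega_*)>0$, a third-order Taylor expansion gives
\[
V_Q(\omega)-V_Q(\omega_*)=-\epsilon^2(\omega-\omega_*)+\tfrac{1}{6}q''(\omega_*)(\omega-\omega_*)^3+O\bigl((\omega-\omega_*)^4\bigr).
\]
Combining with Lemma \ref{lem:findimEcrt} yields the key scaling
\[
V_Q(\omega_-)-V_Q(\omega_+)=\tfrac{4}{3}\sqrt{2/q''(\omega_*)}\,\epsilon^3+O(\epsilon^4)\sim\epsilon^3.
\]
Since $\omega_-$ is automatically a double root of $V_Q(\omega)-V_Q(\omega_-)=0$, factoring the cubic gives the third root at $\omega_{\mathrm{turn}}=\omega_*+2\epsilon\sqrt{2/q''(\omega_*)}+O(\epsilon^2)$, so $V_Q<V_Q(\omega_-)$ precisely on the interval $(\omega_-,\omega_{\mathrm{turn}})$ locally around $\omega_*$.

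Second, to prove the trapping and the size bounds I would exploit conservation of $E_Q$ along \eqref{finiteeq}. Interpreting the hypothesis consistently with Proposition \ref{finitedimosc} (so that the admissible energy is strictly below the separatrix value $V_Q(\omega_-)-V_Q(\omega_+)$), and using $K_Q\geq 0$, one gets $V_Q(\omega_f(t))-V_Q(\omega_+)\leq E_Q<V_Q(\omega_-)-V_Q(\omega_+)$ for all $t$. Together with $\omega_f(0)>\omega_-$ and continuity, this forces $\omega_f(t)\in(\omega_-,\omega_{\mathrm{turn}})$ for all $t\in\R$, giving $|\omega_f(t)-\omega_*|\lesssim\epsilon$. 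For the bound on $\lambda_f$, using $A(\omega)=A(\omega_*)+O(\epsilon)$, $e_{2,0}^{(N)}=S^0_{1,0}$ and $e_{m,0}^{(N)}=S^0_{0,0}$ for $m\geq 3$, I would expand
\[
K_Q(\omega,\lambda)=\tfrac{1}{2}A(\omega_*)\lambda^2\bigl(1+O(\epsilon)\bigr)+O(\lambda^3),
\]
uniformly for $|\omega-\omega_*|\lesssim\epsilon$, so that $\lambda_f^2\lesssim K_Q\leq E_Q\lesssim\epsilon^3$, which yields $|\lambda_f(t)|\lesssim\epsilon^{3/2}$.

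Finally, for periodicity, I would observe that $(\omega_+,0)$ is a non-degenerate local minimum of the Hamiltonian $E_Q$, since $\partial_\lambda^2 K_Q(\omega_+,0)=A(\omega_+)>0$ and $V_Q''(\omega_+)=q'(\omega_+)\sim\epsilon>0$ by Taylor expansion of $q'$ at $\omega_*$. By the Morse lemma, each level set $\{E_Q=c\}$ with $c\in(0,V_Q(\omega_-)-V_Q(\omega_+))$ is a smooth simple closed curve lying inside the well, the Hamiltonian flow \eqref{finiteeq} preserves it, has no equilibria on it, and the curve is compact, so the orbit of $(\omega_f,\lambda_f)$ traverses it with a well-defined finite period $T_f$ depending only on $Q$ and $c=E_{f,N}(Q,\omega_f(0),\lambda_f(0),0)$; the degenerate case $c=0$ corresponds to the fixed point $(\omega_+,0)$ and is covered by allowing $T_f=0$. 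The only real delicacy in the plan is the third-order Taylor bookkeeping for $V_Q$, which is needed to extract the sharp scalings $V_Q(\omega_-)-V_Q(\omega_+)\sim\epsilon^3$ and $\omega_{\mathrm{turn}}-\omega_*\sim\epsilon$; these drive the final $\epsilon$ and $\epsilon^{3/2}$ bounds and cannot be replaced by softer compactness arguments.
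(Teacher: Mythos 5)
Your proposal is correct and follows essentially the same route as the paper: Taylor expansion of $V_Q$ around $\omega_*$ to get $V_Q(\omega_-)-V_Q(\omega_+)\sim\epsilon^3$ and a turning point $\omega_{++}$ with $\omega_{++}-\omega_*\sim\epsilon$, trapping of $\omega_f$ in $(\omega_-,\omega_{++})$ by energy conservation, the bound $|\lambda_f|\lesssim\epsilon^{3/2}$ from $K_Q\sim\lambda^2\lesssim V_Q(\omega_-)-V_Q(\omega_+)$, and periodicity from the planar Hamiltonian structure (which the paper simply declares obvious, while you spell it out via compact regular level sets). Your reading of the hypothesis \eqref{trap} in the sense of \eqref{finiteenergybound} matches how the paper actually uses it in its own proof.
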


\begin{proof}
By the energy conservation and the trapping condition, it is obvious that the solutions are periodic with some period $T_f$.
Notice that if $(\omega_f(0),\lambda_f(0))=(\omega_+,0)$, then the solution is stationary.

\noindent To prove \eqref{varsigmalambdabound}, we estimate $V_Q(\omega_-)-V_Q(\omega_+)$.
Since
\begin{align*}
V_Q(\omega)=V_Q(\omega_*)-(Q-q(\omega_*)) (\omega-\omega_*)+\frac16 q''(\omega_*)(\omega-\omega_*)^3 + O((\omega-\omega_*)^4),
\end{align*}
 using \eqref{defep} and  \eqref{eq:findimEcrt}   and elementary computations we get
\begin{align}
V_Q(\omega_-)-V_Q(\omega_+)&=(Q-q(\omega_*))(\omega_+-\omega_-)+\frac16 q''(\omega_*)\((\omega_--\omega_*)^3
-(\omega_+-\omega_*)^3\)+O(\epsilon^4)\nonumber\\&=
\frac43 \epsilon^3\sqrt{\frac{2}{q''(\omega_*)}}+O(\epsilon^4).\label{potentialhigh}
\end{align}
By a   similar computation, we can show $\omega_{++}-\omega_*\sim \epsilon$, for $\omega_{++}>\omega_+$   the solution of $V_Q(\omega_{++})=V_Q(\omega_-)$.
Since $\omega(t)\in (\omega_-,\omega_{++})$ for all $t$, we obtain the first estimate of \eqref{varsigmalambdabound}.
Turning to the second estimate, since the kinetic energy is $ \frac{1}{2} A (\omega ) \lambda ^2     +\sum  _{m=2}^{N}   \lambda ^m     e _{m,0}^{(N)}(Q, \omega , 0))\sim \lambda^2$, then $E_{Q}<V_Q(\omega_-)$ implies
\begin{align*}
\lambda^2\lesssim  V_Q(\omega_-) -V_Q(\omega_+)\lesssim \epsilon^3.
\end{align*}
\end{proof}

\begin{remark}\label{rem:trap}
The estimates in \eqref{varsigmalambdabound} simply come  from the trapping condition  \eqref{trap}.
Therefore, even if $(\omega,\lambda)$ is not the solution of \eqref{finiteeq}, we can use the bound $\eqref{varsigmalambdabound}$ if they satisfy \eqref{trap}.
\end{remark}

From \eqref{varsigmalambdabound}, it is natural to rescale the problem using \eqref{defep}.
In particular, we set
\begin{align}\label{rescale:f}
\omega_f = \omega_+ + \epsilon \zeta_f,\quad \lambda_f = \epsilon^{3/2} \kappa_f,\quad t=\epsilon^{-1/2}\tau.
\end{align}
Then, setting\begin{equation}\label{rescale:fenrgy}
\begin{aligned}
&\widetilde E_Q(\zeta_f,\kappa_f):=\epsilon^{-3} E_Q(\omega_f,\lambda_f)\\& =\sqrt{\frac{q''(\omega_*)}{2}}(1+O(\epsilon)) \zeta_f^2+\frac16 q''(\omega_*) \zeta_f^3+\frac12 A(\omega_*)\kappa_f^2(1+O(\epsilon)+O(\epsilon  \kappa_f) +O(\epsilon  \zeta_f) ),
\end{aligned}
\end{equation}
the system \eqref{finiteeq} can be written as
\begin{equation}\label{rescale:feq}
\begin{aligned}
\dot \zeta_f &= A(\omega_+ +\epsilon \zeta_f)^{-1}\partial_\kappa \widetilde E_Q(\zeta_f,\kappa_f),\\
\dot \kappa_f &= -A(\omega_+ +\epsilon \zeta_f)^{-1}\partial_\zeta \widetilde E_Q(\zeta_f,\kappa_f),
\end{aligned}
\end{equation}
where $\dot\zeta_f=\frac{d}{d\tau} \zeta_f$ and $\dot\kappa_f=\frac{d}{d\tau} \kappa_f$

We now prove Proposition \ref{finitedimosc}.
Because of \eqref{lem:finiteest}, it suffices to estimate $T_f$.
Notice that the condition \eqref{finiteenergybound} is excluding the case $(\omega_f(t),\lambda_f(t))\equiv (\omega_+,0)$.

\begin{proof}[Proof of Proposition \ref{finitedimosc}]
   First, the rescaled equation can be written as
\begin{equation}\label{rescale:feq2}
\begin{aligned}
\dot \zeta_f &= \kappa_f (1+O(\epsilon)) ,\\
\dot \kappa_f &= - c_0 (1+O(\epsilon))\zeta_f+ O(\zeta_f^2)+O(\epsilon  ) \kappa_f ^2  \text{ with }c_0
=
A(\omega_*)^{-1} \sqrt{2q''(\omega_*)}  .
\end{aligned}
\end{equation}
If $0<\widetilde E_Q(\zeta_f,\kappa_f)\ll1$, then   $|\zeta_f|+|\kappa_f|\ll1$.
Thus, the $\zeta_f^2$ term in \eqref{rescale:feq2} is negligible. Elementary plane phase analysis shows that
$\zeta_f$ varies monotonically  from  its minimum $\zeta_{min}<0$ to its maximum $\zeta_{max}>0$ and then comes back.
Let us see that it takes $\tau\sim 1$ time  to vary from $\zeta_{min} $ to 0, and
since for the other parts of the motion  the time frame  is similar,
we can conclude that the rescaled  period $\tau _f$ is approximately 1.
We have
\begin{equation}\nonumber
\begin{aligned} \frac{d\kappa}{dt}= \frac{d\kappa}{d\zeta}\frac{d\zeta}{dt} = \frac{1}{2}\frac{d \kappa ^2}{d\zeta}
(1+O(\epsilon ) ) =- c_0 (1+O(\epsilon))\zeta + O(\zeta ^2)+O(\epsilon  ) \kappa  ^2  .
\end{aligned}
\end{equation}
An elementary integrating factor argument leads to
\begin{equation}\nonumber
\begin{aligned}    \frac{d \kappa ^2}{d\zeta}
  = -2 c_0 (1+O(\epsilon)+O(\zeta ))\zeta
\end{aligned}
\end{equation}
 and  since, as we mentioned above,  here we assume $|\zeta |\ll 1$, we have
\begin{equation}\nonumber
\begin{aligned}    & \kappa (\zeta )\approx  \sqrt{     c_0 (\zeta _{min}^2  -\zeta ^2 ) }
\end{aligned}
\end{equation}
and so
\begin{equation}\nonumber
\begin{aligned}    & \tau =   \int _{\zeta _{min}}^{0}  \frac{d\zeta}{\kappa (\zeta )}\approx 1.
\end{aligned}
\end{equation}
 Therefore, the period $\tau_f$ in the rescaled time $\tau$ satisfies $\tau_f\sim 1$, which implies $T_f\sim \epsilon^{-1/2}$.

%Thus, the $\zeta_f^2$ term in \eqref{rescale:feq2} is negligible and the system can be approximated by an harmonic oscillator. Therefore, the period $\tau_f$ in the rescaled time $\tau$ satisfies $\tau_f\sim 1$, which implies $T_f\sim \epsilon^{-1/2}$.

% Next, we consider the case $\widetilde E_Q(\omega_f,\kappa_f)\sim 1$ (with the trapping condition \eqref{finiteenergybound}).
%First, the period is a  continuous function of $\widetilde E_Q(\omega_f,\kappa_f)$ and it only blows up when $\widetilde E_Q(\zeta _f,\kappa_f)= E_Q(\zeta _-,0)$, where $\omega_-=\omega_++\epsilon \zeta _-$.
%However, the trapping condition \eqref{finiteenergybound} implies $\widetilde E_Q(\zeta _f,\kappa_f)<cE_Q(\zeta _-,0)$.
%Therefore, the rescaled period $\tau_f$ will satisfy $\tau_f\sim_c 1$.
%This finishes the proof.
\end{proof}

\subsection{Dispersive  Estimates}

In this subsection, we consider  several estimates based on dispersion  for the proof of Theorem \ref{thm:dynamics}.
We use the following standard Strichartz spaces.
\begin{definition}
For $s\geq 0$ and $I=[t_1,t_2]$, we set
\begin{align*}
\stz^s(I):=L^\infty H^s(I)\cap L^2 W^{s,6}(I),\quad \stz^{*,s}(I):=L^1 H^s(I) + L^2 W^{s,6/5}(I),
\end{align*}
and
\begin{align*}
\|\widetilde u\|_{\stz^s(I)}&:=\|\widetilde u\|_{L^\infty_t(I, H^s)} + \|\widetilde u\|_{L^2_t(I, W^{s,6})},\\
\|\widetilde u\|_{\stz^{*,s}(I)}
&:=\inf_{\tilde u=\tilde v+\tilde w}\(\|\widetilde v\|_{L^1_t(I, H^s)} + \|\widetilde w\|_{L^2_t(I, W^{s,6/5})}\).
\end{align*}
\end{definition}

\begin{definition}
Let $I\subset \R$ be an interval and $F\in C(\R;\R)$.
For $t_1,t \in I$ and $\widetilde r_0\in P(\omega_*)H^1(\R^3,\widetilde \C)$, we denote by $U_F(t_1,t)\widetilde r_0\in P(\omega_*)H^1(\R^3,\widetilde \C)$   the solution of
\begin{align}
\im\partial_t \widetilde r =(\mathcal H_{\omega_*}+F(t)P(\omega_*)\sigma_3)\widetilde r,\quad \widetilde r(t_1)=\widetilde r_0.
\end{align}
\end{definition}

\begin{lemma}\label{lem:stz}
Let $I\subset \R$ and $t_0\in I$.
There exists $\delta>0$ s.t.\ for $I\subset \R$ and $t_0\in I$, if $\|F\|_{L^\infty(I)}<\delta$, then we have
\begin{align}
\|U_F(\cdot,t_0)\widetilde r\|_{\stzo}\lesssim \|\widetilde r\|_{H^1},\label{homstz}\\
\|\int_{t_0}^\cdot U_F(t,s)\widetilde f(s)\,ds\|_{\stzo}\lesssim \|\widetilde f\|_{\stzso}\label{inhomstz}
\end{align}
\end{lemma}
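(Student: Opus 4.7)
The strategy is to reduce Strichartz estimates for the perturbed propagator $U_F$ to those for the unperturbed flow $e^{-it\mathcal H_{\omega_*}}$, and then absorb $F(t)P(\omega_*)\sigma_3$ as a small bounded perturbation via Duhamel.

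The first ingredient is Strichartz for $e^{-it\mathcal H_{\omega_*}}$ on $P(\omega_*)L^2$. Under Assumption \ref{a:1}, $\left.\mathcal H_{\omega_*}\right|_{P(\omega_*)L^2}$ has purely absolutely continuous spectrum in $(-\infty,-\omega_*]\cup[\omega_*,\infty)$, no embedded eigenvalue, and no edge resonance. These are precisely the spectral hypotheses under which the standard theory of matrix Schr\"odinger operators arising from NLS linearizations (see the references listed in Remark \ref{rem:FGR}) yields the pointwise dispersive estimate
\[
\|e^{-it\mathcal H_{\omega_*}}P(\omega_*)\widetilde u\|_{L^\infty_x}\lesssim |t|^{-3/2}\|\widetilde u\|_{L^1_x},
\]
which, combined with $L^2$ boundedness of the flow on $\mathrm{Ran}\,P(\omega_*)$, gives by Keel--Tao the Strichartz bounds
\[
\|e^{-i\cdot\mathcal H_{\omega_*}}\widetilde r_0\|_{\stzz(\mathbb R)}\lesssim\|\widetilde r_0\|_{L^2},\qquad \Bigl\|\int_{t_0}^{\cdot}e^{-i(t-s)\mathcal H_{\omega_*}}\widetilde f(s)\,ds\Bigr\|_{\stzz(I)}\lesssim\|\widetilde f\|_{\stzsz(I)},
\]
for data and forcing with values in $\mathrm{Ran}\,P(\omega_*)$. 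Upgrading to the $H^1$ level required in \eqref{homstz}--\eqref{inhomstz} uses that $\langle\mathcal H_{\omega_*}\rangle^{1/2}$ is equivalent to $\langle-\Delta\rangle^{1/2}$ on the continuous subspace, thanks to the Schwartz regularity of the potential $\mathcal V(\omega_*)$ (Lemma \ref{lem:4.3}).

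Second, write Duhamel for the perturbed equation:
\[
U_F(t,t_0)\widetilde r_0=e^{-i(t-t_0)\mathcal H_{\omega_*}}\widetilde r_0-\im\int_{t_0}^t e^{-i(t-s)\mathcal H_{\omega_*}}F(s)P(\omega_*)\sigma_3\, U_F(s,t_0)\widetilde r_0\,ds.
\]
Applying the $H^1$-level Strichartz from Step 1, using $L^1H^1\hookrightarrow\stzso$ and the boundedness of $P(\omega_*)\sigma_3$ on $H^1$, one obtains on any subinterval $J\ni t_0$
\[
\|U_F(\cdot,t_0)\widetilde r_0\|_{\stzo(J)}\leq C_0\|\widetilde r_0\|_{H^1}+C_0|J|\,\|F\|_{L^\infty(J)}\,\|U_F(\cdot,t_0)\widetilde r_0\|_{\stzo(J)}.
\]
Choosing $J$ with $C_0|J|\delta\leq 1/2$ we absorb the last term and obtain \eqref{homstz} on $J$. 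The same fixed-point scheme applied to the forcing $\widetilde f\in\stzso(J)$ yields \eqref{inhomstz} on $J$.

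The main obstacle is extending these short-interval bounds to a general (possibly long) $I$ with constant uniform in $|I|$: a naive iteration on successive subintervals of length $|J|\sim\delta^{-1}$ would produce a Strichartz constant growing like $(2C_0)^{|I|/|J|}$. This is circumvented by reapplying the Duhamel identity directly on $I$ once $\stzo(J_k)$ bounds are known on each constituent subinterval $J_k$, then summing the $\ell^2$ contributions to the $L^2_tW^{1,6}_x$ component and the $\ell^1$ contributions to the $L^\infty_tH^1_x$ component of $\stzo$. In the applications in Sect.~\ref{sec:contspec}, however, $F$ has size $\sim\epsilon^2$ and the relevant intervals are polynomial in $\epsilon^{-1}$, so in the regime in which the estimate is invoked the product $\delta\,|I|$ stays bounded and the short-interval absorption already suffices.
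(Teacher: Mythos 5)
The short-interval part of your argument is sound, but Lemma \ref{lem:stz} needs implicit constants that are uniform in $I$ (the quantifiers put $\delta$ before $I$, and the application in Theorem \ref{thm:dynamics} uses the estimate on intervals whose length grows like an arbitrarily high power of $\epsilon^{-1}$), and your route does not deliver that. Treating all of $F(t)P(\omega_*)\sigma_3$ as a Duhamel forcing in $L^1_tH^1$ costs a factor $|J|\,\|F\|_{L^\infty}$; since $F$ is spatially constant and $P(\omega_*)\sigma_3$ is not localized, there is no time-integrable smallness to exploit, so iterating over subintervals of length $\sim\delta^{-1}$ inflates the data at each step and produces a constant of size $(2C_0)^{|I|/|J|}$. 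The resummation you sketch (``$\ell^2$ for $L^2_tW^{1,6}$, $\ell^1$ for $L^\infty_tH^1$'') does not remove this geometric growth, because the homogeneous evolution on $J_{k+1}$ starts from data already multiplied by the constants accumulated on $J_1,\dots,J_k$. Your quantitative fallback is also incorrect: in \eqref{normeq} one has $F=\partial_\rho E_N=\omega-\omega_*+O(\lambda^2)\sim\epsilon$, not $\epsilon^2$, while $|I|\sim T=\epsilon^{-3(N-1)+1}$, so $\|F\|_{L^\infty}\,|I|\sim\epsilon^{-3N+5}$ is unbounded already for $N=2$ and gets worse as $N$ grows; the entire point of the theorem is that the power of $\epsilon^{-1}$ is arbitrary.

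The missing ingredient, which is exactly how the paper argues, is to split $F(t)P(\omega_*)\sigma_3=F(t)(P_+-P_-)+F(t)\bigl(P(\omega_*)\sigma_3-(P_+-P_-)\bigr)$ using the wave operators of Lemma \ref{lem:03RMP}. The first piece is absorbed \emph{exactly}, not perturbatively: from $\mathcal H_{\omega_*}P(\omega_*)=W\sigma_3(-\Delta+\omega_*)Z$ one gets $\mathcal H_{\omega_*}+F(t)(P_+-P_-)=W\sigma_3(-\Delta+\omega_*+F(t))Z$, and since $W,Z$ are bounded on $L^p$ for all $p$ and $F(t)$ is a spatially constant potential (a pure time-dependent gauge), the corresponding propagator satisfies Strichartz estimates with constants independent of $I$ and of $\delta$. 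The remaining piece is smoothing, $\Sigma^{-l}\to\Sigma^{l}$ by \eqref{eq:03RMP3}, so its Duhamel contribution can be measured in $L^2_tW^{1,6/5}$ and bounded by $\delta\,\|\widetilde r\|_{L^2_tW^{1,6}}\le\delta\,\|\widetilde r\|_{\stzo}$ with no factor of $|I|$, and is absorbed for $\delta$ small, yielding \eqref{homstz}--\eqref{inhomstz} uniformly in $I$. Without some such exact treatment of the non-localized part of the perturbation together with an $L^2$-in-time bound for the localized remainder, the estimate in the form needed in Sect.\ \ref{sec:contspec} cannot be reached by your scheme.
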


Before the proof, we recall the following result from \cite{Cuccagna03RMP}, which was inspired by
an earlier statement in \cite{BP95}, see also \cite{BS03AIHPN}.
\begin{lemma} \label{lem:03RMP} Consider the strong limits
\begin{equation} \label{eq:03RMP1} \begin{aligned} &
W = s- \lim _{t\to +\infty } e ^{\im t \mathcal{H}_{\omega _*}}  e^{-\im \sigma _3(-\Delta +\omega _*  )t }  \text{  and } Z = s- \lim _{t\to +\infty }  e^{ \im \sigma _3(-\Delta +\omega _*  )t }e ^{-\im t \mathcal{H}_{\omega _*}} P (\omega _*)
\end{aligned}\end{equation}
and set
\begin{equation} \label{eq:03RMP2} \begin{aligned} &
 P_+ =W   \begin{pmatrix}
\uno _\C & 0   \\
0 &    0
\end{pmatrix}
Z  \,  , \,
P_- =W  \begin{pmatrix}
0 & 0   \\
0 &    \uno _\C
\end{pmatrix}
 Z    \,  .
\end{aligned}\end{equation}
Then, $W$ initially defined on $C_0^\infty(\R^3,\widetilde \C)$ can be extended to an isomorphism $W:L^p(\R^3,\widetilde \C)\to P(\omega_*) L^p(\R^3,\widetilde \C)$ for all $p\in [1,\infty]$ with $W^{-1}=Z$.
Furthermore, for any $l$  we have
\begin{equation} \label{eq:03RMP3} \begin{aligned} &
 \|   P (\omega _*) \sigma _3-
(P_+-P_- )   \| _ { \Sigma ^{-l}\to \Sigma ^{ l}} \le c_l <\infty .
\end{aligned}\end{equation}
\end{lemma}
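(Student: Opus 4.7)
The plan is to treat Lemma \ref{lem:03RMP} as the standard wave--operator package for the matrix Schr\"odinger operator $\mathcal H_{\omega_*}$, viewed as a short--range perturbation of the free operator $\mathcal H_0:=\sigma_3(-\Delta+\omega_*)$ by the potential $\mathcal V(\omega_*)$ of \eqref{39}. By Lemma \ref{lem:4.3} every matrix entry of $\mathcal V(\omega_*)$ lies in all $\Sigma^s$, hence decays faster than any polynomial at infinity, and this is what drives every estimate.

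First I would establish the strong limits in \eqref{eq:03RMP1} by Cook's method. Writing
\[
e^{\im t\mathcal H_{\omega_*}} e^{-\im t\mathcal H_0}\widetilde f-\widetilde f=\im\int_0^t e^{\im s\mathcal H_{\omega_*}}\mathcal V(\omega_*)\, e^{-\im s\mathcal H_0}\widetilde f\,ds,
\]
the free dispersive estimate for $e^{-\im s\mathcal H_0}$ combined with the rapid spatial decay of $\mathcal V(\omega_*)$ bounds the integrand in $L^2$ by $\langle s\rangle^{-3/2}$ times a weighted norm of $\widetilde f$, so the integral converges as $t\to\infty$ on a dense subset of $L^2$ and extends to $L^2$. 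The analogous argument gives $Z$. Assumption \ref{a:1}, which guarantees that $-\im \mathcal H_{\omega_*}$ has purely absolutely continuous spectrum on $\im\R\setminus\{0\}$ and only the generalized kernel at $0$, is then exactly what is needed to deduce that $Z$ inverts $W$ on $P(\omega_*)L^2$.

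To upgrade $W$ (and $Z$) to an isomorphism on every $L^p(\R^3,\widetilde{\C})$ with $p\in [1,\infty]$, I would adapt Yajima's method to the matrix setting. One expands $W$ into the Dyson series
\[
W=I+\sum_{n\geq 1}(-\im)^n \int_{0<s_n<\dots<s_1} e^{\im s_1\mathcal H_0}\mathcal V(\omega_*) e^{-\im (s_1-s_2)\mathcal H_0}\cdots \mathcal V(\omega_*) e^{-\im s_n\mathcal H_0}\,ds,
\]
estimates the low--order terms by interpolating the free $L^p$ bounds with the smoothing effect of $\mathcal V(\omega_*)$, and sums the tail using a limiting absorption principle for $\mathcal H_{\omega_*}$. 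The second part of Assumption \ref{a:1} (no edge resonance, no embedded eigenvalues) is precisely what allows the boundary values of the resolvent to exist in weighted spaces, uniformly down to the thresholds $\pm \omega_*$.

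The main obstacle is the smoothing estimate \eqref{eq:03RMP3}. Writing $\sigma_3=\chi_+-\chi_-$ for $\chi_\pm$ the two diagonal projectors in \eqref{eq:03RMP2}, and using the intertwining identity $\mathcal H_{\omega_*}W=W\mathcal H_0$ on $L^2$, a short computation yields
\[
P(\omega_*)\sigma_3-(P_+-P_-)=(W-I)(\chi_+-\chi_-)Z+(\chi_+-\chi_-)(Z-I),
\]
modulo contributions absorbed by $P_d(\omega_*)$, which themselves map into the Schwartz class by Proposition \ref{prop:1}.4. Both differences $W-I$ and $Z-I$ are Duhamel integrals carrying a factor of $\mathcal V(\omega_*)$, whose entries map $\Sigma^{-l}\to\Sigma^l$ for arbitrary $l$. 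Iterating this expansion once more and inserting the weighted resolvent bounds obtained above, one propagates the smoothing through the composition to obtain a map $\Sigma^{-l}\to\Sigma^l$ with bound depending on $l$. The delicate point, on which most of the work would go, is to justify the formal manipulation of the strong limits defining $P_\pm$ and to propagate weights through both free and perturbed resolvents uniformly across the thresholds --- and this is once more where the second part of Assumption \ref{a:1} is used in an essential way.
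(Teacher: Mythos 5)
First, a point of comparison: the paper does not prove this lemma at all — it is recalled verbatim from \cite{Cuccagna03RMP} (itself building on \cite{BP95}, see also \cite{BS03AIHPN}), so there is no internal proof to match your argument against. Your outline does follow the strategy of that literature: Cook's method for the existence of $W$ and $Z$, a Yajima-type Dyson-series/limiting-absorption argument for the $L^p$ boundedness (where the absence of edge resonances and embedded eigenvalues from Assumption \ref{a:1} enters), and the algebraic reduction of \eqref{eq:03RMP3} to smoothing bounds on $W-I$, $Z-I$ and on $P_d(\omega_*)\sigma_3$ (the latter being harmless since the generalized eigenfunctions are Schwartz). Up to signs, your identity for $P(\omega_*)\sigma_3-(P_+-P_-)$ is the right reduction.

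The genuine gap is in the last step. You claim that $W-I$ and $Z-I$ map $\Sigma^{-l}\to\Sigma^{l}$ ``because their Duhamel integrals carry a factor of $\mathcal V(\omega_*)$'', but this does not follow as stated: in the integrand $e^{\im s\mathcal H_{\omega_*}}\mathcal V(\omega_*)e^{-\im s\mathcal H_0}$ the weighted norms of both propagators grow polynomially in $s$ (weights and derivatives are not preserved by the free flow, and $e^{\im s\mathcal H_{\omega_*}}$ contains the Jordan-block evolution on $\mathcal N_g(\mathcal H_{\omega_*})$, which grows like $s^3$), so the rapid spatial decay of $\mathcal V(\omega_*)$ alone does not produce an $s$-integrable $\Sigma^{-l}\to\Sigma^{l}$ bound, let alone the gain of arbitrarily many derivatives needed for $\Sigma^{l}$ regularity. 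To close this one must either insert the projection $P(\omega_*)$ and use local-decay/propagation estimates for the projected group, or — as is actually done in \cite{BP95,Cuccagna03RMP} — pass to the stationary (resolvent) representation of the wave operators and exploit the analyticity and rapid decay of the kernel of $R_{\mathcal H_{\omega_*}}(z)\mathcal V(\omega_*)$ uniformly up to the thresholds $\pm\omega_*$, which is exactly where the no-resonance/no-embedded-eigenvalue hypothesis is used. You flag this as ``the delicate point'', but as written the decisive estimate is asserted rather than proved, and the time-dependent route you propose would fail without the additional spectral localization and commutation-of-weights arguments.
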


\begin{proof}[Proof of Lemma \ref{lem:stz}]
Set $\widetilde U_F(t,t_0)\widetilde r_0$ to be the solution of
\begin{align*}
\im \widetilde r_t = \mathcal H_{\omega_*}\widetilde r + F(t)(P_+-P_-)\widetilde r,\quad \widetilde r(t_0)=\widetilde r_0.
\end{align*}
Then, since
\begin{align*}
\mathcal H_{\omega_*}P(\omega_*)=\mathcal H_{\omega_*}WZ=W \sigma_3(-\Delta+\omega_*)Z,
\end{align*}
we have
\begin{align*}
\mathcal H_{\omega_*}+F(t)\(P_+-P_-\)=W\sigma_3(-\Delta+\omega_*+F(t))Z.
\end{align*}
By the $L^p$ boundedness of $W$ and $Z$, we have \eqref{homstz} and \eqref{inhomstz} for $\widetilde U_F$.

Now, since $U_F(t,t_0) \widetilde r_0$ is the solution of
\begin{align*}
\im r_t =(\mathcal H_{\omega_*}+F(t)(P_+-P_-))\widetilde r + F(t)\(P(\omega_*)\sigma_3 -(P_+-P_-)\)\widetilde r,\quad \widetilde r(t_0)=\widetilde r_0,
\end{align*}
we have
\begin{align*}
\widetilde r(t) = \widetilde U_F(t,t_0)\widetilde r_0 -\im \int_{t_0}^t \widetilde U_F(t,s)F(s)\(P(\omega_*)\sigma_3 -(P_+-P_-)\)\widetilde r(s)\,ds,
\end{align*}
where $r(t)=U_F(t,t_0)\widetilde r_0$.
Thus,
\begin{align*}
\|\widetilde r\|_{\stz^s}&\lesssim \|\widetilde r_0\|_{H^s}+\delta\|\(P(\omega_*)\sigma_3 -(P_+-P_-)\)\widetilde r\|_{L^2 W^{s,6/5}}\\&\lesssim \|\widetilde r_0\|_{H^s}+\delta \|\widetilde r\|_{\stz^s}.
\end{align*}
Therefore, for sufficiently small $\delta$, we have \eqref{homstz}.

Next, since $-\im \int_{t_0}^t U_F(t,s)f(s)\,ds$ is the solution of
\begin{align*}
\im r_t =(\mathcal H_{\omega_*}+F(t)(P_+-P_-))\widetilde r + F(t)\(P(\omega_*)\sigma_3 -(P_+-P_-)\)\widetilde r+f,\quad \widetilde r(t_0)=0,
\end{align*}
we have
\begin{align*}
\widetilde r(t)=-\im\int_{t_0}^t\widetilde U_F(t,s)F(s)\(P(\omega_*)\sigma_3-(P_+-P_-)\)\widetilde r(s)\,ds-\im\int_{t_0}^t\widetilde U_F(t,s)f(s)\,ds,
\end{align*}
where $\widetilde r(t)=-\int_{t_0}^t U_F(t,s)f(s)\,ds$.
Thus, as before
\begin{align*}
\|\widetilde r\|_{\stz^s}\lesssim \delta\|\widetilde r\|_{\stz^s}+\|\widetilde f\|_{\stz^{*,s}}.
\end{align*}
Therefore, we have \eqref{inhomstz} for $\delta>0$ sufficiently small.
%
%
%Noticing that  $\partial_{Q(\widetilde{r})} E  =S ^{0}_{1,0}$
%we
%rewrite  \eqref{eq:contsp1} as
%\begin{equation} \label{eq:contsp2} \begin{aligned} &
% \im \dot{\widetilde{r}}- \mathcal{H}_{\omega _*} \widetilde{r}- S ^{0}_{1,0}   (P_+-P_- ) \widetilde{r} = S ^{0}_{1,0} [ P (\omega _*) \sigma _3-
%(P_+-P_- )]  \widetilde{r}+\text{r.h.s. of \eqref{eq:contsp1}.}
%\end{aligned}\end{equation}
%Then we have
%\begin{equation} \label{eq:contsp3} \begin{aligned}
%\widetilde{r} (t) = \im \int _0^t U(t,s)& \left [  S ^{0}_{1,0} \left ( P (\omega _*) \sigma _3-
%(P_+-P_- ) \right )  \widetilde{r} ds +  S ^{1}_{0, N+1} \right . \\& \left .+  P (\omega _*)\sigma _3\nabla _{\widetilde{r}}S ^{0}_{1, 2} (\widetilde{r}) +  P (\omega _*)\sigma _3\nabla _{\widetilde{r}} E_P(P (\omega  )  \widetilde{r}) \right ]  ds
%\end{aligned}\end{equation}
%where
%\begin{equation} \label{eq:contsp4} \begin{aligned}&
%  U(t,s) = e ^{\im (t-s) \mathcal{H}_{\omega _*}} e ^{\im (P_+-P_- ) \int _0^t S ^{0}_{1,0} dt'}  P (\omega _*) \text{ with }\\&  \left \|  \int _0^t U(t,s) f(s)  ds \right \|   _{\text{Stz}^{1}(I)}\le C \| P (\omega _*) f  \|   _{\text{Stz}^{1 *}(I)}
%\end{aligned}\end{equation}
%for a fixed $C$ independent from $S ^{0}_{1,0}$ and $I$.
%
\end{proof}

The following estimate is standard.

\begin{lemma}\label{lem:nonlinearity}
We have
\begin{align*}
\|g(|\widetilde r|^2)\widetilde r\|_{\stzso}\lesssim \|\widetilde r\|_{\stzo}^{3}+ \|\widetilde r\|_{\stzo}^{\max(3,2p+1)}.
\end{align*}
\end{lemma}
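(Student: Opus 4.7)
The plan is to split $g=g_1+g_2$ via a smooth cutoff: pick $\chi\in C^\infty([0,\infty))$ with $\chi=1$ on $[0,1]$ and $\chi=0$ on $[2,\infty)$, and set $g_1=\chi g$, $g_2=(1-\chi)g$. The smoothness of $g$ together with $g(0)=0$ gives, after a Taylor expansion at the origin, the pointwise bounds
\begin{align*}
|g_1(|\widetilde r|^2)\widetilde r|\lesssim |\widetilde r|^3,\qquad |\nabla(g_1(|\widetilde r|^2)\widetilde r)|\lesssim |\widetilde r|^2\,|\nabla\widetilde r|,
\end{align*}
while $g_2$, supported in $[1,\infty)$ and inheriting $|g_2^{(k)}(s)|\lesssim s^{p-k}$, yields
\begin{align*}
|g_2(|\widetilde r|^2)\widetilde r|\lesssim |\widetilde r|^{2p+1},\qquad |\nabla(g_2(|\widetilde r|^2)\widetilde r)|\lesssim |\widetilde r|^{2p}\,|\nabla\widetilde r|.
\end{align*}
Since $g_2$ is supported where $|\widetilde r|\ge 1$, in the regime $p\le 1$ I will be able to further bound $|\widetilde r|^{2p+1}\le |\widetilde r|^3$ and $|\widetilde r|^{2p}\le |\widetilde r|^2$ on that set, so that the $g_2$--contribution collapses into the cubic one.

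I would then estimate both pieces in the $L^2_t W^{1,6/5}_x$--component of $\stzso$. For the cubic piece, a spatial H\"older splitting keeping a single $L^\infty_x$ factor and placing the remaining powers of $\widetilde r$ in $L^{12/5}_x$, $L^3_x$ or $L^2_x$ gives
\begin{align*}
\||\widetilde r|^3\|_{L^{6/5}_x}+\||\widetilde r|^2\,|\nabla\widetilde r|\|_{L^{6/5}_x}\lesssim \|\widetilde r\|_{L^\infty_x}\,\|\widetilde r\|_{H^1}^2,
\end{align*}
after invoking the Sobolev embeddings $H^1(\R^3)\hookrightarrow L^{12/5}\cap L^3$. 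Integrating in time and using the three-dimensional Sobolev embedding $W^{1,6}(\R^3)\hookrightarrow L^\infty$, so that $\|\widetilde r\|_{L^2_t L^\infty_x}\lesssim \|\widetilde r\|_{L^2_t W^{1,6}_x}\le \|\widetilde r\|_{\stzo}$, produces the bound $\|\widetilde r\|_{\stzo}^3$.

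For the $g_2$--contribution in the regime $p>1$ the same strategy should yield
\begin{align*}
\||\widetilde r|^{2p+1}\|_{L^{6/5}_x}&\le \|\widetilde r\|_{L^\infty_x}\,\|\widetilde r\|_{L^{12p/5}_x}^{2p}\lesssim \|\widetilde r\|_{L^\infty_x}\,\|\widetilde r\|_{H^1}^{2p},\\
\||\widetilde r|^{2p}\,|\nabla\widetilde r|\|_{L^{6/5}_x}&\le \|\widetilde r\|_{L^{3p}_x}^{2p}\,\|\nabla\widetilde r\|_{L^6_x}\lesssim \|\widetilde r\|_{H^1}^{2p}\,\|\widetilde r\|_{W^{1,6}_x}.
\end{align*}
Here the hypothesis $p<2$ will be used crucially: it ensures that both $12p/5$ and $3p$ lie in $[2,6]$, so that the Sobolev embeddings of $H^1(\R^3)$ into these Lebesgue spaces apply. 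Taking $L^2_t$ with the time integrability supplied either by $\|\widetilde r\|_{L^2_t L^\infty_x}$ (in the first estimate) or by $\|\widetilde r\|_{L^2_t W^{1,6}_x}$ (in the second) produces $\|\widetilde r\|_{\stzo}^{2p+1}$. Summing the two contributions gives the claimed $\|\widetilde r\|_{\stzo}^3+\|\widetilde r\|_{\stzo}^{\max(3,2p+1)}$.

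The step I expect to be the main obstacle is the exponent bookkeeping: one must be careful, in every spatial H\"older split, to retain exactly one $L^\infty_x$ factor (or, on the gradient side, one $L^6_x$ factor) so that the $L^2_t$--integrability comes for free from the $L^2_t W^{1,6}_x$--part of the Strichartz norm via $W^{1,6}\hookrightarrow L^\infty$, while the remaining powers of $|\widetilde r|$ are absorbed by $\|\widetilde r\|_{L^\infty_t H^1_x}$. The hypothesis $p<2$ is precisely what keeps those residual Lebesgue exponents in the admissible range $[2,6]$; without it one would be forced to interpolate inside $\stzo$ to manufacture $L^q_t L^\infty_x$ control for $q>2$, which would complicate the argument considerably.
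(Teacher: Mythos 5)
Your proposal is correct and follows essentially the same route as the paper: a pointwise bound using the smoothness of $g$ near $0$ and the growth $|g^{(k)}(s)|\lesssim s^{p-k}$ for $s\geq 1$ (the paper states this bound directly rather than via an explicit cutoff $g=g_1+g_2$), followed by a spatial H\"older split, Sobolev embeddings of $H^1(\R^3)$, and placing one factor in the $L^2_tW^{1,6}_x$ part of the Strichartz norm. The only cosmetic difference is the H\"older bookkeeping: you extract an $L^\infty_x$ factor through $W^{1,6}(\R^3)\hookrightarrow L^\infty$, while the paper keeps that factor in $L^6_x$ (resp.\ $W^{1,6}_x$) and puts the remaining powers in $L^3_x$ or $L^{\max(3,3p)}_x$, with the same use of $p<2$ to keep exponents in the admissible range.
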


\begin{proof}
Since $g(0)=0$, $|g^{(n)}(s)|\lesssim s^{p-n}$ for $s\geq 1$ and some $p<2$, we have
\begin{align*}
|g(|\widetilde r|^2)\widetilde r|\lesssim |\widetilde r|^3+ |\widetilde r|^{\max(3,2p+1)},\quad |\nabla_x \(g(|\widetilde r|^2)\widetilde r\)|\lesssim \(|\widetilde r|^2+|\widetilde r|^{\max(2,2p)}\)|\nabla_x \widetilde r|.
\end{align*}
Therefore,
\begin{align*}
\|g(|\widetilde r|^2)\widetilde r\|_{\stzso} &\lesssim \| |\widetilde r|^{3}\|_{L^2L^{6/5}}+\||\widetilde r|^{2}\nabla_x \widetilde r\|_{L^2L^{6/5}}+\| |\widetilde r|^{\max(3,2p+1)}\|_{L^2L^{6/5}}+\||\widetilde r|^{\max(2,2p)}\nabla_x \widetilde r\|_{L^2L^{6/5}}\\&
\lesssim  \|\widetilde r\|_{L^\infty L^{3}}^{2}\|\widetilde r\|_{L^2 W^{1,6}}+\|\widetilde r\|_{L^\infty L^{\max(3,3p)}}^{\max(2,2p)}\|\widetilde r\|_{L^2 W^{1,6}}\lesssim \|\widetilde r\|_{\stzo}^{3}+\|\widetilde r\|_{\stzo}^{\max(3,2p+1)},
\end{align*}
where we have used the embedding $L^3\hookrightarrow H^1$ and $L^{\max(3,3p)}\hookrightarrow H^1$.
\end{proof}

In the following, we consider the situation when $(\omega,\lambda)$ is trapped in the potential well (i.e. $(\omega(t),\lambda(t))$ satisfies \eqref{trap} for all $t\in I$ for some $I\subset \R$).
By Lemma \ref{lem:finiteest}, this condition guarantees $\|\omega-\omega_*\|_{L^\infty(I)}\lesssim \epsilon$ and $\|\lambda\|_{L^\infty}\lesssim \epsilon^{3/2}$.

\begin{lemma}\label{lem:estmu}
Let $I\subset \R$ be an interval and assume $\|\omega-\omega_*\|_{L^\infty(I)}\lesssim \epsilon$ and $\|\lambda\|_{L^\infty}+\|\widetilde r\|_{L^\infty H^1(I)}\lesssim \epsilon^{3/2}$.
Then,  the function $\mu$ in \eqref{mudsymbol} satisfies
\begin{align*}
\|\mu\|_{L^\infty(I)}\lesssim \epsilon^2.
\end{align*}
\end{lemma}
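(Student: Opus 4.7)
The plan is to derive the bound directly from the explicit expansion of $\mu$ provided by Lemma \ref{lem:muexpand}, namely
\begin{align*}
\mu = -A(\omega)^{-1}\bigl(q(\omega)-Q+Q(\widetilde r)\bigr) + S^0_{0,2},
\end{align*}
and estimate each piece under the assumed smallness conditions, closing the $S^0_{0,2}$ contribution by a short bootstrap.

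First I would treat the leading term $-A(\omega)^{-1}(q(\omega)-Q+Q(\widetilde r))$. By \eqref{4} we have $q'(\omega_*)=0$, so Taylor expanding $q$ at $\omega_*$ and using $|\omega-\omega_*|\lesssim \epsilon$ yields $|q(\omega)-q(\omega_*)|\lesssim (\omega-\omega_*)^2\lesssim \epsilon^2$. Combining this with $Q-q(\omega_*)=\epsilon^2$ gives $|q(\omega)-Q|\lesssim \epsilon^2$. For the remaining piece, $Q(\widetilde r)=\tfrac12\|\widetilde r\|_{L^2}^2\leq \tfrac12\|\widetilde r\|_{H^1}^2\lesssim \epsilon^3$. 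Since $A(\omega)$ is smooth and $A(\omega_*)>0$ (by 3.\ of Proposition \ref{prop:1}), $A(\omega)^{-1}$ is uniformly bounded on $I$ for $\epsilon$ small, so the leading term is $O(\epsilon^2)$.

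Next I would deal with the $S^0_{0,2}$ contribution. From Definition \ref{def:symbols} (with $s$ chosen so that $\|\widetilde r\|_{\Sigma^{-s}}\lesssim \|\widetilde r\|_{H^1}$), this term is bounded by
\begin{align*}
C\bigl(|\lambda|+|\mu|+\|\widetilde r\|_{H^1}\bigr)^{2}\lesssim \epsilon^{3}+C'|\mu|^2,
\end{align*}
where we used the hypotheses $|\lambda|+\|\widetilde r\|_{H^1}\lesssim \epsilon^{3/2}$. Substituting back into the expansion,
\begin{align*}
|\mu|\leq C_1\epsilon^2 + C_2\bigl(\epsilon^{3/2}+|\mu|\bigr)^2.
\end{align*}
Now comes the only slightly delicate step: a continuity/bootstrap argument. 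Since $\mu$ is $C^\infty$ and vanishes at the reference point $(q(\omega_*),\omega_*,0,0,0)$, for $\epsilon$ sufficiently small $|\mu(t)|$ starts small and depends continuously on $t$, so one can maintain the a priori hypothesis $|\mu|\leq K\epsilon^{3/2}$ with $K$ fixed. Under this hypothesis the displayed inequality becomes $|\mu|\leq C_1\epsilon^2+C_3\epsilon^3$, which is stronger than the bootstrap assumption and gives the desired $|\mu|\lesssim \epsilon^2$.

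The main (minor) obstacle is just keeping the bootstrap clean, since $S^0_{0,2}$ is a function of $\mu$ itself through the composition $\mu=\mu(Q,\omega,\lambda,Q(\widetilde r),\widetilde r)$; however the quadratic nature of the $j=2$ index ensures that the $|\mu|$ dependence enters only at second order and can be absorbed. No further spectral or PDE input is needed — the result is purely a consequence of the implicit-function structure of $\mu$ and the algebraic fact $q'(\omega_*)=0$.
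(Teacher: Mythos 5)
Your proof is correct and follows the same route as the paper, which simply declares the lemma ``immediate'' from Lemmas \ref{lem:pdsymbol} and \ref{lem:muexpand}: the leading term is controlled via $q'(\omega_*)=0$ and $Q(\widetilde r)\lesssim\|\widetilde r\|_{H^1}^2\lesssim\epsilon^3$, and the $S^0_{0,2}$ remainder is quadratic in $|\lambda|+|\mu|+\|\widetilde r\|_{\Sigma^{-s}}$. Your bootstrap merely makes explicit the absorption of the implicit $|\mu|$-dependence that the paper leaves unstated (one could alternatively note that $|\mu|$ is already small on the domain where it is defined by the implicit function theorem, so $|\mu|^2$ is absorbed directly), so this is a filled-in version of the paper's argument rather than a different one.
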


\begin{proof}
Immediate from Lemmas \ref{lem:pdsymbol} and \ref{lem:muexpand}.
\end{proof}

We estimate $\mathbf R=S^0_{0,N+1}+\sum_{k=2}^7\mathbf R_k$.

\begin{lemma}\label{lem:boundfatR}
Let $T>0$ and set $I=[0,T]$ and assume $\|\omega-\omega_*\|_{L^\infty(I)}\lesssim \epsilon$ and $\|\lambda\|_{L^\infty}+\|\widetilde r\|_{L^\infty H^1(I)}\lesssim \epsilon^{3/2}$.
Then, we have
\begin{align*}
\|\mathbf R\|_{L^\infty(I)}\lesssim \epsilon^{3(N+1)/2}+\epsilon\|\widetilde r\|_{L^6}^2.
\end{align*}

\end{lemma}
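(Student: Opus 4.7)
The plan is to decompose $\mathbf{R}=S^0_{0,N+1}+\sum_{k=2}^7\mathbf R_k$ and estimate each summand separately. The contribution $S^0_{0,N+1}$ will produce the $\epsilon^{3(N+1)/2}$ term, while every $\mathbf R_k$ contributes terms of the shape $\epsilon\|\widetilde r\|_{L^6}^2$, the dominant one coming from $\mathbf R_2$.

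First I would bound $S^0_{0,N+1}$. By Definition \ref{def:symbols}, an element of $\mathcal S^0_{0,N+1}$ is controlled pointwise by the $(N+1)$-st power of $|Q-q(\omega_*)|^{1/2}+|\omega-\omega_*|+|\lambda|+|\mu|+Q(\widetilde r)^{1/2}+\|\widetilde r\|_{\Sigma^{-s}}$. Combining the hypotheses $|\omega-\omega_*|\lesssim\epsilon$ and $|\lambda|+\|\widetilde r\|_{H^1}\lesssim\epsilon^{3/2}$ with Lemma \ref{lem:estmu}, which yields $|\mu|\lesssim\epsilon^2$, and using $Q(\widetilde r)^{1/2}\lesssim\|\widetilde r\|_{L^2}\lesssim\epsilon^{3/2}$ together with $\|\widetilde r\|_{\Sigma^{-s}}\lesssim\|\widetilde r\|_{L^2}$ for $s\ge 0$, one finds the base of the power to be $\lesssim\epsilon^{3/2}$, hence $|S^0_{0,N+1}|\lesssim\epsilon^{3(N+1)/2}$.

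Next I would estimate each $\mathbf R_k$ using H\"older and the Sobolev embedding $H^1(\R^3)\hookrightarrow L^6(\R^3)$, systematically isolating two factors of $\widetilde r$ into an $L^6$ pair and absorbing the remainder into the $\epsilon$ prefactor. For $\mathbf R_2$ the three types of terms, namely $\<S^2_{1,0}\widetilde r,\sigma_1 \widetilde r\>$, $\int\beta_n(|S^1_{0,0}+\widetilde r|^2)\, s^1_{1,0}\, |\widetilde r|^2\,dx$, and $\int\beta_n(|S^1_{0,0}+\widetilde r|^2)\,\<S^1_{1,0},\sigma_1\widetilde r\>_{\C^2}\<S^1_{0,0},\sigma_1\widetilde r\>_{\C^2}\,dx$, are bilinear in $\widetilde r$ with coefficient either the operator $S^2_{1,0}$, whose norm as a map $\Sigma^{-s}\to\Sigma^s$ is $\lesssim\epsilon$ under our hypotheses, or a Schwartz profile $s^1_{1,0}$ or $S^1_{1,0}$ of $\Sigma^s$-norm $\lesssim\epsilon$. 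Using $\Sigma^s\hookrightarrow L^{6/5}$ for $s$ large, each such term is bounded by $\epsilon\|\widetilde r\|_{L^6}^2$, provided the factor $\beta_n(|S^1_{0,0}+\widetilde r|^2)$ does not grow too fast. The latter is trivial for $n=-1$, and for $n\ge 0$ follows from the hypothesis $|g^{(n)}(s)|\lesssim s^{p-n}$ with $p<2$, which after a dichotomy in $|\widetilde r|$ gives $|\beta_n(|S^1_{0,0}+\widetilde r|^2)|\lesssim 1+|\widetilde r|^{\max(0,2(p-n))}$, and this extra factor is absorbed using $\|\widetilde r\|_{L^6}\lesssim\epsilon^{3/2}$. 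For $\mathbf R_k$ with $k=3,4,5$, the integrand contains $k\ge 3$ factors of $\widetilde r$; after peeling off two for $\|\widetilde r\|_{L^6}^2$ and controlling $S^1_{0,0}$ in $L^\infty$, the remaining $k-2$ copies yield a factor $\|\widetilde r\|_{L^6}^{k-2}\lesssim\epsilon^{3(k-2)/2}\le\epsilon$. For $\mathbf R_6$ and $\mathbf R_7$ the derivative bounds $|g^{(n)}(s)|\lesssim s^{p-n}$ with $n\ge 3$ and $p<2$ ensure that $g'''$ and $g^{(4)}$ are uniformly bounded at large argument, and the multilinear structure contains two factors of $\widetilde r$ and at least one profile factor of $\Sigma^s$-norm $\lesssim\epsilon^{3/2}$, so both are likewise $\lesssim\epsilon\|\widetilde r\|_{L^6}^2$. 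Collecting all eight contributions gives the claim.

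The main technical obstacle is the uniform handling of the nonlinear factor $\beta_n(|S^1_{0,0}+\widetilde r|^2)$ for small $n\in\{-1,0,1\}$, where $g^{(n)}$ may grow polynomially; however the hypothesis $p<2$ together with the $H^1$ control on $\widetilde r$ and the three-dimensional $L^6$-Sobolev embedding suffice to place every contribution inside $\epsilon\|\widetilde r\|_{L^6}^2$, rather than the weaker $\|\widetilde r\|_{H^1}^2$ or $\|\widetilde r\|_{L^\infty}^2$, which would be inadequate for the Strichartz-type argument to be carried out in Sect.\ \ref{sec:contspec}.
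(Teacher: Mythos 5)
Your proposal is correct and follows essentially the same route as the paper: term-by-term estimation of $S^0_{0,N+1}$ via the symbol bounds together with $|\mu|\lesssim\epsilon^2$ from Lemma \ref{lem:estmu}, and of each $\mathbf R_k$ via H\"older, the embedding $H^1(\R^3)\hookrightarrow L^6(\R^3)$, the spatial decay of the profiles $S^1_{i,j}$, $s^1_{1,0}$, and the growth hypothesis $|g^{(n)}(s)|\lesssim s^{p-n}$ with $p<2$, absorbing the excess powers of $\widetilde r$ through $\|\widetilde r\|_{L^6}\lesssim\epsilon^{3/2}$. The only cosmetic difference is that the paper runs the bilinear $\mathbf R_2$ estimates through explicit weights $\<x\>^{-4}$, $\<x\>^{-2}$ rather than your $\Sigma^s\hookrightarrow L^{6/5}$ phrasing, which is equivalent.
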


\begin{proof}
It suffices to estimate each term in the sum defining $\mathbf R$.

First, by Lemma \ref{lem:estmu}, we have $|\mu|+|\lambda|+\|\widetilde r\|_{\Sigma^{0}}\lesssim \epsilon^{3/2}$.
Thus,
\begin{align}\label{boundfatR1}
|S^0_{0,N+1}|\lesssim \epsilon^{3(N+1)/2}.
\end{align}
Next, we estimate $\mathbf R_2$.
The   $\<S^2_{1,0}\widetilde r,\widetilde r\>$ term in $\mathbf R_2$ can be bounded by $ \epsilon \|\widetilde r\|_{\Sigma^{-2}}^2\lesssim \epsilon \|\widetilde r\|_{L^6}^2$.
Next, recalling Definition \ref{def:beta}, for $s\geq 0$ we have
\begin{align}\label{betabound}
\beta_n(s)\lesssim
\begin{cases}
s+s^{\max(1,p)} & n=0,\\
1+s^{\max(0,p-1)} & n\geq1.
\end{cases}
\end{align}
Thus, the terms in $\mathbf R_2$ of the form $\int \beta_n(|S^1_{0,0}+\widetilde r|^2)s^1_{1,0}|\widetilde r|^2\,dx$ ($n\geq -1$) can be bounded by
\begin{align}
\left|\int_{\R^3} \beta_n(|S^1_{0,0}+\widetilde r|^2)s^1_{1,0}|\widetilde r|^2\,dx\right|&\lesssim \epsilon\int_{\R^3}\<x\>^{-4} (|\widetilde r|^2+|\widetilde r|^{\max(4,2+2p)})\nonumber\\&\lesssim  \epsilon \(\|\widetilde r\|_{L^6}^2+\|\widetilde r\|_{L^6}^{\max(4,2+2p)}\)\lesssim \epsilon \|\widetilde r\|_{L^6}^2.\label{boundfatR2}
\end{align}
The other term in \eqref{r2} satisifies the same estimate.

\noindent For $\mathbf R_k$ ($k=3,4,5$), by \eqref{r345} and \eqref{betabound}, for $n\geq k-3$, we have
\begin{align}
|\int_{\R^3}\beta_n(|S^1_{0,0}+\widetilde r|^2)\<S^1_{0,0},\sigma_1\widetilde r\>^i|\widetilde r|^{2j}\,dx|&\lesssim\int_{\R^3}\<x\>^{-2}\(|\widetilde r|^k + |\widetilde r|^{\max(k,k+2(p-1))}\)\,dx\nonumber\\&
\lesssim \|\widetilde r\|_{L^6}^k+\|\widetilde r\|_{L^6}^{\max(k,k+2(p-1))}\lesssim \epsilon \|\widetilde r\|_{L^6}^2,\label{boundfatR3}
\end{align}
for $k=3,4$ and
\begin{align}
|\int_{\R^3}\beta_n(|S^1_{0,0}+\widetilde r|^2)\<S^1_{0,0},\sigma_1\widetilde r\>^i|\widetilde r|^{2j}\,dx|&\lesssim\int_{\R^3}\<x\>^{-2}|\widetilde r|^5\,dx\lesssim \epsilon \|\widetilde r\|_{L^6}^2,\label{boundfatR32}
\end{align}
where we  used $\|\widetilde r\|_{L^6}\lesssim \|\widetilde r\|_{H^1}\lesssim \epsilon^{3/2}\leq \epsilon$.

\noindent For $\mathbf R_6$, since $g'''(s)\lesssim s^{p-3}$ for $s\geq 1$, we have
\begin{align}
&\left|\int_0^1\int_0^1(1-t)^2\int_{\R^3}g'''(|v(t,s)|^2)\<v(t,s),\sigma_1 S^1_{0,0}\>_{\C^2}\<v(t,s),\sigma_1 S^1_{1,1}\>_{\C^2}\<v(t,s),\sigma_1 \widetilde r\>_{\C^2}^{2}\,dxdsdt\right| \nonumber\\&
\lesssim \int_0^1\int_0^1\int_{|v(t,s)|\leq 1} |S^1_{1,1}||\widetilde r|^{2}\,dxdsdt+\int_0^1\int_0^1\int_{|v(t,s)|> 1} |v(t,s)|^{2p-3} |S^1_{1,1}||\widetilde r|^{2}\,dxdsdt\nonumber\\&
\lesssim \epsilon^{5/2}\|\widetilde r\|_{L^6}^{2}\le  \epsilon \|\widetilde r\|_{L^6}^{2}.\label{boundfatR4}
\end{align}
Finally, for $\mathbf R_7$, by \eqref{R7} and \eqref{nabla4E}, we have
\begin{align}
|\mathbf R_7|\lesssim \epsilon \|\widetilde r\|_{L^6}^2.\label{boundfatR5}
\end{align}
Combining \eqref{boundfatR1}, \eqref{boundfatR2}, \eqref{boundfatR3}, \eqref{boundfatR4} and \eqref{boundfatR5}, we have the conclusion.
\end{proof}

We also have a similar estimate for $\mathbf R_\omega$ and $\mathbf R_\lambda$ (see \eqref{Rsigmalambdar} for the definition).
\begin{lemma}\label{lem:RsigmaRlambdabound}
Let $T>0$ and set $I=[0,T]$ and assume $\|\omega-\omega_*\|_{L^\infty(I)}\lesssim \epsilon$ and $\|\lambda\|_{L^\infty}+\|\widetilde r\|_{L^\infty H^1(I)}\lesssim \epsilon^{3/2}$.
Then, we have
\begin{align*}
\|\mathbf R_\omega\|_{L^\infty(I)}+\|\mathbf R_\lambda\|_{L^\infty(I)}\lesssim \epsilon^{3N/2}+ \|\widetilde r\|_{L^6}^2.
\end{align*}
\end{lemma}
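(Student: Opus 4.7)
The plan is to differentiate $\mathbf{R}=S^0_{0,N+1}+\sum_{k=2}^7\mathbf{R}_k$ term by term using Lemma~\ref{difsymbol} and then re-run the estimates of Lemma~\ref{lem:boundfatR} on each resulting summand. The key structural observation is that $\widetilde r$ and $Q$ are independent coordinates from $\omega,\lambda$, so $\partial_\omega$ and $\partial_\lambda$ never change the number of $\widetilde r$-factors in the integrands defining the $\mathbf{R}_k$: they act only on the coefficient symbols $S^0_{i,j},s^1_{i,j},S^1_{i,j},S^2_{i,j}$ and, inside the arguments of $\beta_n(|S^1_{0,0}+\widetilde r|^2)$, on the $S^1_{0,0}$ piece. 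Using the multiple-integral representation introduced in Definition~\ref{def:beta} (cf.\ \eqref{expandbeta}), each differentiated summand is again a finite linear combination of objects that fit into the classes of Definition~\ref{def:Rk}, possibly with weakened powers of $\epsilon$. Throughout, $A(\omega)^{-1}$ is uniformly bounded on $I$ by the hypothesis $|\omega-\omega_*|\lesssim\epsilon$ and the positivity $A(\omega_*)>0$ of Proposition~\ref{prop:1}.

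By Lemma~\ref{difsymbol} we have $\partial_\lambda S^0_{0,N+1}\in\mathcal S^0_{0,N}$ and $\partial_\omega S^0_{0,N+1}\in\mathcal S^0_{0,N+1}+\mathcal S^0_{1,N}$. Combining the hypotheses with Lemma~\ref{lem:estmu} gives $|\lambda|+|\mu|+\|\widetilde r\|_{\Sigma^{-s}}\lesssim\epsilon^{3/2}$ and $|Q-q(\omega_*)|^{1/2}+|\omega-\omega_*|\lesssim\epsilon$, so these derivatives are bounded on $I$ by $\epsilon^{3N/2}$ and $\epsilon^{3N/2+1}\le\epsilon^{3N/2}$ respectively.

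For $\mathbf{R}_2$, the term $\langle S^2_{1,0}\widetilde r,\sigma_1\widetilde r\rangle$ under $\partial_\omega$ yields $\langle S^2_{0,0}\widetilde r,\sigma_1\widetilde r\rangle$ plus a lower-order $\langle S^2_{2,0}\widetilde r,\sigma_1\widetilde r\rangle$; the $\epsilon$ weight disappears but we only need $\|\widetilde r\|_{\Sigma^{-2}}^2\lesssim\|\widetilde r\|_{L^6}^2$. Each integral of the form $\int\beta_n(|S^1_{0,0}+\widetilde r|^2)s^1_{1,0}|\widetilde r|^2\,dx$, and its companion with density $\langle S^1_{1,0},\sigma_1\widetilde r\rangle\langle S^1_{0,0},\sigma_1\widetilde r\rangle$, differentiates into (i) an analogous integral with $s^1_{0,0}$ or $S^1_{0,0}$ in place of the $\epsilon$-small coefficient and/or (ii) an integral with $\beta_{n+1}$ multiplied by a factor of the form $\langle S^1_{0,0},\sigma_1(S^1_{0,0}+\widetilde r)\rangle$; both are controlled by the same pointwise bound \eqref{betabound} used in \eqref{boundfatR2} and yield $\lesssim\|\widetilde r\|_{L^6}^2$. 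For $\mathbf{R}_k$ with $k=3,4,5$, differentiation preserves the counter $i+2j=k$ of $\widetilde r$-factors, so the estimates \eqref{boundfatR3}--\eqref{boundfatR32} reproduce $\|\widetilde r\|_{L^6}^k\le\epsilon^{3(k-2)/2}\|\widetilde r\|_{L^6}^2\le\|\widetilde r\|_{L^6}^2$. For $\mathbf{R}_6$ and $\mathbf{R}_7$, the same argument applies using the pointwise bound on $g'''$ and the uniform $H^1$-boundedness of the multilinear form $\nabla^4 E_P$ used in \eqref{boundfatR4}--\eqref{boundfatR5}. Collecting contributions yields $\|\mathbf{R}_\omega\|_{L^\infty(I)}+\|\mathbf{R}_\lambda\|_{L^\infty(I)}\lesssim\epsilon^{3N/2}+\|\widetilde r\|_{L^6}^2$.

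The main obstacle is bookkeeping rather than a new estimate: one must verify that differentiating a generic $\mathbf{R}_k$-style integrand produces only terms again of the form covered by Definition~\ref{def:Rk}, after the natural abuse of notation of Definition~\ref{def:beta}. In particular the dependence of $\beta_n(|S^1_{0,0}+\widetilde r|^2)$ on $\omega,\lambda$ only through $S^1_{0,0}$ must be tracked carefully so that every derivative becomes either a $\beta_{n+1}$-weighted integral of the same $\mathbf{R}_{k'}$ form or produces an additional symbol of the classes $S^0_{i,j}$, $s^1_{i,j}$, $S^1_{i,j}$. Once this structural check is in place, the pointwise estimates reduce to the ones already carried out in Lemma~\ref{lem:boundfatR}, with the only change being the loss of at most one factor of $\epsilon$ (for $\partial_\omega$) or $\epsilon^{3/2}$ (for $\partial_\lambda$), which is precisely the difference between the bound $\epsilon^{3(N+1)/2}+\epsilon\|\widetilde r\|_{L^6}^2$ of Lemma~\ref{lem:boundfatR} and the present $\epsilon^{3N/2}+\|\widetilde r\|_{L^6}^2$.
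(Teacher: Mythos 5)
Your argument is correct and is essentially the proof the paper has in mind: the paper omits the proof, saying only that it is similar to that of Lemma \ref{lem:boundfatR}, and you supply exactly that argument --- differentiate each term of $\mathbf R$ in $\omega$ or $\lambda$ via Lemma \ref{difsymbol}, note that the symbol and $\mathbf R_k$ classes are stable under these derivatives up to losing a factor $\epsilon$ (resp.\ $\epsilon^{3/2}$) in the coefficients, and re-run the pointwise estimates of Lemma \ref{lem:boundfatR}. Your exponent bookkeeping ($\epsilon^{3(N+1)/2}\to\epsilon^{3N/2}$ and $\epsilon\|\widetilde r\|_{L^6}^2\to\|\widetilde r\|_{L^6}^2$) matches the claimed bound.
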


\begin{proof}
The proof is similar to the proof of Lemma \ref{lem:boundfatR}.
Therefore, we omit it.
\end{proof}

We now estimate the $W^{s,6/5}$ norm for $s=0,1$ of $\mathbf R_{\widetilde r}$, see \eqref{Rsigmalambdar}.
\begin{lemma}\label{lem:errorstzbound}
Let $s=0,1$.
Assume $|\omega-\omega_*|\lesssim \epsilon$ and $|\lambda|+\|\widetilde r\|_{H^1(I)}\lesssim \epsilon^{3/2}$.
Then, we have
\begin{align*}
\|\mathbf R_{\widetilde r}(t)\|_{ W^{s,6/5}}\lesssim_\sigma \epsilon^{3N/2}+\epsilon \|\widetilde r(t)\|_{W^{s,6}},\quad \forall t\in I
\end{align*}
\end{lemma}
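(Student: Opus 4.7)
The plan is to mirror the strategy of Lemma \ref{lem:boundfatR}, but replacing scalar $L^\infty$ estimates with function-valued $W^{s,6/5}$ estimates. First I would unfold
\begin{equation*}
\mathbf R_{\widetilde r}=P(\omega_*)\sigma_3\,\widetilde\nabla_{\widetilde r}\mathbf R,\qquad \mathbf R = S^{0}_{0,N+1}+\sum_{k=2}^{7}\mathbf R_k,
\end{equation*}
and reduce to estimating $\widetilde\nabla_{\widetilde r}\mathbf R$ in $W^{s,6/5}$, using that $P(\omega_*)=1-P_d(\omega_*)$ is bounded on $W^{s,6/5}$ for $s=0,1$ (since $P_d(\omega_*)$ has a smooth, rapidly-decaying kernel by Prop.~\ref{prop:1}). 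The standing bounds to be used throughout are the hypotheses of the lemma together with $|\mu|\lesssim\epsilon^2$ (Lemma \ref{lem:estmu}), the growth estimate \eqref{betabound}, and the Schwartz-type spatial decay of the symbols $S^1_{i,j}$ and $s^1_{i,j}$.

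Next I would treat each class of terms. For $S^{0}_{0,N+1}$, Lemma \ref{difsymbol} will give $\widetilde\nabla_{\widetilde r}S^{0}_{0,N+1}\in S^{1}_{0,N}$, bounded in every $\Sigma^s$ by $(|\lambda|+|\mu|+\|\widetilde r\|_{\Sigma^{-s}})^{N}\lesssim\epsilon^{3N/2}$; Schwartz-type decay then yields the $W^{s,6/5}$ bound of the same order. For $\mathbf R_k$ with $k=2,\ldots,5$, variational differentiation of the integrands in \eqref{r2}--\eqref{r345} (Leibniz on the polynomial part, chain rule on $\beta_n$ in the spirit of \eqref{expandbeta}) produces integrands of the generic form $w(x)\,\beta_{n'}(|S^{1}_{0,0}+\widetilde r|^2)\,M(\widetilde r)$, where $w$ is Schwartz-decaying (of type $s^{1}_{1,0}$ for $\mathbf R_2$, of type $S^{1}_{0,0}$-monomial for $\mathbf R_3$--$\mathbf R_5$) and $M(\widetilde r)$ is a polynomial of total degree $k-1$ in $\widetilde r$. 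Splitting the spatial integral into $\{|\widetilde r|\leq 1\}$ and $\{|\widetilde r|>1\}$ and applying \eqref{betabound}, I expect the integrand to be pointwise dominated by $\<x\>^{-M}|\widetilde r|^\alpha$ with $\alpha\leq \max(k-1,\,k-1+2(p-1))\leq 4$, by $p<2$ and $k\leq 5$. The H\"older chain
\begin{equation*}
\|\<x\>^{-M}|\widetilde r|^\alpha\|_{L^{6/5}}\leq \|\<x\>^{-M}\|_{L^{6/(6-\alpha)}}\|\widetilde r\|_{L^6}^{\alpha-1}\|\widetilde r\|_{L^6},
\end{equation*}
combined with $\|\widetilde r\|_{L^6}\lesssim\epsilon^{3/2}$ and $\alpha-1\geq 1$ for $k\geq 3$, will yield $\lesssim\epsilon\,\|\widetilde r\|_{L^6}$; for $k=2$ the required $\epsilon$-smallness will instead be supplied by the weight $w$ itself (through its $s^{1}_{1,0}$-class).

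The terms $\mathbf R_6$ and $\mathbf R_7$ fit the same template once one bounds $g'''$ and $\nabla^4 E_P$ via $|g^{(n)}(s)|\lesssim s^{p-n}$ on $\{|\widetilde r|>1\}$: the worst pointwise power is $|\widetilde r|^\alpha$ with $\alpha\leq 2p+1<5$, still inside the H\"older scope, while the $\epsilon$-factor will be provided by the $S^{1}_{1,1}$ factor in $\mathbf R_6$ or by the $S^{1}_{0,0}$ factor paired with an explicit $\widetilde r$ in $\mathbf R_7$. The case $s=1$ I would reduce to the $s=0$ case by one application of $\nabla_x$ via Leibniz: derivatives on Schwartz weights merely replace $\<x\>^{-M}$ by another Schwartz weight; derivatives on $\widetilde r$ produce $\nabla_x\widetilde r$, to be absorbed as $\|\widetilde r\|_{W^{1,6}}$ in the final H\"older step; derivatives landing inside $\beta_{n'}$ yield a further $\beta_{n'+1}$ factor together with an extra $\widetilde r$ or Schwartz term, handled as above. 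The hard part will be the uniform control of the exponent $\alpha$ across all classes, ensuring $\alpha<5$ so that $6/(6-\alpha)<\infty$ and the Schwartz weight remains Lebesgue-integrable; this is precisely where the standing assumption $p<2$ on the nonlinearity becomes essential.
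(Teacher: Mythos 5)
Your proposal follows essentially the same route as the paper's proof (Lemmas \ref{lem:boundR0}, \ref{lem:boundR2}, \ref{lem:boundR345}, \ref{lem:boundR67}): decompose $\mathbf R$ into $S^0_{0,N+1}$ plus the $\mathbf R_k$, compute $\widetilde\nabla_{\widetilde r}$ of each class via the symbol calculus and \eqref{expandbeta}, exploit the Schwartz decay of the symbols together with \eqref{betabound} and $p<2$ in a weighted H\"older estimate with $\|\widetilde r\|_{L^6}\lesssim\epsilon^{3/2}$ supplying the gain, and handle $s=1$ by Leibniz. The only slip is cosmetic: in your H\"older chain the weight should be taken in $L^{6/(5-\alpha)}$ rather than $L^{6/(6-\alpha)}$, which is harmless since $\langle x\rangle^{-M}$ lies in every Lebesgue space for $M$ large.
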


The proof of Lemma \ref{lem:errorstzbound} is elementary.
However, it will be long simply because there are many terms in $\mathbf R_{\widetilde r}$.
Therefore, we give the proof in Sect.\ \ref{sec:errorstzbound} in the appendix of this paper.

%We write $E= E ^{( N +1 )}$, dropping the $( N +1 )$ superscript in \eqref{eq:mmain1},
%and we introduce  the following "finite dimension" energy
%\begin{equation}\label{eq:findimE} \begin{aligned}
% E   _{f }(\omega , \lambda ): =  d (\omega ) -\omega Q +2 ^{-1} A (\omega )\lambda ^2+  \mathcal{Z}_{0} (\omega , \lambda ,0).
%\end{aligned}\end{equation}
%
%
%We consider    $\omega _- ^{cr} <\omega    (0)< \omega _* $ and   the solution   with charge $Q$   to the system
%\begin{equation} \label{eq:hamilt1} \begin{aligned} & \dot \lambda  = -A ^{-1}(\omega  ) \partial _{\omega}E   (\omega  ,\lambda , \widetilde{r}  )     \quad , \quad    \dot \omega  =  A ^{-1}(\omega  ) \partial _{\lambda}E  (\omega  ,\lambda , \widetilde{r} )  \quad , \quad
% \dot{\widetilde{r}}= -\im \sigma_3 \nabla_{\widetilde{r}} E  (\omega  ,\lambda , \widetilde{r} ) \\&  (\omega  (0), \lambda  (0), \widetilde{r}(0)) = ( \omega    (0) , 0,0).
%\end{aligned}\end{equation}
%In particular the equation $\dot{\widetilde{r}}= -\im \sigma_3 \nabla_{\widetilde{r}} E$
%can be written as
%\begin{equation} \label{eq:contsp1} \begin{aligned} &
% \im \dot{\widetilde{r}}- \mathcal{H}_{\omega _*} \widetilde{r}- \partial_{Q(\widetilde{r})} E  \ P (\omega _*)\sigma _3 \widetilde{r} = S ^{1}_{0, N+1} +  P (\omega _*)\sigma _3\nabla _{\widetilde{r}}S ^{0}_{1, 2} (\widetilde{r}) +  P (\omega _*)\sigma _3\nabla _{\widetilde{r}} E_P(P (\omega  )  \widetilde{r}) .
%\end{aligned}\end{equation}

\begin{lemma}\label{lem:estddtQ}
Let $T>0$ and set $I=[0,T]$ and assume $\|\omega-\omega_*\|_{L^\infty(I)}\lesssim \epsilon$ and $\|\lambda\|_{L^\infty}+\|\widetilde r\|_{L^\infty H^1(I)}\lesssim \epsilon^{3/2}$.
We have
\begin{align*}
\left|\frac{d}{dt}Q(\widetilde r(t))\right|\lesssim \|\widetilde r\|_{L^6}^2+ \epsilon^{ \frac{3}{2}  (N+1)}.
\end{align*}
\end{lemma}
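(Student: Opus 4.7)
The natural starting point is the identity $\frac{d}{dt}Q(\widetilde{r})=\<\partial_t\widetilde{r},\sigma_1\widetilde{r}\>$, which follows from the symmetry of the bilinear form $\<\cdot,\sigma_1\cdot\>$. Substituting the third line of \eqref{normeq} gives
\begin{align*}
\frac{d}{dt}Q(\widetilde{r}) ={} & -\im\<\mathcal{H}_{\omega_*}\widetilde{r},\sigma_1\widetilde{r}\>-\im\,\partial_\rho E_N\<P(\omega_*)\sigma_3\widetilde{r},\sigma_1\widetilde{r}\>\\
& -\im\<P(\omega_*)\sigma_3 g(|\widetilde{r}|^2)\widetilde{r},\sigma_1\widetilde{r}\>-\im\<\mathbf R_{\widetilde r},\sigma_1\widetilde{r}\>,
\end{align*}
and I would estimate each summand separately.

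The central observation is a pointwise cancellation coming from the reality constraint $\widetilde{r}=(r,\bar r)^t\in\widetilde\C$, which is preserved by the flow: for any such $\widetilde{v}$ one has $\<\sigma_3\widetilde{v},\sigma_1\widetilde{v}\>_{\C^2}=\frac{1}{2}(v\bar v-\bar v v)=0$. Splitting $H_{\omega_*}=h_0+h_1\sigma_1$ as in \eqref{36}, with $h_0:=-\Delta+\omega_*+g(\phi_{\omega_*}^2)+g'(\phi_{\omega_*}^2)\phi_{\omega_*}^2$ and $h_1:=g'(\phi_{\omega_*}^2)\phi_{\omega_*}^2$, the $h_0$ contribution in $\<\mathcal{H}_{\omega_*}\widetilde{r},\sigma_1\widetilde{r}\>=\<\sigma_3 H_{\omega_*}\widetilde{r},\sigma_1\widetilde{r}\>$ reduces via self-adjointness of the real scalar operator $h_0$ in $L^2$ to $\frac{1}{2}\int[(h_0 r)\bar r-(h_0\bar r)r]\,dx=0$. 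The $h_1\sigma_1$ contribution, using $\sigma_3\sigma_1=\im\sigma_2$, simplifies to $\frac{1}{2}\int h_1(\bar r^2-r^2)\,dx=-\im\int h_1\Im(r^2)\,dx$, and multiplication by the overall $-\im$ yields the real quantity $-\int h_1\Im(r^2)\,dx$; since $h_1$ is Schwartz by Lemma \ref{lem:4.3}, H\"older gives $|\int h_1\Im(r^2)\,dx|\leq\|h_1\|_{L^{3/2}}\|r\|_{L^6}^2\lesssim\|\widetilde{r}\|_{L^6}^2$.

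The same pointwise identity gives $\<\sigma_3\widetilde{r},\sigma_1\widetilde{r}\>=\<\sigma_3 g(|\widetilde{r}|^2)\widetilde{r},\sigma_1\widetilde{r}\>=0$, since the real scalar multiplier $g(|\widetilde{r}|^2)$ preserves the $\widetilde\C$ structure. Writing $P(\omega_*)=I-P_d(\omega_*)$, the second and third summands reduce to pairings against $P_d(\omega_*)$-projections; by Proposition \ref{prop:1}, $P_d(\omega_*)$ is finite-rank with Schwartz-class range, so H\"older in $L^{6/5}\times L^6$ yields $|\<P_d\sigma_3\widetilde{r},\sigma_1\widetilde{r}\>|\lesssim\|\widetilde{r}\|_{L^6}^2$, and an analogous bound with an extra $\|\widetilde{r}\|_{H^1}^2\lesssim\epsilon^3$ factor for the nonlinear pairing; both are absorbed into $\|\widetilde{r}\|_{L^6}^2$, and the modulation factor $\partial_\rho E_N=O(\epsilon)$ from $(\omega-\omega_*)Q(\widetilde{r})$ in $E_N$ only helps. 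Finally, Lemma \ref{lem:errorstzbound} gives $\|\mathbf R_{\widetilde r}\|_{L^{6/5}}\lesssim\epsilon^{3N/2}+\epsilon\|\widetilde{r}\|_{L^6}$, so H\"older produces $|\<\mathbf R_{\widetilde r},\sigma_1\widetilde{r}\>|\lesssim\epsilon^{3N/2}\|\widetilde{r}\|_{L^6}+\epsilon\|\widetilde{r}\|_{L^6}^2\lesssim\epsilon^{3(N+1)/2}+\|\widetilde{r}\|_{L^6}^2$, where I used $\|\widetilde{r}\|_{L^6}\lesssim\|\widetilde{r}\|_{H^1}\lesssim\epsilon^{3/2}$. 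Summing the four contributions yields the claim. The main obstacle is isolating the near-cancellation from the $U(1)$ gauge symmetry: $\mathcal{H}_{\omega_*}$ is \emph{not} gauge-invariant because of the off-diagonal $h_1\sigma_1$ coupling in $H_{\omega_*}$, and its control comes purely from the Schwartz decay of $\phi_{\omega_*}$ rather than any dynamical smallness in $\epsilon$.
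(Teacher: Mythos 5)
Your proof is correct and follows essentially the same route as the paper: differentiate $Q(\widetilde r)$ via $\<\partial_t\widetilde r,\sigma_1\widetilde r\>$, use the reality/self-adjointness cancellations so that among the local terms only the off-diagonal coupling $g'(\phi_{\omega_*}^2)\phi_{\omega_*}^2\sigma_1$ and the remainder $\mathbf R_{\widetilde r}$ survive, and then conclude by H\"older together with Lemma \ref{lem:errorstzbound} and $\|\widetilde r\|_{L^6}\lesssim \epsilon^{3/2}$. The only cosmetic difference is that the paper drops the projector $P(\omega_*)$ outright (the $P_d(\omega_*)$ contributions in fact cancel exactly), whereas you estimate those pieces by $\|\widetilde r\|_{L^6}^2$ using the smoothing property of $P_d(\omega_*)$; either treatment is fine.
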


\begin{proof}
We directly compute the derivative.
\begin{align*}
\frac{d}{dt}Q(\widetilde r )&=\<\partial_t \widetilde r, \sigma_1 \widetilde r\>=-\Omega(\im \sigma_3 \widetilde r,\widetilde r)\\&
=-\Omega\((H_{\omega_*}+\partial_\rho E_{N})\widetilde r+ g(|\widetilde r|^2)\widetilde r +\mathbf R_{\widetilde r},\widetilde r\)\\&
=-\Omega(g'(\phi_{\omega_*}^2)\phi_{\omega_*}^2 \sigma_1 \widetilde r,\widetilde r)-\Omega(\mathbf R_{\widetilde r},\widetilde r).
\end{align*}
Therefore, by Lemma \ref{lem:errorstzbound}, we have the following, which yields the proof:
\begin{align*}
\left| \frac{d}{dt}Q(\widetilde r)\right|\lesssim \|\widetilde r\|_{L^6}^2+ \epsilon^{\frac32 N}\|\widetilde r\|_{L^6}.
\end{align*}
\end{proof}

\subsection{Long time almost conservation of $E_Q$ and the shadowing for several periods}
\label{sec:proofs fs}
We now prove Theorem \ref{thm:dynamics}.

\begin{proof}[Proof of Theorem \ref{thm:dynamics}]
Take $C_1>0$ sufficiently large (chosen later) and take $\epsilon_0>0$ so that we have $C_1 \epsilon\ll 1$.
We will fix $\epsilon\in (0,\epsilon_0)$.

\noindent We first show that for $0<\widetilde T\leq  \epsilon^{-\frac32(N-1) +1}=:T$, if we have
\begin{align}\label{eq:almostcons2}
\sup_{t\in [0,\widetilde T]}\left|  E_{Q}(\omega(t),\lambda(t))-  E_{Q}(\omega_0,\lambda_0)\right|\leq C_1 \epsilon^4,
\end{align}
then we have for  $C=C(C_0)$, where $C_0$ the constant in the claim of Theorem \ref{thm:dynamics},
\begin{align}\label{rbound2}
\|\widetilde r\|_{\stz^1(0,\widetilde T)}\leq C \epsilon^{3/2}
\end{align}
 and
\begin{align}\label{eq:almostcons3}
\sup_{t\in [0,\widetilde T]}\left|  E_{Q}(\omega(t),\lambda(t))-  E_{Q}(\omega_0,\lambda_0)\right|\leq \frac12 C_1 \epsilon^{4}.
\end{align}
Since $\stz^1\hookrightarrow L^\infty H^1$, by continuity argument this will give us the conclusion.

\noindent First, by \eqref{eq:almostcons2} and the assumption \eqref{finiteenergybound}, we see that $(\omega(t),\lambda(t))$ satisfies \eqref{trap} for all $t\in [0,\widetilde T]$.
Therefore, by Lemma \ref{lem:findimEcrt}, we have \eqref{varsigmalambdabound} where the implicit constants do not depend on $C_1$.
Therefore, by Lemmas \ref{lem:stz}, \ref{lem:nonlinearity} and \ref{lem:errorstzbound}, we have
\begin{align*}
\|\widetilde r\|_{\stz^1(0,t)}\lesssim \|\widetilde r_0\|_{H^1}+\|\widetilde r\|_{\stz^1(0,t)}^{4}+\|\widetilde r\|_{\stz^1(0,t)}^{\max(3,2p+1)}+t^{1/2}\epsilon^{3N/2}+\epsilon\|\widetilde r\|_{\stz^1(0,t)},
\end{align*}
for any $t\in [0,\widetilde T]$.
Therefore, from an easy continuity argument, we have
\begin{align*}
\|\widetilde r\|_{\stz^1(0,\widetilde T)}\lesssim \epsilon^{3/2},
\end{align*}
where the implicit constant is independent of $C_1$
 (but depends on $C_0$).

Next, we prove \eqref{eq:almostcons3}.
We claim that
\begin{align}\label{eq:almostcons4}
\left|E_{f,N}(Q,\omega,\lambda,Q(\widetilde r))-E_Q(\omega,\lambda)\right|=\left|\int_0^1 \partial_\rho E_{f,N}(Q,\omega,\lambda,tQ(\widetilde r))\,dt\  Q(\widetilde r)\right|\lesssim \epsilon^4.
\end{align}
The equality follows from the definition $E_Q(\omega,\lambda) = E_{f,N}(Q,\omega,\lambda,0) $, is a  consequence of
 the assumption $|\omega -\omega _*|\lesssim \epsilon$ and $|\lambda|+\|\widetilde r\|_{H^1}\lesssim \epsilon^{3/2}$ and   $|\partial_\rho E_{f,N} |\lesssim \epsilon$ (the latter follows from the previous inequalities and from $\partial_\rho E_{f,N}
 =\omega -\omega _*+O(\lambda ^2)$,  see \eqref{eq:fenergy}).

\noindent In view of \eqref{eq:almostcons4}, it is enough to prove \eqref{eq:almostcons3}
with $E_Q(\omega,\lambda)$ replaced by $E_{f,N}(Q,\omega,\lambda,Q(\widetilde r))$.

 \noindent From
\begin{align*}
\frac{d}{dt}E_{f,N}(Q,\omega(t),\lambda(t),Q(\widetilde r(t)))&=\partial_{\omega}E_{f,N}\dot \omega+\partial_\lambda E_{f,N}\dot \lambda +\partial_\rho E_{f,N}\frac{d}{dt}Q(\widetilde r)\\& =\partial_{\omega}E_{f,N}\mathbf R_{\omega}+\partial_\lambda E_{f,N}\mathbf R_\lambda + \partial_\rho E_{f,N} \frac{d}{dt}Q(\widetilde r),
\end{align*}
by Lemmas \ref{lem:RsigmaRlambdabound}, \ref{lem:errorstzbound} and \ref{lem:estddtQ}  we have
\begin{align*}
\left|\frac{d}{dt}E_{f,N}(Q,\omega(t),\lambda(t),Q(\widetilde r(t)))\right|\lesssim \epsilon^{3/2}\(\epsilon^{\frac32 N}+\|\widetilde r(t)\|_{L^{6}}^2\)+\epsilon\(\|\widetilde r\|_{L^6}^2+ \epsilon^{ \frac{3N+1}{2}}\),
\end{align*}
where we have used $|\partial_\omega E_{f,N}|\lesssim \epsilon^2$, $|\partial_\lambda E_{f,N}|\lesssim \epsilon^{3/2}$.
Thus, we have
\begin{align*}
&\left|E_{f,N}(Q,\omega(\widetilde T),\lambda(\widetilde T),Q(\widetilde r(\widetilde T)))-E_{f,N}(Q,\omega_0,\lambda_0,Q(\widetilde r_0)\right|\lesssim \int_0^{\widetilde T}\left|\frac{d}{dt}E_{f,N}(Q,\omega(t),\lambda(t),Q(\widetilde r(t)))\right|\,dt\\&
\lesssim \epsilon\|\widetilde r\|_{\stz^1(0,\widetilde T)}+\widetilde T \epsilon^{\frac32(N+1)}\lesssim \epsilon^4,
\end{align*}
where the implicit constant does not depend on $C_1$.
Therefore, we have the conclusion.
\end{proof}

Corollary \ref{corollary:longorb} is an easy consequence of Theorem \ref{thm:dynamics}.
\begin{proof}[Proof of Corollary \ref{corollary:longorb}]
By Theorem \ref{thm:dynamics}, we have
\begin{align}\label{pf:longorb0}
\sup _{t\in [0,T]}|E_Q(\omega (t),\lambda (t))-E_Q(\omega_0,\lambda_0)|\lesssim \epsilon^4  .
\end{align}
By Remark  \ref{rem:trapener},    for $c\in (0,1)$ the constant in
\eqref{finiteenergybound}  we have
\begin{equation}\label{pf:longorb1}
    \text{$E_Q(\omega(t),\lambda(t))<\frac{1+c}{2}(V_Q(\omega_-)-V_Q(\omega_+))\sim \epsilon^{3 }$ for all  $t\in [0,T]$.}
\end{equation}
By Remark  \ref{rem:trap} we know that
\begin{align}\nonumber
\sup _{t\in [0,T]}\( |\omega (t)-\omega_*| ^3 +  |\lambda (t)|^2\) \lesssim \epsilon^{3 }   .
\end{align}
This implies that, setting  $\omega=\omega(t)$, $\lambda=\lambda(t)$,
it suffices to prove that there exists $\alpha\in \R$ with $|\alpha|\lesssim \epsilon$ s.t.\
\begin{align}\label{aadjust}
E_Q(\omega_+ + (1+\alpha)(\omega-\omega_+),(1+\alpha)\lambda)=E_Q(\omega_0,\lambda_0).
\end{align}
By the Taylor expansion
\begin{align*}
E_Q(\omega,\lambda)=\(\sqrt{\frac{q''(\omega_*)}{2}}\epsilon+\frac16 q''(\omega_*)(\omega-\omega_+)\)(\omega-\omega_+)^2+\frac12 A(\omega_*)\lambda^2+O(\epsilon^4)
\end{align*}
we have
\begin{align}
&E_Q(\omega_+ + (1+\alpha)(\omega-\omega_+),(1+\alpha)\lambda)=E_Q(\omega,\lambda)\nonumber\\&\quad+\alpha\(\(\sqrt{2q''(\omega_*)}\epsilon+\frac12 q''(\omega_*)(\omega-\omega_+)\)(\omega-\omega_+)^2+\frac12 A(\omega_*)\lambda^2\)+O(\alpha \epsilon^4+\alpha^2 \epsilon^3).\label{EQalpha}
\end{align}
Notice now that \eqref{pf:longorb0}--\eqref{pf:longorb1} and the hypothesis
$ E_Q(\omega_0,\lambda_0)\sim \epsilon ^{3 }$, imply  $ E_Q(\omega ,\lambda )\sim \epsilon ^{3 }$ and that we have $\omega \in (\omega_+-c'(\omega_+-\omega_-),\omega_{++})$ for some $c'\in (0,1)$,
uniformly bounded away from 0 for all $t\in [0,T]$,
 where $\omega_{++}>\omega_+$ is the solution of $V_Q(\omega_{++})=V_Q(\omega_-)$
 with $\omega_{++}>\omega_{+ }$.
Thus by \eqref{eq:findimEcrt} we have
\begin{align*}
\omega-\omega_+>-c'(\omega_+-\omega_-)=
-2c'\sqrt{\frac{2}{q''(\omega_*)}}\epsilon+O(\epsilon^2).
\end{align*}
Therefore, we have
\begin{align}\label{nondeglow}
\(\sqrt{2q''(\omega_*)}\epsilon+\frac12 q''(\omega_*)(\omega-\omega_+)\)>(1-c')\sqrt{2q''(\omega_*)}\epsilon+O(\epsilon^2).
\end{align}
By \eqref{EQalpha} and \eqref{nondeglow}, it is clear that we can take $|\alpha| \lesssim \epsilon$ such that the 2nd line in  \eqref{EQalpha} is 0
and
 \eqref{aadjust} holds.
Therefore, we have the conclusion.
\end{proof}

We finally prove Theorem \ref{thm:shadow}.

\begin{proof}[Proof of Theorem \ref{thm:shadow}]
To estimate the difference, we use the scaling
\begin{align}\label{rescale:inf}
\omega = \omega_+ + \epsilon \zeta,\quad \lambda = \epsilon^{3/2} \kappa,\quad t=\epsilon^{-1/2}\tau.
\end{align}
For fixed $n\in \N$, $nT_f\sim_n \epsilon^{-1/2}$ so it suffices to show that for fixed $\tau_0>0$,
\begin{align}\label{rescale:est1}
\sup_{\tau\in [0,\tau_0]}\(|\zeta(\tau)-\zeta_f(\tau)|+|\lambda(\tau)-\lambda_f(\tau)|\)\lesssim_{\tau_0}\epsilon.
\end{align}

By the estimate \eqref{rbound}, we have
\begin{equation}\label{rescale:eq1}
\begin{aligned}
\dot \zeta &= \kappa+O(\epsilon),\\
\dot \kappa &= -A(\omega_*)^{-1}\(\sqrt{2q''(\omega_*)}\zeta+\frac12 q''(\omega_*)\zeta^2\)+O(\epsilon),
\end{aligned}
\end{equation}
where $\dot \zeta=\frac{d}{d\tau} \zeta$ and $\dot \kappa=\frac{d}{d\tau} \kappa$.
Notice that this system looks completely the same as \eqref{rescale:eq1} although the $O(\epsilon)$ are of course different.
Subtracting \eqref{rescale:feq} from \eqref{rescale:eq1}, we have
\begin{equation}\label{rescale:eq1}
\begin{aligned}
\frac{d}{d\tau}\( \zeta-\zeta_f\) &= \kappa-\kappa_f+O(\epsilon),\\
\frac{d}{d\tau}\( \kappa-\kappa_f\) &= -A(\omega_*)^{-1}\(\sqrt{2q''(\omega_*)}\(\zeta-\zeta_f\)+\frac12 q''(\omega_*)\(\zeta+\zeta_f\)\(\zeta-\zeta_f\)\)+O(\epsilon),
\end{aligned}
\end{equation}
Therefore, setting $f(\tau):=|\zeta(\tau)-\zeta_f(\tau)|+|\kappa(\tau)-\kappa_f(\tau)|$, we have
\begin{align}\label{gronwall:1}
f(t)\lesssim \epsilon t + \int_0^t f(s)\,ds,
\end{align}
where we have used the bound $|\zeta(\tau)|+|\zeta_f(\tau)|\lesssim 1$ for $\tau\in [0,\tau_0]$.
Therefore, by Gronwall inequality, we have
\begin{align}
\sup_{\tau\in[0,\tau_0]}|f(\tau)|\lesssim_{\tau_0} \epsilon.
\end{align}
Since this is \eqref{rescale:est1}, we have the conclusion.
\end{proof}

\appendix
\section{Appendix}
\subsection{Proof of  Proposition \ref{prop:1}}\label{subsec:prop1}

In this section, we give the proof of Proposition \ref{prop:1}.
We note that  Proposition \ref{prop:1} was proved by Comech--Pelinovsky \cite{CP03CPAM}.

First, we set
\begin{align}\label{37}
\Pi_R:=\frac  1 2 \(1+\sigma_1\),\quad \Pi_I:=\frac 1 2 \(1 -\sigma_1\).
\end{align}
The operators $\Pi_R$ and $\Pi_I$ satisfies $\Pi_R \widetilde u=\widetilde{\Re u}$ and $\Pi_I \widetilde u=\widetilde{\Im u}$.
So, it correspond taking real and imaginary parts respectively.
Further, notice that $\Pi_R$ and $\Pi_I$ are orthogonal projections with respect to the innerproduct $\<\cdot,\cdot\>$ and satisfy $\Pi_R+\Pi_I=1$ and commute with $\pi_0$.
Moreover, we have $[H_\omega,\Pi_X]=0$ for $X=R,I$.
We define
\begin{align*}
L_{+,\omega}:=H_\omega \Pi_R,\quad L_{-,\omega}:=H_\omega \Pi_I.
\end{align*}
Then, we have
\begin{align}\label{38.3}
L_{+,\omega} =-\Delta + \omega +g(\phi_\omega^2)+2g'(\phi_\omega^2),\quad
L_{-,\omega} =-\Delta + \omega +g(\phi_\omega^2).
\end{align}
and
\begin{align}\label{38}
H_\omega=L_{+,\omega} \Pi_R + L_{-,\omega}\Pi_I.\end{align}

%
%
%Assume there $\omega_0>0$ and exists $H^1(\R^3,\R)\ni \phi_{\omega_0}>0$ satisfying \eqref{3} and Assumption \ref{a:1}.
%Then, for arbitrary $n\geq 1$, there exists an open neighborhood of $\mathcal O_{n,\omega_0} \ni \omega_0$ such that there exists a $C^\infty$ map
%\begin{align*}
%\mathcal O_{n,\omega_0} \ni \omega \mapsto \phi_\omega \in \Sigma_n,
%\end{align*}
%s.t.\ $\phi_\omega$ is the unique solution of \eqref{3} in the neighbor of $\phi_{\omega_0}$.
%
%\begin{proof}
%We construct $\phi_\omega$ around $\phi_{\omega_0}$.
%Set $\epsilon= \omega-\omega_0$, $\psi_\epsilon=\phi_\omega-\phi_{\omega_0}$.
%Then, $\phi_\omega$ satisfies \eqref{3} if and only if
%\begin{align*}
%L_{+,\omega} (\omega_0)\psi_\epsilon = -\(\epsilon\phi_{\omega_0}+\epsilon\psi_\epsilon +N(\psi_\epsilon)\).
%\end{align*}
%Since $L_{+,\omega} (\omega_0)$ is invertible from Assumption \ref{a:1}, we can obtain $\psi_\epsilon$ from
%\end{proof}
%
%\begin{remark}
%Even though $\mathcal O_n$ may shrink as $n$ gets large, we can cover $\mathcal O$ by $\cup_{\omega\in \mathcal O}\mathcal O_{n,\omega_0}$ and from the uniqueness we have a $C^\infty$ map $\mathcal O \ni \omega \mapsto \phi_\omega \in \Sigma_n$.
%\end{remark}
%

Notice that $\im \sigma_3 \widetilde{\phi}_\omega$ is purely imaginary valued and $\partial_\omega \widetilde{\phi}_\omega$ is real valued.
This structure is important when we study the generalized kernel. The following lemma is elementary.
\begin{lemma}\label{lem:6}
We have
\begin{itemize}
\item
$\Pi_R \im \sigma_3 \widetilde{\phi}_\omega=0$ and $\Pi_I \partial_\omega \widetilde{\phi}_\omega=0$.
\item
$\mathcal H_\omega \Pi_R=\Pi_I \mathcal H_\omega$ and $\mathcal H_\omega \Pi_I=\Pi_R\mathcal H_\omega$.
\end{itemize}
\end{lemma}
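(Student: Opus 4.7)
The plan is to prove both bullets by elementary manipulations of the Pauli matrices, using only the explicit formulas $\Pi_R=\tfrac12(1+\sigma_1)$, $\Pi_I=\tfrac12(1-\sigma_1)$ in \eqref{37}, the form \eqref{36} of $H_\omega$, and the fact that $\phi_\omega$ is real-valued.

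For the first bullet I will observe, following the remark just after \eqref{37}, that the operators $\Pi_R$ and $\Pi_I$ realize the ``real part'' and ``imaginary part'' under the identification $u\mapsto \widetilde u={}^t(u,\bar u)$. Since $\phi_\omega\in\R$, the vector $\im\sigma_3\widetilde\phi_\omega={}^t(\im\phi_\omega,-\im\phi_\omega)$ is the image of the purely imaginary function $\im\phi_\omega$, so $\Pi_R\im\sigma_3\widetilde\phi_\omega=\widetilde{\Re(\im\phi_\omega)}=0$. Similarly $\partial_\omega\widetilde\phi_\omega=\widetilde{\partial_\omega\phi_\omega}$ is the image of the real function $\partial_\omega\phi_\omega$ (by Lemma \ref{lem:4.3} this derivative exists and is real), and so $\Pi_I\partial_\omega\widetilde\phi_\omega=0$. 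If one prefers a direct verification: $(1+\sigma_1){}^t(a,-a)={}^t(0,0)$ and $(1-\sigma_1){}^t(a,a)={}^t(0,0)$.

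For the second bullet the key observation is the anticommutation relation $\sigma_3\sigma_1=-\sigma_1\sigma_3$, which gives at once
\begin{equation*}
\sigma_3\Pi_R=\tfrac12\sigma_3(1+\sigma_1)=\tfrac12(1-\sigma_1)\sigma_3=\Pi_I\sigma_3,\qquad \sigma_3\Pi_I=\Pi_R\sigma_3.
\end{equation*}
On the other hand, from \eqref{36} we see that $H_\omega$ is a sum of a scalar differential/multiplication operator and a multiplication operator times $\sigma_1$, so $H_\omega$ commutes with every polynomial in $\sigma_1$; in particular $[H_\omega,\Pi_R]=[H_\omega,\Pi_I]=0$. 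Combining with $\mathcal H_\omega=\sigma_3 H_\omega$ I will then get $\mathcal H_\omega\Pi_R=\sigma_3\Pi_R H_\omega=\Pi_I\sigma_3 H_\omega=\Pi_I\mathcal H_\omega$ and symmetrically $\mathcal H_\omega\Pi_I=\Pi_R\mathcal H_\omega$.

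There is essentially no analytic obstacle here; the whole content of the lemma is the parity/anticommutation bookkeeping with Pauli matrices together with the reality of $\phi_\omega$. The only thing worth being careful about is that $\partial_\omega\phi_\omega$ genuinely exists and is real-valued, which is already guaranteed by \eqref{2} and Lemma \ref{lem:4.3}.
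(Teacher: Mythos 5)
Your proof is correct and is exactly the elementary argument the paper has in mind: the paper states the lemma without proof, having just set up the needed ingredients ($\Pi_R\widetilde u=\widetilde{\Re u}$, $\Pi_I\widetilde u=\widetilde{\Im u}$, the reality of $\phi_\omega$, $[H_\omega,\Pi_R]=[H_\omega,\Pi_I]=0$, and $\mathcal H_\omega=\sigma_3H_\omega$ with $\sigma_3\sigma_1=-\sigma_1\sigma_3$). Your write-up simply makes these steps explicit, so there is nothing to add.
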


Following \cite{CP03CPAM}, we introduce the Riesz projection $P_d(\omega)$ for $\omega\in \mathcal O$.

\begin{definition}\label{def:projPd}
For     a fixed sufficiently small  $0<r \ll 1$ we define
in $\Lrad(\R ^3, \C ^2)$ the operator
\begin{align}\label{eq:projPd}
P_d(\omega):=-\frac{1}{2\pi \im}\oint_\gamma ( \mathcal H_\omega -z)^{-1} \,dz, \text{ where $\gamma=\{ r  e^{\im \theta}\ |\ \theta\in [0,2\pi)\}$.}
\end{align}
\end{definition}

\begin{remark}
By a simple change of coordinate we can express $P_d(\omega)$ as
\begin{align}\label{eq:projPd1}
P_d(\omega) =-\frac{1}{2\pi \im}\oint_\gamma (-\im \mathcal H_\omega -z)^{-1} \,dz, \text{ with $\gamma $ as above.}
\end{align}

\end{remark}

The properties of $P_d$ (which correspond to 4.\ of Proposition \ref{prop:1}) are summarized as follows.

\begin{lemma}\label{lem:7}
For $s\in \R$, there exists $\delta_s>0$ s.t.\ $P_d \in \cap_{s}C^\infty(D_{\R}(\omega_*,\delta_s), \mathcal L(\Sigma^{s}(\R^3,\C^2)))$.
Moreover, we have
\begin{itemize}
\item
$P_d(\omega)^2=P_d(\omega)$.
\item
$\Omega(P_d(\omega)U,V)=\Omega(U,P_d(\omega)V)$.
\item
$[P_d(\omega),\mathcal H_\omega]=0$ and $[P_d(\omega),\Pi_A]=[P_d(\omega),\pi _0]=0$ for $A=R,I$.
\end{itemize}
\end{lemma}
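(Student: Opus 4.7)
The plan is to derive each listed property from the contour-integral definition of $P_d(\omega)$ together with structural identities for $\mathcal{H}_\omega$, following the standard Riesz-projection toolkit.

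First, for well-definedness and smoothness I observe that by Assumption \ref{a:1} we have $\sigma_{\mathrm{d}}(\mathcal{H}_{\omega_*}) = \{0\}$ and $\sigma_{\mathrm{ess}}(\mathcal{H}_{\omega_*}) = (-\infty, -\omega_*] \cup [\omega_*, \infty)$, so for $r$ small enough the circle $\gamma$ lies in the resolvent set of $\mathcal{H}_{\omega_*}$. Since by Lemma \ref{lem:4.3} the map $\omega \mapsto \mathcal{V}(\omega)$ is smooth into any $\Sigma^s$ and $\mathcal{V}(\omega)$ is a relatively compact perturbation of $\sigma_3(-\Delta + \omega)$, upper semicontinuity of the spectrum keeps $\gamma$ in the resolvent set for all $\omega \in D_\R(\omega_*, \delta_s)$ with $\delta_s$ small. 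To establish $C^\infty$ dependence of $P_d(\omega)$ with values in $\mathcal{L}(\Sigma^s)$ for every $s$, I expand the resolvent as $(\mathcal{H}_\omega - z)^{-1} = \sum_{k \ge 0}(-1)^k (\sigma_3(-\Delta + \omega) - z)^{-1}\bigl(\mathcal{V}(\omega)(\sigma_3(-\Delta + \omega) - z)^{-1}\bigr)^k$; since $\mathcal{V}(\omega)$ is Schwartz in $x$ (again by Lemma \ref{lem:4.3}), each term maps any $\Sigma^s$ continuously into itself and the series converges uniformly for $(\omega, z)$ near $\{\omega_*\}\times\gamma$, yielding the required joint smoothness.

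For the projection property $P_d^2 = P_d$, I pick two concentric circles $\gamma$ (inner) and $\gamma'$ (outer) inside the resolvent set, represent $P_d^2$ as a double contour integral, and apply the first resolvent identity $(\mathcal{H}_\omega - z)^{-1}(\mathcal{H}_\omega - w)^{-1} = (z - w)^{-1}\bigl[(\mathcal{H}_\omega - w)^{-1} - (\mathcal{H}_\omega - z)^{-1}\bigr]$ together with $\oint_{\gamma'}(z-w)^{-1}\,dz = 2\pi \im$ and $\oint_\gamma (z-w)^{-1}\,dw = 0$ for $w \in \gamma'$. The commutations $[P_d, \mathcal{H}_\omega] = 0$ and $[P_d, \pi_0] = 0$ are immediate, since the resolvents commute with $\mathcal{H}_\omega$ and with $\pi_0$ (the latter being a projection onto a spectrally invariant subspace of $\mathcal{H}_\omega$). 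The commutation $[P_d, \Pi_R] = 0$ is more delicate because, by Lemma \ref{lem:6}, $\mathcal{H}_\omega$ does not commute with $\Pi_R$ but rather satisfies $\mathcal{H}_\omega \Pi_R = \Pi_I \mathcal{H}_\omega$. Equivalently, from $\sigma_3 \sigma_1 = -\sigma_1 \sigma_3$ and $[H_\omega, \sigma_1] = 0$ (visible from \eqref{36}), I obtain the anticommutation $\{\sigma_1, \mathcal{H}_\omega\} = 0$, hence $\sigma_1(\mathcal{H}_\omega - z)^{-1} = -(\mathcal{H}_\omega + z)^{-1}\sigma_1$. Integrating around $\gamma$ and using the substitution $z \mapsto -z$ (which maps $\gamma$ back to $\gamma$ as an oriented curve but flips $dz$, giving $\oint_\gamma f(-z)\,dz = -\oint_\gamma f(z)\,dz$) yields $\sigma_1 P_d = P_d \sigma_1$, and hence $[P_d, \Pi_R] = [P_d, \Pi_I] = 0$.

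Finally, for the $\Omega$-symmetry of $P_d$, I first verify that $\mathcal{H}_\omega$ is $\Omega$-skew-symmetric, namely $\Omega(\mathcal{H}_\omega u, v) = -\Omega(u, \mathcal{H}_\omega v)$. This follows by direct computation: using $\sigma_3 \sigma_1 \sigma_3 = -\sigma_1$, $[H_\omega, \sigma_1] = 0$, and the symmetry of $H_\omega$ with respect to the $\C$-bilinear form $\langle \cdot, \cdot \rangle$, both sides reduce to $\pm \im \langle H_\omega u, \sigma_1 v\rangle$. From this skew-symmetry I derive $\Omega((\mathcal{H}_\omega - z)^{-1} u, v) = -\Omega(u, (\mathcal{H}_\omega + z)^{-1} v)$ (solve $(\mathcal{H}_\omega-z)w=u$ and pair with $v$), and the same $z \mapsto -z$ substitution in the contour integral then yields $\Omega(P_d u, v) = \Omega(u, P_d v)$. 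The main technical point requiring care is the uniform-in-$\omega$ weighted resolvent bound needed to obtain $\Sigma^s$-smoothness for every $s \in \R$, but the Schwartz decay of $\mathcal{V}(\omega)$ and compactness of $\gamma$ inside the resolvent set make the Neumann-series argument routine.
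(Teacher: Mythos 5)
Your contour-integral arguments for $P_d(\omega)^2=P_d(\omega)$ (double contour plus resolvent identity), for $[P_d,\mathcal H_\omega]=[P_d,\pi_0]=0$, for $[P_d,\Pi_R]=[P_d,\Pi_I]=0$ via the anticommutation $\sigma_1\mathcal H_\omega=-\mathcal H_\omega\sigma_1$ together with the substitution $z\mapsto -z$ on the circle $\gamma$, and for the $\Omega$--symmetry via the $\Omega$--skewness of $\mathcal H_\omega$ and the same substitution, are correct and coincide with the computations in the paper.

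The genuine gap is in your proof of the smoothness statement. You expand $(\mathcal H_\omega-z)^{-1}$ in a Neumann series around the \emph{free} resolvent $(\sigma_3(-\Delta+\omega)-z)^{-1}$ with the full potential $\mathcal V(\omega)$ of \eqref{39} as perturbation, and you claim uniform convergence for $(\omega,z)$ near $\{\omega_*\}\times\gamma$. This cannot hold: $\mathcal V(\omega)$ is a fixed-size localized potential, not a small one, and it is precisely what creates the eigenvalue $0$ of $\mathcal H_\omega$ enclosed by $\gamma$, whereas $\sigma_3(-\Delta+\omega)$ has no spectrum in the disc $\{|z|\le r\}$ since $r\ll\omega$. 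Consequently every term of your series is analytic in $z$ on a neighbourhood of that closed disc, so if the series converged uniformly on $\gamma$ in $\mathcal L(\Sigma^s)$ (already for $s=0$) you could integrate term by term in \eqref{eq:projPd} and conclude $P_d(\omega)=0$, contradicting $\mathrm{Ran}\,P_d(\omega)=\mathcal N_g(\mathcal H_\omega)\neq\{0\}$ (Proposition \ref{prop:1}); equivalently, since $\mathcal H_\omega$ has the eigenvalue $0$, the operator $\mathcal V(\omega)(\sigma_3(-\Delta+\omega))^{-1}$ has an eigenvalue $-1$, so the smallness needed for the geometric series is unavailable on $\gamma$. The fix — and the paper's argument — is to perturb around $\mathcal H_{\omega_*}$ instead of the free operator: write $\mathcal H_\omega=\mathcal H_{\omega_*}+h_\omega$ with $\|h_\omega\|_{\mathcal L(\Sigma^s)}\to 0$ as $\omega\to\omega_*$ (Lemma \ref{lem:4.3}), and expand $(\mathcal H_\omega-z)^{-1}=(\mathcal H_{\omega_*}-z)^{-1}\bigl(1+h_\omega(\mathcal H_{\omega_*}-z)^{-1}\bigr)^{-1}$; now the Neumann series is in the genuinely small operator $h_\omega(\mathcal H_{\omega_*}-z)^{-1}$, converges uniformly on $\gamma$ for $\omega\in D_\R(\omega_*,\delta_s)$, and gives the $C^\infty$ dependence of $P_d(\omega)$ in $\mathcal L(\Sigma^s)$. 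A finite Born expansion around the free operator (to exploit the Schwartz decay of $\mathcal V$) does not rescue your route as written, because its remainder still contains $(\mathcal H_\omega-z)^{-1}$, whose $\omega$--regularity and $\Sigma^s$--boundedness are exactly what is to be proven.
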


\begin{proof}
We first expand $\mathcal H_{\omega}=\mathcal H_{\omega_*} +  h_\omega$.
From \eqref{39} and Lemma \ref{lem:4.3}, for arbitrary $s\in\R$, we have $\|h_\omega\|_{\Sigma^s\to \Sigma^s}\to 0$ as $\omega\to \omega_*$.
We can rewrite \eqref{eq:projPd} as
\begin{align*}
P_d(\omega)&=-\frac{1}{2\pi \im}\oint_{\gamma}(\mathcal H_{\omega_*}-z)^{-1}\(1+(\mathcal H_{\omega_*}-z)^{-1} h_\omega\)^{-1}\,dz\\&
=-\frac{1}{2\pi \im}\oint_{\gamma}(\mathcal H_{\omega_*}-z)^{-1}\(1+\widetilde h_\omega(z)\)\,dz\\&
=P_d(\omega_*)-\frac{1}{2\pi \im}\oint_{\gamma}(\mathcal H_{\omega_*}-z)^{-1}\widetilde h_\omega(z) \,dz,
\end{align*}
where $\widetilde h_\omega(z)$ are determined by the relation $$1+\widetilde h_\omega(z)=\(1+(\mathcal H_{\omega_*}-z)^{-1}h_\omega\)^{-1}.$$
Therefore, we see $P_d \in \cap_{s}C^\infty(D_{\R}(\omega_*,\delta_s), \mathcal L(\Sigma^{s}(\R^3,\C^2)))$ for some $\delta_s>0$.

$P_d(\omega)^2=P_d(\omega)$ is standard, see for example section 6 of \cite{HSBook}.
%By \eqref{40}
%and a simple change of variable in the integral,  we have the following, which can be stated as
%$\sigma _1\sigma _3 P_d(\omega)= P_d^*(\omega)\sigma _1\sigma _3$ (for $P_d^*(\omega)$
% the adjoint for the bilinear form $\< \cdot , \cdot \>$) and obviously implies also
%$\<P_d(\omega)U,V\>=\<U,\sigma _1\sigma_3 P_d(\omega)\sigma_3 \sigma _1V\>$:
The fact that $P_d$ is symmetric w.r.t.\ $\Omega$ follows from simple change of variables.
 \begin{align*}&
\Omega(P_d(\omega)U,V) =\Omega\(-\frac{1}{2\pi \im}\oint_\gamma (\mathcal H_\omega-z)^{-1}\,dz U,V\) =\Omega\(-\frac{r}{2\pi }\int_0^{2\pi} (\mathcal H_\omega-re^{\im \theta})^{-1} e^{\im \theta}\,d\theta U,V\)\\&=-\frac{r}{2\pi }\int_0^{2\pi}\Omega\( (\mathcal H_\omega-re^{\im \theta})^{-1} e^{\im \theta}\, U,V\)d\theta
= \frac{r}{2\pi }\int_0^{2\pi}\Omega\( U,( \mathcal H_\omega +re^{-\im \theta})^{-1} e^{-\im \theta}\, V\)d\theta\\&
=\Omega\( U,-\frac{r}{2\pi }\int_0^{2\pi}(\mathcal H_\omega-re^{ \im \theta})^{-1} e^{ \im \theta}\,d\theta V\) =\Omega(U,P_d(\omega)V).
\end{align*} Similarly, we have the following, which implies $[P_d(\omega),\Pi_A]=0$ for $A=R,I$:
\begin{align*}
\sigma_1 P_d(\omega) \sigma_1 &=-\frac{1}{2\pi \im}\sigma_1\oint_\gamma (\mathcal H_\omega -z)^{-1} \,dz \sigma_1  = -\frac{1}{2\pi \im}\oint_\gamma (\sigma_1\mathcal H_\omega \sigma_1 -z)^{-1} \,dz
\\&=  \frac{1}{2\pi \im}\oint_\gamma ( \mathcal H_\omega+ z)^{-1} \,dz= -\frac{1}{2\pi \im}\oint_\gamma (\mathcal H_\omega  -z)^{-1} \,dz=P_d(\omega).
\end{align*}
Finally, $[P_d(\omega), \mathcal H_\omega]=0$ follows from the expression \eqref{eq:projPd} and $[P_d(\omega),\pi_0]$ follows from the expression \eqref{eq:projPd1} and $[-\im \mathcal H_\omega, \pi_0]=0$.
%
%
%\noindent Recalling \eqref{371}, $[P_d(\omega),\pi _0]=0$ follows by
%\begin{equation*}
%\begin{aligned} &    \(1-\sigma_1\mathbf{C}\)    P_d(\omega) =  -\frac{r}{2\pi }\int_0^{2\pi}(\mathcal H_\omega-re^{\im \theta})^{-1} e^{\im \theta}\,d\theta +  \frac{r}{2\pi }\int_0^{2\pi}\sigma_1\mathbf{C} (\mathcal H_\omega-re^{\im \theta})^{-1} e^{\im \theta}\,d\theta \\& =  -\frac{r}{2\pi }\int_0^{2\pi}(\mathcal H_\omega-re^{\im \theta})^{-1} e^{\im \theta}\,d\theta +  \frac{r}{2\pi }\int_0^{2\pi} (\mathcal H_\omega-re^{-\im \theta})^{-1} e^{-\im \theta}\,d\theta \sigma_1\mathbf{C} \\&  =  -\frac{r}{2\pi }\int_0^{2\pi}(\mathcal H_\omega-re^{\im \theta})^{-1} e^{\im \theta}\,d\theta -  \frac{r}{2\pi }\int_0^{-2\pi} (\mathcal H_\omega-re^{ \im \theta})^{-1} e^{ \im \theta}\,d\theta  \sigma_1\mathbf{C}  \\&  =  -\frac{r}{2\pi }\int_0^{2\pi}(\mathcal H_\omega-re^{\im \theta})^{-1} e^{\im \theta}\,d\theta +  \frac{r}{2\pi }\int_ {-2\pi}^0 (\mathcal H_\omega-re^{ \im \theta})^{-1} e^{ \im \theta}\,d\theta  \sigma_1\mathbf{C}   \\&  =  -\frac{r}{2\pi }\int_0^{2\pi}(\mathcal H_\omega-re^{\im \theta})^{-1} e^{\im \theta}\,d\theta  \(1-\sigma_1\mathbf{C}\) = P_d(\omega) \(1-\sigma_1\mathbf{C}\) .
%  \end{aligned}
%\end{equation*}
\end{proof}

\begin{remark}
In Lemma \ref{lem:7}, we only proved $P_d \in \cap_{s}C^\infty(D_{\R}(\omega_*,\delta_s), \mathcal L(\Sigma^{s}(\R^3,\C^2)))$ and not $P_d \in \cap_{s\geq0}C^\infty(D_{\R}(\omega_*,\delta_s), \mathcal L(\Sigma^{-s}(\R^3,\C^2),\Sigma^{s}(\R^3,\C^2)))$.
We will show the latter after we obtain another expression of $P_d$ based on $\Psi_j(\omega)$ which we will define below.
\end{remark}

We next study the structure of the generalized kernel of $-\im \mathcal H_{\omega_*}$.

\begin{lemma}\label{lem:varsigmastar}
There exists $\Psi_j(\omega_*) \in \mathcal S(\R^3,\widetilde \C^2)$ ($j=3,4$) s.t.\ $$-\im \mathcal H_{\omega_*} \Psi_j(\omega_*)=\Psi_{j-1}(\omega_*)\text{ and }\Pi_R\Psi_3(\omega_*)=\Pi_I \Psi_4(\omega_*)=0.$$
Here, $\Psi_2(\omega_*)$ is defined in \eqref{Psi12}.
Moreover, we have
\begin{align}\label{eq:Astarpos}
A(\omega_*)=\Omega(\Psi_2(\omega_*),\Psi_3(\omega_*))=-\Omega(\Psi_2(\omega_*),\Psi_3(\omega_*))>0.
\end{align}
\end{lemma}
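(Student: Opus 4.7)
The plan is to reduce the construction of $\Psi_3(\omega_*)$ and $\Psi_4(\omega_*)$ to two scalar elliptic problems by exploiting the parity decomposition $H_{\omega}=L_{+,\omega}\Pi_R+L_{-,\omega}\Pi_I$ of \eqref{38}, together with the intertwining $\sigma_3 \Pi_R=\Pi_I\sigma_3$ underlying Lemma \ref{lem:6}. Since $\Psi_2(\omega_*)=\widetilde{\partial_\omega \phi_{\omega_*}}$ is ``real'' ($\Pi_R\Psi_2=\Psi_2$) and $-\im\mathcal H_{\omega_*}$ exchanges the ranges of $\Pi_R$ and $\Pi_I$ on $\widetilde \C$, the equation $-\im\mathcal H_{\omega_*}\Psi_3(\omega_*)=\Psi_2(\omega_*)$ forces me to seek $\Psi_3(\omega_*)$ of the ``imaginary'' form $\widetilde{\im\psi_3}$ with $\psi_3$ real radial. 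A short calculation, applying $-\im\sigma_3 H_{\omega_*}$ component-wise and using that $L_{-,\omega_*}$ acts as a scalar operator on $\widetilde{\C}$, collapses the vectorial equation to the scalar identity $L_{-,\omega_*}\psi_3=\partial_\omega\phi_{\omega_*}$. By exactly the same argument, I will seek $\Psi_4(\omega_*)=\widetilde{\psi_4}$ with $\psi_4$ real radial, satisfying $L_{+,\omega_*}\psi_4=-\psi_3$.

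The main obstacle is the solvability of the $L_{-,\omega_*}$ equation, and it is precisely here that the minimal-mass hypothesis \eqref{4} enters. Because $\phi_{\omega_*}>0$ satisfies $L_{-,\omega_*}\phi_{\omega_*}=0$ (this is the ground-state equation \eqref{3}), the Perron--Frobenius principle applied to the Schr\"odinger operator $L_{-,\omega_*}$ on the radial sector identifies $\phi_{\omega_*}$ as the ground state, giving $L_{-,\omega_*}\ge 0$ on $\Lrad$ with $\ker L_{-,\omega_*}|_{\Lrad}=\mathrm{span}(\phi_{\omega_*})$. The Fredholm alternative then renders the equation solvable iff the right-hand side $\partial_\omega\phi_{\omega_*}$ is $L^2$-orthogonal to $\phi_{\omega_*}$, and this is
$\langle\partial_\omega\phi_{\omega_*},\phi_{\omega_*}\rangle=q'(\omega_*)=0$, which is exactly \eqref{4}. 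I will then normalize $\psi_3\perp\phi_{\omega_*}$ to make it unique. For $\Psi_4$ the scalar operator $L_{+,\omega_*}$ is invertible on $\Lrad$ by part 1 of Assumption \ref{a:1} (as noted in the Remark following that assumption, the condition is equivalent to $\mathcal N(L_{+,\omega_*}|_{\Lrad})=\{0\}$), so I set $\psi_4:=-L_{+,\omega_*}^{-1}\psi_3$. Schwartz regularity of $\psi_3$ and $\psi_4$ then follows by iterated elliptic regularity, using Lemma \ref{lem:4.3} to ensure that $\partial_\omega\phi_{\omega_*}$ and the coefficients of $L_\pm$ decay rapidly.

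Finally, for the sign assertion I compute directly from $\Omega(\widetilde u,\widetilde v)=\Im\int u\overline v\,dx$:
\begin{equation*}
\Omega(\Psi_2(\omega_*),\Psi_3(\omega_*))
=\Im\!\int \partial_\omega\phi_{\omega_*}\,\overline{\im\psi_3}\,dx
=-\langle\partial_\omega\phi_{\omega_*},\psi_3\rangle
=-\langle L_{-,\omega_*}\psi_3,\psi_3\rangle,
\end{equation*}
so positivity of $A(\omega_*)$ (modulo a harmless choice of the overall sign in the definition of $\Psi_3(\omega_*)$) reduces to strict positivity of $\langle L_{-,\omega_*}\psi_3,\psi_3\rangle$. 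This is guaranteed by the fact that $\psi_3$ lies in $(\ker L_{-,\omega_*}|_{\Lrad})^\perp$ and is nonzero: indeed $\partial_\omega\phi_{\omega_*}\ne 0$, since otherwise the identity $L_{+,\omega_*}\partial_\omega\phi_{\omega_*}=-\phi_{\omega_*}$ (obtained by differentiating \eqref{3} in $\omega$) would force $\phi_{\omega_*}=0$. The strict positivity of $L_{-,\omega_*}$ off its kernel then yields the desired nonvanishing of $A(\omega_*)$.
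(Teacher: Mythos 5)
Your construction is correct in outline but takes a genuinely different route from the paper's. You reduce to the two scalar radial problems $L_{-,\omega_*}\psi_3=\partial_\omega\phi_{\omega_*}$ and $L_{+,\omega_*}\psi_4=-\psi_3$ via the parity ansatz $\Psi_3(\omega_*)=\widetilde{\im\psi_3}$, $\Psi_4(\omega_*)=\widetilde{\psi_4}$, and solve them by the Fredholm alternative for $L_{-,\omega_*}$ (nonnegative, kernel $\mathrm{span}\{\phi_{\omega_*}\}$ by ground-state positivity, solvability being exactly $q'(\omega_*)=0$ from \eqref{4}) and by the invertibility of $L_{+,\omega_*}$ on $\Lrad$ coming from Assumption \ref{a:1}. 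The paper instead works vectorially on $\mathrm{Ran}\,P_d(\omega_*)$: it characterizes $\mathrm{Ran}\,\mathcal H_{\omega_*,d}$ as $(\mathrm{span}\{\sigma_1\widetilde{\phi}_{\omega_*}\})^\perp$ using $\mathcal H_{\omega_*}^*=\sigma_1\sigma_3\mathcal H_{\omega_*}\sigma_3\sigma_1$, verifies the same two orthogonality conditions ($q'(\omega_*)=0$ for $\Psi_3$, and $\<\Psi_3,\sigma_1\widetilde{\phi}_{\omega_*}\>=0$ by parity for $\Psi_4$), and imposes the parity normalizations a posteriori by composing with $\Pi_I(1-\pi_0)P_d(\omega_*)$, resp.\ $\Pi_R$. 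Your scalar reduction is more explicit and identifies $\Psi_3,\Psi_4$ with the classical generators of the generalized kernel in the Weinstein/GSS framework; the paper's version avoids any ansatz and treats both steps uniformly. Your observation that $\partial_\omega\phi_{\omega_*}\neq 0$ because $L_{+,\omega_*}\partial_\omega\phi_{\omega_*}=-\phi_{\omega_*}$ (differentiating \eqref{3}) is a fine substitute for the paper's argument that $\Psi_3=c\Psi_1$ would force $\Psi_2=0$.

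The one genuine flaw is the sign bookkeeping in the positivity step. With the identity $\Omega(\widetilde u,\widetilde v)=\Im\int u\bar v\,dx$ that you quote, your display gives $A(\omega_*)=\Omega(\Psi_2,\Psi_3)=-\<L_{-,\omega_*}\psi_3,\psi_3\>_{L^2}<0$, and the escape you invoke, a ``harmless choice of the overall sign of $\Psi_3$'', is not available: the sign of $\Psi_3$ is pinned by $-\im\mathcal H_{\omega_*}\Psi_3=\Psi_2$, and the residual freedom $\Psi_3\mapsto\Psi_3+c\Psi_1$ changes neither $\Omega(\Psi_2,\Psi_3)$ (because $\Omega(\Psi_2,\Psi_1)=q'(\omega_*)=0$, cf.\ \eqref{50}) nor $\<L_{-,\omega_*}\psi_3,\psi_3\>$. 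The repair is to compute intrinsically from the definition \eqref{17.6d}, as the paper does in \eqref{43}: $\Omega(\Psi_2,\Psi_3)=\Omega(-\im\mathcal H_{\omega_*}\Psi_3,\Psi_3)=\<H_{\omega_*}\Psi_3,\sigma_1\Psi_3\>=\<L_{-,\omega_*}\psi_3,\psi_3\>_{L^2}>0$, with no sign choice needed. Part of the confusion is not of your making: the remark following \eqref{17.6d} and the printed statement \eqref{eq:Astarpos}, which literally reads $A(\omega_*)=\Omega(\Psi_2,\Psi_3)=-\Omega(\Psi_2,\Psi_3)>0$, carry sign typos (the second expression should be $-\Omega(\Psi_1,\Psi_4)$), but the intrinsic computation above is independent of them.
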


\begin{proof}
First, it suffices to restrict everything in $\mathrm{Ran}P_d(\omega_*)$.
Set $\mathcal H_{\omega_*,d}:=\left.\mathcal H_{\omega_*} \right|_{\mathrm{Ran}P_d(\omega_*)}$.
Recall the relation $$\mathrm{Ran}\mathcal H_{\omega_*,d}=\overline{\mathrm{Ran}\mathcal H_{\omega_*,d}}= (\mathcal{N}(\mathcal H_{\omega_*}^*))^\perp=\(\mathrm{span}\{\sigma_1\widetilde{\phi}_{\omega_*}\}\)^\perp,$$ with orthogonality in terms   \eqref{eq:bilform} and we have used $\mathcal H_{\omega_*}^*=\sigma_1\sigma_3\mathcal H_{\omega_*} \sigma_3\sigma_1$ and Assumption \ref{a:1} in the last equality.
Now,
\begin{align*}
\<\sigma_1\widetilde \phi_{\omega_*},\Psi_2(\omega_*)\>=q'(\omega_*)=0,
\end{align*}
so we have $\Psi_2(\omega_*)\in \mathrm{Ran}\mathcal H_{\omega_*}$.
Therefore, there exists nontrivial $\hat \Psi_3(\omega_*)$ s.t.\ $\im \mathcal H_{\omega_*,d}\hat \Psi_3(\omega_*)=\Psi_3(\omega_*)$.
By standard elliptic regularity argument, we have $\hat\Psi_3\in \mathcal S(\R^3,\C^2)$.
Moreover, setting $\Psi_3(\omega_*)=\Pi_I (1-\pi_0) P_d(\omega_*)\hat \Psi$, we see that $\Psi_3(\omega_*)\in \mathcal S(\R^3,\widetilde \C)$ satisfies $-\im \mathcal H_{\omega_*}\Psi_3(\omega_*)=\Psi_2(\omega_*)$ and $\Pi_R \Psi_3(\omega_*)=\pi_0\Psi_3(\omega_*)=0$.

  In the case $q'({\omega  _*})=0$, we try also to find $ \hat{\Psi}_4({\omega  _*})$ s.t.\ $-\im\mathcal H_{{\omega  _*},d}  \hat{\Psi}_4({\omega  _*})=\Psi_3({\omega  _*})$.
As before, it suffices to show $\<\Psi_3({\omega  _*}),\sigma_1\widetilde{\phi}_{{\omega  _*}}\>=0$.
However, this follows from $$\<\Psi_3({\omega  _*}),\sigma_1\widetilde{\phi}_{{\omega  _*}}\>=\<\Pi_I \Psi_3({\omega  _*}),\sigma_1\widetilde{\phi}_{\omega  _*} \>=\<\Psi_3({\omega  _*}),\Pi_I \sigma_1\widetilde{\phi}_{\omega  _*}\>=0.$$
Again, by standard elliptic regularity argument, we have $\hat \Psi_4(\omega_*)\in \mathcal S(\R^3,\C^2)$.
Setting $\Psi_4(\omega_*) = P_d(\omega_*)(1-\pi_0)\Pi_R\hat\Psi_4(\omega_*) \in \mathcal S(\R^3,\widetilde \C)$, we have $-\im \mathcal H_{\omega_*} \Psi_4(\omega_*)=\Psi_3(\omega_*)$, $\pi_0\Psi_4(\omega_*)=\Pi_I \Psi_4(\omega_*)=0$.

Finally, we show \eqref{eq:Astarpos}.
\begin{align}
-\Omega(\Psi_1(\omega_*),{\Psi}_4({\omega  _*})) &=-\Omega(-\im\mathcal H_{{\omega  _*}} \Psi_2(\omega  _*), {\Psi}_4({\omega  _*}))= \Omega( \Psi_2(\omega_*), -\im\mathcal H_{{\omega  _*}}{\Psi}_4({\omega  _*}))\nonumber\\&=\Omega(\Psi_2(\omega  _*), \Psi_3({\omega  _*} )) =-\Omega(\im \mathcal H_{\omega  _*} \Psi_3({\omega  _*}),\Psi_3({\omega  _*}))
=\<H_{\omega  _*} \Psi_3({\omega  _*}),\sigma _1\Psi_3({\omega  _*})\>\nonumber\\&=\<L_{-,{\omega  _*}} \Psi_3({\omega  _*}),\sigma _1\Psi_3({\omega  _*})\>,\label{43}
\end{align}
where $L_{-,\omega_*}=H_{\omega_*}\Pi_I$.
We can express $L_{-,\omega_*}=-\Delta+\omega_*+g(\phi_{\omega_*}^2)$.
Obviously $\Psi_1(\omega_*)=\im \sigma_3 \widetilde \phi_{\omega_*}$ satisfies $L_{-,\omega_*}\Psi_1(\omega_*)$ and since $\widetilde \phi_{\omega_*}$ is positive valued, $L_{-,\omega_*}\geq 0$ and \eqref{43} will be $0$ if and only if $ u=c \Psi_1(\omega_*)$.
If the last quantity  in \eqref{43} were $0$ it would mean that $\Psi_3({\omega  _*} )=c \Psi_1(\omega_*)$.  This would imply $ 0= -\im \mathcal H_{\omega _*} \Psi_3(\omega _*)=\Psi_2(\omega_*)$, which is absurd.

We note that since $0\neq A(\omega_*)=\<\sigma_1 \widetilde \phi_{\omega_*},\Psi_4\>$, there exists no $\Psi_5$ s.t.\ $-\im \mathcal H_{\omega_*}\Psi_5=\Psi_4(\omega_*)$.
\end{proof}

\begin{proof}[Proof of Proposition \ref{prop:1}]
We set $\tilde \Psi_4(\omega):=P_d(\omega)\Psi_4(\omega_*)$.
Notice that by Lemma \ref{lem:7}, $\Pi_I\tilde\Psi_4(\omega)=0$ and $\tilde \Psi_4\in C^\infty(D_{\R}(\omega_*)(0,\delta_s),\Sigma^s)$.
Similarly, by Lemma \ref{lem:7} and by Lemma \ref{lem:varsigmastar} we have
 $\pi _0 \tilde \Psi_4(\omega) =P_d(\omega)\pi _0 \Psi_4(\omega_*)=0$.
Since $-\im \mathcal H_\omega$ is invariant in $\mathrm{Ran}P_d(\omega)$, we have $$(-\im \mathcal H_\omega)^2 \tilde \Psi_4(\omega)=b(\omega)\Psi_2(\omega)+a(\omega)\tilde \Psi_4(\omega),$$ with $a,b\in C^\infty(D_{\R}(0,\delta_0),\R)$, $b(\omega_*)=1$ and $a(\omega_*)=0$.
Thus, we set $\Psi_4(\omega)=b(\omega)^{-1}\tilde \Psi_4(\omega)\in C^\infty(D_{\R}(\omega_*)(0,\delta_s),\Sigma^s)$ making $\delta_s$ smaller to avoid the zero of $b$ if necessary.
Therefore, we have $(-\im \mathcal H_\omega)^2 \Psi_4(\omega)=\Psi_2(\omega)+a(\omega)\Psi_4(\omega)$.
We further set $$\Psi_3(\omega):=-\im \mathcal H_\omega \Psi_4(\omega)\in C^\infty(D_{\R}(\omega_*)(0,\delta_s),\Sigma^s),$$ with modifying $\delta_s$ to $\delta_{s+2}$.
Notice that we have $\Pi_R \Psi_3(\omega)=-\im \mathcal H_\omega \Pi_I \Psi_4(\omega)=0$  and $\pi _0 \Psi_3(\omega)=-\im \mathcal H_\omega \pi _0 \Psi_4(\omega)=0$.

By applying $\Omega(\Psi_1(\omega),\cdot)$ to the equality $-\im \mathcal H_\omega \Psi_3 =\Psi_2(\omega)+a(\omega)\Psi_4(\omega)$, we obtain
\begin{align*}
0=\Omega(\Psi_1(\omega),\Psi_2(\omega))+ a(\omega)\Omega(\Psi_1(\omega),\Psi_4(\omega)).
\end{align*}
By $\Omega(\Psi_1(\omega),\Psi_2(\omega))=-q'(\omega)$ and $\Omega(\Psi_1(\omega),\Psi_4(\omega))=-\Omega(\Psi_2(\omega),\Psi_3(\omega))=-A(\omega)$, we obtain 5.\ of

Finally, we can express $P_d(\omega)$ by
\begin{align*}
P_d(\omega)=A(\omega)^{-1}\( -\Omega(\cdot,\Psi_4(\omega))\Psi_1(\omega) + \Omega(\cdot,\Psi_3(\omega))\Psi_2(\omega)+\Omega(\Psi_2(\omega), \cdot)\Psi_3(\omega)-\Omega(\Psi_1(\omega),\cdot)\Psi_4(\omega)\)
\end{align*}
By this expression, it is clear that we have
$P_d \in \cap_{s}C^\infty(D_{\R}(\omega_*,\delta_s), \mathcal L(\Sigma^{-s}(\R^3,\C^2),\Sigma^{s}(\R^3,\C^2)))$ for some $\delta_s>0$.
\end{proof}

\subsection{Proof of Lemma \ref{lem:Rexpand}}
\label{app:Rexpand}
In this section, we prove Lemma \ref{lem:Rexpand}.
We expand each $\mathfrak F^* \mathbf R_k$.

\begin{lemma}
Let $l\geq 1$, $\delta>0$ and let $  \mathfrak{F}\in C^\infty(D_{\Hrad^1}(\mathcal T_{\omega_*},\delta),\Hrad^1(\R^3,\widetilde \C))$ satisfy \eqref{Ferror}--\eqref{eq:sigma(i)}.
Then, we have
\begin{align}
\mathfrak F^*\mathbf R_2&=\mathbf R_2 +\mathbf R_3+S^0_{\sigma(l)+1,l+1}+S^0_{0,l+2}.\label{R2expand}
\end{align}
\end{lemma}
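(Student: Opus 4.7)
The plan is to pull back each of the two kinds of generators of $\mathbf R_2$ (the bilinear form $\langle S^2_{1,0}\widetilde r, \sigma_1\widetilde r\rangle$ and the two integral expressions in \eqref{r2}) and read off the result using the identities encoded in Definition \ref{def:symbols} and Definition \ref{def:beta}, together with \eqref{Sijtrans}. Abbreviate $\theta:=S^0_{0,l}$ and $w:=S^1_{\sigma(l),l}$, so by \eqref{Ferror} we have $\mathfrak F^*\widetilde r=e^{\im\theta\sigma_3}(\widetilde r+w)$. The two facts I need at the outset are (i) $\sigma_1 e^{\im\theta\sigma_3}=e^{-\im\theta\sigma_3}\sigma_1$ (since $\sigma_3\sigma_1=-\sigma_1\sigma_3$), which yields the norm-preservation $|e^{\im\theta\sigma_3}\widetilde u|^2=|\widetilde u|^2$ on $\widetilde\C$; and (ii) writing $e^{\im\theta\sigma_3}=1+(\cos\theta-1)+\im\sigma_3\sin\theta$ with $\cos\theta-1\in S^0_{0,2l}$ and $\sin\theta\in S^0_{0,l}$, so that $e^{\pm\im\theta\sigma_3}$ maps any representative of $S^k_{i,j}$ to another representative of $S^k_{i,j}$ (the added corrections live in a smaller class and are absorbed).

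Next I would process the three generators separately. For $\langle S^2_{1,0}\widetilde r,\sigma_1\widetilde r\rangle$, sliding $e^{\im\theta\sigma_3}$ past $\sigma_1$ converts $\mathfrak F^*\langle S^2_{1,0}\widetilde r,\sigma_1\widetilde r\rangle$ into $\langle S^2_{1,0}(\widetilde r+w),\sigma_1(\widetilde r+w)\rangle$, with a relabelled $S^2_{1,0}$. Bilinear expansion gives one $\mathbf R_2$ piece, two crossed pieces controlled via Definition \ref{def:symbols} by $S^0_{1,0}\cdot S^0_{0,1}\cdot S^0_{\sigma(l),l}=S^0_{\sigma(l)+1,l+1}$, and one doubly-$w$ piece in $S^0_{2\sigma(l)+1,2l}$. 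For $\int\beta_n(|S^1_{0,0}+\widetilde r|^2)\,s^1_{1,0}|\widetilde r|^2\,dx$, the pullback gives $\int\beta_n(|S^1_{0,0}+\widetilde r+w|^2)\,s^1_{1,0}|\widetilde r+w|^2\,dx$; the inflated argument of $\beta_n$ is accommodated by the convention \eqref{eq:beta1} (enlarging the $S^1_{0,0}$ representative by $w\in S^1_{0,0}$). Expanding $|\widetilde r+w|^2=|\widetilde r|^2+2\Re\langle w,\sigma_1\widetilde r\rangle+|w|^2$ yields the $\mathbf R_2$ main piece, a cross $\int\beta_n\,\Re\langle s^1_{1,0}w,\sigma_1\widetilde r\rangle\,dx\in S^0_{\sigma(l)+1,l+1}$, and $\int\beta_n\, s^1_{1,0}|w|^2\,dx\in S^0_{2\sigma(l)+1,2l}$. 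The second integrand in \eqref{r2} is treated identically: each factor becomes $\langle e^{\im\theta\sigma_3}S^1_{i,0},\sigma_1(\widetilde r+w)\rangle=\langle S^1_{i,0},\sigma_1\widetilde r\rangle+S^0_{\sigma(l)+i,l}$ (for $i=0,1$), and the four cross products give an $\mathbf R_2$ term, two terms in $S^0_{\sigma(l)+1,l+1}$, and one in $S^0_{2\sigma(l)+1,2l}$.

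Finally, I would dispose of the $S^0_{2\sigma(l)+1,2l}$ remainders by a two-case analysis using $\sigma(l)$ as defined in \eqref{eq:sigma(i)}. When $l\in\{1,2\}$ so $\sigma(l)=1$, we have $2\sigma(l)+1=3\ge 2=\sigma(l)+1$ and $2l\ge l+1$, so $S^0_{3,2l}\subset S^0_{\sigma(l)+1,l+1}$. When $l\ge 3$ so $\sigma(l)=0$, we have $2\sigma(l)+1=1\ge 0$ and $2l\ge l+2$, so $S^0_{1,2l}\subset S^0_{0,l+2}$. Together with the $S^0_{\sigma(l)+1,l+1}$ contributions from the cross terms, this exhibits $\mathfrak F^*\mathbf R_2$ as an element of $\mathbf R_2+S^0_{\sigma(l)+1,l+1}+S^0_{0,l+2}$, which is in particular contained in the class $\mathbf R_2+\mathbf R_3+S^0_{\sigma(l)+1,l+1}+S^0_{0,l+2}$ as claimed (the $\mathbf R_3$ slot is simply not used here, but is included to keep the statement parallel to the subsequent expansions of $\mathfrak F^*\mathbf R_k$ for $k\ge 3$).

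The proof is conceptually routine; the main obstacle is the bookkeeping of the phase factors $e^{\im\theta\sigma_3}$ and of the bi-index smallness counting $(i,j)$, particularly in showing that $\cos\theta-1$ and $\sin\theta$ corrections can be freely absorbed into representatives of $S^1_{i,j}$ without disturbing the bilinear form structure that defines $\mathbf R_2$. Once one is confident in that reshuffling, every remaining term falls into its designated slot by multiplying smallness indices.
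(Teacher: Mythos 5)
Your treatment of the bilinear term $\<S^2_{1,0}\widetilde r,\sigma_1\widetilde r\>$ and your use of the phase identities ($\sigma_3\sigma_1=-\sigma_1\sigma_3$, $e^{\im S^0_{0,l}\sigma_3}S^1_{i,j}=S^1_{i,j}$, absorption of $S^1_{\sigma(l),l}$ into the argument of $\beta_n$ via the convention of Definition \ref{def:beta}) agree with the paper's proof. The gap is in your classification of the cross terms coming from the $\beta_n$--type generators: you assert that expressions such as
\begin{equation*}
\int_{\R^3}\beta_n(|S^1_{0,0}+\widetilde r|^2)\,\<S^1_{\sigma(l)+1,l},\sigma_1\widetilde r\>_{\C^2}\,dx
\qquad\text{and}\qquad
\int_{\R^3}\beta_n(|S^1_{0,0}+\widetilde r|^2)\,s^1_{1+2\sigma(l),2l}\,dx
\end{equation*}
lie directly in $S^0_{\sigma(l)+1,l+1}$, resp.\ $S^0_{2\sigma(l)+1,2l}$. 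They do not: membership in $\mathcal S^0_{i,j}$ requires a representative $\widehat f$ that is defined and smooth on $D_{\R^5\times P(\omega_*)\Sigma^{-s}}$ and bounded by powers of $\|\widetilde r\|_{\Sigma^{-s}}$ for \emph{every} $s\geq 0$, whereas the factor $\beta_n(|S^1_{0,0}+\widetilde r|^2)$ depends on $\widetilde r$ pointwise and is not even defined, let alone estimated, for $\widetilde r$ in a $\Sigma^{-s}$ neighborhood when $s$ is large. This is precisely why the paper introduces the classes $\mathbf R_k$ in Definition \ref{def:Rk} as objects outside the symbol calculus.

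The correct step, which your argument is missing, is to expand the $\beta_n$ factor once more using \eqref{expandbeta}: writing $\beta_n(|S^1_{0,0}+\widetilde r|^2)=\beta_n(|S^1_{0,0}|^2)+\beta_{n+1}(|S^1_{0,0}+\widetilde r|^2)\<S^1_{0,0},\sigma_1\widetilde r\>_{\C^2}+\beta_{n+1}(|S^1_{0,0}+\widetilde r|^2)|\widetilde r|^2$. Only the piece with $\beta_n(|S^1_{0,0}|^2)$ (no $\widetilde r$ inside the nonlinearity) yields a genuine symbol $S^0_{\sigma(l)+1,l+1}$ (resp.\ $S^0_{1+2\sigma(l),2l}$); the other two pieces produce a new $\mathbf R_2$ term and a term of the form \eqref{r345} with $k=3$, i.e.\ an $\mathbf R_3$ term (consistently with $n+1\geq 1=k-2$). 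So the $\mathbf R_3$ slot in \eqref{R2expand} is not a cosmetic addition ``not used here'': it is genuinely generated by the pullback of the $\beta_n$ terms, and it matters downstream, since $\mathbf R_3$ is estimated separately (e.g.\ in Lemma \ref{lem:boundfatR}) by dispersive norms of $\widetilde r$ rather than by $\Sigma^{-s}$ norms. Your index bookkeeping for the purely polynomial pieces (including the two-case analysis in $\sigma(l)$ that distributes $S^0_{2\sigma(l)+1,2l}$ between $S^0_{\sigma(l)+1,l+1}$ and $S^0_{0,l+2}$) is fine and matches the paper; it is the failure to re-expand $\beta_n$ that makes the proposed proof incomplete as written.
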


\begin{proof}
We first consider the term $\<S^2_{1,0}\widetilde r,\sigma_1\widetilde r\>$.
By $\mathfrak F^* S^2_{1,0}=S^2_{1,0}$.
Moreover, from \eqref{Ferror}, expanding $e^{S^0_{0,l}\im \sigma_3}$, we have $\mathfrak F^*\widetilde r=(1+S^0_{0,l}\im \sigma_3 +S^0_{0,2l})\widetilde r+S^1_{\sigma(l),l}$.
Therefore, we have
\begin{align}\label{R21}
\mathfrak F^* \<S^2_{1,0}\widetilde r,\sigma_1 \widetilde r\>=\<S^2_{1,0}\widetilde r,\sigma_1\widetilde r\>+S_{0,l+2}^0+S_{\sigma(l)+1,l+1}^0.
\end{align}
Next, we consider the term $\int_{\R^3}\beta_n(|S^1_{0,0}+\widetilde r|^2)s^1_{1,0}|\widetilde r|^2\,dx$ in $\mathbf R_2$.
We have $|e^{S^0_{0,l}\im \sigma_3}\widetilde r|^2=|\widetilde r|^2$ and
\begin{align*}
|S^1_{0,0}+e^{S^0_{0,l}\im \sigma_3}(\widetilde r+S^1_{0,l})|^2=|S^1_{0,0}+\widetilde r+S^1_{0,l}|^2=|S^1_{0,0}+\widetilde r|^2,
\end{align*}
where we have used $e^{S^0_{0,l}\im \sigma_3}S^1_{0,0}=S^1_{0,0}$.
Thus, we have
\begin{align}
&\mathfrak F^*\int_{\R^3}\beta_n(|S^1_{0,0}+\widetilde r|^2)s^1_{1,0}|\widetilde r|^2\,dx=\int_{\R^3}\beta_n(|S^1_{0,0}+\widetilde r|^2)s^1_{1,0}|\widetilde r+S^1_{\sigma(l),l}|^2\,dx\label{r2expand11}
\\&=\mathbf R_2 +\int_{\R^3}\beta_n(|S^1_{0,0}+\widetilde r|^2)\<S^1_{\sigma(l)+1,l},\sigma_1\widetilde r\>_{\C^2}\,dx+\int_{\R^3}\beta_n(|S^1_{0,0}+\widetilde r|^2)s^1_{1+2\sigma(l),2l}\,dx.\nonumber
\end{align}
By \eqref{expandbeta}, we have
\begin{align}
&\int_{\R^3}\beta_n(|S^1_{0,0}+\widetilde r|^2)\<S^1_{\sigma(l)+1,l},\sigma_1\widetilde r\>_{\C^2}\,dx
=\int_{\R^3}\beta_n(|S^1_{0,0}|^2)\<S^1_{\sigma(l)+1,l},\sigma_1\widetilde r\>_{\C^2}\,dx\nonumber\\&+
\int_{\R^3}\beta_{n+1}(|S^1_{0,0}+\widetilde r|^2)\<S^1_{0,0},\sigma_1\widetilde r\>_{\C^2}\<S^1_{\sigma(l)+1,l},\sigma_1\widetilde r\>\,dx\nonumber\\&
+\int_{\R^3}\beta_{n+1}(|S^1_{0,0}+\widetilde r|^2)|\widetilde r|^2\<S^1_{\sigma(l)+1,l},\sigma_1\widetilde r\>_{\C^2}\,dx\nonumber\\&
=S^0_{\sigma(l)+1,l+1}+\mathbf R_2+\mathbf R_3. \label{r2epandpr1}
\end{align}
We remark that by using \eqref{expandbeta}, we are using the convention   in Definition \eqref{def:beta}.
Notice that here, if $n=-1$ (which means that $\beta_n=\beta_{-1}=\mathrm{constant}$) the terms in the third and fourth term do not appear.
On the other hand, if $n\geq 0$, then $n+1\geq 1= 3-2$ so it is compatible with the Definition of $\mathbf R_3$ in Definition \ref{def:Rk}.

Next, for the last term in the second line of \eqref{r2expand11},
\begin{align}
&\int_{\R^3}\beta_n(|S^1_{0,0}+\widetilde r|^2)s^1_{1+2\sigma(l),2l}\,dx=
\int_{\R^3}\beta_n(|S^1_{0,0}|^2)s^1_{1+2\sigma(l),2l}\,dx\nonumber\\&
\quad+\int_{\R^3}\beta_{n+1}(|S^1_{0,0}+\widetilde r|^2)\<S^1_{1+2\sigma(l),2l},\widetilde r\>_{\C^2}\,dx+\int_{\R^3}\beta_{n+1}(|S^1_{0,0}+\widetilde r|^2)|\widetilde r|^2s^1_{1+2\sigma(l),2l}\,dx\nonumber\\&=
S^0_{1+2\sigma(l),2l}+\int_{\R^3}\beta_{n+1}(|S^1_{0,0}|^2)\<S^1_{1+2\sigma(l),2l},\widetilde r\>_{\C^2}\,dx\nonumber\\&
\quad+\int_{\R^3}\beta_{n+2}(|S^1_{0,0}+\widetilde r|^2)\<S^1_{0,0},\sigma_1 \widetilde r\>_{\C^2}\<S^1_{1+2\sigma(l),2l},\widetilde r\>_{\C^2}\,dx\nonumber\\&
\quad+\int_{\R^3}\beta_{n+2}(|S^1_{0,0}+\widetilde r|^2)|\widetilde r|^2\<S^1_{1+2\sigma(l),2l},\widetilde r\>_{\C^2}\,dx+\mathbf R_2 \nonumber\\&
=S^0_{1+2\sigma(l),2l}+S^0_{1+2\sigma(l),2l+1}+\mathbf R_2+\mathbf R_3+\mathbf R_2\nonumber\\&=S^0_{1+2\sigma(l),2l}+\mathbf R_2+\mathbf R_3,\label{r2epandpr2}
\end{align}
where we have used \eqref{expandbeta} in the first and second equality.
Again, notice that $n+2\geq 1$ and this is compatible with the definition of $\mathbf R_3$.

Finally, the term $\int_{\R^3}\beta_n(|S^1_{0,0}+\widetilde r|^2)\<S^1_{1,0},\sigma_1\widetilde r\>_{\C^2}\<S^1_{0,0},\sigma_1\widetilde r\>_{\C^2}\,dx$ can be handled as above repeatedly using \eqref{expandbeta}.

Therefore, we have \eqref{R2expand}.
\end{proof}

\begin{lemma}
Let $k=3,4,5$.
Let $l\geq 1$, $\delta>0$ and let $  \mathfrak{F}\in C^\infty(D_{\Hrad^1}(\mathcal T_{\omega_*},\delta),\Hrad^1(\R^3,\widetilde \C))$ satisfy \eqref{Ferror}--\eqref{eq:sigma(i)}.
Then, we have
\begin{align}
\mathfrak F^*\mathbf R_k&=\sum_{j=2}^k\mathbf R_j+S^0_{(k-1)\sigma(l),(k-1)l+1}.\label{R34expand}
\end{align}

\end{lemma}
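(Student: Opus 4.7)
The plan is to mirror, term-by-term, the proof of \eqref{R2expand} given just above, and exploit the fact that the pullback $\mathfrak F^*$ acts on the building blocks of \eqref{r345} through three elementary identities. First, since $e^{\im S^0_{0,l}\sigma_3}$ is an isometry on $\widetilde{\C}$ pointwise, $|\mathfrak F^*\widetilde r|^2 = |\widetilde r+S^1_{\sigma(l),l}|^2 = |\widetilde r|^2 + 2\langle S^1_{\sigma(l),l},\sigma_1\widetilde r\rangle_{\C^2} + S^0_{2\sigma(l),2l}$. Second, using $\sigma_1 e^{\im\theta\sigma_3}=e^{-\im\theta\sigma_3}\sigma_1$ and the fact that $e^{-\im S^0_{0,l}\sigma_3}S^1_{0,0}=S^1_{0,0}+S^1_{0,l}$, one obtains $\langle S^1_{0,0},\sigma_1 \mathfrak F^*\widetilde r\rangle_{\C^2} = \langle S^1_{0,0}+S^1_{0,l},\sigma_1(\widetilde r+S^1_{\sigma(l),l})\rangle_{\C^2}$. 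Third, $\beta_n(|S^1_{0,0}+\mathfrak F^*\widetilde r|^2)$ is expanded around $\beta_n(|S^1_{0,0}+\widetilde r|^2)$ via iterated Taylor steps in the perturbation $\langle S^1_{0,0}+\widetilde r,\sigma_1 S^1_{\sigma(l),l}\rangle_{\C^2}$ and $|S^1_{\sigma(l),l}|^2$ as in \eqref{expandbeta}, each step raising the $\beta$-index by one.

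With these ingredients I would substitute into the pullback of a generic term of $\mathbf R_k$ with $i+2j=k$ and $n\geq k-2$, and then fully distribute. What comes out is a finite linear combination of integrals of the schematic form
\[
\int_{\R^3}\beta_{n'}(|S^1_{0,0}+\widetilde r|^2)\,\langle S^1_{0,0},\sigma_1\widetilde r\rangle_{\C^2}^{i'}\,|\widetilde r|^{2j'}\,\Theta_m(\widetilde r)\,dx,
\]
where $\Theta_m$ is a product of $m$ factors, each of them one of $\langle S^1_{\sigma(l),l},\sigma_1\widetilde r\rangle_{\C^2}$, $\langle S^1_{0,0},\sigma_1 S^1_{\sigma(l),l}\rangle_{\C^2}$ or $|S^1_{\sigma(l),l}|^2$, the integer $m\in\{0,\dots,k\}$ counting how many of the original $k$ occurrences of $\widetilde r$ have been traded for $S^1_{\sigma(l),l}$, and the total power of $\widetilde r$ among the factors equals $k-m$. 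The classification then proceeds by the value of $m$: when $m=0$ the integral is a term of $\mathbf R_k$ (the condition $n'\geq k-2$ is preserved); when $1\leq m\leq k-2$ the total $\widetilde r$-degree is $k-m\in\{2,\dots,k-1\}$ and I would verify that $n'\geq(k-m)-2$, so that the integral fits the definition of $\mathbf R_{k-m}$; when $m\in\{k-1,k\}$ the symbol coefficient already carries at least $(k-1)\sigma(l)$ in the first index and $(k-1)l$ in the second, and either the surviving single $\widetilde r$-factor (for $m=k-1$) adds the missing $+1$ to the second index, or the coefficient itself belongs to $S^0_{k\sigma(l),kl}\subset S^0_{(k-1)\sigma(l),(k-1)l+1}$ (for $m=k$, using $l\geq 1$).

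The only delicate point, which I expect to be the main obstacle, is the coupled bookkeeping between the $\beta$-index $n'$ and the effective degree $k-m$: each application of \eqref{expandbeta} raises $n'$ by one but is balanced by the introduction of an extra $S^1_{\sigma(l),l}$-factor (either visibly in $\Theta_m$ or implicitly through a factor $\langle S^1_{0,0},\sigma_1 S^1_{\sigma(l),l}\rangle_{\C^2}$ absorbed into a scalar coefficient), which increases $m$ by one. Hence the invariant $n'-(k-m)+2\geq 0$ is preserved throughout the expansion, so every resulting integral satisfies the defining index condition of the class it is being put into. Combining this index invariance with the classification above yields \eqref{R34expand}. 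A completely analogous expansion handles the pure $|\widetilde r|^{2j}$ summand (the case $i=0$), the combinatorics being identical to the mixed case, so no separate argument is needed.
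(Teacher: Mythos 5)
Your overall strategy is the same as the paper's: factor out the phase (which leaves $|\cdot|^2$ and the pairings with $S^1_{0,0}$ unchanged up to admissible symbols), distribute the multilinear factors $\langle S^1_{0,0},\sigma_1(\widetilde r+S^1_{\sigma(l),l})\rangle_{\C^2}^i\,|\widetilde r+S^1_{\sigma(l),l}|^{2j}$, and classify the outcome by the degree in $\widetilde r$. Taylor-expanding $\beta_n$ with an index raised at each step is an unnecessary detour --- by the convention of Definition \ref{def:beta} the perturbation inside the argument of $\beta_n$ is absorbed without changing $n$ --- but it is harmless, since the classes $\mathbf R_j$ only require the lower bound $n\geq j-2$ and your invariant moves in the right direction.

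The gap is in your treatment of the cases $m=k-1$ and $m=k$. The integrals you obtain there still carry the factor $\beta_{n'}(|S^1_{0,0}+\widetilde r|^2)$, and an expression such as $\int_{\R^3}\beta_{n'}(|S^1_{0,0}+\widetilde r|^2)\,s^1_{k\sigma(l),kl}\,dx$ is \emph{not} an element of $\mathcal S^0$: membership in $\mathcal S^0_{i,j}$ requires a function defined and smooth on a neighborhood in $\R^5\times P(\omega_*)\Sigma^{-s}$ for \emph{every} $s\geq 0$, with bounds in terms of $\|\widetilde r\|_{\Sigma^{-s}}$, whereas $\beta_{n'}(|S^1_{0,0}+\widetilde r|^2)$ depends on $\widetilde r$ through a pointwise composition which does not even make sense for $\widetilde r\in\Sigma^{-s}$ with $s$ large; this is exactly why the paper keeps the classes $\mathbf R_k$ separate from the $S^0$-calculus, and the error term in \eqref{R34expand} must be a genuine $S^0$ symbol for the later differentiations and estimates. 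The fix is one more application of \eqref{expandbeta}, as in \eqref{r2epandpr1}--\eqref{r2epandpr2}: write $\beta_{n'}(|S^1_{0,0}+\widetilde r|^2)=\beta_{n'}(|S^1_{0,0}|^2)+\beta_{n'+1}(|S^1_{0,0}+\widetilde r|^2)\langle S^1_{0,0},\sigma_1\widetilde r\rangle_{\C^2}+\beta_{n'+1}(|S^1_{0,0}+\widetilde r|^2)|\widetilde r|^2$. Only the $\widetilde r$-independent piece, integrated against the small weight (times the single remaining linear factor when $m=k-1$), gives the symbol $S^0_{(k-1)\sigma(l),(k-1)l+1}$ (together with $S^0_{k\sigma(l),kl}\subset S^0_{(k-1)\sigma(l),(k-1)l+1}$ for $m=k$, using $l\geq1$ as you note); the two remaining pieces are additional $\mathbf R_2$ and $\mathbf R_3$ terms of the forms \eqref{r2} and \eqref{r345}, which is harmless because $k\geq3$ so they are absorbed into $\sum_{j=2}^k\mathbf R_j$. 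With that extra step your classification closes and coincides with the paper's proof.
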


\begin{proof}
We consider the expansion of the term $\mathfrak F^* \int_{\R^3} \beta_n(|S^1_{0,0}+\widetilde r|^2)\<S^1_{0,0},\sigma_1\widetilde r\>^i |\widetilde r|^{2k}\,dx$.
Recall that $n\geq k-2$.
By $e^{S^0_{0,l}\im \sigma_3}S^1_{0,0}=S^1_{0,0}$, we have
\begin{align*}
&\mathfrak F^* \int_{\R^3} \beta_n(|S^1_{0,0}+\widetilde r|^2)\<S^1_{0,0},\sigma_1\widetilde r\>^i |\widetilde r|^{2k}\,dx\\&=\int_{\R^3} \beta_n(|S^1_{0,0}+\widetilde r|^2)\<S^1_{0,0},\sigma_1(\widetilde r+S^1_{\sigma(l),l})\>^i |\widetilde r +S^1_{\sigma(l),l}|^{2k}\,dx\\&
=\sum_{j=2}^k \mathbf R_j +\int_{\R^3}\beta_n(|S^1_{0,0}+\widetilde r|^2)\<S^1_{(k-1)\sigma(l),(k-1)l},\sigma_1 \widetilde r\>_{\C^2}\,dx+\int_{\R^3}\beta_n(|S^1_{0,0}+\widetilde r|^2)s^1_{k\sigma(l),kl}\,dx.
\end{align*}
Using \eqref{expandbeta} as for \eqref{r2epandpr1} and \eqref{r2epandpr2}, we have
\begin{align*}
\int_{\R^3}\beta_n(|S^1_{0,0}+\widetilde r|^2)\<S^1_{(k-1)\sigma(l),(k-1)l},\sigma_1 \widetilde r\>_{\C^2}\,dx=S^0_{(k-1)\sigma(l),(k-1)l+1}+\mathbf R_2+\mathbf R_3,
\end{align*}
and
\begin{align*}
\int_{\R^3}\beta_n(|S^1_{0,0}+\widetilde r|^2)s^1_{k\sigma(l),kl}\,dx=S^0_{k\sigma(l),kl}+\mathbf R_2+\mathbf R_3.
\end{align*}
Therefore, we have \eqref{R34expand}.
\end{proof}

\begin{lemma}
Let $l\geq 1$, $\delta>0$ and let $  \mathfrak{F}\in C^\infty(D_{\Hrad^1}(\mathcal T_{\omega_*},\delta),\Hrad^1(\R^3,\widetilde \C))$ satisfy \eqref{Ferror}--\eqref{eq:sigma(i)}.
Then, we have
\begin{align}
\mathfrak F^*\mathbf R_6&=\sum_{j=2}^6\mathbf R_j+S^0_{\sigma(l)+1,l+2}. \label{R567expand}
\end{align}

\end{lemma}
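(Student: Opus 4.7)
The plan is to expand the pullback of each factor in a generic summand of $\mathbf R_6$ using the structural identities \eqref{Ferror}, namely $\mathfrak F^* \widetilde r = e^{\im S^0_{0,l}\sigma_3}(\widetilde r + S^1_{\sigma(l),l})$ together with $\mathfrak F^*S^k_{i,j}=S^k_{i,j}$ (recall \eqref{Sijtrans}), and then collect the resulting terms according to the classes introduced in Definition \ref{def:Rk}. The key observation, already exploited in the proofs of \eqref{R2expand} and \eqref{R34expand}, is that $e^{\im S^0_{0,l}\sigma_3}S^1_{0,0}=S^1_{0,0}$, so the pullback of the argument $v(t,s)=sS^1_{0,0}+t(\widetilde r+S^1_{0,0})$ takes the form
\begin{align*}
\mathfrak F^* v(t,s) \;=\; v(t,s) + t\bigl(e^{\im S^0_{0,l}\sigma_3}-1\bigr)\widetilde r + t\,e^{\im S^0_{0,l}\sigma_3}S^1_{\sigma(l),l}\;=\; v(t,s) + S^0_{0,l}\im\sigma_3 t\widetilde r + S^1_{\sigma(l),l},
\end{align*}
where we have absorbed the higher order expansion of $e^{\im S^0_{0,l}\sigma_3}-1$.

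I would then expand the pullback of
\begin{align*}
g'''(|v|^2)\,\<v,\sigma_1 S^1_{0,0}\>_{\C^2}\,\<v,\sigma_1 S^1_{1,1}\>_{\C^2}\,\<v,\sigma_1 \widetilde r\>_{\C^2}^{2}
\end{align*}
by treating each of the five factors separately. For $g'''(|\mathfrak F^*v|^2)=\beta_3(|\mathfrak F^*v|^2)$ I would invoke the convention of Definition \ref{def:beta} and the expansion \eqref{expandbeta} around $|v|^2$, which produces the original $\beta_3$ plus correction terms of higher order carrying factors $\<v,\sigma_1\widetilde r\>_{\C^2}$ or $|\widetilde r|^2$. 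Each of the remaining four inner products is multilinear, so replacing $v$ by $v+S^0_{0,l}\im\sigma_3 t\widetilde r+S^1_{\sigma(l),l}$ and $\widetilde r$ by $e^{\im S^0_{0,l}\sigma_3}(\widetilde r+S^1_{\sigma(l),l})$ generates a finite sum whose terms differ from the original integrand by at least one factor of either $S^0_{0,l}$ or $S^1_{\sigma(l),l}$ in place of one of the four slots occupied by $v$ or $\widetilde r$.

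Next, I would classify each term produced. The unperturbed term reproduces $\mathbf R_6$ (with the trivial relabelling coming from \eqref{expandbeta}). Any term in which a single instance of $\widetilde r$ in $\<v,\sigma_1\widetilde r\>_{\C^2}^{2}$ is replaced by $S^1_{\sigma(l),l}$ is, after pulling this $S^1$ factor out of the $\<\cdot,\cdot\>_{\C^2}$ via $\<v,\sigma_1 S^1_{\sigma(l),l}\>_{\C^2}=\<S^1_{\sigma(l)+1,l},\sigma_1\widetilde r\>_{\C^2}+S^0_{\sigma(l)+1,l}$ (using $v=S^1_{0,0}+\widetilde r\cdot S^0_{0,0}$), a term of the form $\mathbf R_5$ up to a remainder $\int\beta_3(|S^1_{0,0}+\widetilde r|^2)\<S^1_{\sigma(l)+1,l},\sigma_1\widetilde r\>_{\C^2}^{i}|\widetilde r|^{2j}\,dx$ that by iterated application of \eqref{expandbeta} falls either in $\sum_{j=2}^{5}\mathbf R_j$ or in $S^0_{\sigma(l)+1,l+2}$. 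Replacements in the $S^1_{1,1}$ slot or in the $v$ arguments follow the same pattern but with a lower combinatorial weight of $\widetilde r$ and hence contribute to $\mathbf R_j$ with smaller $j$ or directly to $S^0_{\sigma(l)+1,l+2}$. Terms in which two or more slots are replaced by $S^1_{\sigma(l),l}$ gain additional powers of smallness and lie trivially in $S^0_{\sigma(l)+1,l+2}$. The corrections coming from the expansion \eqref{expandbeta} of $\beta_3$ gain one extra factor of $\widetilde r$ each time and hence always land in the $\mathbf R_j$ classes with $j\leq 6$ or in the remainder class.

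The main obstacle I expect is purely bookkeeping: systematically tracking the bidegree $(i,j)$ of the scalar symbol attached to each monomial in $\widetilde r$, especially the degenerate case $l=1$, where $\sigma(l)=1$ and the borderline term $S^0_{\sigma(l)+1,l+2}=S^0_{2,3}$ must not accidentally spill into a lower class. I would resolve this at the end by verifying, one production at a time, that every surviving term has either the structure required by some $\mathbf R_j$ with $j\in\{2,\dots,6\}$ or the bidegree required by $S^0_{\sigma(l)+1,l+2}$; the calculation is of the same flavour as \eqref{r2epandpr1}--\eqref{r2epandpr2} and presents no new analytic difficulty.
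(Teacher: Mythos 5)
Your proposal follows essentially the same route as the paper's proof: pull back factor by factor using \eqref{Ferror}, \eqref{Sijtrans} and the invariance $e^{\im S^0_{0,l}\sigma_3}S^1_{0,0}=S^1_{0,0}$, replace $g'''$ by $\beta_3$ via the convention of Definition \ref{def:beta}, and classify the correction terms generated by the substitutions $\widetilde r\mapsto \widetilde r+S^1_{\sigma(l),l}$ through repeated use of \eqref{expandbeta}, exactly as in \eqref{r2epandpr1}--\eqref{r2epandpr2}, into $\sum_{j=2}^6\mathbf R_j$ plus the remainder $S^0_{\sigma(l)+1,l+2}$. The paper merely organizes the bookkeeping slightly more compactly (absorbing the shift inside $v$ and $\beta_3$ and only tracking the corrections coming from the explicit factor $\<v,\sigma_1\widetilde r\>_{\C^2}^2$), so your argument is correct and not materially different.
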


\begin{proof}
For $\mathfrak F^* \mathbf R_6$, we have, recalling $v(t,s)=sS^1_{0,0}+t (\widetilde r+S^1_{0,0})$,
\begin{align*}
&\mathfrak F^* \int_0^1\int_0^1(1-t)^2\int_{\R^3}g'''(|v(t,s)|^2)\<v(t,s),\sigma_1 S^1_{0,0}\>_{\C^2}\<v(t,s),\sigma_1 S^1_{1,1}\>_{\C^2}\<v(t,s),\sigma_1 \widetilde r\>_{\C^2}^2\,dxdsdt\\&=\mathbf R_6 +\int_{\R^3}\beta_3(|S^1_{0,0}+\widetilde r|^2)\<\widetilde r+S^1_{0,0},\sigma_1 S^1_{0,0}\>_{\C^2}\<\widetilde r+S^1_{0,0},\sigma_1 S^1_{1,1}\>_{\C^2}\<\widetilde r+S^1_{0,0},\sigma_1 S^1_{\sigma(l),l}\>_{\C^2}^2\,dx\\&
+\int_{\R^3}\beta_3(|S^1_{0,0}+\widetilde r|^2)\<\widetilde r+S^1_{0,0},\sigma_1 S^1_{0,0}\>_{\C^2}\<\widetilde r+S^1_{0,0},\sigma_1 S^1_{1,1}\>_{\C^2}\<\widetilde r+S^1_{0,0},\sigma_1 S^1_{\sigma(l),l}\>_{\C^2}\\&\quad\quad \times\<\widetilde r+S^1_{0,0},\sigma_1 \widetilde r\>_{\C^2}\,dx\\&
=\sum_{k=2}^6\mathbf R_k +\int_{\R^3}\beta_3(|S^1_{0,0}+\widetilde r|^2)\<S^1_{\sigma(l)+1,l+2},\sigma_1 \widetilde r\>_{\C^2}\,dx+\int_{\R^3}\beta_3(|S^1_{0,0}+\widetilde r|^2)s^1_{2\sigma(l)+1,2l+1} dx\\&
=\sum_{k=2}^6\mathbf R_k+S^0_{\sigma(l)+1,l+2},
\end{align*}
where we used the convention   in Definition \ref{def:beta} in the first equality when we replace $g'''$ with $\beta_3$ and erase the integral of $t$ and $s$.
Furthermore, in the last equality, we have used  \eqref{expandbeta} as in \eqref{r2epandpr1} and \eqref{r2epandpr2}.
Here, since \eqref{expandbeta} is used, we are again using the convention   in Definition \ref{def:beta}.

By the above equality, we have \eqref{R567expand}.
\end{proof}

\begin{lemma}
Let $l\geq 1$, $\delta>0$ and let $  \mathfrak{F}\in C^\infty(D_{\Hrad^1}(\mathcal T_{\omega_*},\delta),\Hrad^1(\R^3,\widetilde \C))$ satisfy \eqref{Ferror}--\eqref{eq:sigma(i)}.
Then, we have
\begin{align}
\mathfrak F^*\mathbf R_7&=\sum_{j=2}^7\mathbf R_j+S^0_{\sigma(l)+1,l+2}.\label{R7expand}
\end{align}
\end{lemma}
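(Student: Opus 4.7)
The proof follows the same template as \eqref{R2expand}, \eqref{R34expand}, and \eqref{R567expand}: expand the integrand under $\mathfrak F^*$, apply the convention of Definition \ref{def:beta} together with \eqref{expandbeta}, and collect terms by type. The first step is to make the structure of $\mathbf R_7$ explicit. By differentiating \eqref{E3} once more, one sees that $\nabla^4 E_P(u)(v_1,v_2,v_3)$ is a finite sum of terms of the form
\begin{equation*}
\beta_n(|u|^2)\,\<u,\sigma_1 a_1\>_{\C^2}\cdots\<u,\sigma_1 a_k\>_{\C^2}\,\<a_{k+1},\sigma_1 a_{k+2}\>_{\C^2}^{\varepsilon}\, a_{k+3},
\end{equation*}
with $n\geq 1$, $\varepsilon\in\{0,1\}$, and $\{a_i\}\subset\{v_1,v_2,v_3,u\}$ in the right multiplicities. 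Pairing with $\sigma_1\widetilde r$ and substituting $u=v(t,s)$, $v_1=S^1_{0,0}$, $v_2=v_3=\widetilde r$ therefore shows that each summand of $\mathbf R_7$ is an integral of the shape
\begin{equation*}
\int_0^1\!\!\int_0^1 (1-t)^2 \int_{\R^3}\beta_n(|v(t,s)|^2)\prod_i \<v(t,s),\sigma_1 B_i\>_{\C^2}\prod_j\<A_j,\sigma_1 B_j'\>_{\C^2}\,dx\,ds\,dt,
\end{equation*}
where $A_j,B_i,B_j'\in\{S^1_{0,0},\widetilde r\}$ and the total number of $\widetilde r$-slots is exactly $3$ and of $S^1_{0,0}$-slots is exactly $1$. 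By the convention in Definition \ref{def:beta} the $(s,t)$-integrals are absorbed, so such an integrand is already of the shape $\beta_n(|S^1_{0,0}+\widetilde r|^2)(\cdot)$ with $n\geq 1$.

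Next I apply $\mathfrak F^*$, using $\mathfrak F^*S^1_{0,0}=S^1_{0,0}$ and $\mathfrak F^*\widetilde r=e^{\im S^0_{0,l}\sigma_3}(\widetilde r+S^1_{\sigma(l),l})$. Since $|e^{\im\theta\sigma_3}w|^2=|w|^2$, the argument of $\beta_n$ becomes $|S^1_{0,0}+\widetilde r+S^1_{\sigma(l),l}|^2$. For each inner product $\<\cdot,\sigma_1\widetilde r\>_{\C^2}$, the identity $\sigma_1 e^{\im\theta\sigma_3}=e^{-\im\theta\sigma_3}\sigma_1$ together with $e^{\pm\im\theta\sigma_3}=1\pm\im S^0_{0,l}\sigma_3+S^0_{0,2l}$ (applied to both sides of the pairing) reduces the effect of the phase on inner products of the form $\<S^1_{0,0},\sigma_1\widetilde r\>_{\C^2}$ or $\<v,\sigma_1\widetilde r\>_{\C^2}$ to the substitution $\widetilde r\mapsto\widetilde r+S^1_{\sigma(l),l}$ modulo extra additive factors of the form $S^0_{0,l}\<\cdot,\sigma_1\widetilde r\>_{\C^2}$ or $S^0_{0,l}\<S^1_{1,0},\sigma_1\widetilde r\>_{\C^2}$ (for the rotated $S^1_{0,0}$). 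Crucially, every such correction carries an extra $l$-index coming from $S^0_{0,l}$ or $S^1_{\sigma(l),l}$.

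I then sort the resulting sum by the number $m\in\{0,1,2,3\}$ of $S^1_{\sigma(l),l}$-factors (together with any $S^0_{0,l}$-factors) that have replaced an original $\widetilde r$. The term with $m=0$ and no phase factor reproduces the original $\mathbf R_7$ summand. For $m\geq 1$, the remaining integrand has at most $3-m$ inner products of the type $\<S^1_{0,0},\sigma_1\widetilde r\>_{\C^2}$, $\<v,\sigma_1\widetilde r\>_{\C^2}$, or $\<\widetilde r,\sigma_1\widetilde r\>_{\C^2}=|\widetilde r|^2$, and at least one extra factor of type $\<\cdot,\sigma_1 S^1_{\sigma(l),l}\>_{\C^2}=\<S^1_{\sigma(l),l+1},\sigma_1\cdot\>_{\C^2}$ or $S^0_{0,l}\<\cdot,\sigma_1\widetilde r\>_{\C^2}$. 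I then apply \eqref{expandbeta} repeatedly to $\beta_n(|S^1_{0,0}+\widetilde r+S^1_{\sigma(l),l}|^2)$ to convert it into $\beta_n(|S^1_{0,0}+\widetilde r|^2)$ plus corrections of the form $\beta_{n+1}(\cdots)\<S^1_{0,0},\sigma_1\widetilde r\>_{\C^2}$, $\beta_{n+1}(\cdots)\<S^1_{\sigma(l),l+1},\sigma_1\widetilde r\>_{\C^2}$ or $\beta_{n+1}(\cdots)|\widetilde r|^2$ (precisely as in \eqref{r2epandpr1}--\eqref{r2epandpr2}). Each resulting monomial is either of the form \eqref{r345} with $i+2j\in\{3,4,5\}$ and $n'\geq k-2$, hence in $\mathbf R_k$ for some $k\in\{3,4,5\}$; or of the form \eqref{r2} (when two $\widetilde r$-slots remain but one of the coefficient vectors is $S^1_{\sigma(l)+1,l+\cdot}$), hence in $\mathbf R_2$; or, when $m\geq 2$ exhausts the $\widetilde r$-slots, it becomes $\int \beta_{n'}(|S^1_{0,0}+\widetilde r|^2)\,s^1_{\sigma(l)+1,l+2}\,dx$ which belongs to $S^0_{\sigma(l)+1,l+2}$ (using the estimate \eqref{betabound} to absorb $\beta_{n'}$). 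This completes the classification, giving \eqref{R7expand}.

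The main obstacle is purely combinatorial bookkeeping: the explicit form of $\nabla^4 E_P$ produces many distinct summands, and for each one the application of $\mathfrak F^*$, followed by the expansions of the phase and of $\beta_n$, generates a further proliferation of terms. For each resulting term one must verify that the exponent structure $(i,j)$ of $\<S^1_{0,0},\sigma_1\widetilde r\>^i|\widetilde r|^{2j}$ and the index $n'$ of $\beta_{n'}$ satisfy the conditions of Definition \ref{def:Rk} (so that the term lies in the claimed $\mathbf R_k$), or that enough $S^1_{\sigma(l),l}$ and $S^0_{0,l}$ factors are present to place the leftover in $S^0_{\sigma(l)+1,l+2}$. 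No new analytic ingredient is required beyond those already invoked in the preceding lemmas of this subsection.
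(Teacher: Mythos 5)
Your overall template (expand $\nabla^4E_P$, pull back, absorb phases, expand $\beta_n$ via \eqref{expandbeta}, classify) is the same as the paper's, but the final classification step contains a genuine gap: you claim that every correction term with $m\geq 1$ lands in a class of the form \eqref{r345} with $i+2j\in\{3,4,5\}$, in \eqref{r2}, or in $S^0_{\sigma(l)+1,l+2}$ — i.e.\ you never use $\mathbf R_6$. This fails for the $g'''$-part of \eqref{nabla4E}: the correction carrying exactly one factor $S^1_{\sigma(l),l}$ is
$\int_0^1\!\int_0^1(1-t)^2\int_{\R^3}g'''(|\mathfrak F^*v|^2)\<\mathfrak F^*v,\sigma_1 S^1_{0,0}\>_{\C^2}\<\mathfrak F^*v,\sigma_1 S^1_{\sigma(l),l}\>_{\C^2}\<\mathfrak F^*v,\sigma_1\widetilde r\>_{\C^2}^2\,dx\,ds\,dt$,
which is of degree up to six in $\widetilde r$ once the pairings are expanded (e.g.\ it contains $\int\beta_3\<S^1_{0,0},\sigma_1\widetilde r\>_{\C^2}\<S^1_{\sigma(l),l},\sigma_1\widetilde r\>_{\C^2}|\widetilde r|^4\,dx$). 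Such a monomial fits neither \eqref{r345} (capped at $i+2j=5$) nor \eqref{r2}, and it cannot be put into an $S^0$ class because $|\widetilde r|^4$ is not controlled by $\Sigma^{-s}$ norms. The only way out is the paper's: leave this term unexpanded and recognize it as an $\mathbf R_6$ term, using $S^1_{\sigma(l),l}\subset S^1_{1,1}$; this is exactly why $\mathbf R_6$ appears in \eqref{R73} and in the statement's sum $\sum_{j=2}^7$.

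Two further points in the same step need repair. First, expanding the phase as $e^{\pm\im S^0_{0,l}\sigma_3}=1\pm\im S^0_{0,l}\sigma_3+S^0_{0,2l}$ generates additive corrections in which all three $\widetilde r$-slots survive, multiplied only by the scalar $S^0_{0,l}$; these again have $\widetilde r$-degree up to seven and fit none of your target classes. The clean fix is exact absorption: since $\mathfrak F^*v=e^{\im S^0_{0,l}\sigma_3}\widetilde v$ with $\widetilde v$ of the same form as $v$ (with $\widetilde r\mapsto\widetilde r+S^1_{\sigma(l),l}$ and relabeled symbols), the phases cancel in $\widetilde r$--$\widetilde r$ pairings and can be moved onto the generic $S^1_{0,0}$ symbols in the others, leaving no additive remainder — this is what the paper tacitly does in \eqref{R71}--\eqref{R73}. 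Second, your last bullet asserts that $\int\beta_{n'}(|S^1_{0,0}+\widetilde r|^2)s^1_{\sigma(l)+1,l+2}\,dx$ "belongs to $S^0_{\sigma(l)+1,l+2}$ using \eqref{betabound}": membership in an $\mathcal S^0$ class is not an estimate but a structural requirement (smooth dependence on $\widetilde r\in\Sigma^{-s}$), and $\beta_{n'}(|S^1_{0,0}+\widetilde r|^2)$ is not such a symbol; one must expand it once more via \eqref{expandbeta}, as in \eqref{r2epandpr2}, obtaining a genuine $S^0$ term plus $\mathbf R_2+\mathbf R_3$ contributions. With these corrections your argument reduces to the paper's proof.
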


\begin{proof}
Since we have
\begin{equation}\label{nabla4E}
\begin{aligned}
&\nabla^4 E_P(u)(v_1,v_2,v_3)=g'(|u|^2)\sum_{\tau\in \mathfrak S_3}\<v_{\tau(1)}, \sigma_1 v_{\tau(2)}\>_{\C^2}v_{\tau(3)}\\&\quad+2g''(|u|^2)\sum_{\tau\in \mathfrak S_3}\(\<u, \sigma_1 v_{\tau(1)}\>_{\C^2}\<u,\sigma_1 v_{\tau(2)}\>v_{\tau(3)}  +\< v_{\tau(1)}, \sigma_1 v_{\tau(2)}\>_{\C^2}\<u, \sigma_1 v_{\tau(3)}\>_{\C^2}u \)\\&\quad+8g'''(|u|^2)\<u, \sigma_1 v_1\>_{\C^2}\<u,\sigma_1 v_2\>_{\C^2}\<u,\sigma_1 v_3\>_{\C^2}u,
\end{aligned}
\end{equation}
where $\mathfrak S_3$ is the permutation group of $\{1,2,3\}$, it suffices to consider the contribution of each terms of the r.h.s.\ of \eqref{nabla4E}.
For the first term of the r.h.s.\ of \eqref{nabla4E}, we have
\begin{align}
&\mathfrak F^*\int_0^1\int_0^1(1-t)^2g'(|v(t,s)|^2)\<S^1_{0,0},\sigma_1\widetilde r\>|\widetilde r|^2\,dx dsdt\label{R71}\\&=
\int_0^1\int_0^1(1-t)^2g'(|\mathfrak F^* v(t,s)|^2)\<S^1_{0,0},\sigma_1\(\widetilde r+S^1_{\sigma(l),l}\)\>_{\C^2}|\widetilde r+S^1_{\sigma(l),l}|^2\,dx dsdt\nonumber\\&
=\int_0^1\int_0^1(1-t)^2g'(|\mathfrak F^* v(t,s)|^2)\<S^1_{0,0},\sigma_1\widetilde r\>_{\C^2}|\widetilde r|^2\,dx dsdt
+\mathbf R_2+\mathbf R_3\nonumber\\&\quad+\int_{\R^3}\beta_1(|S^1_{0,0}+\widetilde r|^2)\<S^1_{2\sigma(l),2l},\sigma_1 \widetilde r\>_{\C^2}\,dx+\int_{\R^3}\beta_1(|S^1_{0,0}+\widetilde r|^2)s^1_{3\sigma(l),3l}\nonumber\\&=
\int_0^1\int_0^1(1-t)^2g'(|\mathfrak F^* v(t,s)|^2)\<S^1_{0,0},\sigma_1\widetilde r\>_{\C^2}|\widetilde r|^2\,dx dsdt
+\mathbf R_2+\mathbf R_3+S^0_{2\sigma(l),2l+1},\nonumber
\end{align}
where we have used the convention   in Definition \ref{def:beta} in the second equality when we replace $g'$ with $\beta_1$ and erase the integral of $t$ and $s$.
Furthermore, in the last equality, we have used  \eqref{expandbeta} as in \eqref{r2epandpr1} and \eqref{r2epandpr2}.
Also, notice that $1\geq k-2$ for $k=3$ so it is compatible with the definition of $\mathbf R_k$ in Definition \ref{def:Rk}.

For the second term of the r.h.s.\ of \eqref{nabla4E}, considering first term with $\tau=\mathrm{id}$, we have
\begin{align}
&\mathfrak F^*\int_0^1\int_0^1(1-t)^2g''(|v(t,s)|^2)\<v(t,s),\sigma_1 S^1_{0,0}\>_{\C^2}\<v(t,s),\sigma_1\widetilde r\>_{\C^2}|\widetilde r|^2\,dx dsdt\label{R72}\\&=
\int_0^1\int_0^1(1-t)^2g''(|\mathfrak F^*v(t,s)|^2)\<  \mathfrak{F}^*v(t,s),\sigma_1 S^1_{0,0}\>_{\C^2}\<\mathfrak F^*v(t,s),\sigma_1\(\widetilde r+S^1_{\sigma(l),l}\)\>_{\C^2}\nonumber\\&\quad\quad \times|\widetilde r+S^1_{\sigma(l),l}|^2\,dx dsdt\nonumber\\&
=\int_0^1\int_0^1(1-t)^2g''(|\mathfrak F^*v(t,s)|^2)\<  \mathfrak{F}^*v(t,s),\sigma_1 S^1_{0,0}\>_{\C^2}\<\mathfrak F^*v(t,s),\sigma_1\widetilde r\>_{\C^2}|\widetilde r|^2\,dx dsdt+\sum_{k=2}^4\mathbf R_k\nonumber\\&
\quad+\int_{\R^3}\beta_2(|S^1_{0,0}+\widetilde r|^2)\<S^1_{2\sigma(l),2l},\sigma_1\widetilde r\>_{\C^2}\,dx+\int_{\R^3}\beta_2(|S^1_{0,0}+\widetilde r|^2)s^1_{3\sigma(l),3l}\nonumber\\&
=\int_0^1\int_0^1(1-t)^2g''(|\mathfrak F^*v(t,s)|^2)\<  \mathfrak{F}^*v(t,s),\sigma_1 S^1_{0,0}\>_{\C^2}\<\mathfrak F^*v(t,s),\sigma_1\widetilde r\>_{\C^2}|\widetilde r|^2\,dx dsdt\nonumber\\&
\quad+\sum_{k=2}^4\mathbf R_k+S^0_{2\sigma(l),2l+1},\nonumber
\end{align}
where we have used the convention   in Definition \ref{def:beta} in the second equality when we replace $g''$ with $\beta_2$ and erase the integral of $t$ and $s$.
Furthermore, in the last equality, we have used  \eqref{expandbeta} as in \eqref{r2epandpr1} and \eqref{r2epandpr2}.
Also, notice that $2\geq k-2$ for $k=3,4$ so it is compatible with the definition of $\mathbf R_k$ in Definition \ref{def:Rk}.

The other terms in the second term of the r.h.s.\ of \eqref{nabla4E} will have the same estimate.
Finally, for the third term of the r.h.s.\ of \eqref{nabla4E}, we have
\begin{align}
&\mathfrak F^* \int_0^1\int_0^1 (1-t)^2 g'''(|v(t,s)|^2)\<v(t,s),\sigma_1 S^1_{0,0}\>_{\C^2}\<v(t,s),\sigma_1\widetilde r\>^3\,dxdsdt\label{R73}\\&=
\int_0^1\int_0^1 (1-t)^2 g'''(|\mathfrak F^*v(t,s)|^2)\<\mathfrak F^*v(t,s),\sigma_1 S^1_{0,0}\>_{\C^2}\<\mathfrak F^*v(t,s),\sigma_1\(\widetilde r+S^1_{\sigma(l),l}\)\>^3\,dxdsdt\nonumber\\&=
\int_0^1\int_0^1 (1-t)^2 g'''(|\mathfrak F^*v(t,s)|^2)\<\mathfrak F^*v(t,s),\sigma_1 S^1_{0,0}\>_{\C^2}\<\mathfrak F^*v(t,s),\sigma_1\widetilde r \>^3\,dxdsdt\nonumber\\&
\quad+\sum_{k=2}^6\mathbf R_k+S^0_{2\sigma(l),2l+1},\nonumber
\end{align}
where we have used the same argument as above.
Adding the first terms of r.h.s.\ in \eqref{R71}, \eqref{R72} and \eqref{R73}, we have $\mathbf R_7$.
Therefore, we have the conclusion.
\end{proof}

Combining \eqref{R2expand}, \eqref{R34expand}, \eqref{R567expand} and \eqref{R7expand}, we obtain Lemma \ref{lem:Rexpand}.

\subsection{Proof of Lemma \ref{lem:errorstzbound}}

\label{sec:errorstzbound}

In this subsection we prove Lemma \ref{lem:errorstzbound}.
Since $\mathbf R=S^0_{0,N+1}+\sum_{k=2}^7 \mathbf R_k$, we estimate each term in Lemmas \ref{lem:boundR0}, \ref{lem:boundR2}, \ref{lem:boundR345} and \ref{lem:boundR67}.

\begin{lemma}\label{lem:boundR0}
Let $s=0,1$. Assume $|\omega-\omega_*|\lesssim \epsilon$ and $|\lambda|+\|\widetilde r\|_{H^1}\lesssim \epsilon^{3/2}$.
Then, we have
\begin{align}\label{eq:boundR0}
\|\widetilde\nabla_{\widetilde r}S^0_{0,N+1}\|_{W^{s,6/5}}\lesssim \epsilon^{\frac32 N}.
\end{align}
\end{lemma}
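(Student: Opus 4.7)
The plan is to reduce the statement to a direct application of the symbol calculus developed in Section \ref{subsec:sandmu} combined with the weighted-to-Lebesgue embedding $\Sigma^s \hookrightarrow W^{s',6/5}$ for $s$ large enough.

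First I would observe, by the last clause of Lemma \ref{difsymbol}, that differentiating with $\widetilde\nabla_{\widetilde r}$ turns an $S^0_{i,j}$ symbol into an $S^1_{i,j-1}$ symbol, so
\begin{equation*}
\widetilde\nabla_{\widetilde r} S^0_{0,N+1} \in \mathcal S^1_{0,N}.
\end{equation*}
By the definition of $\mathcal S^1_{i,j}$ in Definition \ref{def:symbols}, for every $s\ge 0$ this gives, with a constant depending on $s$,
\begin{equation*}
\bigl\|\widetilde\nabla_{\widetilde r} S^0_{0,N+1}\bigr\|_{\Sigma^s} \lesssim_s \bigl(|\lambda|+|\mu|+\|\widetilde r\|_{\Sigma^{-s}}\bigr)^{N}.
\end{equation*}

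Next I would use the standing hypotheses together with Lemma \ref{lem:estmu}: we have $|\lambda|\lesssim \epsilon^{3/2}$, $\|\widetilde r\|_{\Sigma^{-s}}\lesssim \|\widetilde r\|_{H^1}\lesssim \epsilon^{3/2}$, and $|\mu|\lesssim \epsilon^2\lesssim \epsilon^{3/2}$. Substituting these bounds gives
\begin{equation*}
\bigl\|\widetilde\nabla_{\widetilde r} S^0_{0,N+1}\bigr\|_{\Sigma^s} \lesssim_s \epsilon^{\frac{3N}{2}}.
\end{equation*}

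Finally I would pass from $\Sigma^s$ to $W^{s',6/5}$ for $s'=0,1$. The point is that $\Sigma^s$ is the domain of the harmonic oscillator raised to the power $s/2$, so elements of $\Sigma^s$ decay like $\langle x\rangle^{-s}$ and have $s$ derivatives in $L^2$. Writing $\|f\|_{W^{s',6/5}}\le \|\langle x\rangle^{-s}\|_{L^3}\,\|\langle x\rangle^{s}f\|_{W^{s',2}}$ and choosing $s$ large enough (e.g.\ $s>1$ already suffices in $\R^3$ since $\langle x\rangle^{-s}\in L^3$), one obtains $\|f\|_{W^{s',6/5}}\lesssim \|f\|_{\Sigma^s}$ for $s'=0,1$. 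Combining with the previous bound yields \eqref{eq:boundR0}.

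This is an essentially bookkeeping argument; there is no real obstacle beyond making sure the symbol class estimate is applied with a value of $s$ large enough to absorb the spatial weight needed for the $L^{6/5}$ embedding, and that Lemma \ref{lem:estmu} may legitimately be invoked to control $\mu$ by $\epsilon^{3/2}$.
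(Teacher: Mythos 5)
Your argument is correct and is essentially the paper's own proof: the paper likewise applies Lemma \ref{difsymbol} to write $\widetilde\nabla_{\widetilde r}S^0_{0,N+1}=S^1_{0,N}$ and then bounds $\|S^1_{0,N}\|_{W^{s,6/5}}\lesssim \epsilon^{3N/2}$ using the symbol class bound together with $|\lambda|,\|\widetilde r\|_{H^1}\lesssim\epsilon^{3/2}$ and $|\mu|\lesssim\epsilon^2$. The only difference is that you spell out the routine embedding of $\Sigma^s$ (for $s$ large) into $W^{s',6/5}$, which the paper leaves implicit.
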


\begin{proof}
By Lemma \eqref{difsymbol}, we have $\sigma_3\widetilde \nabla_{\widetilde r}S^0_{0,N+1}=S^1_{0,N}$.
Thus,
\begin{align}\label{contriS}
\| \sigma_3\widetilde \nabla_{\widetilde r}S^0_{0,N+1}\|_{W^{1,6/5}}= \| S^1_{0,N}\|_{W^{1,6/5}}\lesssim \epsilon^{3N/2}.
\end{align}
Therefore, we have \eqref{eq:boundR0}.
\end{proof}

\begin{lemma}\label{lem:boundR2}
Let $s=0,1$. Assume $|\omega-\omega_*|\lesssim \epsilon$ and $|\lambda|+\|\widetilde r\|_{H^1}\lesssim \epsilon^{3/2}$.
Then, we have
\begin{align}\label{eq:boundR2}
\|\widetilde\nabla_{\widetilde r}\mathbf R_2\|_{W^{s,6/5}}\lesssim \epsilon\|\widetilde r\|_{W^{s,6}}.
\end{align}
\end{lemma}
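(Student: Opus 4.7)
The strategy is to work term-by-term on the prototypes constituting $\mathbf R_2$ in Definition \ref{def:Rk}, namely the quadratic form $\<S^2_{1,0}\widetilde r,\sigma_1\widetilde r\>$ and the two integrals in \eqref{r2}. For each, I will compute $\widetilde\nabla_{\widetilde r}$ explicitly using the product rule and Lemma \ref{difsymbol}, then bound the result in $W^{s,6/5}$ by Hölder's inequality. The expected gain of the factor $\epsilon$ comes from the subscript-$i=1$ smallness of $S^2_{1,0}$, $s^1_{1,0}$, $S^1_{1,0}$, while one factor $\|\widetilde r\|_{W^{s,6}}$ is absorbed from an explicit copy of $\widetilde r$; any additional occurrences of $\widetilde r$ are controlled by the hypothesis $\|\widetilde r\|_{H^1}\lesssim \epsilon^{3/2}$ together with the Sobolev embedding $H^1\hookrightarrow L^6$.

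For the quadratic form, Lemma \ref{difsymbol} gives $\widetilde\nabla_{\widetilde r}\<S^2_{1,0}\widetilde r,\sigma_1\widetilde r\> = 2S^2_{1,0}\widetilde r$ plus correction terms in which $d_{\widetilde r}S^2_{1,0}$ produces extra factors of $\widetilde r$. Since $S^2_{1,0}\in \mathcal L(\Sigma^{-s'},\Sigma^{s'})$ with norm $\lesssim \epsilon$ for every $s'$, its Schwartz kernel $K(x,y)$ decays rapidly in both variables with overall size $\lesssim \epsilon$. Minkowski's inequality in $L^{6/5}_x$ followed by Hölder in $L^6_y$ yields $\|S^2_{1,0}\widetilde r\|_{W^{s,6/5}}\lesssim \epsilon\|\widetilde r\|_{L^6}$, and the correction terms only bring in extra small factors of $\widetilde r$.

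For the integral $\int \beta_n(|S^1_{0,0}+\widetilde r|^2)s^1_{1,0}|\widetilde r|^2\,dx$, using the convention of Definition \ref{def:beta} one obtains
\begin{equation*}
\widetilde\nabla_{\widetilde r}(\cdot)=2\beta_n(|S^1_{0,0}+\widetilde r|^2) s^1_{1,0}\widetilde r+2\beta_{n+1}(|S^1_{0,0}+\widetilde r|^2) s^1_{1,0}|\widetilde r|^2(S^1_{0,0}+\widetilde r)+(\widetilde\nabla_{\widetilde r}s^1_{1,0})|\widetilde r|^2.
\end{equation*}
The leading piece is estimated via
$\|\beta_n s^1_{1,0}\widetilde r\|_{L^{6/5}}\leq \|\beta_n s^1_{1,0}\|_{L^{3/2}}\|\widetilde r\|_{L^6}\lesssim \epsilon\|\widetilde r\|_{L^6}$, using $|\beta_n(t)|\lesssim 1+t^{\max(0,p-n)}$ with $p<2$ (so $\beta_n$ is integrable against the rapid decay of $s^1_{1,0}$) and the $\epsilon$-smallness from subscript $i=1$. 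The cubic term $\beta_{n+1}s^1_{1,0}|\widetilde r|^2\widetilde r$ is bounded by $\|\beta_{n+1}s^1_{1,0}\|_{L^3}\|\widetilde r\|_{L^6}^2\|\widetilde r\|_{L^6}\lesssim \epsilon\cdot \epsilon^3\cdot\|\widetilde r\|_{L^6}\leq \epsilon\|\widetilde r\|_{L^6}$, and all other pieces are treated identically after absorbing excess powers of $\widetilde r$ via the $H^1$ hypothesis. The second summand of \eqref{r2} is handled verbatim, with $|\widetilde r|^2$ replaced by $\<S^1_{1,0},\sigma_1\widetilde r\>\<S^1_{0,0},\sigma_1\widetilde r\>$.

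For $s=1$, I would apply $\nabla_x$ to each gradient expression above and use the product rule. New factors of the form $\nabla_x\beta_n(|S^1_{0,0}+\widetilde r|^2)=2\beta_{n+1}\<S^1_{0,0}+\widetilde r,\sigma_1\nabla_x(S^1_{0,0}+\widetilde r)\>$ either produce $\nabla_x S^1_{0,0}\in \Sigma^{s'}$ (rapid decay preserved) or $\nabla_x\widetilde r$ (absorbable into $\|\widetilde r\|_{W^{1,6}}$), while $\nabla_x s^1_{1,0}$ stays in the symbol class, so the same Hölder scheme applies. The main obstacle is the bookkeeping: enumerating all combinatorial terms generated by successive product rules, handling the $n=-1$ boundary case (where $\beta_{-1}\equiv 1$ kills the $\beta_{n+1}$-type term), and checking uniformity in $n$ of the $L^{3/2}$ and $L^3$ bounds on $\beta_n(|S^1_{0,0}+\widetilde r|^2)$ times rapidly decaying symbols.
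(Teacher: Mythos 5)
Your proposal is correct and follows essentially the same route as the paper: differentiate each prototype term of $\mathbf R_2$, use the symbol calculus of Lemma \ref{difsymbol} for the derivatives of $S^2_{1,0}$ and $s^1_{1,0}$, extract the factor $\epsilon$ from the subscript $i=1$, and absorb surplus copies of $\widetilde r$ via H\"older, $H^1\hookrightarrow L^6$ and $\|\widetilde r\|_{H^1}\lesssim\epsilon^{3/2}$. The only (cosmetic) differences are that the paper bounds the quadratic-form term by duality in the $\Sigma^{\pm s}$ norms rather than through a kernel/Minkowski argument, and routes the term coming from differentiating the symbol through the embedding $L^{6/(2\tilde p+2)}\hookrightarrow\Sigma^{-2}$.
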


\begin{proof}
Let $f=\<S^2_{1,0}\widetilde r,\sigma_1 \widetilde r\>$.
Then, since $$\<\widetilde \nabla_{\widetilde r}f,\sigma_1\widetilde \eta\>=\<S^2_{1,0}\widetilde r,\widetilde \eta\> + \<d_{\widetilde r}S^2_{1,0}(\widetilde r,\widetilde \eta),\sigma_1\widetilde r\>,$$ and $d_{\widetilde r}S^2_{1,0}(\widetilde r,\cdot) = S^2_{1,1}$, we have
\begin{align*}
\|\widetilde \nabla_{\widetilde r}f\|_{\Sigma^s}&\lesssim \sup_{\|\widetilde \eta\|_{\Sigma^{-s}}\leq 1}\<\widetilde \nabla_{\widetilde r}f,\sigma_1\widetilde \eta\>\leq \|S^2_{1,0}\widetilde r \|_{\Sigma^s}+\sup_{\|\widetilde \eta\|_{\Sigma^{-s}}\leq 1}\|d_{\widetilde r}S^2_{1,0}(\widetilde r,\widetilde \eta)\|_{\Sigma^s}\|\widetilde r\|_{\Sigma^{-s}}\\&\lesssim_s \epsilon \|\widetilde r\|_{\Sigma^{-s}}.
\end{align*}
In particular, we have
\begin{align}\label{contriR21}
\|\widetilde\nabla_{\widetilde r} f\|_{W^{1,6/5}}\lesssim  \epsilon \|\widetilde r\|_{L^{6}}.
\end{align}
We next consider the term $ \widetilde \nabla_{\widetilde r}\int_{\R^3}\beta_n(|S^1_{0,0}+\widetilde r|^2)s^1_{1,0}|\widetilde r|^2\,dx$ in $\widetilde \nabla_{\widetilde r}\mathbf R_2$.
Since
\begin{align*}
\widetilde \nabla_{\widetilde r}\int_{\R^3}\beta_n(|S^1_{0,0}+\widetilde r|^2)s^1_{1,0}|\widetilde r|^2\,dx=&\beta_n(|S^1_{0,0}+\widetilde r|^2)s^1_{1,0}\widetilde r+S^2_{0,0}\beta_n(|S^1_{0,0}+\widetilde r|^2)|\widetilde r|^2\\&+(1+S^2_{0,0})\beta_{n+1}(|S^1_{0,0}+\widetilde r|^2)s^1_{1,0}|\widetilde r|^2(S^1_{0,0}+\widetilde r),
\end{align*}
where are using \eqref{expandbeta} and the convention of Definition \ref{def:beta},
we have
\begin{align}
&\| \widetilde \nabla_{\widetilde r}\int_{\R^3}\beta_n(|S^1_{0,0}+\widetilde r|^2)s^1_{1,0}|\widetilde r|^2\,dx\|_{W^{s,6/5}}\lesssim \epsilon\|\beta_n(|S^1_{0,0}+\widetilde r|^2)\widetilde r\|_{W^{s,6/5}}\nonumber\\&\quad+\|\beta_n(|S^1_{0,0}+\widetilde r|^2)|\widetilde r|^2\|_{L^{6/(2\tilde p+2)}}+\epsilon\|\beta_{n+1}(|S^1_{0,0}+\widetilde r|^2)|\widetilde r|^2 (S^1_{0,0}+\widetilde r)\|_{W^{s,6/5}}\lesssim \epsilon \|\widetilde r\|_{W^{s,6}},\label{contriR22}
\end{align}
where $\tilde p=\max(p,1)$ and we have used $L^{6/(2\tilde p+2)}\hookrightarrow\Sigma^{-2}$ coming from $\Sigma^2\hookrightarrow L^2\cap L^\infty$.

The contribution of the term $\sigma_3 \widetilde \nabla_{\widetilde r}\int_{\R^3}\beta_n(|S^1_{0,0}+\widetilde r|^2)\<S^1_{1,0},\sigma_1\widetilde r\>_{\C^2}\<S^1_{0,0},\sigma_1\widetilde r\>_{\C^2}\,dx$ also follows the same estimate as \eqref{contriR22}.
Therefore, from \eqref{contriR21} and \eqref{contriR22}, we have \eqref{eq:boundR2}.
\end{proof}

\begin{lemma}\label{lem:boundR345}
Let $k=3,4,5$.
Let $s=0,1$. Assume $|\omega-\omega_*|\lesssim \epsilon$ and $|\lambda|+\|\widetilde r\|_{H^1}\lesssim \epsilon^{3/2}$.
Then, we have
\begin{align*}
\|\widetilde\nabla_{\widetilde r}\mathbf R_k\|_{W^{s,6/5}}\lesssim \epsilon\|\widetilde r\|_{W^{s,6}}.
\end{align*}
\end{lemma}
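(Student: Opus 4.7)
The plan is to follow the template of Lemma~\ref{lem:boundR2}, handling the finitely many summands of $\mathbf R_k$ one at a time. Fix a triple $(i,j,n)$ with $i+2j=k$ and $n\geq k-2$, and write the corresponding summand as
\begin{equation*}
F_{i,j,n}(\widetilde r) := \int_{\R^3}\beta_n\bigl(|S^1_{0,0}+\widetilde r|^2\bigr)\<S^1_{0,0},\sigma_1\widetilde r\>_{\C^2}^{\,i}|\widetilde r|^{2j}\,dx.
\end{equation*}
Applying the product and chain rules as in the derivation of \eqref{expandbeta}, $\widetilde\nabla_{\widetilde r}F_{i,j,n}$ is a finite sum of three types of pointwise integrands (up to overall signs and $\sigma_1$-factors coming from the convention in \eqref{diftilder}):
\begin{align*}
&\text{(T1)}\quad \beta_{n+1}(|S^1_{0,0}+\widetilde r|^2)\,\<S^1_{0,0},\sigma_1\widetilde r\>^{i}\,|\widetilde r|^{2j}\,(S^1_{0,0}+\widetilde r),\\
&\text{(T2)}\quad i\,\beta_n(\cdot)\,\<S^1_{0,0},\sigma_1\widetilde r\>^{i-1}\,|\widetilde r|^{2j}\,S^1_{0,0} \qquad (\text{present only if }i\geq 1),\\
&\text{(T3)}\quad j\,\beta_n(\cdot)\,\<S^1_{0,0},\sigma_1\widetilde r\>^{i}\,|\widetilde r|^{2(j-1)}\,\widetilde r \qquad (\text{present only if }j\geq 1).
\end{align*}
Because $n\geq k-2\geq 1$, bound \eqref{betabound} yields $\|\beta_n(|S^1_{0,0}+\widetilde r|^2)\|_{L^\infty}\lesssim 1$ (and likewise for $\beta_{n+1}$), so the outer nonlinear factors are uniformly bounded in the regime $\|\widetilde r\|_{H^1}\lesssim\epsilon^{3/2}\ll 1$.

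To estimate each of (T1)--(T3) in $L^{6/5}$, I would peel off a single factor of $\widetilde r$ to carry the target $\|\widetilde r\|_{L^6}$ and H\"older-pair the remainder in $L^{3/2}$. Using the pointwise bound $|\<S^1_{0,0},\sigma_1\widetilde r\>|\leq |S^1_{0,0}||\widetilde r|$, every contribution is controlled by an integral schematically of the form $\int |S^1_{0,0}|^{\alpha}|\widetilde r|^{k-1}\cdot |\widetilde r|\,dx$ with $\alpha\geq 0$ (and $\alpha\geq 1$ whenever an explicit $S^1_{0,0}$ appears outside a bracket pairing). The residual factor of $\widetilde r$-order $k-2$ is H\"older-bounded by
\begin{equation*}
\|\,|\widetilde r|^{k-2}\,\|_{L^{3/2}}=\|\widetilde r\|_{L^{3(k-2)/2}}^{k-2}\lesssim \|\widetilde r\|_{H^1}^{k-2}\lesssim \epsilon^{3(k-2)/2},
\end{equation*}
which is valid because $3(k-2)/2\leq 9/2<6$ for $k\leq 5$, so the Sobolev embedding $H^1\hookrightarrow L^{3(k-2)/2}$ applies. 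The weight $|S^1_{0,0}|^{\alpha}$ is bounded in $L^\infty$ (and any $L^p$) by the Schwartz-type decay of $S^1_{0,0}\in\Sigma^\sigma$. The only branch with no surviving $S^1_{0,0}$ weight is (T3) with $i=0$, which forces $k=2j$, hence $k=4$ and $j=2$; there, directly $\||\widetilde r|^{2}\widetilde r\|_{L^{6/5}}\leq \|\widetilde r\|_{L^3}^2\|\widetilde r\|_{L^6}\lesssim \epsilon^3\|\widetilde r\|_{L^6}$. In every case one collects at least $\epsilon^{3(k-2)/2}\geq\epsilon^{3/2}\gtrsim \epsilon$, giving the required $L^{6/5}$ bound.

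The upgrade to $W^{1,6/5}$ is obtained by distributing $\partial_{x_l}$ over each of (T1)--(T3) via Leibniz. A derivative falling on $S^1_{0,0}$ produces another $\Sigma^\sigma$-decaying weight (harmless by the same H\"older split); a derivative falling on $\beta_n(|S^1_{0,0}+\widetilde r|^2)$ spawns $\beta_{n+1}$ times $\partial_{x_l}(S^1_{0,0}+\widetilde r)$, which either again hits $S^1_{0,0}$ or lands on $\widetilde r$; in the latter case I place the differentiated $\widetilde r$ in $L^6$ to contribute $\|\widetilde r\|_{W^{1,6}}$, while the remaining $k-2$ factors of $\widetilde r$ go into $L^{3(k-2)/2}$ exactly as before. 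The hard part is not any single estimate but the case-by-case bookkeeping across the triples $(i,j,n)$; the only structural branching point is whether any free $S^1_{0,0}$ weight survives in the pure (T3)-term, and that single branch is settled by the direct $L^3$-based estimate above. Summing over the finitely many $(i,j,n)$ appearing in $\mathbf R_k$ then yields the conclusion.
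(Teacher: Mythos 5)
Your overall template (differentiate, enumerate the resulting terms, then H\"older with the decaying weight $S^1_{0,0}$ and Sobolev embeddings of $H^1$) is the same as the paper's, but there is a genuine error at the step you lean on throughout: the claim that $n\geq k-2\geq 1$ together with \eqref{betabound} gives $\|\beta_n(|S^1_{0,0}+\widetilde r|^2)\|_{L^\infty}\lesssim 1$ ``in the regime $\|\widetilde r\|_{H^1}\lesssim\epsilon^{3/2}$''. Smallness of $\|\widetilde r\|_{H^1}$ gives no pointwise control, since $H^1(\R^3)\not\hookrightarrow L^\infty$, and \eqref{betabound} only says $\beta_n(s)\lesssim 1+s^{\max(0,p-1)}$ for $n\geq 1$. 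Hence for $k=3$, $n=1$ and $1<p<2$ (a case allowed by the standing hypotheses on $g$) the factor $\beta_1(|S^1_{0,0}+\widetilde r|^2)$ may grow like $|\widetilde r|^{2p-2}$ on the set where $|\widetilde r|$ is large, and your subsequent H\"older bookkeeping ($|\widetilde r|^{k-2}$ in $L^{3/2}$, weight in $L^\infty$) does not absorb it. This is exactly the case the paper isolates: in \eqref{contriRk4} the surplus $|S^1_{0,0}+\widetilde r|^{2p-2}|\widetilde r|^2$ is paired with the rapidly decaying weight $s^1_{0,0}$ and estimated in $L^{6/2p}$, and the analogous correction appears again for the $W^{1,6/5}$ bound through the $\beta_{n+1}(|v|^2)|v|$ factor. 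Your argument can be repaired along these lines (the extra $|\widetilde r|^{2p-2}$ can be placed in $L^6$-based norms because $2p-1<6$, using that a weight always survives in the $k=3$ terms), but as written the uniform boundedness claim is false and the $k=3$ branch is not covered. Note also that $\beta_{n+1}$ is indeed bounded (since $n+1\geq 2>p$), so the problem is confined to the $\beta_n$ factor itself.

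A secondary, more minor omission: your list (T1)--(T3) treats $S^1_{0,0}$ as independent of $\widetilde r$, whereas by Lemma \ref{difsymbol} the symbols themselves have $\widetilde r$-derivatives of type $S^2$, so $\widetilde\nabla_{\widetilde r}F_{i,j,n}$ also contains the smoothing-operator contributions recorded as the $S^2_{0,0}(\cdots)$ and $(1+S^2_{0,0})$ terms in \eqref{contriRk1}. These terms are harmless (the paper dismisses them in one line precisely because of the $S^2_{0,0}$ cutoff), but they should at least be acknowledged for the term count to be complete.
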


\begin{proof}
We consider the term $\int_{\R^3}\beta_n(|S^1_{0,0}+\widetilde r|^2)\<S^1_{0,0},\sigma_1 \widetilde r\>^i |\widetilde r|^{2j}\,dx$ in $\mathbf R_k$ for $k=3,4,5$.
Here, recall that $n\geq k-2$ (see Definition \ref{def:Rk}).
Taking $\widetilde \nabla_{\widetilde r}$ and using \eqref{expandbeta} with the convention of Definition \ref{def:beta}, we have
\begin{align}
&\widetilde \nabla_{\widetilde r}\int_{\R^3}\beta_n(|S^1_{0,0}+\widetilde r|^2)\<S^1_{0,0},\sigma_1 \widetilde r\>_{\C^2}^i |\widetilde r|^{2j}\,dx=
\beta_n(|S^1_{0,0}+\widetilde r|^2)\<S^1_{0,0},\sigma \widetilde r\>_{\C^2}^i|\widetilde r|^{2(j-1)}\widetilde r \nonumber
\\&\quad+ \beta_n(|S^1_{0,0}|^2)\<S^1_{0,0},\sigma \widetilde r\>_{\C^2}^{i-1}|\widetilde r|^{2j}S^1_{0,0}
+S^2_{0,0}\(\beta_n(|S^1_{0,0}+\widetilde r|^2)\<S^1_{0,0},\sigma \widetilde r\>_{\C^2}^i|\widetilde r|^{2j}\widetilde r\) \nonumber
\\&\quad+(1+S^2_{0,0})\beta_{n+1}(|S^1_{0,0}+\widetilde r|^2)\<S^1_{0,0},\sigma_1\widetilde r\>_{\C^2}^i|\widetilde r|^{2j}(S^1_{0,0}+\widetilde r),\label{contriRk1}
\end{align}
where if $i=0$ or $j=0$, the terms with $i-1$ or $j-1$ do not exist.

For the first term of r.h.s.\ of \eqref{contriRk1}, if $s=0$,
\begin{align}\label{contriRk2}
\|\beta_n(|S^1_{0,0}+\widetilde r|^2)\<S^1_{0,0},\sigma_1 \widetilde r\>_{\C^2}^i|\widetilde r|^{2(j-1)}\widetilde r\|_{L^{6/5}}\lesssim \| (1+|S^1_{0,0}+\widetilde r|^{\max(2(p-n),0)})s^1_{0,0}|\widetilde r|^{k-1}\|_{L^{6/5}}.
\end{align}
If $k=4,5$ since $p<2$ and $n\geq2$, we have
\begin{align}
\|\beta_n(|S^1_{0,0}+\widetilde r|^2)\<S^1_{0,0},\sigma_1 \widetilde r\>_{\C^2}^i|\widetilde r|^{2(j-1)}\widetilde r\|_{L^{6/5}}&\lesssim \| s^1_{0,0}|\widetilde r|^{k-1}\|_{L^{6/5}}\lesssim \| |\widetilde r|^{k-1}\|_{L^{6/(k-1)}}\nonumber\\&
\leq \|\widetilde r\|_{L^6}^{k-1}\lesssim \epsilon \|\widetilde r\|_{L^6}.\label{contriRk3}
\end{align}
The same estimate holds for the case $k=3$ with $n\geq 2$ or $n=1$ and $p \leq 1$.
For the remaining case $k=3$, $n=1$ and $1<p<2$, we have
\begin{align}
&\|\beta_n(|S^1_{0,0}+\widetilde r|^2)\<S^1_{0,0},\sigma_1 \widetilde r\>_{\C^2}^i|\widetilde r|^{2(j-1)}\widetilde r\|_{L^{6/5}}\lesssim \| (1+|S^1_{0,0}+\widetilde r|^{2p-2})s^1_{0,0}|\widetilde r|^2\|_{L^{6/5}}\nonumber\\&\lesssim \| |\widetilde r|^2\|_{L^{3}}+\||S^1_{0,0}+\widetilde r|^{2p-2}|\widetilde r|^2 \|_{L^{6/2p}}\lesssim \epsilon \|\widetilde r\|_{L^6}.\label{contriRk4}
\end{align}
The case $s=1$ for the first term of the r.h.s.\ of \eqref{contriRk1}, since we have
\begin{align}
&\left|\nabla_x\(\beta_n(|S^1_{0,0}+\widetilde r|^2)\<S^1_{0,0},\sigma_1 \widetilde r\>_{\C^2}^i |\widetilde r|^{2(j^1)}\widetilde r\)\right|
\lesssim \left|\beta_n(|S^1_{0,0}+\widetilde r|^2)\right| |S^1_{0,0}| |\widetilde r|^{k-2}|\nabla_x \widetilde r|\nonumber\\&\quad+
\left|\beta_n(|S^1_{0,0}+\widetilde r|^2)\right| |S^1_{0,0}| |\widetilde r|^{k-1}
+\left|\beta_{n+1}(|v|^2)|v|\right| |S^1_{0,0}| |\widetilde r|^{k-1}\(S^1_{0,0}+|\nabla_x \widetilde r|\),\label{contriRk4.1}
\end{align}
where $v=S^1_{0,0}+\widetilde r$,
we can bound the $L^{6/5}$ norm of the first term in the r.h.s.\ of \eqref{contriRk4.1} by a similar manner to \eqref{contriRk2}, \eqref{contriRk3} and \eqref{contriRk4}, and we have
\begin{align}
\|\beta_n(|S^1_{0,0}+\widetilde r|^2) |S^1_{0,0}| |\widetilde r|^{k-2}\nabla_x \widetilde r\|_{L^{6/5}}\lesssim \epsilon\|\nabla_x \widetilde r\|_{L^6}.
\end{align}
The second term is the same as what appeared in \eqref{contriRk2}, so we can bound it by $\epsilon\|\widetilde r\|_{L^6}$.
The $L^{6/5}$ norm of the third term of the r.h.s.\ of \eqref{contriRk4.1} can be bounded by
\begin{align*}
&\|\beta_{n+1}(|v|^2)|v| |S^1_{0,0}| |\widetilde r|^{k-1}\(S^1_{0,0}+|\nabla_x \widetilde r|\)\|_{L^{6/5}}\lesssim \| |S^1_{0,0}|(1+|\widetilde r|^{\max(0,2p-2n-1)})|\widetilde r|^{k-1}(1+|\nabla_x\widetilde r|)\|_{L^{6/5}}\\&
\lesssim \|\widetilde r\|_{L^6}^{k-1}(1+\|\nabla_x\widetilde r\|_{L^6})+\|\widetilde r\|_{L^6}^{k-1+\max(0,2p-2n-1)}(1+\|\nabla_x\widetilde r\|_{L^6}),
\end{align*}
where the last term is only needed in the case $k=3$, $n=1$ and $p>1$.
Thus, we have
\begin{align}
	&\|\beta_n(|S^1_{0,0}+\widetilde r|^2)\<S^1_{0,0},\sigma_1 \widetilde r\>_{\C^2}^i|\widetilde r|^{2(j-1)}\widetilde r\|_{W^{s,6/5}}\lesssim \epsilon \|\widetilde r\|_{W^{s,6}}.\label{contriRk5}
\end{align}

The second term of the r.h.s.\ of \eqref{contriRk1} can be handled by similar manner.
The third term will be easier because we have a cutoff $S^2_{0,0}$ so we omit it.
The fourth term also can be handled similarly as the first term taking care of the decay property of $\beta_n$.
Therefore, we have the conclusion.
\end{proof}

\begin{lemma}\label{lem:boundR67}
Let $s=0,1$. Let $k=6,7$.
Assume $|\omega-\omega_*|\lesssim \epsilon$ and $|\lambda|+\|\widetilde r\|_{H^1}\lesssim \epsilon^{3/2}$.
Then, we have
\begin{align*}
\|\widetilde\nabla_{\widetilde r}\mathbf R_k\|_{W^{s,6/5}}\lesssim \epsilon\|\widetilde r\|_{W^{s,6}}.
\end{align*}
\end{lemma}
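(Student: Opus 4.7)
The plan is to mimic the strategy used in Lemma \ref{lem:boundR345}, i.e. to compute $\widetilde \nabla _{\widetilde r}$ term by term, use the convention of Definition \ref{def:beta} when expanding derivatives of $\beta_n(|v|^2)$, and then exploit the combination of spatial decay coming from the $S^{1}_{0,0}$, $S^{1}_{1,1}$ factors with the bound $\|\widetilde r\|_{H^1}\lesssim \epsilon^{3/2}$, together with the Sobolev embedding $H^1(\R^3)\hookrightarrow L^6(\R^3)$.

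First I would treat $\mathbf R_7$. By \eqref{nabla4E} the integrand in \eqref{R7} is a finite sum of terms with schematic form
\begin{equation*}
\beta_n(|v(t,s)|^2)\,\<S^{1}_{0,0},\sigma_1\widetilde r\>^{i_1}\,\<v(t,s),\sigma_1 S^{1}_{0,0}\>^{i_2}\,\<v(t,s),\sigma_1 \widetilde r\>^{i_3}\,|\widetilde r|^{2j}
\end{equation*}
with $n\ge 1$ and $i_1+i_2+i_3+2j=4$ including at least two factors involving $\widetilde r$ and at least one $S^{1}_{0,0}$. Because $v(t,s)=sS^{1}_{0,0}+t(\widetilde r+S^{1}_{0,0})$ depends linearly on $\widetilde r$, the application of $\widetilde \nabla_{\widetilde r}$ produces exactly the same kind of terms, plus analogous ones arising from $\beta_n'$ via \eqref{expandbeta}. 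Every resulting summand still has at least one localising factor $S^{1}_{0,0}\in \mathcal S(\R^3,\widetilde\C)$ and at least $3$ total factors involving either $\widetilde r$ or $S^{1}_{1,1}$. Then I would estimate each summand in $L^{6/5}$ (respectively $W^{1,6/5}$) by pulling out the spatial decay of $S^{1}_{0,0}$ in $L^{6/5}$ via $\<x\>^{-M}\in L^{6/5}$ for $M$ large and bounding the remaining factors in $L^\infty$ or $L^6$, exactly as in the passage \eqref{contriRk2}--\eqref{contriRk5} of Lemma \ref{lem:boundR345}. The growth of $\beta_n$ for large argument is controlled by $1+|v|^{2(p-n)}$ with $p<2$ and $n\ge 1$, and $v$ is dominated by $\|\widetilde r\|_{L^6}+\|S^{1}_{0,0}\|_{L^\infty}\lesssim 1$ on the support where $S^{1}_{0,0}$ matters (or absorbed in the $L^6$ norm elsewhere), so no new difficulty arises.

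For $\mathbf R_6$, the procedure is the same: using the convention in Definition \ref{def:beta} to absorb the double integral over $(t,s)$, differentiating the integrand in \eqref{r6} with respect to $\widetilde r$ yields finitely many terms of the shape
\begin{equation*}
\beta_m(|v(t,s)|^2)\,S^{1}_{0,0}\cdot S^{1}_{1,1}\cdot (\text{product of at most four factors among } \widetilde r,\ S^{1}_{0,0},\ S^{1}_{1,1}),
\end{equation*}
with $m\in\{3,4\}$. Each such term again carries the decaying prefactor $S^{1}_{0,0}\cdot S^{1}_{1,1}$ and is at least quadratic in $\widetilde r$. Because $S^{1}_{1,1}$ contributes an extra power of $\epsilon$ and lies in Schwartz, I would apply Hölder as $\|S^{1}_{0,0}S^{1}_{1,1}|\widetilde r|^{k-1}\|_{L^{6/5}}\lesssim \epsilon\|\widetilde r\|_{L^6}^{k-1}\lesssim \epsilon\|\widetilde r\|_{L^6}$ (using $\|\widetilde r\|_{L^6}\lesssim \epsilon^{3/2}\le 1$), and the gradient in $x$ is distributed by the Leibniz rule with the factor $\nabla \widetilde r$ estimated in $L^6$ just as in \eqref{contriRk4.1}.

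The only mild obstacle is bookkeeping: when $\widetilde \nabla_{\widetilde r}$ hits $\beta_n(|v(t,s)|^2)$ one gets an extra factor $\<v(t,s),\sigma_1(\cdot)\>$ which in the worst case introduces a factor of $\widetilde r$ without spatial decay; one has to check that such contributions still come with enough localisation through the remaining $S^{1}_{0,0}$ or $S^{1}_{1,1}$ factor. Because in both $\mathbf R_6$ and $\mathbf R_7$ every summand contains such a localising factor by construction (this is the very reason these expressions were isolated in Definition \ref{def:Rk}), the bookkeeping goes through, and the required estimate $\|\widetilde\nabla_{\widetilde r}\mathbf R_k\|_{W^{s,6/5}}\lesssim \epsilon\|\widetilde r\|_{W^{s,6}}$ follows. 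Combining Lemmas \ref{lem:boundR0}, \ref{lem:boundR2}, \ref{lem:boundR345} and \ref{lem:boundR67} then yields Lemma \ref{lem:errorstzbound}.
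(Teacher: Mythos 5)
Your proposal is correct and follows exactly the route the paper intends: the paper omits this proof with the remark that it is ``similar to the proof of Lemma \ref{lem:boundR345}'', and your term-by-term expansion of $\widetilde\nabla_{\widetilde r}\mathbf R_6$ and $\widetilde\nabla_{\widetilde r}\mathbf R_7$, exploiting the localising factors $S^1_{0,0}$, $S^1_{1,1}$ together with H\"older, the growth bounds on $g^{(n)}$, and $\|\widetilde r\|_{L^6}\lesssim\epsilon^{3/2}$, is precisely that argument carried out in detail. No gap.
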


\begin{proof}
Since the proof will be similar to the proof of Lemma \ref{lem:boundR345}, we will omit it.
\end{proof}

By Lemmas \ref{lem:boundR0}, \ref{lem:boundR2}, \ref{lem:boundR345} and \ref{lem:boundR67}, we obtain Lemma \ref{lem:errorstzbound}.

\section*{Acknowledgments}
M.M. was supported by the JSPS KAKENHI Grant Numbers JP15K17568, JP17H02851 and JP17H02853.

%
%\bibliographystyle{amsplain}
%\bibliography{Bib.bib}

\begin{thebibliography}{10}

\bibitem{AMRBook}
R.~Abraham, J.~E. Marsden, and T.~Ratiu, \emph{Manifolds, tensor analysis, and
  applications}, second ed., Applied Mathematical Sciences, vol.~75,
  Springer-Verlag, New York, 1988. %\MR{960687 (89f:58001)}


\bibitem{Bambusi13CMPas}
D. Bambusi, \emph{Asymptotic stability of ground states in some
  {H}amiltonian {PDE}s with symmetry}, Comm. Math. Phys. \textbf{320} (2013),
  no.~2, 499--542.


\bibitem{BM16CMP}
D.~Bambusi and A.~Maspero, \emph{Freezing of energy of a soliton in an external
  potential}, Comm. Math. Phys. \textbf{344} (2016), no.~1, 155--191.
  %\MR{3493141}


\bibitem{Bonanno15JDE}
Claudio Bonanno, \emph{Long time dynamics of highly concentrated solitary waves
  for the nonlinear {S}chr\"{o}dinger equation}, J. Differential Equations
  \textbf{258} (2015), no.~3, 717--735. %\MR{3279351}



\bibitem{BG01MCS}
V.~S. Buslaev and V.~E. Grikurov, \emph{Simulation of instability of bright
  solitons for {NLS} with saturating nonlinearity}, Math. Comput. Simulation
  \textbf{56} (2001), no.~6, 539--546. %\MR{1842578}

\bibitem{BP92StP}
V.~S. Buslaev and G.~S. Perelman, \emph{Nonlinear scattering: states
  that are close to a soliton}, Zap. Nauchn. Sem. S.-Peterburg. Otdel. Mat.
  Inst. Steklov. (POMI) \textbf{200} (1992), no.~Kraev. Zadachi Mat. Fiz.
  Smezh. Voprosy Teor. Funktsii. 24, 38--50, 70, 187. %\MR{1192111 (93k:35040)}

\bibitem{BP95}
V.~S. Buslaev and G.~S. Perelman, \emph{On the stability of solitary waves for nonlinear {S}chr\"odinger
  equations}, Nonlinear evolution equations, Amer. Math. Soc. Transl. Ser. 2,
  vol. 164, Amer. Math. Soc., Providence, RI, 1995, pp.~75--98. %\MR{1334139 (96e:35157)}

\bibitem{BS03AIHPN}
V.~S. Buslaev and C. Sulem, \emph{On asymptotic stability of
  solitary waves for nonlinear {S}chr\"odinger equations}, Ann. Inst. H.
  Poincar\'e Anal. Non Lin\'eaire \textbf{20} (2003), no.~3, 419--475.
  %\MR{1972870 (2004d:35229)}

\bibitem{CP03CPAM}
A. Comech and D. Pelinovsky, \emph{Purely nonlinear instability of
  standing waves with minimal energy}, Comm. Pure Appl. Math. \textbf{56}
  (2003), no.~11, 1565--1607. %\MR{1995870 (2005h:37176)}



\bibitem{Cuccagna12Rend}
S. Cuccagna, \emph{On the {D}arboux and {B}irkhoff steps in the asymptotic
  stability of solitons}, Rend. Istit. Mat. Univ. Trieste \textbf{44} (2012),
  197--257. %\MR{3019561}



\bibitem{Cuccagna03RMP}
S. Cuccagna, \emph{On asymptotic stability of ground states of {NLS}}, Rev.
  Math. Phys. \textbf{15} (2003), no.~8, 877--903. %\MR{2027616 (2004k:35348)}

\bibitem{Cuccagna11CMP}
S. Cuccagna, \emph{The {H}amiltonian structure of the nonlinear {S}chr\"odinger
  equation and the asymptotic stability of its ground states}, Comm. Math.
  Phys. \textbf{305} (2011), no.~2, 279--331. %\MR{2805462 (2012d:37176)}

\bibitem{Cuccagnatrans}
S. Cuccagna, \emph{On asymptotic stability of moving ground states of the nonlinear  {S}chr\"odinger
  equation}, Trans. Amer. Math. Soc. \textbf{366} (2014), no.~6, 2827--2888.



\bibitem{CM15APDE}
S. Cuccagna and M. Maeda, \emph{On small energy stabilization in the
  {NLS} with a trapping potential}, Anal. PDE \textbf{8} (2015), no.~6,
  1289--1349. %\MR{3396999}


\bibitem{CM16JNS}
S. Cuccagna and M. Maeda, \emph{On orbital instability of spectrally
  stable vortices of the {NLS} in the plane}, J. Nonlinear Sci. \textbf{26}
  (2016), no.~6, 1851--1894. %\MR{3562397}


\bibitem{CPV05CPAM}
S. Cuccagna, D. Pelinovsky, and V. Vougalter, \emph{Spectra of
  positive and negative energies in the linearized {NLS} problem}, Comm. Pure
  Appl. Math. \textbf{58} (2005), no.~1, 1--29. %\MR{2094265}


\bibitem{soffer}  V. Fleurov and  A. Soffer, \emph{Soliton in a Well. Dynamics and Tunneling}, arXiv:1305.4279v1.


\bibitem{FGJS04CMP}
J.~Fr\"{o}hlich, S.~Gustafson, B.~L.~G. Jonsson, and I.~M. Sigal,
  \emph{Solitary wave dynamics in an external potential}, Comm. Math. Phys.
  \textbf{250} (2004), no.~3, 613--642. %\MR{2094474}




\bibitem{GMW15DCDS}
R.~H. Goodman, J.~L. Marzuola, and M.~I. Weinstein,
  \emph{Self-trapping and {J}osephson tunneling solutions to the nonlinear
  {S}chr\"odinger/{G}ross-{P}itaevskii equation}, Discrete Contin. Dyn. Syst.
  \textbf{35} (2015), no.~1, 225--246. %\MR{3286956}


\bibitem{GSS}
M. Grillakis, J. Shatah, and W. Strauss, \emph{Stability theory
  of solitary waves in the presence of symmetry. {I}}, J. Funct. Anal.
  \textbf{74} (1987), no.~1, 160--197. %\MR{MR901236 (88g:35169)}

%\bibitem{GSS2}
%Manoussos Grillakis, Jalal Shatah, and Walter Strauss, \emph{Stability theory of solitary waves in the presence of symmetry.
%  {II}}, J. Funct. Anal. \textbf{94} (1990), no.~2, 308--348. %\MR{MR1081647  (92a:35135)}

\bibitem{HSBook}
P.~D. Hislop and I.~M. Sigal, \emph{Introduction to spectral theory}, Applied
  Mathematical Sciences, vol. 113, Springer-Verlag, New York, 1996, With
  applications to Schr\"odinger operators. %\MR{1361167}

\bibitem{HZ08IMRN}
Justin Holmer and Maciej Zworski, \emph{Soliton interaction with slowly varying
  potentials}, Int. Math. Res. Not. IMRN (2008), no.~10, Art. ID rnn026, 36.
  %\MR{2429244}

 
\bibitem{KOPV17ARMA}
R. Killip, T. Oh, O. Pocovnicu, and M. Vi\c~san, \emph{Solitons
  and scattering for the cubic-quintic nonlinear {S}chr\"odinger equation on
  {$\Bbb{R}^3$}}, Arch. Ration. Mech. Anal. \textbf{225} (2017), no.~1,
  469--548. %\MR{3634031}

\bibitem{JLFGS06AHP}
B.~Lars~G. Jonsson, J\"{u}rg Fr\"{o}hlich, Stephen Gustafson, and
  Israel~Michael Sigal, \emph{Long time motion of {NLS} solitary waves in a
  confining potential}, Ann. Henri Poincar\'{e} \textbf{7} (2006), no.~4,
  621--660. %\MR{2232367}



\bibitem{LemPapSulSul88}
B.~J. LeMesurier, G.~Papanicolaou, C.~Sulem, and P.-L. Sulem, \emph{Focusing
  and multi-focusing solutions of the nonlinear {S}chr\"odinger equation},
  Phys. D \textbf{31} (1988), no.~1, 78--102. %\MR{947896}



\bibitem{Maeda08KMJ}
M. Maeda, \emph{Stability and instability of standing waves for
  1-dimensional nonlinear {S}chr\"odinger equation with multiple-power
  nonlinearity}, Kodai Math. J. \textbf{31} (2008), no.~2, 263--271.


\bibitem{Maeda12JFA}
M. Maeda, \emph{Stability of bound states of {H}amiltonian {PDE}s in the
  degenerate cases}, J. Funct. Anal. \textbf{263} (2012), no.~2, 511--528.
  %\MR{2923422}

\bibitem{MRS10JNS}
J.~L. Marzuola, S. Raynor, and G. Simpson, \emph{A system of {ODE}s
  for a perturbation of a minimal mass soliton}, J. Nonlinear Sci. \textbf{20}
  (2010), no.~4, 425--461. %\MR{2665276}

\bibitem{MW10DCDS}
J.~L. Marzuola and M.~I. Weinstein, \emph{Long time dynamics near the
  symmetry breaking bifurcation for nonlinear
  {S}chr\"odinger/{G}ross-{P}itaevskii equations}, Discrete Contin. Dyn. Syst.
  \textbf{28} (2010), no.~4, 1505--1554. %\MR{2679721}


\bibitem{merle}F.Merle and P.Raphael
 , \emph{On a sharp lower bound on the blow-up rate for the L2 critical nonlinear Schr\"odinger equation }, J. Amer. Math. Soc..
  \textbf{19} (2006), no.~1, 37--90.





\bibitem{Ohta11JFA}
M. Ohta, \emph{Instability of bound states for abstract nonlinear
  {S}chr\"odinger equations}, J. Funct. Anal. \textbf{261} (2011), no.~1,
  90--110. %\MR{2785894}

\bibitem{PP12JDE}
D.~E. Pelinovsky and T.~V. Phan, \emph{Normal form for the symmetry-breaking
  bifurcation in the nonlinear {S}chr\"odinger equation}, J. Differential
  Equations \textbf{253} (2012), no.~10, 2796--2824. %\MR{2964644}


\bibitem{perelman}
 G. Perelman {\em On  the Formation of Singularities in Solutions of the Critical
Nonlinear Schr\"odinger equation},   Ann. Henri Poincar\'e 2 (2001), 605--673.



\bibitem{SW90CMP}
A.~Soffer and M.~I. Weinstein, \emph{Multichannel nonlinear scattering for
  nonintegrable equations}, Comm. Math. Phys. \textbf{133} (1990), no.~1,
  119--146. %\MR{1071238 (91h:35303)}

\bibitem{SW92JDE}
A.~Soffer and M.~I. Weinstein, \emph{Multichannel nonlinear scattering for nonintegrable equations.
  {II}. {T}he case of anisotropic potentials and data}, J. Differential
  Equations \textbf{98} (1992), no.~2, 376--390. %\MR{1170476}

\bibitem{SW99IM}
A.~Soffer and M.~I. Weinstein, \emph{Resonances, radiation damping and instability in {H}amiltonian
  nonlinear wave equations}, Invent. Math. \textbf{136} (1999), no.~1, 9--74.
  %\MR{1681113 (2000k:37119)}

\bibitem{SulemSulem}
C. Sulem and P.-L. Sulem, \emph{The nonlinear {S}chr\"odinger
  equation}, Applied Mathematical Sciences, vol. 139, Springer-Verlag, New
  York, 1999, Self-focusing and wave collapse. %\MR{MR1696311 (2000f:35139)}

\bibitem{TCL96PRL}
V. Tikhonenko, J. Christou, and B. Luther-Davies, \emph{Three
  dimensional bright spatial soliton collision and fusion in a saturable
  nonlinear medium}, Phys. Rev. Lett. \textbf{76} (1996), 2698--2701.

\bibitem{Weinstein85}
M.~I. Weinstein, \emph{Modulational stability of ground states of
  nonlinear {S}chr\"odinger equations}, SIAM J. Math. Anal. \textbf{16} (1985),
  no.~3, 472--491. %\MR{MR783974 (86i:35130)}

\bibitem{Weinstein86}
M.~I. Weinstein, \emph{Lyapunov stability of ground states of nonlinear dispersive
  evolution equations}, Comm. Pure Appl. Math. \textbf{39} (1986), no.~1,
  51--67. %\MR{820338}

\end{thebibliography}
%

Department of Mathematics and Geosciences,  University
of Trieste, via Valerio  12/1  Trieste, 34127  Italy.
{\it E-mail Address}: {\tt scuccagna@units.it}

Department of Mathematics and Informatics,
Faculty of Science,
Chiba University,
Chiba 263-8522, Japan.
{\it E-mail Address}: {\tt maeda@math.s.chiba-u.ac.jp}

\end{document}